\documentclass[a4paper]{article}
\usepackage[top=2.5cm,bottom=2.5cm,left=2.5cm,right=2.5cm]{geometry}
\usepackage{amsmath,amsthm,amssymb,mathrsfs}
\usepackage{enumerate}
\usepackage{graphicx}

\newtheorem{dfn}{Definition}[section]
\newtheorem{thm}[dfn]{Theorem}

\newtheorem{lem}[dfn]{Lemma}

\newtheorem{cor}[dfn]{Corollary}

\newtheorem{rem}[dfn]{Remark}
\newtheorem{prop}[dfn]{Proposition}\makeatletter

         \@addtoreset{equation}{section}
\makeatother

\newcommand{\Es}{{\rm Es}}
\newcommand{\LE}{{\rm LE}}
\usepackage{color,bm}

\newcommand{\old}[1]{{}} 

\newcommand{\cC}{{\cal B}}
\newcommand{\bfeta}{{\bm \eta}}
\newcommand{\bfzeta}{{\bm \zeta}}
\newcommand{\bfetag}{{\bm \eta}^{\rm g}}
\newcommand{\bfiota}{{\bm \iota}}
\newcommand{\bfup}{{\bm \upsilon}}
\newcommand{\bfgam}{{\bm \gamma}}
\newcommand{\bfP}{{\bf P}}
\newcommand{\bfQ}{{\bf Q}}
\newcommand{\bfM}{{\bf M}}
\newcommand{\bfN}{{\bf N}}
\newcommand{\bbfP}{{\overline{\bf P}}}
\newcommand{\bbfQ}{{\overline{\bf Q}}}
\newcommand{\bbfM}{{\overline{\bf M}}}

\newcommand{\ovg}{\overline{\gamma}}
\newcommand{\wtg}{\widetilde{\gamma}}

\newcommand{\thistheoremname}{}
\newtheorem{genericthm}[dfn]{\thistheoremname}

\begin{document}
\title{\bf {\large One-point function estimates for loop-erased random walk in three dimensions}}
\author{Xinyi Li and Daisuke Shiraishi}
\date{}

\maketitle
\begin{abstract}
In this work, we consider loop-erased random walk (LERW) in three dimensions and  give an asymptotic estimate on the one-point function for LERW and the non-intersection probability of LERW and simple random walk in three dimensions for dyadic scales. These estimates will be crucial to the characterization of the convergence of LERW to its scaling limit in natural parametrization. As a step in the proof, we also obtain a coupling of two pairs of LERW and SRW  with different starting points conditioned to avoid each other.  \end{abstract}

\section{Introduction}\label{sec:intro}
\subsection{Introduction and main results}
Loop-erased random walk (LERW) is a random simple path obtained by erasing all loops chronologically from a simple random walk path, which was originally introduced by Greg Lawler (\cite{Loop}). Since his introduction of LERW, it has been studied extensively both in mathematics and physics literature. In two dimensions, it is proved that it has a conformally invariant scaling limit, which is charaterized by Schramm-Loewner evolution (SLE) (see \cite{Sch} and \cite{LSW}). LERW also has a strong connection with other models in statistical physics, e.g. the uniform spanning tree (UST) which arises in statistical physics in conjunction with the Potts model (see \cite{Pem} and \cite{Wil} for the relation between LERW and UST). In this paper, we consider the one-point function for LERW in three dimensions, i.e., we study the probability that LERW in $\mathbb{Z}^{3}$ hits a given point and obtain an asymptotic bound with error estimate for dyadic scales.

LERW in $\mathbb{Z}^{d}$ enjoys a Gaussian behavior if $d$ is large. In fact, it is known that the scaling limit of LERW is Brownian motion (see Theorem 7.7.6 of \cite{Lawb}) for $d\geq 4$. Furthermore, the probability of LERW hitting a given point $x \in \mathbb{Z}^{d}$ (we write $p_{d}^{x}$ for this hitting probability) is of order $|x|^{2-d}$ for $d \ge 5$ and  $|x|^{-2} ( \log |x| )^{-{1}/{3} }$ for $d = 4$  assuming that LERW starts from the origin (see Section 11.5 of \cite{LawLim} for $d \ge 5$ and \cite{Law4} for $d = 4$).

On the other hand, if $d$ is small, the situation changes dramatically. In two dimensions, LERW converges to $\text{SLE}_{2}$ when the lattice spacing tends to 0 (see \cite{Sch} and \cite{LSW}). Furthermore, it is established by Rick Kenyon (\cite{Ken}) that $p_{2}^{x} \approx |x|^{-{3}/{4}}$ (the notation $\approx$ means that the logarithm of both sides are asymptotic as $|x| \to \infty$, see also \cite{Mas} for estimates on $p_{x}^{2}$). Recently, using SLE techniques, it is proved in \cite{BLV} that $p_{2}^{x} \sim c   |x|^{-{3}/{4}}$ for some constant $c$ where the notation $\sim$ means that the both sides are asymptotic. 

In contrast to other dimensions, relatively little is known for LERW in three dimensions. One crucial reason for this is that we have no nice tool like SLE to describe the LERW scaling limit (the existence of the scaling limit is proved in \cite{Koz} though). In \cite{Lawbound}, it is shown that 
\begin{equation}
c |x|^{-2 + \epsilon} \le p^{x}_{3} \le C |x|^{- \frac{4}{3}}
\end{equation}
for some $c, C, \epsilon > 0$. The existence of the critical exponent for $p^{x}_{3}$ is established in \cite{S}. Namely, it is proved that 
\begin{equation}\label{mukasi}
\mbox{there exists $\alpha \in [\frac{1}{3}, 1)$ such that }p^{x}_{3} \approx |x|^{-1-\alpha}.
\end{equation}
This allows us to show that the dimension of LERW or its scaling limit is equal to $2- \alpha$ (see \cite{S} and \cite{S2}). Numerical experiments and field-theoretical prediction suggest that $2 - \alpha = 1.62 \pm 0.01$ (see \cite{Gut}, \cite{Wil2} and \cite{WF}).

The main purpose of this paper is to improve \eqref{mukasi}. To state the main results precisely, let us introduce some notation here. Take a point $x \neq 0$ from $\mathbb{D} = \{ x \in \mathbb{R}^{3} \ | \ |x| <1 \}$ a unit open ball in $\mathbb{R}^{3}$ centered at the origin. We consider the simple random walk $S$ on $\mathbb{Z}^{3}$ started at the origin and write $T$ for the first time that $S$ exits from a ball of radius $2^{n}$ centered at $0$. Let $x_{n}$ be the one of the nearest point from $2^{n} x$ among $\mathbb{Z}^{3}$. Finally, we set 
\begin{equation}
a_{n,x} = P \Big( x_{n} \in \text{LE} (S[0, T] ) \Big)
\end{equation}
for the probability that LERW hits $x_{n}$ where $\text{LE} (\lambda )$ stands for the loop-erasure of a path $\lambda$ (see Section \ref{sec:2.2} for its precise definition). 

Now we can state the main theorem of this paper.

\begin{thm}\label{ONE.POINT}
There exist universal constants $c > 0$, $\delta > 0$ and a constant $c_{x} > 0$ depending only on $x  \in \mathbb{D} \setminus \{ 0 \}$ such that for all $n\in\mathbb{Z}^+$ and $x  \in \mathbb{D} \setminus \{ 0 \}$, 
\begin{equation}\label{one-goal}
a_{n, x} = c_{x} 2^{-(1 + \alpha ) n } \Big\{  1 + O \Big( d_{x}^{-c} 2^{- \delta  n} \Big) \Big\}  \ (\text{as } n \to \infty),
\end{equation}
where $d_{x} = \min \{ |x| , 1 - |x| \}$ and $\alpha$ is the exponent in \eqref{mukasi}. Moreover, the constant $c_{x}$ satisfies
\begin{align}
& a_{1}  |x|^{-1-\alpha } \le c_{x} \le a_{2}  |x|^{-1-\alpha } \  \ \ \ \  \   \ \ \ \ \  \ \  \Big(\text{if } 0 < |x| \le  \frac{1}{2} \Big)\\
& a_{1} (1-|x|)^{1-\alpha } \le c_{x} \le a_{2} (1-|x|)^{1-\alpha } \ \  \Big(\text{if }  \frac{1}{2} \le |x| < 1 \Big),
\end{align}
where $a_{1}, a_{2} > 0$ are universal constants.
\end{thm}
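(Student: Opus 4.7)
My plan is to establish \eqref{one-goal} by showing that the rescaled sequence
\[ \rho_n := 2^{(1+\alpha)n}\, a_{n,x} \]
is Cauchy in $n$ with geometric rate, in the relative sense
$\rho_n = \rho_{n-1}\bigl(1+O(d_x^{-c}2^{-\delta n})\bigr)$.
Summing this geometric estimate defines $c_x := \lim_n \rho_n$ and simultaneously yields the error bound in \eqref{one-goal}. The two-sided bounds on $c_x$ in terms of $|x|$ and $1-|x|$ can then be read off from the polynomial bound \eqref{mukasi} together with this convergence and a standard Harnack/Beurling comparison near the boundary of the ball.

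The first step would be a path decomposition of $a_{n,x}$ at the visit to $x_n$. The classical representation for loop-erased walks expresses $\{x_n \in \text{LE}(S[0,T])\}$ as a sum over pairs: a simple random walk from $0$ to $x_n$ whose loop-erasure passes through $x_n$, together with a post-$x_n$ walk that exits $B(0,2^n)$ while avoiding the already loop-erased prefix. This factorizes $a_{n,x}$ (up to loop corrections) into a harmonic-measure-like contribution and an escape contribution. Each factor carries a natural scaling, and together they are responsible for the $2^{-(1+\alpha)n}$ leading order; the content of the theorem is that the multiplicative corrections converge geometrically fast as $n \to \infty$.

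The technical heart is the coupling of LERW--SRW pairs announced in the abstract. Given two nearby starting configurations at a mesoscopic scale, the goal is to couple two pairs (LERW plus simple walk, conditioned on mutual avoidance) so that they coincide after having traveled a macroscopic distance, with failure probability $O(2^{-\delta n})$. Equipped with such a coupling, one can substitute the escape contribution at scale $2^n$ by the corresponding quantity at scale $2^{n-1}$ up to a geometric error, giving a one-step comparison
\[ \rho_n \;=\; \rho_{n-1}\, \bigl(1 + O(d_x^{-c}\, 2^{-\delta n})\bigr), \]
which, after iteration, produces the desired Cauchy estimate.

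The main obstacle is the coupling itself, because loop-erasure is highly non-local: a successful coupling of two simple random walks does not a priori yield a coupling of their loop-erasures. To overcome this one needs (i) a separation lemma for LERW--SRW pairs ensuring that path tips spread apart at macroscopic scale, (ii) the quasi-loop estimates from \cite{S} controlling how mesoscopic loops perturb the loop-erased trajectory, and (iii) quantitative non-intersection estimates between LERW and SRW with explicit dependence on the starting configuration. Making these estimates uniform in the starting geometry is what produces the $d_x^{-c}$ prefactor in the error term, and tracking these dependencies carefully is what will occupy the bulk of the subsequent work.
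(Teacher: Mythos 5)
Your outline correctly identifies the main ingredients (Green's function / escape probability decomposition via time reversal, the coupling for LERW--SRW pairs with shifted starting points, the separation lemma, the $d_x^{-c}$ factor from uniformity in the starting geometry), and the factorization of $a_{n,x}$ into a harmonic-measure piece and a non-intersection piece is essentially what the paper does via its identity $a_{n,x} = G_{\cC_n}(0,x_n)\, P(\LE(X[0,\tau_0])\cap Y^\circ=\emptyset)$ and reversibility. However, the stated one-step comparison is off by one level of iteration, and this is where the argument would stall if carried out literally.

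You propose to show directly that $\rho_n := 2^{(1+\alpha)n}a_{n,x}$ satisfies $\rho_n = \rho_{n-1}\bigl(1 + O(d_x^{-c}2^{-\delta n})\bigr)$. Unwinding, this is the assertion $a_{n,x}/a_{n-1,x} = 2^{-(1+\alpha)}\bigl(1+O(2^{-\delta n})\bigr)$, which already presupposes that the exact exponent $1+\alpha$ governs a single dyadic step. Neither the coupling nor the multiscale comparison coming from Kozma's result can deliver this: those tools compare two nearby configurations (or the same configuration at two consecutive scales) to each other, not to a fixed a priori constant. In the paper the quantity that is actually controlled in one step is the \emph{ratio of ratios}: writing $g_{n,x} := f_{n,x}/f_{n-1,x}$ (where $f_{n,x}$ is the non-intersection probability that replaces the escape contribution), the coupling and the Kozma comparison yield $g_{n,x} = g_{n-1,x}\bigl(1+O(d_x^{-c}2^{-\delta n})\bigr)$. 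This is a second-order, purely relative statement and contains no reference to $\alpha$. Only after summing this geometric estimate to conclude that $g_{n,x}$ converges to some limit $r_x$, and then invoking the up-to-constants bounds $f_{n,x}\asymp 2^{-(1+\alpha)n}$ (which come from Corollary~\ref{COR} and the earlier work of Shiraishi), can one identify $r_x = 2^{-(1+\alpha)}$; your claim $\rho_n=\rho_{n-1}(1+O(\cdot))$ then emerges as a \emph{consequence}, not as the inductive step. You would need to reorganize the argument so that the first derived estimate is the comparison of consecutive ratios, and the identification of the exponent comes as a final bootstrap from the known order-of-magnitude asymptotics. With that correction the outline matches the structure of the proof; the remaining technical pieces you list (quasi-loop control, separation lemma, uniform non-intersection estimates) are indeed what is needed to make the coupling and the cross-scale comparison quantitative.
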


In order to prove Theorem \ref{ONE.POINT}, it turns out that we need to estimate the following non-intersection probability of simple random walk and LERW. Let $S^{1}$ and $S^{2}$ be independent simple random walks on $\mathbb{Z}^{3}$ started at the origin. We write $T^{i}_{n}$ for the first time that $S^{i}$ exits from a ball of radius $n$. We are interested in 
\begin{equation}\label{esn}
\text{Es} (n) := P \Big( \text{LE} ( S^{1} [0, T^{1}_{n} ] ) \cap S^{2} [1, T^{2}_{n} ] = \emptyset \Big)
\end{equation}
the probability that LERW $\text{LE} ( S^{1} [0, T^{1}_{n} ] )$ and simple random walk $S^{2} [1, T^{2}_{n} ] $ do not intersect (we denote this non-intersection event by $A^{n}$).  In this paper, we will show the following theorem.

\begin{thm}\label{ESCAPE.1}
There exist $c > 0$ and $\delta > 0$ such that for all $n\in\mathbb{Z}^+$,
\begin{equation}\label{GOAL.1}
\Es (2^{n} )= c 2^{- \alpha n } \big( 1 + O(2^{-\delta n} ) \big),
\end{equation}
where $\alpha$ is the exponent in \eqref{mukasi}. 
\end{thm}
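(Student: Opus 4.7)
The plan is to establish the sharp asymptotic by showing that the ratios $b_n := \Es(2^n)/\Es(2^{n-1})$ converge to $2^{-\alpha}$ exponentially fast. Once an estimate of the form $|b_{n+1} - b_n| \le C\rho^n$ with some $\rho \in (0,1)$ is proved, the telescoping product $\Es(2^n) = \Es(1) \prod_{k=1}^n b_k$ yields $\Es(2^n) = c \cdot 2^{-\alpha n}\bigl(1 + O(2^{-\delta n})\bigr)$ for any $\delta < -\log_2 \rho$; the identification $\lim b_n = 2^{-\alpha}$ is forced by \eqref{mukasi}, so only the quantitative convergence of the ratios needs to be shown.

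To analyse $b_n$ I would use a domain-Markov decomposition of the non-intersection event. Condition on the initial segments $\LE(S^1[0,T^1_{2^{n-1}}])$ and $S^2[0,T^2_{2^{n-1}}]$ under the event $A^{2^{n-1}}$ and note that, by the domain-Markov property of LERW, the full loop-erasure inside the ball of radius $2^n$ extends the initial loop-erasure by an SRW continuation conditioned to avoid the already erased path. The multiplicative factor picked up between scales $2^{n-1}$ and $2^n$ is therefore an annulus non-intersection probability, depending only on ``boundary data'' at scale $2^{n-1}$: the exit positions and the shape of each path in a neighbourhood of the inner boundary. In particular, $b_n$ is a weighted average of this annulus continuation probability, where the weights are the conditional law of the boundary data given $A^{2^{n-1}}$.

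The key technical input is the coupling of LERW/SRW pairs announced in the abstract. Given two realisations of the boundary data at scale $2^{n-1}$ — arising, say, from the $\Es$-conditioned laws at two different scales — I would construct a coupling in which, after running both continuations through $k$ further dyadic annuli and conditioning on sustained non-intersection, the boundary data at scale $2^{n-1+k}$ coincide with probability at least $1 - C\rho^k$. The construction proceeds inductively over annuli, using separation-of-arms estimates to guarantee that with uniformly positive probability each pair enters a ``nice'' configuration from which the two copies can be glued; iterating across $k$ annuli produces exponential decay of total variation. Plugging this into the weighted-average representation of $b_n$ from the previous step gives $|b_{n+1} - b_n| \le C\rho^n$, which is what is needed.

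I expect the coupling step to be the main obstacle. Conditioning on non-intersection imposes long-range correlations on both the underlying SRW and on the erased loop-soup trace, so a naive coupling of the raw random walks will not work; the coupling has to be built directly under the conditioned measure, and the gluing must account for how the already-formed loop-erasure influences every future SRW excursion. Producing this control uniformly in the boundary configuration, with a genuinely geometric rate, is the technical heart of the argument, and once it is in place the remainder of the proof is a fairly routine extraction of the asymptotic from the telescoping product.
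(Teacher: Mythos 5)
Your high-level plan — reduce \eqref{GOAL.1} to showing $b_n := \Es(2^n)/\Es(2^{n-1})$ converges geometrically, decompose via the domain-Markov property, and rely on a coupling of conditioned LERW/SRW pairs to control the conditional law of the ``boundary data'' — is indeed the broad shape of the argument. However, there are two genuine gaps that, as written, would defeat the proposal.

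First, your decomposition at scale $2^{n-1}$ does not actually give you room to extract an exponentially small error. Conditioning on $\LE(S^1[0,T^1_{2^{n-1}}])$ is not the same as conditioning on the initial segment of $\LE(S^1[0,T^1_{2^n}])$ truncated at $\partial B(2^{n-1})$: the loop-erasure of a walk stopped at a smaller radius is a genuinely different object from the initial piece of the loop-erasure of a longer walk, and the two laws differ by constants at that scale. The paper must invoke a quantitative comparison (Masson's Corollary 4.5) to relate them, and that comparison only yields a small error $O(2^{-qn})$ when the decomposition scale is pushed down to a mesoscopic $2^{(1-q)n}$ — giving $O(qn)$ dyadic annuli in which the coupling can act. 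Cutting at $2^{n-1}$ leaves a single annulus, so even a perfect coupling produces at most a constant-order error, not $\rho^n$. Choosing the intermediate cut scale carefully is not cosmetic; it is what makes the rates geometric.

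Second, and more seriously, the step ``plugging this into the weighted-average representation of $b_n$ gives $|b_{n+1}-b_n|\le C\rho^n$'' does not follow from the coupling alone. The coupling controls the total variation distance between conditional laws of boundary data at a single scale under varying initial configurations. To compare $b_n$ with $b_{n+1}$ you must also relate the \emph{annulus continuation probability} at the pair of scales $(2^{n-1},2^n)$ to the one at $(2^n,2^{n+1})$; these are different lattice functionals on different state spaces, and the coupling says nothing about their closeness. The paper bridges this via a dedicated scale-comparison step (Proposition 3.6) built on Kozma's Theorem 5: the SRW is replaced by a coupled Brownian sausage, the LERW non-intersection probabilities on $\mathbb{Z}^3$ and $2\mathbb{Z}^3$ are compared directly, and this requires the starting points of the LERW and the SRW to be \emph{separated} to two poles of a mesoscopic ball $B(2^{(1-5q)n})$, precisely so the Kozma comparison controls the error uniformly. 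This separation-of-starting-points manoeuvre — which is what the coupling is actually \emph{for} — is absent from your outline. Without it, the comparison across scales is just as hard as the original problem, and the telescoping-product ``remainder'' you call routine is in fact the core technical obstacle, together with the coupling.

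So: the coupling idea and the reduction to $b_n$ are on target, but you have mislocated the difficulty. You need a mesoscopic cut so that the coupling has many annuli and the domain-Markov mismatch is small, you need the coupling explicitly to move the starting points apart, and you need an additional, independent across-scales comparison (via Wiener sausages and Kozma's multiscale theorem) that the coupling does not supply.
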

This theorem immediately implies a lot of up-to-constants estimates for LERW. We summarize them in the following corollary but postpone its proof till the end of Section \ref{sec:3.2}. Write 
\begin{equation}
M_{n} = \text{len} \big( \text{LE} ( S^{1} [0, T^{1}_{n} ] ) \big)
\end{equation}
for the number of lattice steps for LERW  and let $\Es(\cdot,\cdot)$ be another escape probability defined in \eqref{escape-1-3}.
\begin{cor}\label{COR}
It follows that for $ n\geq m\geq 1$, 
\begin{align}
\Es (n) \asymp n^{-\alpha };\quad \Es (m, n) \asymp \big( \frac{m}{n} \big)^{\alpha};\quad \label{esaymp} E (M_{n} ) \asymp n^{2-\alpha };\quad \frac{M_{n}}{n^{2-\alpha}} \textrm{ is tight}.
\end{align}
\end{cor}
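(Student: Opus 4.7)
My plan is to bootstrap Theorem \ref{ESCAPE.1} together with the quasi-multiplicativity of $\Es$ (the machinery behind the notation $\Es(m,n)$ introduced at \eqref{escape-1-3}) to obtain the four estimates in order. The organizing principle is that Theorem \ref{ESCAPE.1} supplies the dyadic-scale input, after which the remaining work is dyadic interpolation, a standard one-point function identity, and elementary moment estimates.

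For the first estimate $\Es(n) \asymp n^{-\alpha}$: Theorem \ref{ESCAPE.1} gives $\Es(2^k) \asymp 2^{-\alpha k}$ directly, absorbing the $O(2^{-\delta k})$ correction into constants. For arbitrary $n$, set $k = \lfloor \log_{2} n \rfloor$ and apply the quasi-multiplicativity identity $\Es(n) \asymp \Es(2^k) \cdot \Es(2^k, n)$ together with $\Es(2^k, n) \asymp 1$ (since the scales $2^k$ and $n$ differ by a bounded factor) to conclude. The second estimate $\Es(m,n) \asymp (m/n)^{\alpha}$ is then immediate: quasi-multiplicativity $\Es(n) \asymp \Es(m) \cdot \Es(m,n)$ combined with the first estimate yields $\Es(m,n) \asymp n^{-\alpha}/m^{-\alpha} = (m/n)^{\alpha}$.

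For $E(M_n) \asymp n^{2-\alpha}$, I would expand
\begin{equation*}
E(M_n) = \sum_{y \in B(0, n) \cap \mathbb{Z}^{3}} P\big( y \in \LE(S^1[0, T^1_n]) \big),
\end{equation*}
and invoke the standard LERW hitting identity $P(y \in \LE(S^1[0, T^1_n])) \asymp G_{B(0,n)}(0, y) \cdot \Es(|y|)$ valid for $y$ in the bulk of the ball. Combined with the three-dimensional Green function estimate $G_{B(0,n)}(0, y) \asymp |y|^{-1}$ for $|y| \leq n/2$ and the first estimate $\Es(|y|) \asymp |y|^{-\alpha}$, this gives $P(y \in \LE) \asymp |y|^{-1-\alpha}$. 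Summing over dyadic annuli,
\begin{equation*}
\sum_{|y| \leq n/2} |y|^{-1-\alpha} \asymp \sum_{k=0}^{\lfloor \log_{2} n\rfloor} 2^{3k} \cdot 2^{-k(1+\alpha)} \asymp n^{2-\alpha},
\end{equation*}
while the contribution from the boundary annulus $n/2 < |y| \leq n$ is handled at the same order using the $(1-|x|)^{1-\alpha}$ scaling in Theorem \ref{ONE.POINT}. Finally, tightness of $M_n/n^{2-\alpha}$ follows immediately from Markov: $P(M_n/n^{2-\alpha} > K) \leq E(M_n)/(K n^{2-\alpha}) \leq C/K$, uniformly in $n$.

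The main technical obstacle is the one-point function identity used in the third estimate. Theorem \ref{ONE.POINT} is stated at dyadic scales for points of the form $x_{n}$, so extending the up-to-constants estimate $P(y \in \LE) \asymp |y|^{-1-\alpha}$ uniformly to all $y$ in $B(0,n)$ and all $n$ (not only $n = 2^k$) requires the same kind of quasi-multiplicative dyadic interpolation used for $\Es(n)$, together with careful handling of the bulk-versus-boundary dichotomy via the two regimes of $c_x$ in Theorem \ref{ONE.POINT}. The remaining steps are essentially bookkeeping once this identity is in hand.
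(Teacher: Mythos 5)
Your outline matches the paper's overall strategy: bootstrap from Theorem \ref{ESCAPE.1} and transfer to all scales via quasi-multiplicativity. For the first two assertions, your argument is essentially identical to the paper's, which cites Propositions 6.2.1, 6.2.2, and 6.2.4 of \cite{S} for the relation $\Es(m)\Es(m,n)\asymp\Es(n)$ and then plugs in \eqref{GOAL.1}.

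For $E(M_n)\asymp n^{2-\alpha}$, the paper does not re-derive the one-point function sum: it simply cites Theorem 8.1.4 and Proposition 8.1.5 of \cite{S}, which give $E(M_n)\asymp n^2\Es(n)$, and then applies the first estimate. Your attempt to re-derive this via $E(M_n)=\sum_{y}P(y\in\LE)$ together with $P(y\in\LE)\asymp G_{B(0,n)}(0,y)\Es(|y|)$ is morally what \cite{S} proves, so it is not incorrect in spirit. However, you invoke Theorem \ref{ONE.POINT} to handle the boundary annulus $n/2<|y|\le n$, and this creates a circularity: in this paper, Theorem \ref{ONE.POINT} is proved in Section \ref{sec:length} and its proof explicitly uses Corollary \ref{COR} as an input (see the start of the proof of Theorem \ref{ONE.POINT}, where \eqref{eq33}--\eqref{eq34} are derived ``together with Corollary \ref{COR}''). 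The dependence is easily broken --- the boundary annulus only needs a crude upper bound of order $n^{2-\alpha}$, and the matching lower bound already comes from the interior dyadic annuli --- but as written, your argument uses a result that logically comes later.

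For tightness, your Markov-inequality argument is valid and in fact simpler than the paper's: since $M_n/n^{2-\alpha}\ge 0$ and $E(M_n/n^{2-\alpha})\le C$ uniformly, we get $P(M_n/n^{2-\alpha}>K)\le C/K$, which is precisely tightness. The paper instead cites the exponential tail bounds of Theorems 8.1.6 and 8.2.6 of \cite{S}, presumably because those bounds are what \cite{S} naturally provides and they yield more than the statement requires; your observation that only the first moment is needed is a genuine simplification.
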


Before finishing this subsection, it may be worth mentioning one of the motivations of this work. It turns out that our results in this work help us characterize how 3D LERW converges to its scaling limit. In particular, it is a key ingredient for giving a natural time-parametrization of the scaling limit of 3D LERW. Some progress towards in this direction will be made in \cite{Natural}.

\subsection{Some words about the proof}\label{sec:words}
Let us here explain a sketch of proofs for the main results. We recall that $S$ is the simple random walk on $\mathbb{Z}^{3}$ started at the origin and that $T$ stands for the first time that $S$ exits from $B(n)$ a ball of radius $n$. We write $\gamma = \text{LE} (S[0, T] )$ for the loop-erasure of the simple random walk path. Take a point $x \in B(n)$. In order for $\gamma$ to hit the point $x$, the following two conditions are required: 

\vspace{2mm}

(i) $x \in S[0, T]$;     \ \ \ \ (ii)  $\text{LE} \big( S[0, \sigma_{x} ] \big) \cap S[\sigma_{x} + 1, T] = \emptyset$ 

\vspace{2mm}

\hspace{-5.5mm}where $\sigma_{x}$ stands for the last time (up to $T$) that $S$ hits $x$. Considering the time reversal of $\text{LE} \big( S[0, \sigma_{x} ] \big) $ and translating the path, we can relate the probability of the second condition (ii) to the non-intersection probability $\text{Es} (n)$ defined as in \eqref{esn}. In fact, it is known that the probability that $\gamma$ hits $x$ is comparable to $n^{-1} \text{Es} (n)$ if $x$ is not too close to the origin and the boundary of $B(n)$ (see \cite{S} for this). Thus, loosely speaking, the proof of Theorem \ref{ONE.POINT} boils down to that of Theorem \ref{ESCAPE.1}. 

We will now explain how to prove Theorem \ref{ESCAPE.1}. Since the existence of the scaling limit of LERW (we denote the scaling limit by ${\cal K}$) is already proved by Gady Kozma in \cite{Koz}, in order to estimate on $\Es (2^{n})$, it is natural to compare it with the non-intersection probability of ${\cal K}$ and a Brownian motion both started at the origin. However,  this approach unfortunately does not work without modification because there is still a non-negligible ``gap" between the simple random walk and Brownian motion as well as LERW and ${\cal K}$. The idea to deal with this issue is that somehow we separate the starting points of LERW and simple random walk wide enough so that the gap becomes negligible.

Let us here be more precise on how to rigorize the idea of separating starting points. Note that for clarity of presentation we may not use the same notation as Sections \ref{sec:noninter} through \ref{sec:length}.

\begin{itemize}
\item Let $b_{n} = \text{Es} (2^{n} )/ \text{Es} (2^{n-1} )$. It suffices to show that 
\begin{equation}\label{BOILBOIL}
b_{n} = c \Big( 1 + O \big( 2^{- \delta n} \big) \Big)
\end{equation}
for some $c > 0$ and $\delta > 0$. Note that it is proved in \cite{S} that $c_{1} \le b_{n} \le c_{2}$ for some constants $c_{1}, c_{2} > 0$.

\item For two sequences $\{ f_{n} \}$ and $\{ g_{n} \}$, we write $f_{n} \simeq g_{n}$ if $f_{n} = g_{n} \big\{ 1 + O \big( 2^{- \delta n} \big) \big\}$ for some constant $\delta > 0$.

\item Take $q \in (0, 1)$. For a path $\lambda$ and integer $k \ge 0$, we denote the first time that $\lambda$ exits from $B \big( 2^{(1-kq) n } \big) $ by $t_{k, q}$.  We set $t = t_{0,q}$.

\item We write $\gamma_{n} =\text{LE} ( S^{1} [0, T^{1}_{2^{n}} ] )$ and write $\lambda_{n} = S^{2} [1, T^{2}_{2^{n}} ]$ where $T^{i}_{m}$ stands for the first time that $S^{i}$ exits from $B(m)$. Notice that $\lambda_{n} (0) \neq 0$. Since it is proved in \cite{Mas} that the distribution of $\gamma_{n} [0, t_{1, q} ]$ is sufficiently close to that of $\gamma_{n-1} [0, t_{1, q} ]$, we have $P (F) \simeq P (F')$ and (note that $A_{n} \subset F$ and $A_{n-1} \subset F'$)
\begin{equation}\label{sk1}
b_{n}  \simeq \frac{P\big( A_{n} \ \big| \ F \big)}{P\big( A_{n-1} \ \big| \ F' \big)}
\end{equation}
where $A_{n}:=A^{2^n}$ is the event considered in $\text{Es} (2^{n})$ (see \eqref{esn}), $F$ is the event that $\gamma_{n} [0, t_{1, q} ]$ does not intersect with $\lambda_{n} [0, t_{1, q} ]$ and $F'$ is the event that $\gamma_{n-1} [0, t_{1, q} ]$ does not intersect with $\lambda_{n-1} [0, t_{1, q} ]$. See Lemma \ref{1st-lem} for more details.

\item To write $P\big( A_{n} \ \big| \ F \big)$ explicitly, we introduce the following function $g (\gamma, \lambda)$. Suppose that we have a pair of two paths $(\gamma, \lambda)$ such that they are lying in $B \big( 2^{(1-q)n} \big)$ (denote the set of such pairs by $\Gamma$). Let $X$ be a random walk started at the endpoint of $\gamma$ and conditioned that $X [1,t]$ does not intersect with $\gamma$. Also let $Y$ be the simple random walk started at the endpoint of $\lambda$. Then the function $g$ is defined by 
\begin{equation}\label{sk2}
g( \gamma , \lambda ) = P \Big( \big( \text{LE} ( X [0, t ] ) \cup \gamma \big) \cap \big( Y [0, t] \cup \lambda \big) = \emptyset \Big),
\end{equation}
for $(\gamma, \lambda ) \in \Gamma$. Note that the starting point of $\gamma$ does not necessarily coincide with that of $\lambda$ for $(\gamma, \lambda ) \in \Gamma$.

\item It follows from the domain Markov property of $\gamma_{n}$ and the strong Markov property of $\lambda_{n}$ that 
\begin{equation}\label{sk3}
P\big( A_{n} \ \big| \ F \big) = \sum_{(\gamma, \lambda ) \in \Gamma} g (\gamma , \lambda ) P \Big( \big( \gamma_{n} [0, t_{1, q} ] , \lambda_{n} [0, t_{1, q} ] \big) = (\gamma, \lambda ) \ \Big| \ F \Big).
\end{equation}
See Section \ref{sec:3.3} for rigorous arguments for this bullet and the last one.
\item The first key observation is that the function $g$ depends only on the end part of $(\gamma, \lambda)$ in the following sense. Take two elements $(\gamma, \lambda)$ and $(\gamma', \lambda')$ of $\Gamma$. Suppose that $\big( \gamma [0, t_{2, q} ], \lambda [0, t_{2, q} ]  \big)  = \big( \gamma' [0, t_{2, q} ], \lambda' [0, t_{2, q} ] \big) $. Then we have
\begin{equation}\label{sk4}
g( \gamma, \lambda ) \simeq g( \gamma', \lambda' ).
\end{equation}
Namely, $g$ depends only on $\big( \gamma [t_{2, q}, t_{1,q} ],  \lambda [t_{2, q}, t_{1,q} ] \big)$ the end part of  $(\gamma, \lambda)$. See Prop.\ \ref{prop1} for more details.

\item Why does \eqref{sk4} hold? To see this, assume that the non-intersection event considered in \eqref{sk2} occurs. This event forces $X$ and $Y$ not to return to an inner ball $B \big( 2^{(1-2q)n} \big)$ with high probability. Therefore, the initial part of $(\gamma, \lambda)$ is not important for computing $g (\gamma, \lambda)$.

\item This observation allows us to write 
\begin{equation}\label{sk5}
P\big( A_{n} \ \big| \ F \big) \simeq  E \Big\{ g \big( \gamma_{n} [t_{2, q}, t_{1, q} ] , \lambda_{n} [t_{2,q}, t_{1, q} ] \big)  \ \Big| \ F \Big\}.
\end{equation}
Namely, in order to deal with $P\big( A_{n} \ \big| \ F \big)$, we only have to control the (conditional) distribution of  the end part $\big( \gamma_{n} [t_{2, q}, t_{1, q} ] , \lambda_{n} [t_{2,q}, t_{1, q} ] \big)$ conditioned on the event $F$. With this in mind, for $(\gamma, \lambda) \in \Gamma$, let 
\begin{equation}\label{sk6}
\mu ( \gamma, \lambda ) = P \Big( \big( \gamma_{n} [t_{2, q}, t_{1, q} ] , \lambda_{n} [t_{2,q}, t_{1, q} ] \big) = (\gamma, \lambda ) \ \Big| \ F \Big) 
\end{equation}
be the probability measure on $\Gamma$ induced by $\big( \gamma_{n} [t_{2, q}, t_{1, q} ] , \lambda_{n} [t_{2,q}, t_{1, q} ] \big)$ conditioned on the event $F$.

\item Let $v = (2^{(1-3 q)n }, 0, 0 )$ and $w = -v$ be two poles of  $ B \big( 2^{(1-3 q)n} \big)$. To change the starting points of $S^{1}$ and $S^{2}$, we write $\overline{S}^{1}$ and $\overline{S}^{2}$ for the simple random walks started at $v$ and $w$. Let $\overline{T}^{i}_{r}$ be the first time that $\overline{S}^{i}$ exits from $B (r)$. We write $\overline{\gamma}_{n} = \text{LE} (\overline{S}^{1} [0, \overline{T}^{1}_{2^{n}} ])$, $\overline{\gamma}_{n-1} = \text{LE} (\overline{S}^{1} [0, \overline{T}^{1}_{2^{n-1}} ])$, $\overline{\lambda}_{n} = \overline{S}^{2} [0, \overline{T}^{2}_{2^{n}} ]$ and  $\overline{\lambda}_{n-1} = \overline{S}^{2} [0, \overline{T}^{2}_{2^{n-1}} ]$.

\item The events $\overline{A}_{n}$, $\overline{A}_{n-1}$, $\overline{F}$ and $\overline{F}'$ are defined by replacing $\gamma_{n}$, $\gamma_{n-1}$, $\lambda_{n}$ and $\lambda_{n-1}$ by $\overline{\gamma}_{n} $,  $\overline{\gamma}_{n-1} $,  $\overline{\lambda}_{n} $ and  $\overline{\lambda}_{n-1}$ respectively in the definition of $A_{n}$, $A_{n-1}$, $F$ and $F'$ defined in the fourth item. For example, $\overline{A}_{n}$ is the event that $\overline{\gamma}_{n}$ does not intersect with $\overline{\lambda}_{n}$, and $\overline{F}'$ is the event that $\overline{\gamma}_{n-1} [0, t_{1,q}] $ does not intersect with $\overline{\lambda}_{n-1} [0, t_{1,q}]$, etc. Then, by the same reason for the equation \eqref{sk1}, we have $P (\overline{F}  ) \simeq P ( \overline{F}' )$ and
\begin{equation}\label{sk7}
\frac{P (\overline{A}_{n}) }{ P (\overline{A}_{n-1} )} \simeq \frac{ P \big( \overline{A}_{n} \ \big| \ \overline{F} \big) }{P \big( \overline{A}_{n-1} \ \big| \ \overline{F}' \big)},
\end{equation}
since the distribution of $\overline{\gamma}_{n} [0, t_{1,q}]$ is close to that of $\overline{\gamma}_{n-1} [0, t_{1,q}]$ by \cite{Mas}.

\item The same ideas used to show \eqref{sk5} gives that  
\begin{equation}\label{sk8}
P \big( \overline{A}_{n} \ \big| \ \overline{F} \big) \simeq  E \Big\{ g \big( \overline{\gamma}_{n} [t_{2, q}, t_{1, q} ] , \overline{\lambda}_{n} [t_{2,q}, t_{1, q} ] \big)  \ \Big| \ \overline{F} \Big\}.
\end{equation}
As in the equation \eqref{sk6}, we define the probability measure $\nu$ by 
\begin{equation}\label{sk9}
\nu ( \gamma, \lambda ) = P \Big( \big( \overline{\gamma}_{n} [t_{2, q}, t_{1, q} ] , \overline{\lambda}_{n} [t_{2,q}, t_{1, q} ] \big) = (\gamma, \lambda ) \ \Big| \ \overline{F} \Big). 
\end{equation}

\item Here is the second key observation. It follows from some coupling technique that the total variation distance between $\mu$ and $\nu$ is small enough so that $P\big( A_{n} \ \big| \ F \big)  \simeq P \big( \overline{A}_{n} \ \big| \ \overline{F} \big)$. See bullets later in this subsection for a brief explanation why this coupling works. Rigourous arguments are wrapped up in Section \ref{sec:coupling}  with the help of a recent work \cite{Lawrecent}. This observation also gives that  $P\big( A_{n-1} \ \big| \ F' \big)  \simeq P \big( \overline{A}_{n-1} \ \big| \ \overline{F}' \big)$. Therefore, we have 
\begin{equation}\label{sk10}
b_{n} \simeq \frac{P (\overline{A}_{n}) }{ P (\overline{A}_{n-1} )}.
\end{equation}
Namely, the original starting points ($=$ the origin) are replaced by the two different poles of $B \big( 2^{(1-3q)n} \big)$.

\item This replacement of the starting points can be carried out for $b_{n-1}$, which enables us to compare $b_{n}$ and $b_{n-1}$ via multiscale analysis established in \cite{Koz}. In fact, taking $q > 0$ sufficiently small, we see that $b_{n} = b_{n-1} \big\{ 1 + O \big ( 2^{-\delta n} \big) \big\}$ for some $\delta > 0$. This gives \eqref{BOILBOIL}.
\end{itemize}

\newpage
\begin{figure}
\begin{center}
\begin{tabular}{cc}
\includegraphics[scale=0.5]{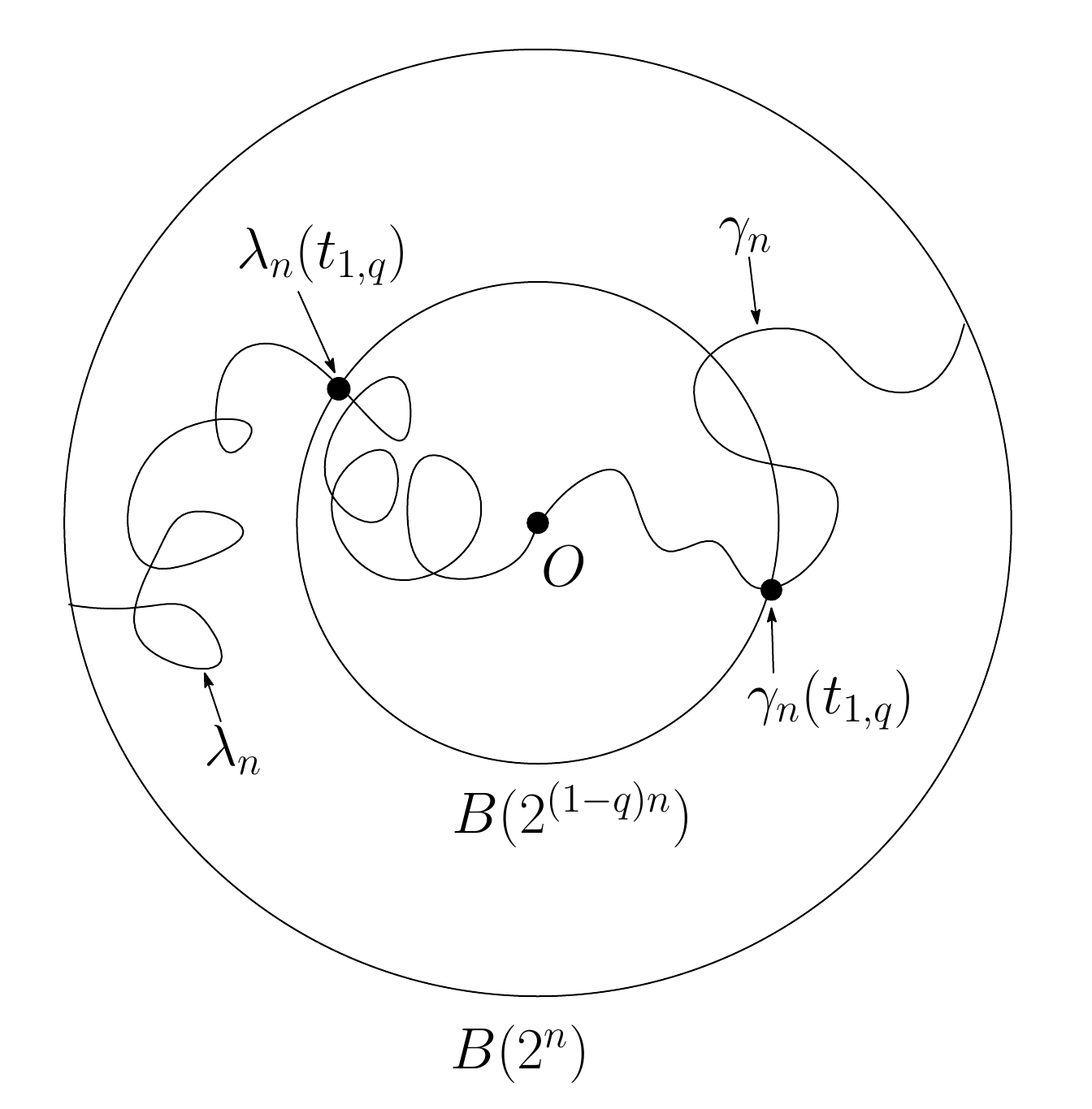} & \includegraphics[scale=0.46]{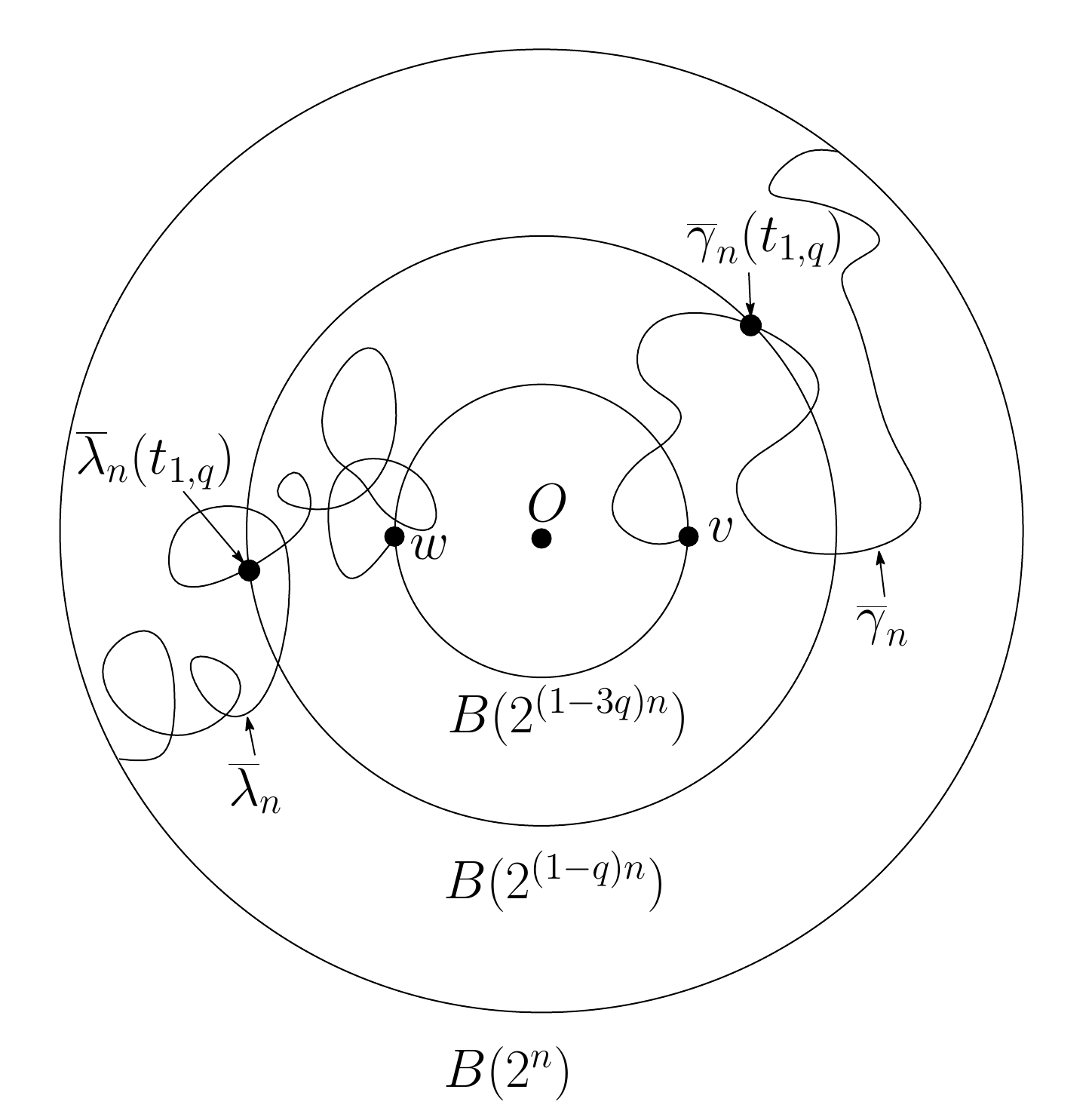}
\end{tabular}
\end{center}
\caption{Illustrations for events $F$ and $\overline{F}$.}
\end{figure}
Let us also give a brief explanation on the coupling in the second key observation above.
\begin{itemize}
\item  It is well known that LERW is not Markovian per se but can still be regarded as a Markov process if one records all of its history. In fact, consider an infinite LERW (abbreviated as ILERW later on) $\eta$ and write $\eta_n$ for $\eta[0,T_{2^n}]$, the part of the path stopped at first exiting $B(2^n)$, then one can construct a formal Markov process $(\eta_0,\eta_1,\eta_2,\ldots)$ with corresponding transition probability at each ``step''. Letting $\eta_{m,n}=\eta[T_{2^m},T_{2^n}]$ for $m\leq n$. It is also well known that LERW enjoys a weak asymptotic independence: the correlation of $\eta_k$ and $\eta_{m,n}$ decays like $O(2^{k-m})$ (or $\eta_{m,\infty}$ for ILERW). Hence, it is possible to couple two ILERW's $\eta^x$ and $\eta^y$ started from $x,y\in B(2^k)$, such that $\eta^x_{n+k,\infty}=\eta^y_{n+k,\infty}$ with probability $1-O(2^{-\beta n})$. Key observations that leads to this coupling are:
\begin{itemize}
\item[i)] 3D LERW rarely ``back-trackes'' very far. In fact, the actual configuration in $B(2^k)$ barely matters for the distribution of the path after reaching $\partial B(2^{k+m})$, if $m$ is large. Hence, it is possible to find an $m$ such that if two LERW's are coupled for $m$ steps, then the probability of getting decoupled ever is bounded by, say, $1/2$.

\item[ii)] At each step, there is a uniform positive probability for $\eta^x$ and $\eta^y$ to be coupled for $m$ steps from the next step. More precisely, for any realizations of $\eta_n^x$ and $\eta_n^y$, there exists $c>0$, such that with probability greater than $c$, $\eta_{n+1,n+m+1}^x=\eta_{n+1,n+m+1}^y$.
\item[iii)] The exponential convergence rate follows from a combination of i) and ii), by bundling every $(m+1)$ steps as a giant step.
\end{itemize} 

\item In the same spirit, it is possible to couple a pair of ILERW and SRW started from a pair of (not necessarily distinct) points inside $B(2^k)$ and conditioned to not intersect until first exit of $B(2^{2n+k})$, with another such pair, such that their paths agree from first exiting $B(2^{n+k})$ onward with probability $1-O(2^{-\beta n})$. In this case, observation i) above is still easily verifiable and observation ii) follows thanks to an auxiliary result generally known as the ``separation lemma''. For the form that satisfies our setup, see Theorem 6.1.5 of \cite{S} or Claim 3.4 of \cite{SS}.

\item As a prototype of such coupling, although with a slightly different setup, has already been proved by Greg Lawler in \cite{Lawrecent}, we will not reinvent the wheel here; instead, we are going to show that it is possible to obtain the coupling described above through ``tilting'' the coupling in \cite{Lawrecent}, as they are in fact intimately related. In \cite{Lawrecent}, Lawler considered the law of a pair of ILERW's $(\eta^1,\eta^2)$ both started from the origin and tilted its law by 
\begin{equation}\label{eq:tiltintro}
1_{\eta^1_n\cap \eta^2_n=\{0\} } \exp (-L_n(\eta^1_n,\eta^2_n)),
\end{equation}
where $L_n(\eta^1_n,\eta^2_n)$ stands for the loop term of loops in $B(2^n)$ that touch both $\eta^1_n$ and $\eta^2_n$. Then, it is shown that it is possible to couple $(\eta^1,\eta^2)$ with another pair of ILERW $(\overline{\eta}^1,\overline{\eta}^2)$ started from different initial configurations inside $B(2^k)$, and tilted similarly, such that $ \eta^i_{(n+k)/2,n}=\overline{\eta}^i_{(n+k)/2,n}$, $i=1,2$ with probability greater than $1-O(2^{-\beta (n-k)})$ for some $\beta>0$. 
\item In fact, if one decompose the SRW in our setup as LERW and loops from an independent loop soup, then the conditioning that ILERW and SRW do not intersect can be interpreted as an ILERW and a LERW do not intersect plus loops that touches both paths do not appear, which is in a way very similar to the tilting of \eqref{eq:tiltintro}, despite a few stitches in the definition for it is not trivial to deal with the replacement of ILERW by LERW. In Section \ref{sec:coupling}, we are going to deal with this issue and then add back loops to obtain the coupling we need.
\end{itemize}
\begin{figure}[htb]
\begin{center}
\includegraphics[scale=0.4]{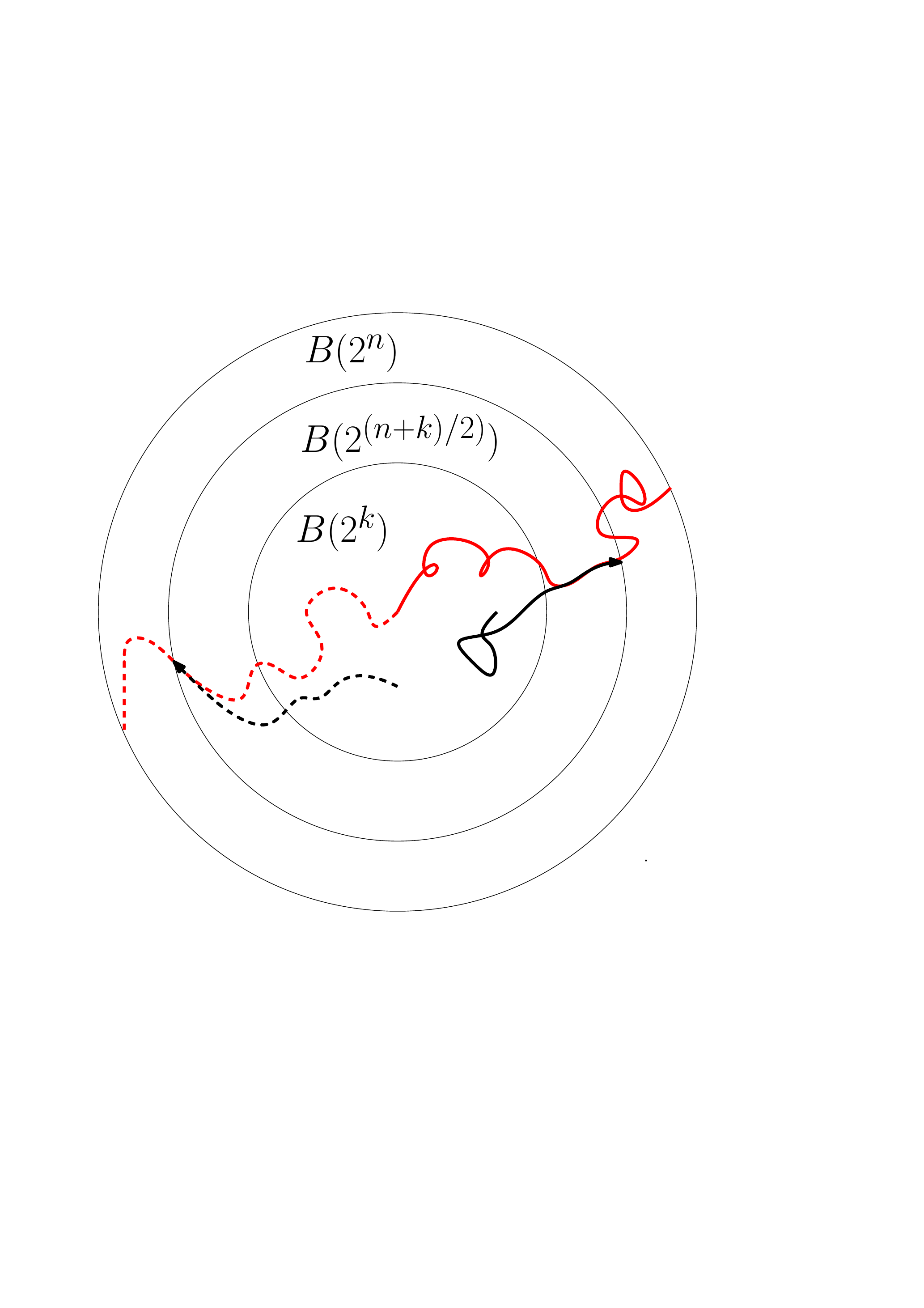}
\caption{A schematic sketch of the coupling. Dashed and continuous curves represent LERW's and SRW's respectively. Although it is impossible to couple the beginning parts of walks, we can find a coupling such that both LERW's and SRW's agree after first exiting of $B(2^{(k+n)/2})$ with high probability.}
\end{center}
\end{figure}
Finally, let us explain the structure of this paper. In Section \ref{sec:notations}, we introduce notations and discuss some basic properties of LERW. We prove Theorem \ref{ESCAPE.1} and Corollary \ref{COR} in Section \ref{sec:noninter}, assuming coupling results from the next section. Section \ref{sec:coupling} is dedicated to the discussion and proof of various couplings of two pairs of LERW and SRW conditioned to avoid each other, crucial to the proof of both main theorems. Finally, we give a proof of Theorem \ref{ONE.POINT} in Section \ref{sec:length}. As it resembles a lot the proof of Theorem \ref{ESCAPE.1}, we will be less pedagogical in the presentation.

\medskip

\noindent {\bf Acknowledgements:} The authors are grateful to Greg Lawler for numerous helpful and inspiring discussions. XL wishes to thank Kyoto University for its warm hospitality during his visit, when part of this work was conceived. The authors would also thank the generous support from the National Science Foundation via the grant DMS-1806979 for the conference ``Random Conformal Geometry and Related Fields'', where part of this work was accomplished. 

\section{Notations, conventions and a short introduction to LERW}\label{sec:notations}

In this section, we introduce some notations and conventions that we are going to use throughout the paper in Section \ref{kigou}. Then, we give a very short introduction on various properties that will be used in this paper in Sections \ref{sec:2.2} to \ref{sec:2.5}.

\subsection{Notations and conventions}\label{kigou}
In this subsection, we will give some definitions which will be used throughout the paper. In the text, we also use ``$:=$'' to denote definition.

We call $\lambda = [ \lambda (0), \lambda (1), \cdots , \lambda (m) ] $, a sequence of points in $\mathbb{Z}^3$, a path, if $|\lambda (j-1) - \lambda (j) | =1 $ for all $j=1,\ldots, m$. Let ${\rm len}(\lambda)=m$ denotes the length of $\lambda$. We call $\lambda$ a self-avoiding path (SAP) if $\lambda (i) \neq \lambda (j)$ for all $i \neq j$. For two paths $\lambda_1 [0,m_1]$ and $\lambda_2[0,m_2]$, with $\lambda_1(m_1)=
\lambda_2(0)$, we write
$$
\lambda_1 \oplus \lambda_2 := [\lambda_1(0),\lambda_1(1),\ldots,\lambda_1(m_1),\lambda_2(1),\ldots,\lambda_2(m_2)]
$$
for the concatenation of $\lambda_1$ and $\lambda_2$.

We write $| \cdot |$ for the Euclid distance in $\mathbb{R}^{3}$. For $n \ge 0$ and $z \in \mathbb{Z}^{3}$, we write 
$$
B (z, n) := \big\{  x \in \mathbb{Z}^{3} \ \big| \ |x-z| < n \big\}.
$$
If $z = 0$, we write $B(n)$ and $\cC(n)=B(2^n)$ for short. We write $\mathbb{D} = \{ x \in \mathbb{R}^{d}\, \big|\, |x| < 1 \}$ and $\overline{\mathbb{D}}$ for its closure. 

 For any path $\eta$, we write $T_{x, r}(\eta)$ for the first time that $\eta$ hits $\partial  B (x, r)$ the outer boundary of $B (x, r)$. We write  $T_{r}(\eta)$ for the case that $x=0$. Let $T^{r}(\eta) = T_{2^{r}}(\eta)$. We will drop the dependence on $\eta$ in the notation whenever there is no confusion.
 
For a subset $A \subset \mathbb{Z}^{d}$, we let $\partial A = \{ x \notin A  |  \text{ there exists } y \in A \text{ such that } |x-y| =1 \}$ 
We write $\overline{A} := A \cup \partial A$. Given a subset $A \subset \mathbb{Z}^{d}$ and $r > 0$, we write $r A := \{ ry \ | \ y \in A \}$.
 
Throughout the paper, we will use various letters, e.g.\ $S$, $S^{1}$, $S^{2}$, $R^{1}$, $R^{2}$, etc., to represent simple random walks on $\mathbb{Z}^{3}$ and will use ad-hoc notations for its probability law. Unless otherwise indicated, we use $E_\bullet$ with the same sub- and superscripts for the corresponding expectation of a probability measure $P_\bullet$.   For the probability law and the expectation of $S$ started at $z$, we use $P^{z}$ and $E^{z}$ respectively.

For a subset $A \subset \mathbb{Z}^{3}$ and $x, y \in A$, we write $$G_{A} (x, y) = E^{x} \Big( \sum_{j= 0}^{\tau - 1} {\bf 1} \{ S(j) = y \} \Big),$$ where $\tau = \inf \{ t \ \big| \ S(t) \in \partial A \}$, for Green's function in $A$.

We use $c, C', \dotsb$ to denote arbitrary positive constants which may change from line to line and use $c$ with subscripts, i.e., $c_1,c_2,\ldots$ to denote constants that stay fixed. If a constant is to depend on some other quantity, this will be made explicit. For example, if $C$ depends on $\delta$, we write $C_{\delta}$. 
  
 \subsection{Loop-erased random walk, reversibility and domain Markov property}\label{sec:2.2}
In this subsection, we will give the definition of the loop-erased random walk (LERW) and review some known facts about it, especially the time reversibility and the domain Markov property. As we are working in the case of $d=3$, we will only state things for $\mathbb{Z}^3$. 

We begin with the definition of the chronological loop-erasure of a path.
\begin{dfn}\label{procedure}
Given a path $\lambda = [\lambda (0), \lambda (1), \cdots , \lambda (m) ] \subset \mathbb{Z}^{3}$, we define its loop-erasure $\LE ( \lambda )$ as follows. Let 
\begin{equation}\label{procedure-1}
s_{0} := \max \{ t \big| \lambda (t) = \lambda (0) \},
\end{equation}
and for $i \ge 1$, let 
\begin{equation}\label{procedure-2}
s_{i} := \max \{ t \big|\lambda (t) = \lambda (s_{i-1} +1) \}.
\end{equation}
We write $n = \min \{ i \big| s_{i} = m \}$. Then we define $\LE ( \lambda ) $ by 
\begin{equation}\label{procedure-3}
\LE ( \lambda ) = [ \lambda ( s_{0} ), \lambda ( s_{1} ), \cdots , \lambda ( s_{n} ) ].
\end{equation}
If $\lambda = [\lambda (0), \lambda (1), \cdots  ] \subset \mathbb{Z}^{3}$ is an infinite path such that for each $n$, $\big|\{ k \ge n\ | \ \lambda (k) = \lambda (n) \}\big|<\infty$, then we can define $\LE ( \lambda )$ similarly.
\end{dfn}

In general, we will use the term loop-erased random walk, or LERW, loosely to refer to the (random) SAP obtained by loop-erasing from some finite SRW. However, to make things precise, we have to specify the stopping time of this SRW. A very common scenario is the following: let $D$ be a finite subset of $\mathbb{Z}^3$ and let $S^x$ be the SRW started from $x$ stopped at the first exit of $D$, when we call $\LE(S^x)$ the LERW from $x$ stopped at exiting $D$. In contrast, if $S^x$ is an infinite SRW started from $x\in\mathbb{Z}^3$, then $\LE(S^x)$ will be referred to as an {\it infinite LERW} or ILERW started from $x$.

``LERW stopped at exiting $B(n)$'' and ILERW are different stochastic objects. Moreover, the law of the former and that of the latter truncated at first exiting $B(n)$ differ greatly, especially at the ending parts. However, if we only look at beginning parts, they still look pretty similar. The following quantative lemma is excerpted from \cite{Mas}.
\begin{lem}[Corollary 4.5 of \cite{Mas}]\label{lem:LEWILEW}
Given $0\in D\subset\mathbb{Z}^3$, let $\lambda^\circ$ be an ILERW started at the origin, and $\lambda$ a LERW started from 0 stopped at exiting $D$. Let $P^\circ$ and $P$ be their respective laws. Moreover, suppose $n\geq 2$ and $l\geq 0$ satisfying $B(nl)\subset D$. Truncate $\lambda^\circ$ and $\lambda$ at first exit of $B(l)$ and denote by $\lambda_l^\circ$ and $\lambda_l$ respectively.
Then, for all $\omega\in\Gamma_l$,
$$
P^\circ[\lambda^\circ_l=\omega]=\big(1+O\big(n^{-1}\big)\big)P[\lambda_l=\omega].
$$
\end{lem}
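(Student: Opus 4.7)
My plan is to couple the two LERWs on a common probability space via a single infinite SRW $S$ on $\mathbb{Z}^{3}$ started at the origin: the stopped walk $S[0,T_D]$ produces $\lambda$ and its $B(l)$-truncation $\lambda_l$, while the full walk $S[0,\infty)$ produces $\lambda^{\circ}$ and $\lambda^{\circ}_l$. With both random variables defined on the same space, I can compare their laws by isolating a high-probability ``good event''.

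The key observation is that on
\[
B_\omega:=\{S[T_D,\infty)\cap \omega=\emptyset\},
\]
the events $\{\lambda_l=\omega\}$ and $\{\lambda^{\circ}_l=\omega\}$ coincide. Indeed, if $S$ does not revisit $\omega$ after $T_D$, then the last visit of $S$ to each vertex of $\omega$ occurs before $T_D$ and is unaffected by continuing $S$ beyond $T_D$, so both $\LE(S[0,T_D])$ and $\LE(S)$ begin with $\omega$, and since $\omega_k\notin B(l)$ this prefix is precisely the first-exit truncation in both cases. Because $\{\lambda_l=\omega\}\in\mathcal{F}_{T_D}$, the strong Markov property at $T_D$ together with the classical $\mathbb{Z}^{3}$ hitting estimate $P^y[S\text{ hits }B(l)]\leq Cl/|y|\leq C/n$ (valid for $y\in\partial D$ since $|y|\geq nl$) yields
\[
P[\lambda_l=\omega,\,B_\omega^c]=E\big[\mathbf{1}_{\lambda_l=\omega}\,P^{S(T_D)}[\text{SRW hits }\omega]\big]\leq \tfrac{C}{n}\,P[\lambda_l=\omega].
\]

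The matching bound for $P[\lambda^{\circ}_l=\omega,\,B_\omega^c]$ is the main technical hurdle, since $\{\lambda^{\circ}_l=\omega\}$ depends on the entire infinite path of $S$ and is not $\mathcal{F}_{T_D}$-measurable. To handle it, I would fall back on the explicit product formula for both probabilities obtained from the standard loop decomposition of SRW paths: any SRW path whose loop-erasure begins with $\omega=[\omega_0,\ldots,\omega_k]$ factors uniquely as loops $L_j$ based at each $\omega_j$ in $(D\text{ or }\mathbb{Z}^{3})\setminus \omega_{0:j-1}$, the connecting edges $\omega_j\omega_{j+1}$, and a final escape walk from $\omega_k$ avoiding $\omega$. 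Summing over such configurations gives
\[
P[\lambda_l=\omega]=6^{-k}\prod_{j=0}^{k} G_{D\setminus \omega_{0:j-1}}(\omega_j,\omega_j)\cdot P^{\omega_k}\big[S[1,T_D]\cap \omega=\emptyset\big],
\]
with the analogous identity for $P^{\circ}[\lambda^{\circ}_l=\omega]$ using $\mathbb{Z}^{3}$ and $\infty$ in place of $D$ and $T_D$. The ratio $P^{\circ}/P$ is then a product of Green's-function ratios (each $\geq 1$) times an escape-probability ratio ($\leq 1$), every factor being controlled by the same $O(1/n)$ hitting estimate above. The delicate point I expect to work out is telescoping these per-vertex corrections so their total contribution is $O(1/n)$ uniformly in the length of $\omega$; this should follow from a Green's-function identity expressing the defect $G_{\mathbb{Z}^{3}\setminus \omega_{0:j-1}}(\omega_j,\omega_j)-G_{D\setminus \omega_{0:j-1}}(\omega_j,\omega_j)$ as a hitting probability of $\omega$ by an SRW started on $\partial D$, collapsing the entire product into a single $O(1/n)$ factor.
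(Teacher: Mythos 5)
The paper does not prove this lemma: it is cited verbatim as Corollary~4.5 of \cite{Mas}, so there is no ``paper's own proof'' to compare against. Evaluating your attempt on its own merits: the coupling on a common SRW $S$, the identification of the good event $B_\omega=\{S[T_D,\infty)\cap\omega=\emptyset\}$ on which $\{\lambda_l=\omega\}$ and $\{\lambda_l^\circ=\omega\}$ coincide, and the bound $P[\lambda_l=\omega,\,B_\omega^c]\leq (C/n)P[\lambda_l=\omega]$ via $\mathcal{F}_{T_D}$-measurability and the $\mathbb{Z}^3$ hitting estimate are all correct. You are also right that this half of the coupling is not enough, and that falling back on the product formula $P[\lambda_l=\omega]=6^{-k}F_\omega(D)\,{\rm Esc}_{\omega,D}(\omega(k))$ is the way forward; the escape-probability ratio is handled cleanly by the strong Markov argument you already used.

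The gap is in the treatment of $F_\omega(\mathbb{Z}^3)/F_\omega(D)$. You propose to sum per-vertex Green's-function defects $\Delta_j=G_{\mathbb{Z}^3\setminus\omega_{0:j-1}}(\omega_j,\omega_j)-G_{D\setminus\omega_{0:j-1}}(\omega_j,\omega_j)$ and hope the contributions ``collapse'' to a single $O(1/n)$. This does not work as stated: each $\Delta_j$ is of order $\sup_{y\in\partial D}G_{\mathbb{Z}^3\setminus\omega_{0:j-1}}(y,\omega_j)\lesssim 1/(nl)$, and the number of terms is ${\rm len}(\omega)$, which for a LERW segment crossing $B(l)$ is typically of order $l^{2-\alpha}\gg l$. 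The naive sum $\sum_j\Delta_j$ therefore blows up with $l$ and is not $O(1/n)$. The genuine telescoping operates at the level of $\log$-ratios and is most transparently carried out via the loop-measure representation \eqref{eq:Fetadef}: one has
\[
\log\frac{F_\omega(\mathbb{Z}^3)}{F_\omega(D)}=\sum_{j}\log\frac{G_{\mathbb{Z}^3\setminus\omega_{0:j-1}}(\omega_j,\omega_j)}{G_{D\setminus\omega_{0:j-1}}(\omega_j,\omega_j)}
= m\big(\text{loops touching }\omega\text{ and not contained in }D\big),
\]
where the last equality follows because each such loop is counted exactly once, by the first vertex of $\omega$ it visits. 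Since every loop in this family must travel from $\overline{B(l)}$ to $B(nl)^c$, a standard loop-measure estimate in $\mathbb{Z}^3$ (e.g.\ Lemma 2.6 of \cite{Lawrecent}) bounds the right-hand side by $O(1/n)$ uniformly in $\omega$, closing the gap. Without invoking this loop-measure identity, the ``Green's-function identity expressing the defect as a hitting probability'' you gesture at does not, by itself, control the product uniformly in the length of $\omega$.
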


\medskip

For a path $\lambda[0,m] \subset \mathbb{Z}^{d}$, we define its time reversal $\lambda^{R}$ by $\lambda^{\rm R} := [\lambda (m), \lambda (m-1), \cdots , \lambda (0) ]$. Note that in general, $\LE ( \lambda ) \neq (\LE ( \lambda^{\rm R} ) )^{\rm R}$. However, as next lemma shows, the time reversal of LERW has same distribution to the original LERW. Let $\Lambda_{m}$ be the set of paths of length $m$ started at the origin.

\begin{lem}[Lemma 7.2.1 of \cite{Lawb}]\label{reversal} 
For each $m \ge 0$, there exists a bijection $T^{m} : \Lambda_{m} \to \Lambda_{m}$ such that for each $\lambda \in \Lambda_{m}$, we have
\begin{equation}\label{rev-1}
\LE ( \lambda ) = (\LE ( (T^{m} \lambda )^{\rm R} ) )^{\rm R}.
\end{equation}
Moreover, we know that that $\lambda$ and $T^{m} \lambda $ visit the same edges in the same directions with the same multiplicities.
\end{lem}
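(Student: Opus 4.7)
The proof plan rests on the canonical decomposition of a walk along its loop-erasure skeleton. Given $\lambda\in\Lambda_m$ with $\eta=\LE(\lambda)=[\eta_0,\ldots,\eta_n]$ and times $s_0<\cdots<s_n$ from Definition \ref{procedure}, one has the representation
\begin{equation*}
\lambda \,=\, \ell_0\,\oplus\,[\eta_0,\eta_1]\,\oplus\,\ell_1\,\oplus\,\cdots\,\oplus\,[\eta_{n-1},\eta_n]\,\oplus\,\ell_n,
\end{equation*}
where $\ell_i=\lambda[s_{i-1}+1,s_i]$ (with $s_{-1}:=-1$) is a loop at $\eta_i$ that, by maximality of $s_{i-1}$, visits no earlier skeleton vertex in $\{\eta_0,\ldots,\eta_{i-1}\}$. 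Any such past-avoiding tuple conversely reconstructs a unique walk with loop-erasure $\eta$. Applying the same procedure to $\mu^R$ and then reversing gives a dual characterization: $(\LE(\mu^R))^R=\eta$ holds precisely when $\mu$ admits a decomposition of the above form in which each loop avoids the \emph{later} skeleton vertices $\{\eta_{i+1},\ldots,\eta_n\}$ instead.

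My plan is to build $T^m$ one Eulerian equivalence class at a time: fix a multiset of directed edges supporting a walk from $0$ to some terminus, and within this class construct a bijection that converts the partition by forward loop-erasure into the partition by reverse loop-erasure. Since $T^m$ would then stay within each Eulerian class, preservation of directed-edge multiplicities is built in. Concretely, given a past-avoiding tuple $(\ell_0,\ldots,\ell_n)$, I would iteratively excise sub-excursions of each $\ell_i$ that visit future skeleton vertices and transplant them onto the loops stored at those vertices, eventually producing a future-avoiding tuple that uses the same total multiset of directed edges; the image $T^m\lambda$ would then be the walk reconstructed from this new tuple via the dual decomposition.

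The main obstacle is to show that this class-wise matching is well-defined, canonical, and bijective --- equivalently, to establish that for each Eulerian class and each SAP $\eta$, the number of trails with forward loop-erasure $\eta$ equals that with reverse loop-erasure $\eta$, together with an explicit bijection. I would attack this via the BEST theorem, which encodes Eulerian trails in a directed multigraph as pairs consisting of an arborescence together with orderings of out-edges at each non-root vertex: forward loop-erasure corresponds to ``last-exit'' arborescences rooted at the terminal vertex, while reverse loop-erasure corresponds to ``first-entry'' arborescences rooted at $0$, and the two families of arborescences are in canonical correspondence via the standard reroot operation. Once this combinatorial step is in place, both conclusions of the lemma are immediate: preservation of the directed-edge multiset is automatic because $T^m$ stays inside each Eulerian class, and $\LE(\lambda)=(\LE((T^m\lambda)^R))^R$ is built into the very definition of $T^m$ as a bijection between the two loop-erasure partitions.
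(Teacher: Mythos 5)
The paper cites Lemma 7.2.1 of Lawler's book rather than giving its own proof, so I will assess your argument on its own terms. Your setup is sound: the decomposition $\lambda = \ell_0 \oplus [\eta_0,\eta_1] \oplus \ell_1\oplus \cdots \oplus \ell_n$ with past-avoiding loops, the dual characterization of $(\LE(\mu^R))^R = \eta$ via future-avoiding loops, the observation that working inside a fixed Eulerian class makes the edge-multiplicity statement automatic, and the identification of $\LE(\lambda)$ with the unique $0$-to-$w$ path in the last-exit arborescence and of $(\LE(\lambda^R))^R$ with the $0$-to-$w$ path in the first-entry arborescence, are all correct and are good reductions.

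The gap sits at the crux, in the assertion that last-exit and first-entry arborescences ``are in canonical correspondence via the standard reroot operation.'' This does not hold. The last-exit tree is an in-arborescence toward the terminal vertex $w$ and the first-entry tree is an out-arborescence from $0$; both are subsets of the fixed multiset of directed edges, and they are in general disjoint as edge sets. Rerooting a spanning arborescence reverses the orientation of the path between the two roots, but the reversed edges need not exist in the given directed multigraph, so rerooting does not carry one family into the other. After contracting $\eta$ to a single vertex $*$, what your argument needs is a bijection, for a balanced (Eulerian) directed multigraph, between in-arborescences to $*$ and out-arborescences from $*$. That these two sets are equinumerous is a genuine theorem (a consequence of the directed matrix-tree theorem for Eulerian digraphs, where the in- and out-Laplacians have equal cofactors), but it is not rerooting, and an equality of cardinalities alone does not furnish the explicit bijection $T^m$ the lemma demands: one would still have to splice it with the second BEST coordinate (the per-vertex orderings of out-edges) in a way compatible with trail reconstruction. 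Similarly, your excise-and-transplant description of $T^m$ remains a sketch: you do not pin down which sub-excursions are cut, in what order, or where exactly they are reinserted, nor do you check that the output tuple is genuinely future-avoiding or that the operation is invertible. Those verifications carry essentially all the content of the lemma.
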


\medskip

Note that LERW is not a Markov process. However it satisfies the domain Markov property in the following sense. 

\begin{lem}[Proposition 7.3.1 of {\rm \cite{Lawb}}]\label{domain} 
Let $D $ be a finite subset of $\mathbb{Z}^{3}$. Suppose that $\lambda_{i}$ ($i =1, 2$) are simple paths of length $m_{i}$ with $\lambda_{1}, \lambda_2[0,1,\ldots,m_2-1]\subset D$, $\lambda_{1} ( m_{1} ) = \lambda_{2} (0) $ and $\lambda_2(m)\in\partial D$. Let $\lambda$ be the concatenation of $\lambda_1$ and $\lambda_2$ and suppose also that $\lambda$ is a SAP. Let $Y$ be a random walk  started at $\lambda_1(m_1)=\lambda_{2} (0)$ conditioned on $Y [1, T_{D}(D) ] \cap \lambda_{1} = \emptyset$ and denote the law of $Y$ by $\overline{P}$. 
Then we have
\begin{equation}\label{domain-1}
P^{\lambda_{1} (0) } \Big( \LE ( S[0, T_{D} ] ) = \lambda\ \big| \ \LE ( S[0, T_{D} ] ) [0, m_{1} ] = \lambda_{1} \Big) = \overline{P} \Big( \LE ( Y [0, T^{Y}_{D} ] ) = \lambda_{2} \Big).
\end{equation}
\end{lem}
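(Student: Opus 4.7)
The plan is a direct computation via the last-exit decomposition of the simple random walk path at the vertex $v := \lambda_1(m_1) = \lambda_2(0)$. On the event that $\LE(S[0, T_D])$ begins with $\lambda_1$, the trajectory $S[0, T_D]$ splits naturally as $\omega^{(1)} \oplus \omega^{(2)}$, where $\omega^{(1)}$ is the piece up to the last visit of $v$ and $\omega^{(2)}$ starts at $v$ and never revisits $\lambda_1$. The ordinary Markov property at the split time factors the joint probability, and \eqref{domain-1} follows after cancellation.

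First I would characterize the conditioning event via last-visit times. Unrolling Definition~\ref{procedure}, $\{\LE(S[0, T_D])[0, m_1] = \lambda_1\}$ is precisely the event that there exist times $\sigma_0 < \sigma_1 < \cdots < \sigma_{m_1} \leq T_D$ with $\sigma_i = \max\{t \leq T_D : S(t) = \lambda_1(i)\}$ and $S(\sigma_i + 1) = \lambda_1(i + 1)$ for $0 \leq i < m_1$. Setting $\tau := \sigma_{m_1}$, these constraints automatically force $S[\tau + 1, T_D] \cap \lambda_1 = \emptyset$ and $\LE(S[0, \tau]) = \lambda_1$; conversely, any walk satisfying these two conditions at some $\tau$ lies in the conditioning event.

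Next I would decompose and factor. Writing $\{\LE(S[0, T_D]) = \lambda\}$ as the disjoint union over $k$ of $\{\tau = k\} \cap \{\LE(S[0, T_D]) = \lambda\}$, the constraints on $\omega^{(1)} = S[0, k]$ and $\omega^{(2)} = S[k, T_D]$ decouple: $\omega^{(1)}$ is a walk from $\lambda_1(0)$ to $v$ with $\LE(\omega^{(1)}) = \lambda_1$, and $\omega^{(2)}$ is a walk from $v$ to $\partial D$ with $\omega^{(2)}[1, |\omega^{(2)}|] \cap \lambda_1 = \emptyset$ and $\LE(\omega^{(2)}) = \lambda_2$. The Markov property at the deterministic time $k$ factors the joint probability as $p(\omega^{(1)}) \cdot p^v(\omega^{(2)})$, where $p$ and $p^v$ are the SRW step-weights started from $\lambda_1(0)$ and $v$ respectively. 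The same decomposition applied to the conditioning event $\{\LE(S[0, T_D])[0, m_1] = \lambda_1\}$ yields the same $\omega^{(1)}$-factorization, but with the $\LE(\omega^{(2)}) = \lambda_2$ clause dropped. Taking the ratio cancels the sum over $\omega^{(1)}$ and leaves exactly $\overline{P}(\LE(Y[0, T^Y_D]) = \lambda_2)$.

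The main technical subtlety is that $\tau$ is a last-exit time rather than a forward stopping time, so the Markov property cannot be invoked directly at $\tau$. The disjoint union over the deterministic values $\tau = k$ circumvents this, but one must carefully verify that the pair of conditions (loop-erasure of the prefix equals $\lambda_1$, suffix avoids $\lambda_1$ after time zero) exhausts $\{\LE(S[0, T_D])[0, m_1] = \lambda_1\} \cap \{\tau = k\}$ with no implicit double-counting. Once this bookkeeping is in place the factorization and cancellation become routine.
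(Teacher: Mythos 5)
The paper cites this lemma as Proposition 7.3.1 of \cite{Lawb} without reproducing the argument; your proof is correct and is exactly the standard last-exit decomposition used to prove the domain Markov property of LERW (decompose at the last visit of $v=\lambda_1(m_1)$, sum over deterministic values of $\tau$ to avoid applying the Markov property at a non-stopping time, factor, and cancel). One minor point you might tighten: when enumerating admissible prefixes $\omega^{(1)}$ you should also impose $\omega^{(1)}\subset D$, which follows from $\tau<T_D$ but not from $\LE(\omega^{(1)})=\lambda_1$ alone; since this constraint and the resulting sum over $\omega^{(1)}$ appear identically in the numerator and the denominator, the final cancellation is unaffected.
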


\subsection{Random walk loop soup and LERW}
In this subsection, we give another description of LERW through random walk loop soup measure. We refer readers to Sections 4 and 5 of \cite{Lawlernotes} for detailed discussions in this direction.

Let $\lambda$ be a loop-erased random walk from $x\in D \subseteq\mathbb{Z}^3$ stopped at exiting $D$. For generality of notation we do not require $D$ to be a finite set. For instance, if $D=\mathbb{Z}^3$, then $\lambda$ is actually an ILERW.  Let $\eta$ be a SAP in $\overline{D}$ of length $n$ such that $\eta[0,n-1]\subset D$ and write $\tau=\eta(n)$, then
$$
P\big[\lambda[0,n]=\eta\big]= 6^{-n} F_{\eta}(D) {\rm Esc}_{\eta, D}\big(\tau\big),$$ 
where 
$$
{\rm Esc}_{\eta,D}\big(\tau\big):= P^{\tau}\big[ S[1,2,\ldots]\mbox{ hits $\partial D$ before hitting $\eta$}\big]
$$
denotes the escape probability for SRW, and 
$$
F_{\eta}(D):=\prod_{j=0}^{n} G_{A_j}(\eta(j),\eta(j))\mbox{ where $A_j = D \big\backslash \eta[0,j-1]$, for $j=0,1,\ldots,n$}.
$$
Note that if $\tau\in\partial D$, then ${\rm Esc}_{\eta,D}(\tau)=1$.
Let $m$ denote the (unrooted) random walk loop measure defined in Section 5 of \cite{Lawlernotes}.  Then,
\begin{equation}\label{eq:Fetadef}
 F_{\eta}(D)= \exp\left\{\sum_{l\subseteq D,\; l \cap \eta \neq \emptyset}  m(l)\right\}.
\end{equation}
This description may seem mysterious to readers unfamiliar with the subject, but what it actually does is nothing more than weighting each SAP by the total weight of all SRW paths whose chronological loop-erasure gives this SAP. 

Conversely, starting from a LERW path, we are also able to ``add back'' loops from a loop soup and obtain a SRW. More precisely, letting $\lambda$ be the LERW as above, and let $\cal L$ be an independent Poissonian loop soup with intensity $m$.  Then we can add back loops from $\cal L$ to $\lambda$ through the following procedure.

\begin{prop}[Proposition 5.9 of \cite{Lawlernotes}, See also Prop.\ 4.3, ibid.]\label{prop:addloops}
Given $x\in D\subseteq\mathbb{Z}^3$, let $\lambda$ be a LERW from $x$ stopped at exiting $D$. Insert loops into $\lambda$ in the following way:
\begin{itemize}
\item Repeat for each $j=0,1,\ldots,n-1$:
\begin{itemize}
\item Choose all loops from $\cal L$ in $D\backslash \lambda[0,j-1]$ that touches $\lambda(j)$.
\item For each such loop, choose a representative rooted at $\lambda(j)$ (if there are several representatives then choose uniformly among all possibilities).
\item Concatenate all these loops in the order they appear in the soup and call the concatenated loop $l_j$.
\end{itemize}
\item Insert $l_j$'s into $\lambda$ in the following order (note that $l_j$ starts and stops at $\lambda(j)$): $$l_0\oplus\lambda[0,1]\oplus l_1\oplus\lambda[1,2]\oplus l_2\oplus\cdots\oplus l_{n-1}\oplus\lambda[n-1,n],$$ and call the new path $\gamma$.
\end{itemize}
 Then, $\gamma$ has the law of the SRW started at $x$ stopped at the first exit of $D$.
\end{prop}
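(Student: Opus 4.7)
The plan is to show that for every SRW trajectory $\omega$ from $x$ of length $N$ with $\omega[0,N-1]\subset D$ and $\omega(N)\in\partial D$, one has $P[\gamma=\omega]=6^{-N}$, which identifies the law of $\gamma$ with that of SRW from $x$ stopped at first exit of $D$. The starting point will be the standard loop-erasure decomposition of an SRW path: $\omega$ corresponds bijectively to a pair $\bigl(\eta,(b_0^\omega,\ldots,b_{n-1}^\omega)\bigr)$, where $\eta=\LE(\omega)$ has length $n$ and $b_j^\omega$ is the rooted loop at $\eta(j)$ formed by the loops erased at the $j$-th stage of the procedure in Def.\ \ref{procedure}. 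A direct check using the fact that the last-visit times satisfy $s_0<s_1<\cdots<s_n$ shows $b_j^\omega\subseteq A_j:=D\setminus\eta[0,j-1]$, and the weight factorizes as $6^{-|\omega|}=6^{-n}\prod_j 6^{-|b_j^\omega|}$. Summing this over admissible bubble sequences with fixed $\eta$ recovers $P[\lambda=\eta]=6^{-n}F_\eta(D)$ from the text (noting $\Es_{\eta,D}(\eta(n))=1$ since $\eta(n)\in\partial D$), so the two weight decompositions are compatible.

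Next, using that $\lambda$ and $\mathcal{L}$ are independent, I would factor
\[
P[\gamma=\omega]=P[\lambda=\eta]\cdot\prod_{j=0}^{n-1}P[l_j=b_j^\omega\mid\lambda=\eta],
\]
the product being justified by mutual independence of the $l_j$'s conditionally on $\lambda=\eta$. The key observation here is that a loop $l\in\mathcal{L}$ contributes to $l_j$ if and only if $l\subseteq A_j$ and $\eta(j)\in l$, and these two conditions single out the unique index $j=\min\{i:\eta(i)\in l\}$: for any $j'>j$ one has $\eta(j)\in l$ but $\eta(j)\notin A_{j'}$, so $l$ cannot contribute to $l_{j'}$. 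Hence the different $l_j$'s are read off disjoint sub-collections of the Poissonian $\mathcal{L}$ and are therefore independent (after independently choosing uniform rootings and orderings).

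The hard part will be the identity
\[
P[l_j=b\mid\lambda=\eta]=\frac{6^{-|b|}}{G_{A_j}(\eta(j),\eta(j))}
\]
for every rooted loop $b$ at $\eta(j)$ in $A_j$, including the trivial length-zero one; this matches exactly the conditional distribution of the erased SRW-bubble $b_j^\omega$ at step $j$ given the skeleton. Rather than re-derive it, I would invoke Proposition 4.3 of \cite{Lawlernotes}. Its mechanism is the loop-measure identity $\sum_{l\subseteq A_j,\,\eta(j)\in l}m(l)=\log G_{A_j}(\eta(j),\eta(j))$, which makes the number of unrooted loops of $\mathcal{L}$ touching $\eta(j)$ in $A_j$ Poisson with this mean, combined with a combinatorial identification of the uniformly rerooted concatenation with the excursion of SRW at $\eta(j)$ in $A_j$. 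Combining the three ingredients then yields
\begin{align*}
P[\gamma=\omega] &=P[\lambda=\eta]\prod_{j=0}^{n-1}P[l_j=b_j^\omega\mid\lambda=\eta]\\
&=6^{-n}F_\eta(D)\cdot\prod_{j=0}^{n-1}\frac{6^{-|b_j^\omega|}}{G_{A_j}(\eta(j),\eta(j))}=6^{-N},
\end{align*}
as required.
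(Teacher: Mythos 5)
The paper does not supply its own proof of this proposition; it cites Prop.\ 5.9 and Prop.\ 4.3 of \cite{Lawlernotes}, so there is no in-paper argument to compare against. Your reconstruction is correct and follows the standard route: the bijective decomposition of an SRW trajectory into its loop-erasure $\eta$ and the erased bubbles $b_j^\omega\subset A_j$ with the weight factorization $6^{-N}=6^{-n}\prod_j 6^{-|b_j^\omega|}$; the conditional independence of the $l_j$'s, obtained by observing that every soup loop feeds into the unique index $j=\min\{i:\eta(i)\in l\}$ and then invoking the restriction property of the Poisson loop soup; and the per-site identity $P[l_j=b]=6^{-|b|}/G_{A_j}(\eta(j),\eta(j))$ from Prop.\ 4.3, ibid. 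These combine to give $P[\gamma=\omega]=6^{-N}$, as required.
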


\subsection{Escape probability and scaling limit}\label{sec:2.5}

As we discussed in Section \ref{sec:words}, the probability that a LERW and an independent simple random walk do not intersect up to exiting a large ball, which is referred to as escape probability, is a key object in the paper. 

\begin{dfn}\label{escape}
Let $0<m < n$. Let $S^{1}$ and $S^{2}$ be independent SRW's on $\mathbb{Z}^{3}$ started at the origin, and write $P$ for their joint distribution. We define escape probabilities $\Es (n)$ and $\Es (m, n)$ as follows: let
\begin{equation}\label{escape-1-1}
\Es (n) := P\Big( \LE(S^{1} [0, T^n(S^1) ])\cap S^{2} [1, T^n(S^2) ]= \emptyset \Big),
\end{equation}
and let
\begin{equation}\label{escape-1-3}
\Es (m, n) :=  P \Big( \LE(S^{1} [0, T^n(S^1) ])[s,u]\cap S^{2} [1, T^n(S^2) ]= \emptyset \Big),
\end{equation}
where  $u=T^n\big(\LE(S^{1} [0, \tau^{1}_{n} ])\big)$ and $s = \sup \{ t \le u \ | \ \lambda ( t) \in \partial B (0, m ) \}$. More precisely, we first consider the loop erasure of a random walk up to exiting $B (n)$, then we only look at the loop erasure after the last visit to $B (m)$. $\Es (m, n)$ is the probability that this part of the loop erasure does not intersect an independent simple random walk up to the first exiting of $B (n)$.
\end{dfn}
As the most accurate asymptotics of $\Es(n)$ and $\Es(m,n)$ are given in Corollary \ref{COR}, we will not talk about existing weaker estimates in the form of \eqref{mukasi}, but only state a fact which will be used later. It is showed in Lemma 7.2.2 of \cite{S} that 
\begin{equation}\label{Eslogasym}
\lim_{n \to \infty }\frac{\log\Es\big( 2^{(1-q)n }, 2^{n} \big) }{ \log 2^{- \alpha q n} }  = 1.
\end{equation}
where $\alpha$ is the same as in \eqref{mukasi}. 

\medskip

Finally, we review some known facts about the scaling limit of LERW in three dimensions, whose existence was first proved in \cite{Koz}. We refer to \cite{SS} for properties of this limit.
Let $S$ be a simple random walk started at the origin on $\mathbb{Z}^{3}$. Remind the definition of $\mathbb{D}$ in Section \ref{kigou}. Write
\begin{equation}\label{rescale}
\text{LEW}_{n} =\frac{\LE ( S[0, \tau_{n}] )}{n}.
\end{equation}
 We write ${\cal H} (\overline{\mathbb{D}} )$ for the metric space of the set of compact subsets in $\overline{\mathbb{D}}$ with the Hausdorff distance $d_{\text{H}}$. Thinking of $\text{LEW}_{n}$ as random elements of ${\cal H} (\overline{\mathbb{D}} )$, let $P^{(n)}$ be the probability measure on ${\cal H} (\overline{\mathbb{D}} )$ induced by $\text{LEW}_{n}$. Then \cite{Koz} shows that $P^{(2^{j})}$ is a Cauchy sequence with respect to the weak convergence topology, and therefore $P^{(2^{j})}$ converges weakly to some limit probability measure $\nu$. We write ${\cal K}$ for the random compact subset associated with $\nu$ and call ${\cal K}$ the scaling limit of LERW in three dimensions. It is also shown in \cite{Koz} that ${\cal K}$ is invariant under rotations and dilations.

\section{Non-intersection probability}\label{sec:noninter}
This section is dedicated to the proof of Theorem \ref{ESCAPE.1} and is organized in a hierarchical structure. We lay out the structure of the whole proof in Section \ref{sec:3.1} assuming three key intermediate results, namely Lemma \ref{1st-lem}, Propositions \ref{CONDCOMP} and \ref{difscale}. These results are proved in Sections \ref{sec:3.2}, \ref{sec:3.7} and  \ref{sec:difscale} respectively. Sections \ref{sec:3.3}-\ref{sec:3.6} contain intermediate results for Proposition \ref{CONDCOMP} which requires the coupling from Section \ref{sec:coupling}. Section \ref{sec:3.1} also contains the proof of Corollary \ref{COR}.
\subsection{Notations and the proof of Theorem \ref{ESCAPE.1}}\label{sec:3.1}
We start with introducing notations for various walks and paths we are going to discuss in this section. Then we state without proof a few key propositions that compare the non-intersection probabilities under different setups. After that, we give a proof of Theorem \ref{ESCAPE.1} assuming these intermediate results. At last, we give the proof of Corollary \ref{COR}.


Let $n\in\mathbb{Z}^+$. Let $S^{1}$ and $S^{2}$ be independent SRW's on $\mathbb{Z}^{3}$. Write
\begin{equation}\label{gamma.1}
\gamma^{n}_{x} = \LE( S^{1} [0, T^{n} ] ) = \LE( S^{1} [0, T_{2^{n}}] )
\end{equation}
for the loop-erasure of $S^{1}$ up to $T^n$ assuming that $S^{1} (0) = x$.  Using the notation above, $\gamma^{n}_{x} [T^{k}, T^{l} ]$ stands for $\gamma^{n}_{x} [t, u]$ where $t $ (resp. $u$) denotes the first time that $\gamma^{n}_{x}$ hits the boundary of $\cC_k$ (resp. $\cC_l$). Write $\gamma^{n}$ for $\gamma^{n}_{0}$. 

\vspace{3mm}

Let 
\begin{equation}\label{lambda.1}
\lambda_{x} = \Big( \lambda_{x} (k) \Big)_{k \ge 0} =  \Big( S^{2} (k) \Big)_{k \ge 0}
\end{equation}
be $S^{2}$ assuming that $S^{2} (0) = x$. Write $\lambda$ for $\lambda_{0}$.

As introduced in Section \ref{sec:intro}, we are interested in the event 
\begin{equation}\label{An}
A_{n} := \Big\{ \gamma^{n} [0, T^{n}] \cap \lambda  [1, T^{n} ] = \emptyset  \Big\},
\end{equation}
and the quantity 
$$a_{n} := \Es( 2^n)=P ( A_n ),$$
for $\alpha$ as in \eqref{mukasi}. Write 
\begin{equation}\label{ratio-a}
b_{n} = \frac{a_{n}}{a_{n-1}}.
\end{equation}


We fix some $q \in (0,1/10)$ whose explicit value will be specified in Prop.\ \ref{difscale}. We also assume that we always take large $n$ such that $n\geq 30/q$. Now, let 
\begin{equation}\label{Anq}
A_{n, q} := \Big\{ \gamma^{n} [0, T^{(1-q)n}] \cap \lambda  [1, T^{(1-q)n} ] = \emptyset \Big\}\mbox{ and }
A^{-}_{n, q} := \Big\{ \gamma^{n-1} [0, T^{(1-q)n}] \cap \lambda  [1, T^{(1-q)n} ] = \emptyset \Big\}
\end{equation}
(note that $A^{-}_{n, q}$ is different from $A_{n-1, q}$). Then we have 
\begin{equation}
a_{n} = P \Big( A_{n} \ \Big|  \  A_{n, q}  \Big) P \Big(  A_{n,q} \Big)\mbox{ and } a_{n-1} = P \Big( A_{n-1} \ \Big|  \  A^{-}_{n, q}  \Big) P \Big(  A^{-}_{n,q} \Big).
\end{equation}

The following lemma shows that the probability of $A_{n,q}$ is very close to that of $A^{-}_{n,q}$, allowing us to relate $b_n$ to the ratio of two conditional probabilities.
\begin{lem}\label{1st-lem}
It follows that for all $n$ and $q \in (0,1)$
\begin{equation}\label{ratio-1}
b_{n} = \frac{P \Big( A_{n} \ \Big|  \  A_{n, q}  \Big) }{P \Big( A_{n-1} \ \Big|  \  A^{-}_{n, q}  \Big)} \big( 1 + O (2^{- q n} ) \big).
\end{equation}
\end{lem}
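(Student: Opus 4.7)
\textbf{Proof plan for Lemma \ref{1st-lem}.} The identities $a_n = P(A_n\mid A_{n,q})P(A_{n,q})$ and $a_{n-1}=P(A_{n-1}\mid A^-_{n,q})P(A^-_{n,q})$ hold because $A_n\subset A_{n,q}$ and $A_{n-1}\subset A^-_{n,q}$ (the events in $A_{n,q}$ and $A^-_{n,q}$ merely truncate the non-intersection condition at the exit of the smaller ball $\cC_{(1-q)n}$, which sits strictly inside both $\cC_{n-1}$ and $\cC_n$). Writing $b_n = a_n/a_{n-1}$ and dividing, the lemma reduces to showing
\begin{equation*}
\frac{P(A_{n,q})}{P(A^-_{n,q})} = 1 + O(2^{-qn}).
\end{equation*}

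The plan is to compare both $P(A_{n,q})$ and $P(A^-_{n,q})$ with the analogous quantity for an infinite LERW, using Masson's Lemma \ref{lem:LEWILEW}. Since $\gamma^n$ and $\lambda$ are independent, and both events depend only on the initial segments $\gamma^n[0,T^{(1-q)n}]$, $\gamma^{n-1}[0,T^{(1-q)n}]$, and $\lambda[0,T^{(1-q)n}]$, we condition on $\lambda$ and sum over realizations of the LERW truncated at exiting $\cC_{(1-q)n}$. For $\gamma^n$, Lemma \ref{lem:LEWILEW} applies with $D=\cC_n$, inner radius $l=2^{(1-q)n}$, and scale factor $N=2^{qn}$ (since $B(Nl)\subset D$); this yields uniformly in the truncated path $\omega\in\Gamma_{(1-q)n}$,
\begin{equation*}
P\bigl(\gamma^n[0,T^{(1-q)n}]=\omega\bigr)=\bigl(1+O(2^{-qn})\bigr)P\bigl(\gamma^\infty[0,T^{(1-q)n}]=\omega\bigr),
\end{equation*}
where $\gamma^\infty$ denotes the ILERW started at the origin. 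The identical argument applied to $\gamma^{n-1}$ with $D=\cC_{n-1}$ gives $N=2^{qn-1}$ and therefore the same $O(2^{-qn})$ relative error. Composing these two comparisons yields
\begin{equation*}
P\bigl(\gamma^n[0,T^{(1-q)n}]=\omega\bigr)=\bigl(1+O(2^{-qn})\bigr)P\bigl(\gamma^{n-1}[0,T^{(1-q)n}]=\omega\bigr)
\end{equation*}
uniformly in $\omega$.

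Finally, writing $P(A_{n,q})=\sum_\omega P(\gamma^n[0,T^{(1-q)n}]=\omega)\,P(\omega\cap\lambda[1,T^{(1-q)n}]=\emptyset)$ and the analogous expression for $P(A^-_{n,q})$, the uniform comparison above factors out and gives the desired ratio $1+O(2^{-qn})$. Substituting back into the ratio $a_n/a_{n-1}$ completes the proof. No real obstacle arises here: the lemma is essentially a bookkeeping consequence of Masson's comparison lemma, whose uniformity in $\omega$ is exactly what makes the argument go through once independence of the SRW $\lambda$ from the LERW's $\gamma^n,\gamma^{n-1}$ is used to condition on $\lambda$.
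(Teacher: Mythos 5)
Your proof is correct and follows essentially the same route as the paper: reduce to showing $P(A_{n,q})/P(A^-_{n,q}) = 1 + O(2^{-qn})$, decompose each probability as a sum over realizations of the truncated LERW weighted by the SRW non-intersection probability, and invoke Masson's Corollary 4.5 to compare the LERW laws uniformly in the truncated path. The only (minor and welcome) difference is that you make explicit the chaining through the ILERW law needed to compare $\gamma^n$ with $\gamma^{n-1}$, whereas the paper invokes the comparison directly; this is a cleaner account of the same argument.
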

We postpone its proof to Section \ref{sec:3.2}.

\vspace{0.3cm}

As explained in Section \ref{sec:intro}, we will relate quantities such as $A_n$ and $A_{n,q}$ to non-intersection probabilities of SRW and LERW started at a mesoscopic distance. To this end, we introduce the following notations.

Let
\begin{equation}
x_1 :=\big(- 2^{(1-5q)n}, 0, 0 \big), \ y_1:= - x_1
\end{equation}
be two poles of $B \big( 2^{(1-5 q) n} \big)$. 
Let 
\begin{equation}\label{star}
\gamma_{1}  = \gamma^{n}_{x_1 }, \ \lambda_{1}  = \lambda_{y_1} [0, T^{n}]
\end{equation}
be a LERW started from $x_1$ stopped at exiting $T^{n}$ and SRW started from $y_1$ stopped at exiting $T^{n}$.  

\vspace{2mm}

Set 
\begin{align}\label{bnq}
B_{n} = \Big\{ \gamma_{1}  [0, T^{n}] \cap \lambda_{1}  [0, T^{n} ] = \emptyset \Big\}\mbox{, and }B_{n, q} = \Big\{ \gamma_{1}  [0, T^{(1-q)n}] \cap \lambda_{1}  [0, T^{(1-q)n} ] = \emptyset \Big\},
\end{align}
which are the analog of $A_{n}$ and $A_{n,q}$ defined in \eqref{An} and \eqref{Anq}. Write 
\begin{equation*}
\pi \big( \gamma_{1}  [0, T^{(1-q)n}],  \lambda_{1}  [0, T^{(1-q)n} ]  \big) = \big( \gamma_{1}  [T^{(1-3q)n}, T^{(1-q)n}],  \lambda_{1}  [T^{(1-3q)n}, T^{(1-q)n} ]  \big).
\end{equation*}
We also write $\gamma'_{1} := \LE(S^{1} [0, T^{n-1} ] ) $ assuming that $S^{1} (0) = x_{1}$ and define 
\begin{align*}
B^{-}_{n} := \Big\{ \gamma'_{1} [0, T^{n-1}] \cap \lambda_{1} [0, T^{n-1}] = \emptyset \Big\}\mbox{ and } B^{-}_{n, q} := \Big\{ \gamma'_{1} [0, T^{(1-q)n}] \cap \lambda_{1} [0, T^{(1-q)n} ] = \emptyset \Big\}.
\end{align*}

We claim that conditional probabilities that appear in \eqref{ratio-1} can be replaced with a small error by corresponding conditional probabilities for $\gamma_1$ and $\lambda_1$.
\begin{prop}\label{CONDCOMP} There exists $\delta>0$, such that
\begin{equation}\label{couple-4}
P \Big( A_{n} \ \Big|  \  A_{n, q}  \Big) = \big( 1 + O (2^{- \delta q n} ) \big) P \Big( B_{n} \ \Big|  \  B_{n, q}  \Big)
\end{equation}
and
\begin{equation}\label{couple-4p}
P \Big( A_{n-1} \ \Big|  \  A^{-}_{n, q}  \Big) = \big( 1 + O (2^{- \delta q n} ) \big) P \Big( B^{-}_{n} \ \Big|  \  B^{-}_{n, q}  \Big).
\end{equation}
\end{prop}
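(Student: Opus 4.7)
The plan is to follow the outline sketched in Section~\ref{sec:words}. For \eqref{couple-4}, both $P(A_n\mid A_{n,q})$ and $P(B_n\mid B_{n,q})$ will be expressed as conditional expectations of a common ``continuation function'' $g$, and then we will use the coupling of Section~\ref{sec:coupling} to match the two conditional laws of the relevant path segments up to an error of order $2^{-\delta qn}$.

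First, for a pair of admissible self-avoiding paths $(\gamma,\lambda)$ contained in $B(2^{(1-q)n})$ (with $\gamma$ self-avoiding and disjoint from $\lambda$), define a function $g(\gamma,\lambda)$ in the spirit of \eqref{sk2}: it is the probability that a random walk $X$ started at the endpoint of $\gamma$, conditioned so that $\LE(X[0,t])$ extends $\gamma$ to a SAP (i.e.\ $X[1,t]\cap\gamma=\emptyset$ up to the first exit time $t$ of $B(2^n)$), and an independent SRW $Y$ started at the endpoint of $\lambda$ and stopped at exiting $B(2^n)$, together yield a pair of paths that avoid each other and avoid $\gamma\cup\lambda$. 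By the domain Markov property (Lemma~\ref{domain}) for LERW combined with the strong Markov property of SRW, one obtains
$$
P(A_n\mid A_{n,q}) \;=\; E\Big[g\big(\gamma^n[0,T^{(1-q)n}],\lambda[0,T^{(1-q)n}]\big)\;\Big|\;A_{n,q}\Big],
$$
and an entirely analogous identity with $(\gamma^n,\lambda)$ replaced by $(\gamma_1,\lambda_1)$ and $A_{n,q}$ by $B_{n,q}$ for $P(B_n\mid B_{n,q})$.

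Next, a key intermediate step is to prove the analogue of \eqref{sk4}: $g(\gamma,\lambda)$ essentially depends only on the end portion $\big(\gamma[T^{(1-3q)n},T^{(1-q)n}],\lambda[T^{(1-3q)n},T^{(1-q)n}]\big)$ of the input. The heuristic is that, conditioned on the non-intersection event in the definition of $g$, both $X$ and $Y$ travel from scale $2^{(1-q)n}$ to scale $2^n$ without ever backtracking to the deep interior $B(2^{(1-2q)n})$ except with probability $O(2^{-qn})$; this follows from standard 3D SRW hitting/gambler's-ruin estimates applied to $Y$ and from the same estimates applied to the conditioned walk $X$ (using the known polynomial bounds on $\Es$ underlying \eqref{Eslogasym} to handle the conditioning). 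Combining this with the $\Es(2^{qn})\asymp 2^{-\alpha qn}$ lower bound on the unconditional non-intersection probability, one deduces $g(\gamma,\lambda)\simeq g(\gamma',\lambda')$ whenever the end portions of $(\gamma,\lambda)$ and $(\gamma',\lambda')$ coincide. Consequently,
$$
P(A_n\mid A_{n,q}) \;\simeq\; E\Big[\tilde g\big(\gamma^n[T^{(1-3q)n},T^{(1-q)n}],\lambda[T^{(1-3q)n},T^{(1-q)n}]\big)\;\Big|\;A_{n,q}\Big],
$$
where $\tilde g$ is a well-defined function of the end portion alone, and similarly for $P(B_n\mid B_{n,q})$.

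The second key step is to use the coupling of Section~\ref{sec:coupling} to compare the two conditional laws $\mu$ and $\nu$ on the end portion, defined analogously to \eqref{sk6} and \eqref{sk9}. The coupling tells us that, with probability $1-O(2^{-\delta qn})$, we can realise a pair of LERW--SRW started from the origin conditioned on $A_{n,q}$ and a pair started from the mesoscopic poles $(x_1,y_1)\in B(2^{(1-5q)n})$ conditioned on $B_{n,q}$ so that their end portions in $B(2^{(1-q)n})\setminus B(2^{(1-3q)n})$ agree. Because $\tilde g$ is bounded (and in fact comparable to $\Es(2^{qn})$) by the separation-type estimates, integrating against the coupling yields
$$
P(A_n\mid A_{n,q}) = (1+O(2^{-\delta qn}))\,P(B_n\mid B_{n,q}),
$$
which is \eqref{couple-4}. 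The proof of \eqref{couple-4p} is identical: the LERW on the left now stops at scale $2^{n-1}$ rather than $2^n$, but inside $B(2^{(1-q)n})$ Lemma~\ref{lem:LEWILEW} shows that the laws of $\gamma^{n-1}$ and $\gamma^n$ agree up to a multiplicative $1+O(2^{-qn})$, and the corresponding continuation function $g^-$ is obtained from $g$ by replacing $t$ with the first exit of $B(2^{n-1})$; the same two-step reasoning applies verbatim. The main obstacle is the coupling itself, i.e.\ producing a joint realisation of the two conditioned pairs with end portions matching with probability $1-O(2^{-\delta qn})$; this is where the separation lemma and the ``tilting'' reduction to the framework of \cite{Lawrecent} outlined at the end of Section~\ref{sec:words} are essential, and is the content of Section~\ref{sec:coupling}.
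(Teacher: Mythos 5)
Your proposal follows the same three-step strategy as the paper's Sections~\ref{sec:3.3}--\ref{sec:3.7}: decompose $P(A_n\mid A_{n,q})$ via a continuation function $g$ using the domain Markov property, show $g$ asymptotically depends only on the end portion of $(\eta^1,\eta^2)$ (the role of Propositions~\ref{ghEs}, \ref{prop1} and \ref{replace}, with the separation lemma handling the ``bad'' configurations where the conditioned endpoints are too close), and then compare the two conditional laws on end portions via the coupling Proposition~\ref{prop:coup2}, using the uniform bound $\overline{g}\lesssim\Es(2^{(1-q)n},2^n)$ to convert the total-variation error into the multiplicative $1+O(2^{-\delta qn})$ factor. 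This is the paper's argument; the only thing glossed over is that the ``end-portion-only dependence'' of $g$ requires first restricting to configurations where $\overline h\ge 2^{-qn/2}$ and then discarding the complement via the separation lemma, rather than holding uniformly as stated.
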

We will postpone the proof to Section \ref{sec:3.7} and dedicate Sections \ref{sec:3.3} - \ref{sec:3.6} to preparatory works. Also, we note that this proposition relies on the coupling result from Section \ref{sec:coupling}.  As a corollary, we have:
\begin{cor}
For some universal constant $\delta > 0$
\begin{equation}\label{CHANGE}
b_{n} =  \frac{P \Big( B_{n} \ \Big|  \  B_{n, q}  \Big) }{P \Big( B^{-}_{n} \ \Big|  \  B^{-}_{n, q}  \Big)} \big( 1 + O (2^{- \delta q n} ) \big).
\end{equation}
\end{cor}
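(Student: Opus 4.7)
The plan is to chain Lemma \ref{1st-lem} and Proposition \ref{CONDCOMP} together; no new probabilistic input is required. First I would invoke Lemma \ref{1st-lem} to express the ratio $b_{n}=a_{n}/a_{n-1}$ as
\[
b_{n}=\frac{P\bigl(A_{n}\mid A_{n,q}\bigr)}{P\bigl(A_{n-1}\mid A^{-}_{n,q}\bigr)}\bigl(1+O(2^{-qn})\bigr).
\]
Then I would substitute the two identities \eqref{couple-4} and \eqref{couple-4p} of Proposition \ref{CONDCOMP} into the numerator and the denominator respectively, yielding
\[
b_{n}=\frac{\bigl(1+O(2^{-\delta qn})\bigr)P\bigl(B_{n}\mid B_{n,q}\bigr)}{\bigl(1+O(2^{-\delta qn})\bigr)P\bigl(B^{-}_{n}\mid B^{-}_{n,q}\bigr)}\bigl(1+O(2^{-qn})\bigr).
\]

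The final step is purely arithmetic: for $n$ large enough so that the implicit errors $O(2^{-\delta qn})$ and $O(2^{-qn})$ are, say, bounded by $1/2$ in absolute value, one has $(1+O(2^{-\delta qn}))/(1+O(2^{-\delta qn}))=1+O(2^{-\delta qn})$. Multiplying this by the remaining factor $(1+O(2^{-qn}))$ and absorbing both into a single error term of the form $1+O(2^{-\delta' qn})$ with $\delta':=\min(\delta,1)$, we obtain the desired statement (renaming $\delta'$ back to $\delta$). For small $n$, the claim is trivial by adjusting the implicit constants, since $b_{n}$ is bounded above and below by positive universal constants according to the bounds recalled in the first bullet of Section \ref{sec:words}.

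The main obstacle in the argument is therefore not in this corollary at all, but has been relegated to Proposition \ref{CONDCOMP}, which is where the nontrivial work — the coupling that replaces LERW and SRW started from the origin by the analogous processes started at the opposite mesoscopic poles $x_{1}$ and $y_{1}$ — is carried out using the tools of Section \ref{sec:coupling}. The contribution of the present corollary is merely to record that once that coupling-based replacement has been paid for in the two conditional probabilities, its error combines with the Lemma \ref{1st-lem} error in the natural multiplicative way.
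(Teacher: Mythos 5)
Your proof is correct and matches the paper's (implicit) derivation: the corollary is simply the result of substituting the two identities \eqref{couple-4} and \eqref{couple-4p} of Proposition \ref{CONDCOMP} into Lemma \ref{1st-lem} and combining the multiplicative errors, which is exactly what you do. The note about small $n$ is a harmless precaution (the ratio $b_n$ is bounded above and below, so the $O$-constant can be adjusted), but it is not strictly needed since the stated $O$-notation already carries an implicit constant.
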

We now introduce quantities that correspond to the scale $2^{n-1}$. Let $$x_{2} := \big( -2^{(1-5q)n -1},0,0 \big) = \frac{x_{1}}{2}, \ y_{2} := -x_{2} = \frac{y_{1}}{2}\mbox{ and }\gamma_{2} := \LE(S^{1} [0, T^{n-1}] )\mbox{ , }\lambda_{2} := S^{2}$$ assuming that $S^{1} (0) = x_{2}$ and  $S^{2} (0) = y_{2}$.
We then define
\begin{align*}
C_{n} := \Big\{ \gamma_{2} [0, T^{n-1} ] \cap \lambda_{2} [0, T^{n-1} ] = \emptyset \Big\} \mbox{ and }C_{n,q} := \Big\{ \gamma_{2} [0, T^{(1-q)n-1} ] \cap \lambda_{2} [0, T^{(1-q)n-1} ] = \emptyset \Big\} 
\end{align*}
and similarly, let $\gamma'_{2} := \LE(S^{1} [0, T^{n-2} ] ) $ assuming that $S^{1} (0) = x_{2}$
and 
\begin{align*}
C^{-}_{n} := \Big\{ \gamma'_{2} [0, T^{n-2}] \cap \lambda_{2} [0, T^{n-2}] = \emptyset \Big\}\mbox{ and } C^{-}_{n, q} := \Big\{ \gamma'_{2} [0, T^{(1-q)n-1}] \cap \lambda_{2} [0, T^{(1-q)n-1} ] = \emptyset \Big\}.
\end{align*}
Similar to \eqref{CHANGE} (note that the only difference between $C_\bullet$ and $B_\bullet$ is the scale), we also have
\begin{equation}\label{CHANGE-2}
b_{n-1} =  \frac{P \Big( C_{n} \ \Big|  \  C_{n, q}  \Big) }{P \Big( C^{-}_{n} \ \Big|  \  C^{-}_{n, q}  \Big)} \big( 1 + O (2^{-  q n} ) \big).
\end{equation}

The following proposition states that the probability of $B_n$ and $B_{n}^-$ are actually close to that of $C_n$ and $C_{n}^-$. We postpone its proof to Section \ref{sec:difscale}.
\begin{prop}\label{difscale}
There exist universal constants $c_{1} > 0$ and $q_{1} > 0$  such that for all $n \ge 1$  and $q \in (0, q_{1} )$,
\begin{align}
&P \Big( B_{n}  \Big) = P \Big( C_{n}  \Big)  \big( 1 + O (2^{- c_{1} q n} ) \big) \mbox{ and }\label{Goal1-1} \\
&P \Big( B^{-}_{n}  \Big) = P \Big( C^{-}_{n}  \Big)  \big( 1 + O (2^{- c_{1} q n} ) \big). \label{Goal2-1}
\end{align}
\end{prop}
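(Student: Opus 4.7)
The strategy is to exploit the fact that $B_n$ and $C_n$ describe statistically equivalent events under a factor-of-$2$ dilation of space: the starting points are related by $x_1=2x_2$, $y_1=2y_2$, and the exit radii $2^n$ and $2^{n-1}$ differ by the same factor. Because the continuum scaling limit of LERW established in \cite{Koz} is dilation-invariant, the two probabilities coincide in the limit; the task is to quantify the discretization error and show that it decays at the rate $2^{-c_1 q n}$.

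I plan to proceed as follows. First, I would decompose both $P(B_n)$ and $P(C_n)$ at a common \emph{relative} mesoscopic scale---say, by conditioning on the configurations $\bigl(\gamma_1[0,T^{(1-2q)n}],\lambda_1[0,T^{(1-2q)n}]\bigr)$ and $\bigl(\gamma_2[0,T^{(1-2q)n-1}],\lambda_2[0,T^{(1-2q)n-1}]\bigr)$ respectively. Using the domain Markov property of LERW (Lemma \ref{domain}) together with the strong Markov property of SRW, each probability factorizes as a weighted sum of a mesoscopic non-intersection probability times a conditional extension probability. Second, I would invoke the coupling machinery of Section \ref{sec:coupling}, in conjunction with the separation lemma (Theorem 6.1.5 of \cite{S}), to show that once one conditions on non-intersection up to the mesoscopic scale, the resulting laws on mesoscopic configurations for $B_n$ and $C_n$ match, under the natural factor-of-$2$ rescaling, up to total variation error $O(2^{-c_1 q n})$. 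Third, since the scale ratio from the mesoscopic to the outer scale is $2^{2qn}$ in both cases, a second application of the same coupling machinery (now comparing two well-separated extension events at the same scale ratio) shows that the conditional extension probabilities match up to the same error rate. Combining these two steps with a careful error accounting yields \eqref{Goal1-1}, and \eqref{Goal2-1} follows from a verbatim argument applied at scales $(n-1,n-2)$ in place of $(n,n-1)$.

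The main obstacle will be Step 2: converting the qualitative fact that ``typical mesoscopic configurations of non-intersecting LERW--SRW pairs look the same across adjacent scales'' into a quantitative statement with rate $2^{-c_1 q n}$. This will require combining the separation lemma (to rule out degenerate mesoscopic configurations, which would be fatal for the coupling) with Lemma \ref{lem:LEWILEW} to swap each truncated LERW for an infinite LERW on the interior---where comparison across scales is cleaner because one no longer sees the outer boundary---and then applying the coupling of Section \ref{sec:coupling}. The exponential error of the coupling depends on the gap between the mesoscopic scale $2^{(1-2q)n}$ and the outer scale $2^n$, giving the $qn$ factor through the relation $n-(1-2q)n=2qn$; threading this gap bound through the domain Markov decomposition without losing the exponential rate is where I expect the bulk of the technical work.
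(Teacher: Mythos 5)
Your outline does not engage with the actual crux of Proposition~\ref{difscale}, which is a cross-lattice comparison: $\gamma_1,\lambda_1$ live on $\mathbb{Z}^3$ and exit $B(2^n)$, while $\gamma_2,\lambda_2$ live on $\mathbb{Z}^3$ and exit $B(2^{n-1})$, and these are \emph{not} related by an exact lattice symmetry -- after a factor-of-$2$ dilation, $\gamma_2$ becomes a LERW on $2\mathbb{Z}^3$. Your Step~2 proposes to show that the mesoscopic laws for $B_n$ and $C_n$ agree ``under the natural factor-of-$2$ rescaling, up to total variation error $O(2^{-c_1qn})$'' by invoking the coupling machinery of Section~\ref{sec:coupling} together with the separation lemma and Lemma~\ref{lem:LEWILEW}. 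But every coupling in Section~\ref{sec:coupling} (Propositions~\ref{prop:coup1}--\ref{prop:coup3}) compares two pairs of LERW--SRW on the \emph{same} lattice $\mathbb{Z}^3$ with different initial data; none of them, nor the separation lemma nor Lemma~\ref{lem:LEWILEW}, gives any quantitative control over the discrepancy between $\mathbb{Z}^3$ and $2\mathbb{Z}^3$. So Step~2 is essentially the proposition restated without a tool to prove it, and the domain Markov decomposition you wrap around it (Steps~1 and~3) just redistributes the difficulty without resolving it.

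The paper's proof uses a different, and in this context essential, piece of machinery: Theorem~5 of~\cite{Koz}, which is Kozma's quantitative comparison between LERW non-intersection probabilities on $\mathbb{Z}^3$ and on $2\mathbb{Z}^3$ (derived from his proof of Cauchy convergence of the rescaled LERW laws). To make this applicable, the paper first replaces the SRW $\lambda_2$ by a Wiener sausage via a KMT-type strong approximation (Lemma~3.2 of~\cite{Lawcut}) and conditions the Brownian motion to avoid a small ball around the LERW's starting point; exact Brownian scaling then relates the two scales in the continuum, and Theorem~5 of~\cite{Koz} controls the lattice-to-lattice error. Finally, Lemma~4.8 of~\cite{Koz} (or Theorem~3.1 of~\cite{SS}) converts back from sausages to paths, and the resulting additive error $O(2^{-10qn})$ is turned into the multiplicative error in~\eqref{Goal1-1} using the lower bound $P(B_n)\asymp\Es(2^{(1-5q)n},2^n)\ge c2^{-5qn}$. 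You should incorporate this cross-scale ingredient; without it, the argument cannot close, because no amount of same-scale coupling or domain-Markov bookkeeping produces the $\mathbb{Z}^3\leftrightarrow 2\mathbb{Z}^3$ estimate that~\eqref{Goal1-1} fundamentally encodes.
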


We are now ready to prove Theorem \ref{ESCAPE.1}.

\begin{proof}[Proof of Thm.\ \ref{ESCAPE.1} assuming Lemma \ref{1st-lem}, Propositions \ref{CONDCOMP} and \ref{difscale}]
To prove \eqref{GOAL.1}, it suffices to show that there exists universal constants $c_{1}, N> 0$  such that for all $n \ge N$, 
\begin{equation}\label{12mainclaim}
b_{n} = b_{n-1} \big( 1 + O (2^{- c_{1} n} ) \big).
\end{equation}
Recall \eqref{CHANGE} and \eqref{CHANGE-2}. By Proposition 4.4 of \cite{Mas} as in the proof of Lemma \ref{1st-lem}, we have
\begin{align}
&\frac{P \Big( B_{n} \ \Big|  \  B_{n, q}  \Big) }{P \Big( B^{-}_{n} \ \Big|  \  B^{-}_{n, q}  \Big)} = \frac{P \Big( B_{n}  \Big) }{P \Big( B^{-}_{n}   \Big)} \big( 1 + O (2^{- q n} ) \big), \mbox{ and }  \frac{P \Big( C_{n} \ \Big|  \  C_{n, q}  \Big) }{P \Big( C^{-}_{n} \ \Big|  \  C^{-}_{n, q}  \Big)} = \frac{P \Big( C_{n}  \Big) }{P \Big( C^{-}_{n}   \Big)} \big( 1 + O (2^{- q n} ) \big). 
\end{align}
Hence, it follows that there exists a universal constant $\delta > 0$ such that for all $n$ and $q \in (0,1)$
\begin{equation}\label{wb-1}
b_{n} = \frac{P \Big( B_{n}  \Big) }{P \Big( B^{-}_{n}   \Big)} \bigg( 1 + O (2^{-\delta q n} ) \bigg)  
\mbox{ and }b_{n-1} = \frac{P \Big( C_{n}  \Big) }{P \Big( C^{-}_{n}   \Big)} \bigg( 1 + O (2^{- \delta q n} ) \bigg). 
\end{equation}
The claim \eqref{12mainclaim} hence follows by Proposition \ref{difscale} with appropriately chosen $q$ and $N=30/q$ (see above \ref{Anq}). This finishes the proof of Theorem \ref{ESCAPE.1}.
\end{proof}

 \begin{proof}[Proof of Corollary \ref{COR}]
The  first statement of \eqref{esaymp} follows from Proposition 6.2.1 of \cite{S} and \eqref{GOAL.1}, and the second follows from the following fact proved in Proposition 6.2.2 and 6.2.4 of \cite{S}: for $m \le n$,
 \begin{equation*}
 \Es (m) \Es (m,n) \asymp \Es (n).
 \end{equation*}
It follows from Theorem 8.1.4 and Proposition 8.1.5 of \cite{S} that 
 \begin{equation*}
 E( M_{n} ) \asymp n^{2}\Es(n) \asymp n^{2-\alpha},
 \end{equation*}
 which gives the third statement. Finally, exponential tail bounds on $M_{n}$ as in Theorem 8.1.6 and Theorem 8.2.6 of \cite{S} ensure the tightness of ${M_{n}}/{n^{2-\alpha}}$. 
 \end{proof} 
Before ending this subsection, we introduce some path spaces which will be used in the following sections. We write  $\Gamma $ 
for the set of paths satisfying 
\begin{itemize}
\item[$(i)$] $\eta$ is a SAP;

\item[$(ii)$] $\eta (0) =0$, $\eta \big( \text{len} (\eta) \big) \in  \partial \cC_{(1-q) n } $ and $\eta \big[ 0, \text{len} (\eta ) -1 \big] \subset \cC_{(1-q) n } $. 

\end{itemize}
We also write  $\Lambda $ 
for the set of paths satisfying (ii) above only.

\subsection{Proof of Lemma \ref{1st-lem}}\label{sec:3.2}

\begin{proof}[Proof of Lemma \ref{1st-lem}]
Since 
\begin{equation*}
b_{n} = \frac{ P \Big( A_{n} \ \Big|  \  A_{n, q}  \Big) P \Big(  A_{n,q} \Big)}{ P \Big( A_{n-1} \ \Big|  \  A^{-}_{n, q}  \Big) P \Big(  A^{-}_{n,q} \Big)},
\end{equation*}
it suffices to show that 
\begin{equation}
\frac{P \Big(  A_{n,q} \Big)}{P \Big(  A^{-}_{n,q} \Big)} = \big( 1 + O (2^{-q n} ) \big).
\end{equation}

For a path $\eta$, write 
\begin{equation}
f(\eta) = P \Big( \lambda [1, T^{(1-q)n} ] \cap \eta = \emptyset \Big)
\end{equation}
for the probability that $S^{2}$ up to $T^{(1-q)n}$ and $\eta$ do not intersect. Using the function $f$, we see that 
\begin{equation}\label{lem1-1}
 P \Big(  A_{n,q} \Big) = \sum_{\eta \in \Gamma} f (\eta ) P \Big( \gamma^{n} [0, T^{(1-q)n}] = \eta \Big).
 \end{equation}
%
Similarly, we have 
\begin{equation}\label{lem1-2}
 P \Big(  A^{-}_{n,q} \Big) = \sum_{\eta \in \Gamma} f (\eta ) P \Big( \gamma^{n-1} [0, T^{(1-q)n}] = \eta \Big).
 \end{equation}
However, by Corollary 4.5 of \cite{Mas}, for any $\eta \in \Gamma$, we have
\begin{equation}
 P \Big( \gamma^{n} [0, T^{(1-q)n}] = \eta \Big) = \big( 1 + O (2^{-q n} ) \big)  P \Big( \gamma^{n-1} [0, T^{(1-q)n}] = \eta \Big).
 \end{equation}
This finishes the proof of this lemma.
 \end{proof}

\subsection{Decomposition of paths and weak independence}\label{sec:3.3}
In this subsection, we decompose the paths at their first exit of $\cC_{(1-q)n}$ and state without proof some preliminary results for Proposition \ref{CONDCOMP}. 

Remind the definition of $\Lambda$ and $\Gamma$ at the end of Section \ref{sec:3.1}. Define 
\begin{equation}\label{cnq}
{\cal C}  = \Big\{ (\eta^{1}, \eta^{2} ) \in \Gamma \times \Lambda \ \Big| \ \eta^{1} [0, T^{(1-q) n}] \cap \eta^{2} [1, T^{(1-q) n} ] = \emptyset \Big\}.
\end{equation}
Take $(\eta^{1}, \eta^{2} ) \in {\cal C} $. Let $w^{i}$ be the endpoint of $\eta^{i}$ lying on $\partial \cC_{(1-q) n } $. 
Write 
\begin{align}\label{X}
&R^{1}, R^{2} \text{ for two independednt simple random walks started at } w^{1}, w^{2} \text{ and } \notag \\
&X \text{ for } R^{1} \text{ conditioned that } R^{1} [1, T^{n} ] \cap \eta^{1} = \emptyset.
\end{align}
By the domain Markov property of LERW (see Lemma \ref{domain}), conditioned on $\gamma^{n} [0, T^{(1-q)n}] = \eta^{1}$, the (conditional) distribution of $\gamma^{n} [T^{(1-q)n}, T^{n} ]$ is same as the law of $\LE( X [0, T^{n} ] )$. With this in mind, for  $(\eta^{1}, \eta^{2} ) \in {\cal C} $, let
\begin{equation}\label{g}
g (\eta^{1}, \eta^{2} ) = P \Big( \big( \LE( X [0, T^{n} ] ) \cup \eta^{1} [0, {\rm len} (\eta^{1} ) ] \big) \cap \big( R^{2} [0, T^{n} ] \cup  \eta^{2} [1, {\rm len} (\eta^{2} ) ] \big) = \emptyset \Big)
\end{equation}
be the probability that the loop-erasure of $X$ and $R^{2}$ do not intersect. We are now able to re-write $P ( A_{n} )$ in terms of $g$:
$$
 P \Big( A_{n} \Big)= \sum_{(\eta^{1}, \eta^{2} ) \in {\cal C} } g (\eta^{1}, \eta^{2} )  P \Big(  \big( \gamma^{n} [0, T^{(1-q)n}], \lambda  [0, T^{(1-q)n} ] \big) = (\eta^{1}, \eta^{2})  \Big).
 $$
Hence
\begin{equation}\label{henkei}
P \Big( A_{n} \ \Big|  \  A_{n, q}  \Big) = 
 \sum_{(\eta^{1}, \eta^{2} ) \in {\cal C} } g (\eta^{1}, \eta^{2} ) \mu_{n, q} (\eta^{1}, \eta^{2} ),
\end{equation}
where 
\begin{equation}\label{mu}
\mu_{n, q} (\eta^{1}, \eta^{2} ) = P \Big(  \big( \gamma^{n} [0, T^{(1-q)n}], \lambda  [0, T^{(1-q)n} ] \big) = (\eta^{1}, \eta^{2}) \ \Big| \ A_{n,q} \Big)
\end{equation}
stands for the conditional distribution on $A_{n,q}$.

The next proposition measures the magnitude of $g(\eta^1,\eta^2)$ in terms of a function $h$ of $(\eta^1,\eta^2)$ we are going to define below and $\Es(\cdot,\cdot)$ in Definition \ref{escape}. We postpone its proof till Section \ref{sec:3.6}.
\begin{prop}\label{ghEs}
One has
\begin{equation}\label{eq:ghEs}
g (\eta^{1}, \eta^{2} )  \asymp h  (\eta^{1}, \eta^{2} ) \Es \big( 2^{(1-q) n}, 2^{n} \big),
\end{equation}
where
\begin{align}\label{h}
h  (\eta^{1}, \eta^{2} )&\; = P \Big( \big( \LE( X [0, T^{n} ] ) [0, s] \cup \eta^{1} [0, {\rm len} (\eta^{1} ) ] \big) \notag  \cap \big( R^{2} [0, T^{(1-q)n+1} ] \cup  \eta^{2} [1, {\rm len} (\eta^{2} ) ] \big) = \emptyset \Big) \notag \\
\text{and } s&\; = \inf \Big\{ k \ge 0 \  \Big| \ \LE  \big( X [0, T^{n} ]  \big)  (k) \notin \cC_{(1-q) n+1 }  \Big\}.
\end{align}
\end{prop}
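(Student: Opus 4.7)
The plan is to split the full non-intersection event defining $g$ into two parts separated at scale $\cC_{(1-q)n+1}$: an ``inner'' non-intersection (which yields the factor $h$) and an ``outer'' non-intersection in the annulus $\cC_{n} \setminus \cC_{(1-q)n+1}$ (which should yield the factor $\Es(2^{(1-q)n}, 2^{n})$). Concretely, let $E$ denote the event that the ``tail'' pieces $\LE(X[0,T^{n}])[s,\,\cdot\,]$ and $R^{2}[T^{(1-q)n+1}(R^{2}),\,T^{n}]$ avoid one another as well as the inner pieces $\eta^{1} \cup \LE(X[0,T^{n}])[0,s]$ and $\eta^{2} \cup R^{2}[0,T^{(1-q)n+1}]$. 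Then the event in $g$ is exactly the intersection of the event in $h$ with $E$, so
$$
g(\eta^{1},\eta^{2}) \;=\; h(\eta^{1},\eta^{2}) \cdot P\bigl( E \,\bigm|\, \mbox{event in } h \bigr),
$$
and the proposition reduces to showing that this conditional probability is $\asymp \Es(2^{(1-q)n}, 2^{n})$.

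For the upper bound, I would use the domain Markov property (Lemma \ref{domain}) to identify the conditional law of the continuation of $\LE(X[0,T^{n}])$ past its exit of $\cC_{(1-q)n+1}$ as that of a LERW generated by a fresh SRW started from the exit point and conditioned to avoid the entire past, combined with the strong Markov property of $R^{2}$ at $T^{(1-q)n+1}(R^{2})$. Forgetting the avoidance of the inner pieces only enlarges the event, so the conditional probability of $E$ is bounded above by a two-walk non-intersection probability of the form appearing in $\Es(2^{(1-q)n},2^{n})$, up to a translation of the common origin by an amount $O(2^{(1-q)n})$; the translation costs only a constant factor by the up-to-constants comparison of escape probabilities at different base points (Propositions 6.2.2 and 6.2.4 of \cite{S}).

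The lower bound is the technical heart. I would invoke the separation lemma for LERW/SRW pairs (Theorem 6.1.5 of \cite{S} or Claim 3.4 of \cite{SS}) applied to the conditional distribution at the scale $\cC_{(1-q)n+1}$: on the event in $h$, with uniformly positive probability the two tips and their immediate histories are well-separated on $\partial \cC_{(1-q)n+1}$ in the sense that they admit disjoint ``cones of room'' extending into the annulus. Restricted to such a separated configuration, a standard application of the Harnack principle and the domain Markov property shows that the walks in the annulus dominate an unconditioned pair whose inner obstacles $\eta^{1},\eta^{2} \subset \cC_{(1-q)n}$ are at macroscopic distance and therefore only contribute a constant factor, so that the tail non-intersection probability is bounded below by $c\,\Es(2^{(1-q)n}, 2^{n})$.

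The main obstacle is making this lower bound rigorous: the conditional law after the inner non-intersection event is not a product measure, and the separation lemma as stated in \cite{S} is for walks both started near a common origin, whereas here $X$ is already loop-erased against $\eta^{1}$ and $R^{2}$ is a raw SRW started at a possibly distant $w^{2}$. I expect this difficulty to be handled by running the separation argument of Section 6 of \cite{S} inside the mesoscopic annulus $\cC_{(1-q)n+1}\setminus\cC_{(1-q)n}$ (rather than from the origin), where both paths are already of ``diffusive'' scale and the conditioning by $\eta^{1}$ affects only the innermost portion; the cost of this mild adaptation is absorbed into the $\asymp$ in \eqref{eq:ghEs}.
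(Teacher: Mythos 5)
Your decomposition $g = h \cdot P(E\mid \text{event in }h)$ is logically valid, and your overall division into inner/outer pieces is the right direction, but the sketch is missing the key technical tool that the paper actually relies on, and the lower bound—which you correctly flag as the hard part—is left essentially unresolved.

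For the upper bound, your citation is off target: Propositions 6.2.2 and 6.2.4 of \cite{S} express the approximate multiplicativity $\Es(m)\Es(m,n)\asymp\Es(n)$, not a "base-point shift" for escape probabilities. More importantly, $\Es(m,n)$ by definition involves two walks started at a common origin, so "translating the common origin" does not reconcile it with two walks started at distinct points on $\partial\cC_{(1-q)n+1}$. What the paper does instead is introduce the \emph{last} exit time $t_1$ of $\gamma^n$ from $\partial\cC_{(1-q)n+3}$, discard the middle stretch $\gamma^n[T^{(1-q)n+1},t_1]$ to enlarge the event, and then invoke Proposition 4.6 of \cite{Mas}—the quasi-independence of the initial and terminal portions of a LERW across a scale gap—together with the Harnack principle to factor the resulting probability into an inner non-intersection term and $\Es(2^{(1-q)n+3},2^n)\asymp\Es(2^{(1-q)n},2^n)$. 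Without Proposition 4.6 of \cite{Mas}, or an equivalent, your argument cannot convert "bounded above by a two-walk non-intersection probability of the right form" into an actual bound by $\Es(2^{(1-q)n},2^n)$.

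For the lower bound, your diagnosis of the obstacle is accurate (conditional law is not a product; the separation lemma is stated for a common origin), but the proposed fix—"running the separation argument inside the mesoscopic annulus"—is not a mild adaptation. The paper's proof of Lemma \ref{3rd-lem}, following Proposition 5.3 of \cite{Mas} and Proposition 6.2.4 of \cite{S}, decomposes $\gamma^n=\gamma_1\oplus\widehat\gamma\oplus\gamma_2$, constructs an explicit nested family of cones $O_1\supset O_2\supset O_3$, a corridor $W$, a slab $W^*$, and path classes $K_1,K_2$, routes the middle piece $\widehat\gamma$ through $W$ with uniformly positive conditional probability (via Lemma 6.2.3 of \cite{S} and the domain Markov property), and only then applies the separation lemma ((6.13) of \cite{S}) plus Proposition 4.6 of \cite{Mas} and (6.43) of \cite{S}. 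This geometric bridging of the two scales is the substantive content of the lower bound, and your sketch does not supply it.
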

\begin{rem}
We now explain the significance of Prop.\ \ref{ghEs}. The function $h$ measures closeness of $\eta^{1}$ and $\eta^{2}$ in the following sense. Let 
\begin{equation}\label{eq:Ddef}
D (\eta^{1}, \eta^{2} ) = \frac{\min \Big\{ {\rm dist} \big( w^{1}, \eta^{2} \big), {\rm dist} \big( w^{2}, \eta^{1} \big) \Big\}}{2^{(1-q)n + 1 }},
\end{equation}
where $w^{i}$ stands for the endpoint of $\eta^{i}$. It turns out that 
\begin{equation*}
h (\eta^{1}, \eta^{2})  \text{ is small } \Longleftrightarrow D (\eta^{1}, \eta^{2} ) \text{ is small}.
\end{equation*}
However, we note that if $D (\eta^{1}, \eta^{2} ) \le \frac{1}{2}$, then $D (\eta^{1}, \eta^{2} )$ does not depend on the initial part of $(\eta^{1}, \eta^{2})$, i.e.,
\begin{equation*}
D (\eta^{1}, \eta^{2} ) = D \big( \eta^{1} [T^{(1-q)n}, T^{(1-q)n + 1} ],  \eta^{2} [T^{(1-q)n}, T^{(1-q)n + 1} ] \big).
\end{equation*}
This gives an intuitive reason why (loosely speaking) $h (\eta^{1}, \eta^{2})$ does not depend on the initial part of $(\eta^{1}, \eta^{2})$. Therefore, once we show that the dependence of the magnitude of $h (\eta^{1}, \eta^{2})$ is small on the initial part of $(\eta^{1}, \eta^{2} ) $ is negligible, we are able to show the same thing for $g (\eta^{1}, \eta^{2} ) $. This proposition will be one of the ingredients for the proof of Proposition \ref{CONDCOMP}.
\end{rem}

\subsection{Asymptotic independence of $g (\eta^{1}, \eta^{2})$ from initial parts}\label{sec:3.4}
The goal of this subsection is to show that roughly speaking, $g$ does not depend on the ``initial part" of  $ (\eta^{1}, \eta^{2} ) $. In other words, if two pairs $ (\eta^{1}, \eta^{2} ),  (\eta^{3}, \eta^{4} )  \in {\cal C}$ satisfy $\eta^{i} [ T^{(1-2q) n}, T^{(1-q) n} ] = \eta^{i + 2} [ T^{(1-2q) n}, T^{(1-q) n} ]$ for $i =1,2$, then $g (\eta^{1}, \eta^{2})$ is very close to $g (\eta^{3}, \eta^{4})$.

\vspace{4mm}

We recall that the ${\cal C} $, the set of pairs of paths, was defined as in \eqref{cnq}.  Take $(\eta^{1}, \eta^{2} ) \in {\cal C}$. We denote the endpoint of $\eta^{i}$ by $w^{i}$ which lies on $\partial\cC_{(1-q) n } $. We also define a truncating operation on paths by
\begin{align}\label{pi}
&\pi (\eta^{i}) = \pi_{n,q} (\eta^{i} ) = \eta^{i} [T^{(1-3q) n}, T^{(1-q) n} ]; \quad(\eta^{1}, \eta^{2}) = \pi_{n,q} (\eta^{1}, \eta^{2}) = \big( \pi_{n,q} (\eta^{1} ) , \pi_{n,q} (\eta^{2} ) \big).
\end{align}

\vspace{3mm}

We want to consider an analog of $g (\eta^{1}, \eta^{2})$ for $\pi (\eta^{1}, \eta^{2})$. With this in mind, we write (note the difference of $\overline{X}$ defined here and $X$  in \eqref{X})
\begin{align}\label{Xver}
&R^{1}, R^{2} \text{ for two independednt simple random walks started at } w^{1}, w^{2} \text{ and } \notag \\
&\overline{X} \text{ for } R^{1} \text{ conditioned that } R^{1} [1, T^{n} ] \cap \pi ( \eta^{1} ) = \emptyset.
\end{align}
Let 
\begin{equation}\label{gver}
\overline{g} (\eta^{1}, \eta^{2} ) = P \Big( \big( \LE( \overline{X} [0, T^{n} ] ) \cup \pi (\eta^{1} ) \big) \cap \big( R^{2} [0, T^{n} ] \cup  \pi (\eta^{2} ) \big) = \emptyset \Big).
\end{equation}
Note that $\overline{g} (\eta^{1}, \eta^{2} )$ is a function of $\pi (\eta^{1}, \eta^{2}) $ and it does not depend on the initial part of $(\eta^{1}, \eta^{2} )$. 

\vspace{3mm}

We next define an analog of $h( \eta^{1}, \eta^{2} )$ for $\pi (\eta^{1}, \eta^{2})$ (see \eqref{h} for the definition of $h$). To do it, let 
\begin{equation*}
\overline{s} = \inf \Big\{ k \ge 0 \  \Big| \ LE  \big( \overline{X} [0, T^{n} ]  \big)  (k) \notin \cC_{(1-q)n + 1}   \Big\}.
\end{equation*}
We define
\begin{equation}\label{hver}
\overline{h}  (\eta^{1}, \eta^{2} ) = P \Big( \big( \LE( \overline{X} [0, T^{n} ] ) [0, \overline{s}] \cup \pi (\eta^{1})  \big) \cap \big( R^{2} [0, T^{(1-q)n+1} ] \cup \pi ( \eta^{2} ) \big) = \emptyset \Big).
\end{equation}
Again we remark that $\overline{h}  (\eta^{1}, \eta^{2} )$ is a function of $\pi (\eta^{1}, \eta^{2}) $.
An easy modification of the proof of Proposition \ref{ghEs} gives that 
\begin{prop}\label{barghEs}
One has
\begin{equation}\label{modif}
\overline{g} (\eta^{1}, \eta^{2} ) \asymp \overline{h} ( \eta^{1}, \eta^{2} )\Es\big( 2^{(1-q)n }, 2^{n} \big).
\end{equation}
\end{prop}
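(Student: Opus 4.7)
The plan is to mirror the proof of Proposition \ref{ghEs} essentially verbatim. The only formal differences between Proposition \ref{barghEs} and Proposition \ref{ghEs} are: (i) the full paths $\eta^i$ are replaced by their truncations $\pi(\eta^i)$ in both the conditioning of $\overline{X}$ and in the non-intersection event; (ii) $\overline{h}$ uses $\pi(\eta^i)$ in place of $\eta^i$. Crucially, the endpoints $w^1, w^2 \in \partial\cC_{(1-q)n}$ are unchanged, so the independent walks $R^1, R^2$ start from the same locations and live in the same geometric configuration as in Proposition \ref{ghEs}.

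I would decompose the non-intersection event defining $\overline{g}$ at the first exit of $\cC_{(1-q)n+1}$, which is the intermediate scale separating what $\overline{h}$ captures (non-intersection from $\partial\cC_{(1-q)n}$ to $\partial\cC_{(1-q)n+1}$) from what should be captured by $\Es(2^{(1-q)n}, 2^n)$ (non-intersection from $\partial\cC_{(1-q)n+1}$ to $\partial\cC_n$). By the domain Markov property for LERW (Lemma \ref{domain}) applied to $\overline{X}$ and the strong Markov property for $R^2$ applied at their respective first exit times, $\overline{g}(\eta^1,\eta^2)$ factors, up to a constant, as $\overline{h}(\eta^1,\eta^2)$ times a conditional non-intersection probability for the continuations from scale $2^{(1-q)n+1}$ to scale $2^n$. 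The separation lemma (Theorem 6.1.5 of \cite{S} or Claim 3.4 of \cite{SS}) then guarantees that, conditional on the event in $\overline{h}$, the two paths are separated at scale $2^{(1-q)n+1}$ with uniformly positive probability, and on this separation event the continuation non-intersection probability is comparable to $\Es(2^{(1-q)n}, 2^n)$ by translation and monotonicity of escape probabilities.

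The one point requiring a sentence of verification is that the conditioning $R^1[1,T^n] \cap \pi(\eta^1) = \emptyset$ defining $\overline{X}$ is strictly weaker than the conditioning defining $X$ (since $\pi(\eta^1) \subset \eta^1$); but since every appeal to the separation lemma and to scale-to-scale decoupling in the proof of Proposition \ref{ghEs} is driven purely by the local geometry in the annulus $\cC_n \setminus \cC_{(1-q)n}$, together with the fact that the conditioned LERW near scale $2^{(1-q)n}$ is macroscopically well-behaved, the same estimates apply to $\overline{X}$ with the same universal constants (after trivial adjustments).

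I do not expect any substantial obstacle: the proof of Proposition \ref{ghEs} never invokes the initial portion $\eta^i[0, T^{(1-3q)n}]$ in any essential way — all its use of $\eta^i$ is through the endpoint $w^i$ and the behaviour of the paths in the annulus between scales $2^{(1-q)n}$ and $2^n$, which are identical in both setups. The whole of Proposition \ref{barghEs} therefore follows by transcribing the argument, with $\eta^i$ replaced everywhere by $\pi(\eta^i)$ and $X$ by $\overline{X}$, which is precisely the ``easy modification'' alluded to in the paper.
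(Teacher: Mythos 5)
Your proposal is correct and matches the paper's approach exactly: the paper itself disposes of Proposition~\ref{barghEs} with the single remark that it follows by ``an easy modification of the proof of Proposition~\ref{ghEs},'' which is precisely the transcription you describe. Your diagnosis of why the transcription works is also the right one — the proof of Proposition~\ref{ghEs} (via Lemmas~\ref{2nd-lem} and \ref{3rd-lem}) interacts with $\eta^i$ only through the endpoints $w^i\in\partial\cC_{(1-q)n}$ and the geometry of the paths in the outer annulus, so replacing $\eta^i$ by $\pi(\eta^i)$ and $X$ by $\overline{X}$ changes nothing in the estimates.
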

\vspace{2mm}

The following proposition shows that $\overline{g} (\eta^{1}, \eta^{2} )$ is close enough to $g  (\eta^{1}, \eta^{2} )$ for ``typical" $(\eta^{1}, \eta^{2} )$ in the sense that $\overline{h} ( \eta^{1}, \eta^{2} )$ is not too small.  More precisely, we have 
\begin{prop}\label{prop1}
There exists $C < \infty$ such that for all $n$, $q \in (0,1)$ and $(\eta^{1}, \eta^{2} ) \in {\cal C}$ satisfying 
\begin{equation}\label{cond}
\overline{h}  (\eta^{1}, \eta^{2} ) \ge 2^{ - \frac{q n}{2} },
\end{equation}
it follows that 
\begin{equation}\label{g-gver}
\big| g( \eta^{1}, \eta^{2} ) - \overline{g} (\eta^{1}, \eta^{2} ) \big| \le C 2^{ - \frac{q n}{2} } \overline{g} (\eta^{1}, \eta^{2} ).
\end{equation}
\end{prop}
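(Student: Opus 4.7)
The plan is to introduce a ``good event'' $J$ on which the walks defining $g$ and $\overline{g}$ both avoid the inner ball $\cC_{(1-2q)n}$, so that the only portions of $(\eta^1, \eta^2)$ on which the two definitions disagree---namely the pieces $\eta^i[0, T^{(1-3q)n}]$, which are contained in $\overline{\cC}_{(1-3q)n} \subset \cC_{(1-2q)n}$---play no role in the non-intersection. Concretely, set $J := \{X[1,T^n]\cap \cC_{(1-2q)n}=\emptyset\}\cap\{R^2[0,T^n]\cap\cC_{(1-2q)n}=\emptyset\}$, and denote by $\overline{J}$ the analogous event with $\overline{X}$ in place of $X$. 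On $J$ (respectively $\overline{J}$) the non-intersection events $A$ (from the definition of $g$) and $\overline{A}$ (from the definition of $\overline{g}$) agree, since neither $\LE(X)\cup R^2$ nor $\LE(\overline X)\cup R^2$ can reach the inner region where $\eta^1,\eta^2$ differ from $\pi(\eta^1),\pi(\eta^2)$.

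The first step is to show
\begin{equation*}
P(A\cap J^c)\le C 2^{-qn}\,g(\eta^1,\eta^2), \qquad P(\overline A\cap \overline J^c)\le C 2^{-qn}\,\overline g(\eta^1,\eta^2).
\end{equation*}
The unconditioned probability that a $3$D SRW starting at $\partial\cC_{(1-q)n}$ ever reaches $\partial\cC_{(1-2q)n}$ before leaving $\cC_n$ is $O(2^{-qn})$ by the discrete Green function. The same bound persists for $X$ and $\overline X$ by absolute continuity, since $P(R^1[1,T^n]\cap\eta^1=\emptyset)$ is bounded below by a universal constant (by transience, the walk has uniformly positive chance of stepping out of $\cC_{(1-q)n}$ and never returning). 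To incorporate the joint non-intersection event, we decompose $R^2$ (respectively $X$) at its last exit from $\cC_{(1-2q)n}$; applying the separation lemma (Theorem 6.1.5 of \cite{S}) to the two walks after this last exit expresses the contribution of trajectories visiting the inner ball as a harmonic-measure cost of order $2^{-qn}$ times a non-intersection probability comparable to $\overline{g}$ (respectively $g$).

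The second step is to match $P(A\cap J)$ with $P(\overline A\cap \overline J)$. On any trajectory $\omega$ that avoids $\cC_{(1-2q)n}$, the indicator $\mathbf{1}_{\omega\cap\eta^1[0,T^{(1-3q)n}]=\emptyset}$ is automatically $1$, so
\begin{equation*}
\frac{P(X=\omega)}{P(\overline X=\omega)}=\frac{P(R^1[1,T^n]\cap\pi(\eta^1)=\emptyset)}{P(R^1[1,T^n]\cap\eta^1=\emptyset)}=\frac{1}{1-P\bigl(R^1[1,T^n]\text{ hits }\eta^1[0,T^{(1-3q)n}]\,\big|\,R^1[1,T^n]\cap\pi(\eta^1)=\emptyset\bigr)}.
\end{equation*}
The conditional probability in the denominator is again $O(2^{-qn})$ by the Beurling-type estimate used in step~1 (applied to $R^1$ under the conditional measure). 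Since on $J$ the events $A$ and $\overline A$ coincide, summing the comparison of densities over $\omega$ gives $P(A\cap J)=(1+O(2^{-qn}))\,P(\overline A\cap \overline J)$. Combining with step~1, we obtain $g=\overline g\,(1+O(2^{-qn}))$, which is strictly stronger than the announced bound; the hypothesis $\overline h\ge 2^{-qn/2}$, together with Proposition~\ref{barghEs}, guarantees that $\overline g$ is large enough for the relative error to be meaningful and the stated estimate \eqref{g-gver} to follow.

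The main obstacle is the first step: the events $\overline A$ and $\overline J^c$ are not independent, and conditioning on non-intersection may bias the walks toward or away from the inner region. Handling this cleanly requires the separation lemma so that the geometric control it provides can be combined with a last-exit decomposition to reduce the probability of an atypical deep excursion to a clean product of a harmonic-measure factor and a non-intersection probability. The remaining ingredients---the uniform lower bound on $P(R^1[1,T^n]\cap\eta^1=\emptyset)$, the absolute-continuity comparison of $X$ and $\overline X$, and the identification of $A$ with $\overline A$ on $J$---are essentially bookkeeping once this estimate is established.
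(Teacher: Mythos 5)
Your high-level strategy (split on a good event where both walks avoid an inner ball, then compare densities on the good event and bound the bad-event contribution) is in the same spirit as the paper's proof, but the execution diverges at the crucial step, and I think there is a real gap.

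The paper does not attempt to show that the bad-event probability is small \emph{relative to} the joint non-intersection probability $g$ or $\overline g$ directly. Instead it uses the ratio form $g = P(H_1,H_2)/P(H_2)$, $\overline g = P(\overline H_1,\overline H_2)/P(\overline H_2)$ and bounds the two differences $P(\overline H_2)-P(H_2)$ and $P(\overline H_1,\overline H_2)-P(H_1,H_2)$ by $c\,2^{-2qn}P(H_2)$ via a strong-Markov/harmonic-measure argument (the factor $2^{-2qn}$ comes from using $\cC_{(1-3q)n}$ as the inner ball). It then invokes the hypothesis $\overline h\ge 2^{-qn/2}$ \emph{together with} Proposition~\ref{barghEs} and \eqref{Eslogasym} to lower-bound $P(\overline H_1,\overline H_2)\ge c\,2^{-3qn/2}P(\overline H_2)$, and the quotient of these two facts is exactly the $2^{-qn/2}$ in the statement. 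The hypothesis is therefore not a cosmetic device to make a relative error ``meaningful''; it is what turns the absolute bound $c\,2^{-2qn}P(H_2)$ into a relative one. Your concluding paragraph treats the hypothesis as an afterthought, which is a sign that something is missing.

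The missing piece is your Step~1 estimate $P(A\cap J^c)\le C2^{-qn}g$ (and the barred analogue). You derive it from two ingredients: the unconditional harmonic-measure bound $P(J^c)=O(2^{-qn})$, and a claim that decomposing at the last exit from $\cC_{(1-2q)n}$ plus the separation lemma lets you peel off a factor ``comparable to $\overline g$''. This last claim is exactly the hard part, and it is only asserted, not shown. The difficulty is that on the bad event the last-exit point of the walk lies on $\partial\cC_{(1-2q)n}$, so the remaining non-intersection probability from there is of the order $\Es(2^{(1-2q)n},2^n)\approx 2^{-2\alpha q n}$, which is smaller than $\Es(2^{(1-q)n},2^n)\approx 2^{-\alpha q n}$ by a further factor $2^{-\alpha q n}$; meanwhile $g\asymp h\cdot\Es(2^{(1-q)n},2^n)$ with $h$ potentially as small as $2^{-qn/2}$. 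Nothing in the sketch shows that the factor you recover after the last exit really is $\gtrsim g/P(\overline H_2)$ uniformly in the configuration; without that, your conclusion does not follow. Note also that with your choice of inner ball $\cC_{(1-2q)n}$ (which gives only $P(J^c)=O(2^{-qn})$), the paper's route is unavailable, since $2^{-qn}$ divided by the lower bound $2^{-3qn/2}$ from the hypothesis would \emph{blow up}; one needs the smaller ball $\cC_{(1-3q)n}$ to make that bookkeeping close. So Step~1, as written, neither reduces to the paper's argument nor supplies a complete replacement.

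Your Step~2 (the density comparison on the good event) is sound and essentially parallels the paper's comparison of $P(H_2)$ with $P(\overline H_2)$: on paths avoiding the inner region, $1_{H_2}=1_{\overline H_2}$, and the density ratio is a constant $P(\overline H_2)/P(H_2)=1+O(2^{-2qn})$ (you wrote $O(2^{-qn})$, which is a weaker but still correct bound for the ball you chose). The fix for Step~1 is either to carry out the separation-lemma/last-exit argument in full detail---tracking how the hypothesis on $\overline h$ enters---or to adopt the paper's cleaner route: work with the numerator and denominator of $g$, $\overline g$ separately, shrink the inner ball to $\cC_{(1-3q)n}$ to make the harmonic-measure cost $2^{-2qn}$, and then invoke Proposition~\ref{barghEs} and \eqref{Eslogasym} under the hypothesis \eqref{cond} to convert the absolute error into the relative one $2^{-qn/2}$ of \eqref{g-gver}.
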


\begin{proof} We follow the notations introduced at the beginning of Section \ref{sec:3.3}.
Take $(\eta^{1}, \eta^{2} ) \in {\cal C}$ satisfying \eqref{cond} 
and  set 
\begin{align}\label{event}
&H_{1} =  \Big\{ \big( \LE( R^{1} [0, T^{n} ] ) \cup \eta^{1} [0, {\rm len} (\eta^{1} ) ] \big) \cap \big( R^{2} [0, T^{n} ] \cup  \eta^{2} [1, {\rm len} (\eta^{2} ) ] \big) = \emptyset \Big\}; \notag \\
&H_{2} = \Big\{ R^{1} [1, T^{n} ] \cap \eta^{1} = \emptyset \Big\}; \notag \\
&\overline{H}_{1} =  \Big\{ \big( \LE( R^{1} [0, T^{n} ] ) \cup \pi (\eta^{1}) \big) \cap \big( R^{2} [0, T^{n} ] \cup  \pi (\eta^{2})  \big) = \emptyset \Big\}; \notag \\
&\overline{H}_{2} = \Big\{ R^{1} [1, T^{n} ] \cap \pi (\eta^{1}) = \emptyset \Big\}.
\end{align}
Then by definition, we have
\begin{align}\label{ggver}
&g (\eta^{1}, \eta^{2} ) = P \big( H_{1} \ \big| \ H_{2} \big) = \frac{P \big( H_{1}, \ H_{2} \big) }{P \big( H_{2} \big) }; \quad \overline{g} (\eta^{1}, \eta^{2} ) = P \big( \overline{H}_{1} \ \big| \ \overline{H}_{2} \big) = \frac{P \big( \overline{H}_{1}, \ \overline{H}_{2} \big) }{P \big( \overline{H}_{2} \big) }.
 \end{align}
 
We first show that $P \big( H_{2} \big)$ is close to $P \big( \overline{H}_{2} \big)$. It is clear that $P \big( H_{2} \big) \le P \big( \overline{H}_{2} \big)$ since $\pi (\eta^{1} ) \subset \eta^{1}$. On the other hand, we have
\begin{equation*}
P \big( \overline{H}_{2} \big) - P \big( H_{2} \big) \le P \Big( \overline{H}_{2}, \ R^{1} [1, T^{n} ] \cap\cC_{(1-3q)n} \neq \emptyset \Big).
\end{equation*}
 In oder to bound the RHS of the inequality above, set
$$
 \widetilde{B} = B \Big( w^{1}, \frac{2^{(1-q)n}}{3} \Big)\mbox{ and }J = \eta^{1} \cap  \widetilde{B} = \pi ( \eta^{1} ) \cap  \widetilde{B}.
 $$
We also let 
\begin{equation*}
u = \inf \{ k \ | \ R^{1} (k) \in \partial  \widetilde{B} \}.
\end{equation*}
By the strong Markov property and Proposition 1.5.10 of \cite{Lawb},
\begin{equation*}
P \Big( \overline{H}_{2}, \ R^{1} [1, T^{n} ] \cap \cC_{(1-3q)n}  \neq \emptyset \Big) \le c 2^{-2 q n} P \Big( R^{1} [1, u] \cap J = \emptyset \Big).
\end{equation*}
We write $D = \big\{ x \in \partial B \ \big| \ x \notin B \big( \frac{5}{4} \cdot 2^{(1-q)n} \big) \big\}$ for a subset of $\partial  \widetilde{B}$. Then by Proposition 6.1.1 of \cite{S} and Proposition 1.5.10 of \cite{Lawb} again, we see that 
\begin{align*}
&\;P \big( H_{2} \big) \ge P \Big( R^{1} [1, u] \cap J = \emptyset, \ R^{1} (u) \in D, \  R^{1} [u, T^{n} ] \cap B \big( 2^{(1-q)n } \big) = \emptyset \Big) \\
\ge &\;c P \Big( R^{1} [1, u] \cap J = \emptyset, \ R^{1} (u) \in D \Big) \ge c P \Big( R^{1} [1, u] \cap J = \emptyset \Big).
\end{align*}
Therefore, we have
\begin{equation}\label{h2}
P \big( H_{2} \big) = P \big( \overline{H}_{2} \big) \big( 1 + O (2^{-2 q n } ) \big).
\end{equation}

We next compare $P \big( H_{1}, \ H_{2} \big) $ and $P \big( \overline{H}_{1}, \ \overline{H}_{2} \big)$. Note that $H_{1} \subset \overline{H}_{1}$. Thus, 
\begin{equation*}
P \big( H_{1}, \ H_{2} \big) \le P \big( \overline{H}_{1}, \ \overline{H}_{2} \big).
\end{equation*}
Moreover, by the strong Markov property as above, it follows that 
\begin{align}\label{H1}
&\;P \big( \overline{H}_{1}, \ \overline{H}_{2} \big) - P \big( H_{1}, \ H_{2} \big) \notag \\
\le&\; P \Big( \overline{H}_{1}, \ \overline{H}_{2}, \  R^{1} [0, T^{n}] \cap \cC_{(1-3q)n} \neq \emptyset \Big)+ P \Big( \overline{H}_{1}, \ \overline{H}_{2}, \  R^{2} [0, T^{n}] \cap \cC_{(1-3q)n}  \neq \emptyset \Big)  \\
\le&\; c 2^{-2 q n} P \Big( R^{1} [1, u] \cap J = \emptyset \Big) \le c 2^{-2 q n} P \big( H_{2} \big).\notag
\end{align}
By \eqref{modif} and \eqref{cond}, we see that 
\begin{equation*}
\overline{g} (\eta^{1}, \eta^{2} )  \ge c 2^{ - \frac{q n}{2} }\Es\big( 2^{(1-q)n }, 2^{n} \big),
\end{equation*}
Combining this with \eqref{ggver}, we have 
\begin{equation}\label{low}
P \big( \overline{H}_{1}, \ \overline{H}_{2} \big) \ge c 2^{ - \frac{q n}{2} } P \big( \overline{H}_{2} \big) 
\Es \big( 2^{(1-q)n }, 2^{n} \big).
\end{equation}
Therefore, by \eqref{Eslogasym} and the fact that $\alpha < 1$, we have
\begin{equation}\label{eq:3.42}
P \big( \overline{H}_{1}, \ \overline{H}_{2} \big) \ge c 2^{ - \frac{3 q n}{2} } P \big( \overline{H}_{2} \big) .
\end{equation}
Thus, using \eqref{eq:3.42} and \eqref{H1}, we conclude that 
\begin{equation*}
P \big( \overline{H}_{1}, \ \overline{H}_{2} \big) - P \big( H_{1}, \ H_{2} \big) \le c 2^{ - \frac{q n}{2} } P \big( \overline{H}_{1}, \ \overline{H}_{2} \big),
\end{equation*}
which gives
\begin{equation}\label{concl}
P \big( H_{1}, \ H_{2} \big) = P \big( \overline{H}_{1}, \ \overline{H}_{2} \big) \big( 1 + O (2^{ - \frac{q n}{2} } ) \big).
\end{equation}
Finally, using \eqref{ggver}, \eqref{h2} and \eqref{concl}, we have 
\begin{equation*}
g (\eta^{1}, \eta^{2} ) = \frac{P \big( H_{1}, \ H_{2} \big) }{P \big( H_{2} \big) } = \frac{P \big( \overline{H}_{1}, \ \overline{H}_{2} \big) }{P \big( \overline{H}_{2} \big) } \Big( 1 + O (2^{ - \frac{q n}{2} } ) \Big) = \overline{g} (\eta^{1}, \eta^{2} ) \big( 1 + O (2^{ - \frac{q n}{2} } ) \big),
\end{equation*}
which completes the proof.
\end{proof}

\subsection{Comparison of conditional probabilities}\label{sec:3.5}
The goal of this subsection is \eqref{henkei-4} and \eqref{henkei-5}, in which $P ( A_{n} \ |  \  A_{n, q}  )$ and $P ( B_{n} \ |  \  B_{n, q}  )$ are both rewritten (with a small error term) into weighted sums of $\overline{g}(\cdot,\cdot)$ which allows an easy comparison using results from Section \ref{sec:coupling}.

\vspace{3mm}

We recall that $\mu_{n,q}$ was defined as in \eqref{mu} which is a probability measure on ${\cal C}$ obtained by the conditional distribution on $A_{n,q}$. We also recall 
the decomposition of $P ( A_{n} |    A_{n, q}  )$ in \eqref{henkei}. 
The next proposition shows that we can replace $g (\eta^{1}, \eta^{2} ) $ in the RHS of \eqref{henkei} with $\overline{g} (\eta^{1}, \eta^{2} )$ with small enough error terms.

\begin{prop}\label{replace}
One has that 
\begin{equation}\label{henkei-3}
P \Big( A_{n} \ \Big|  \  A_{n, q}  \Big) = \big(1 + O (2^{ - \frac{q n}{2} } ) \big) \sum_{(\eta^{1}, \eta^{2} ) \in {\cal C}} \overline{g} (\eta^{1}, \eta^{2} ) \mu_{n, q} (\eta^{1}, \eta^{2} ). 
\end{equation}
\end{prop}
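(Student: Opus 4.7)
The plan is to split the sum in \eqref{henkei} according to whether $\overline{h}$ is large or small, apply Proposition \ref{prop1} on the large-$\overline{h}$ piece, and show that the small-$\overline{h}$ piece is a negligible remainder. Define
\begin{equation*}
{\cal G} := \{(\eta^{1}, \eta^{2}) \in {\cal C} : \overline{h}(\eta^{1}, \eta^{2}) \ge 2^{-qn/2}\}, \qquad {\cal B} := {\cal C} \setminus {\cal G},
\end{equation*}
so that by \eqref{henkei}, $P(A_n \mid A_{n,q}) = \sum_{{\cal G}} g\, \mu_{n,q} + \sum_{{\cal B}} g\, \mu_{n,q}$.

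On ${\cal G}$, condition \eqref{cond} is satisfied by construction, so Proposition \ref{prop1} gives $g(\eta^{1}, \eta^{2}) = \overline{g}(\eta^{1}, \eta^{2})\big(1 + O(2^{-qn/2})\big)$ uniformly in $(\eta^{1}, \eta^{2}) \in {\cal G}$. Summing against $\mu_{n,q}$ yields the main term $\sum_{{\cal G}} g\, \mu_{n,q} = (1 + O(2^{-qn/2}))\sum_{{\cal G}} \overline{g}\, \mu_{n,q}$, which we would like to extend to all of ${\cal C}$ at the cost of a comparable error.

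For the bad set I would bound $g$ and $\overline{g}$ separately. By Proposition \ref{barghEs} and the definition of ${\cal B}$, $\overline{g}(\eta^{1},\eta^{2}) \le C\,\overline{h}(\eta^{1},\eta^{2})\, \Es(2^{(1-q)n}, 2^n) \le C 2^{-qn/2}\, \Es(2^{(1-q)n}, 2^n)$ on ${\cal B}$, hence $\sum_{{\cal B}} \overline{g}\, \mu_{n,q} \le C 2^{-qn/2}\, \Es(2^{(1-q)n}, 2^n)$. For $g$, the trivial bound $h \le 1$ combined with Proposition \ref{ghEs} gives $g \le C\,\Es(2^{(1-q)n}, 2^n)$, hence $\sum_{{\cal B}} g\, \mu_{n,q} \le C\,\Es(2^{(1-q)n}, 2^n)\, \mu_{n,q}({\cal B})$. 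The complementary lower bound $\sum_{\cal C} \overline{g}\, \mu_{n,q} \ge c\, \Es(2^{(1-q)n}, 2^n)$ follows from the standard separation lemma, since a positive $\mu_{n,q}$-fraction of pairs satisfy $\overline{h} \ge c > 0$, and this is what converts the above absolute bounds into bounds that are small relative to the main term.

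The crux and main obstacle is then the quantitative quantile estimate $\mu_{n,q}({\cal B}) \le C 2^{-qn/2}$, i.e., that conditional on $A_{n,q}$ the terminal configuration rarely has anomalously small $\overline{h}$. This is a sharpening of the qualitative separation phenomenon for LERW--SRW pairs: not only are the walks separated at $\partial\cC_{(1-q)n}$ with positive probability, but the lower tail of the degree of separation decays polynomially in the width of the annulus over which the separation takes place. I would obtain it by iterating the separation lemma (Theorem 6.1.5 of \cite{S}, or Claim 3.4 of \cite{SS}) across the $O(qn)$ dyadic sub-shells between $\cC_{(1-3q)n}$ and $\cC_{(1-q)n}$, combining a uniform per-shell separation gain with a geometric Markov argument in the spirit of the coupling discussed in Section \ref{sec:words}. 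Once this quantile separation is in hand, assembling the three bounds above yields \eqref{henkei-3}.
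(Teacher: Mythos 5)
Your decomposition of ${\cal C}$ into ${\cal G}$ (where \eqref{cond} holds) and ${\cal B}$, and your handling of the ${\cal G}$-part via Proposition \ref{prop1}, match the paper's argument exactly. The divergence is in how you control $\sum_{{\cal B}} g\,\mu_{n,q}$, and there is a genuine gap there.

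To bound $\sum_{{\cal B}} g\,\mu_{n,q}$ you use the crude bound $h\le 1$ (giving $g\le C\,\Es(2^{(1-q)n},2^n)$ pointwise via Proposition \ref{ghEs}) and then reduce to the quantile estimate $\mu_{n,q}({\cal B})\le C2^{-qn/2}$. This is a nontrivial quantitative lower-tail bound on the degree of separation under the conditional law $\mu_{n,q}$. The separation lemma as quoted (Theorem 6.1.5 of \cite{S}, Claim 3.4 of \cite{SS}) gives only $\mu_{n,q}(\overline{h}\ge c)\ge c'$ for universal constants $c,c'$, not polynomial decay of $\mu_{n,q}(\overline{h}<\varepsilon)$ as $\varepsilon\to 0$. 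Your sketch --- iterate separation across $O(qn)$ sub-shells with a geometric Markov argument --- would at best show that with high probability the walks were well-separated at \emph{some} intermediate scale, which does not control the configuration at the terminal scale where $\overline{h}$ is evaluated, and it is not clear that this machinery yields the specific exponent $qn/2$. You do not carry it through, and it is in any case more than the problem requires.

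The paper's route avoids any estimate on $\mu_{n,q}({\cal B})$ by exploiting a pointwise domination $g(\eta^1,\eta^2)\le C\,\overline{g}(\eta^1,\eta^2)$ valid for \emph{all} $(\eta^1,\eta^2)\in{\cal C}$, not merely under \eqref{cond}. This follows from two observations already present in the proof of Proposition \ref{prop1} that are established \emph{without} using \eqref{cond}: from $\pi(\eta^i)\subset\eta^i$ one has $H_1\subset\overline{H}_1$ and $H_2\subset\overline{H}_2$, hence $P(H_1,H_2)\le P(\overline{H}_1,\overline{H}_2)$, while \eqref{h2} gives $P(H_2)=P(\overline{H}_2)(1+O(2^{-2qn}))$; combining these through \eqref{ggver} yields $g\le C\overline{g}$. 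On ${\cal B}$ one has $\overline{h}<2^{-qn/2}$, so Proposition \ref{barghEs} gives $\overline{g}\le C2^{-qn/2}\Es(2^{(1-q)n},2^n)$, and hence $g\le C'2^{-qn/2}\Es(2^{(1-q)n},2^n)$ there as well. Summing against $\mu_{n,q}$ and comparing with the lower bound \eqref{compara-es} finishes the argument. Note that you already derived the correct bound $\sum_{{\cal B}}\overline{g}\,\mu_{n,q}\le C2^{-qn/2}\Es(2^{(1-q)n},2^n)$ in your third paragraph; the missing step is that the same bound applies to $\sum_{{\cal B}} g\,\mu_{n,q}$ via $g\lesssim\overline{g}$, so that no control on $\mu_{n,q}({\cal B})$ is needed at all.
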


\begin{proof}
We set 
\begin{equation*}
{\cal C}_{1} = \big\{ (\eta^{1}, \eta^{2} ) \in {\cal C} \ \big| \ (\eta^{1}, \eta^{2} ) \text{ satisfies \eqref{cond}} \big\}
\end{equation*} 
and let ${\cal C}_{2} = {\cal C} \setminus {\cal C}_{1}$. 

By the separation lemma (see Theorem 6.1.5 of \cite{S} or Claim 3.4 of \cite{SS} for the separation lemma), we see that there exists a universal constant $c, c'> 0$ such that for all $n$ and $q \in (0,1)$
\begin{equation*}
\mu_{n,q} \Big( \big\{ (\eta^{1}, \eta^{2} ) \in {\cal C} \ \big| \  \eta^{1} \text{ and } \eta^{2} \text{ are } c \text{-well-separated} \big\} \big) \ge c',
\end{equation*}
where we say $\eta^{1}$ and $\eta^{2}$ are $c$-well-separated if 
\begin{equation}\label{eq:cwell}
\min \Big\{ \text{dist} \Big( \eta^{1} \big( {\rm len} (\eta^{1} ) \big), \eta^{2} \Big),    \text{dist} \Big( \eta^{2} \big( {\rm len} (\eta^{2} ) \big), \eta^{1} \Big)  \ge c 2^{(1-q)n}.
 \end{equation}
If $\eta^{1}$ and $\eta^{2}$ are  $c$-well-separated, then it is easy to see that there exists $c' > 0$
\begin{equation*}
\overline{h} (\eta^{1}, \eta^{2} ) \ge c',
\end{equation*}
which gives 
\begin{equation*}
\overline{g} (\eta^{1}, \eta^{2} ) \ge c \Es \big( 2^{(1-q)n}, 2^{n} \big).
\end{equation*}
Therefore, we have 
\begin{equation}\label{compara-es}
\sum_{(\eta^{1}, \eta^{2} ) \in {\cal C}} \overline{g} (\eta^{1}, \eta^{2} ) \mu_{n, q} (\eta^{1}, \eta^{2} ) \asymp \Es \big( 2^{(1-q)n}, 2^{n} \big).
\end{equation}

Combining this with Proposition \ref{prop1}, we have 
\begin{align*}
P \Big( A_{n} \ \Big|  \  A_{n, q}  \Big)=&\; \sum_{(\eta^{1}, \eta^{2} ) \in {\cal C}_{1}} g (\eta^{1}, \eta^{2} ) \mu_{n, q} (\eta^{1}, \eta^{2} ) + \sum_{(\eta^{1}, \eta^{2} ) \in {\cal C}_{2}} g (\eta^{1}, \eta^{2} ) \mu_{n, q} (\eta^{1}, \eta^{2} ) \\
=&\; \big(1 + O (2^{ - \frac{q n}{2} } ) \big) \sum_{(\eta^{1}, \eta^{2} ) \in {\cal C}_{1}} \overline{g} (\eta^{1}, \eta^{2} ) \mu_{n, q} (\eta^{1}, \eta^{2} ) + O (2^{ - \frac{q n}{2} } ) \sum_{(\eta^{1}, \eta^{2} ) \in {\cal C}} \overline{g} (\eta^{1}, \eta^{2} ) \mu_{n, q} (\eta^{1}, \eta^{2} ) \\
=&\; \big(1 + O (2^{ - \frac{q n}{2} } ) \big) \sum_{(\eta^{1}, \eta^{2} ) \in {\cal C}} \overline{g} (\eta^{1}, \eta^{2} ) \mu_{n, q} (\eta^{1}, \eta^{2} ),
\end{align*}
which gives the proposition.
\end{proof}
The following corollary is a by product of the proof above (see \eqref{compara-es}).
\begin{cor} One has that
\begin{equation}\label{couple-3}
P \Big( A_{n} \ \Big|  \  A_{n, q}  \Big) \asymp \Es \big( 2^{(1-q)n}, 2^{n} \big).
\end{equation}
\end{cor}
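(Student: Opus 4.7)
The plan is to extract this corollary directly from the proof of Proposition \ref{replace}, since the required estimate \eqref{compara-es} was already established there. First I would recall that Proposition \ref{replace} rewrites
\[
P\bigl(A_n\mid A_{n,q}\bigr)\;=\;\bigl(1+O(2^{-qn/2})\bigr)\sum_{(\eta^1,\eta^2)\in{\cal C}}\overline{g}(\eta^1,\eta^2)\,\mu_{n,q}(\eta^1,\eta^2),
\]
so the corollary reduces to showing that this weighted average of $\overline{g}$ is comparable, up to universal constants, to $\Es(2^{(1-q)n},2^n)$.

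For the upper bound I would invoke Proposition \ref{barghEs}, which gives $\overline{g}(\eta^1,\eta^2)\asymp\overline{h}(\eta^1,\eta^2)\Es(2^{(1-q)n},2^n)$, together with the trivial bound $\overline{h}\le 1$, to conclude
\[
\sum_{(\eta^1,\eta^2)\in{\cal C}}\overline{g}(\eta^1,\eta^2)\,\mu_{n,q}(\eta^1,\eta^2)\;\le\;C\,\Es\bigl(2^{(1-q)n},2^n\bigr).
\]
For the matching lower bound I would use the separation lemma (Theorem 6.1.5 of \cite{S} or Claim 3.4 of \cite{SS}) exactly as in the proof of Proposition \ref{replace}: the $\mu_{n,q}$-mass of the set of $c$-well-separated pairs in the sense of \eqref{eq:cwell} is bounded below by a universal constant $c'>0$. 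On this set, $\overline{h}(\eta^1,\eta^2)$ is bounded below by a universal constant by its definition in \eqref{hver}, and hence so is $\overline{g}(\eta^1,\eta^2)/\Es(2^{(1-q)n},2^n)$ by Proposition \ref{barghEs}. Restricting the sum to this set then yields
\[
\sum_{(\eta^1,\eta^2)\in{\cal C}}\overline{g}(\eta^1,\eta^2)\,\mu_{n,q}(\eta^1,\eta^2)\;\ge\;c\,\Es\bigl(2^{(1-q)n},2^n\bigr),
\]
which is \eqref{compara-es}.

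Combining the two displays with the expansion from Proposition \ref{replace} gives the claim. There is no real obstacle here — the main work was already carried out in Proposition \ref{replace}, and the only additional input is the observation that the multiplicative error $1+O(2^{-qn/2})$ is absorbed into the $\asymp$ symbol. I would therefore state the proof in one or two sentences, referencing \eqref{henkei-3} and \eqref{compara-es} directly.
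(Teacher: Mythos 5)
Your proposal is correct and takes essentially the same route as the paper: the corollary is explicitly noted in the paper to be a by-product of the proof of Proposition \ref{replace}, namely the estimate \eqref{compara-es}, which you re-derive exactly as the paper does (separation lemma for the lower bound, Proposition \ref{barghEs} together with $\overline{h}\le 1$ for the upper bound) and then combine with \eqref{henkei-3}.
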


\vspace{4mm}

Recall that $\overline{g} (\eta^{1}, \eta^{2} ) $ is a function of $\pi (\eta^{1}, \eta^{2} )$ (see \eqref{gver} for the definition of $\overline{g} (\eta^{1}, \eta^{2} ) $). With this in mind, we define a set of pairs of paths $\overline{ {\cal C}}$ by
\begin{equation}
\overline{ {\cal C}} = \big\{  (\overline{\eta}^{1}, \overline{\eta}^{2} )  \ \big| \   (\overline{\eta}^{1}, \overline{\eta}^{2} ) \text{ satisfies }  (iii),  (iv) \text{ and } (v)  \big\} 
\end{equation}
where
\begin{align}
& (iii) \ \overline{\eta}^{1} \text{ is a SAP and } \overline{\eta}^{2}  \text{ is a path}. \\
& (iv) \ \overline{\eta}^{i} (0) \in \partial \cC_{(1-3q) n } , \ \overline{\eta}^{i} [0, {\rm len} (\overline{\eta}^{i} ) -1 ] \subset \cC_{(1-q) n }  \mbox{ and } \overline{\eta}^{i} \big( {\rm len} (\overline{\eta}^{i} ) \big) \in \partial \cC_{(1-q) n }  \text{ for } i=1,2. \\
& (v) \ \overline{\eta}^{1} [0,  {\rm len} (\overline{\eta}^{1} ) ] \cap \overline{\eta}^{2} [0,  {\rm len} (\overline{\eta}^{2} ) ] = \emptyset.
\end{align}
With little abuse of notation, we can then define $\overline{g} (\overline{\eta}^{1}, \overline{\eta}^{2} )$ for $(\overline{\eta}^{1}, \overline{\eta}^{2} ) \in \overline{ {\cal C}}$ through $\overline{g} ({\eta}^{1},{\eta}^{2} )$ for any $ ({\eta}^{1},{\eta}^{2} )$ such that $\pi({\eta}^{1},{\eta}^{2} )=(\overline{\eta}^{1}, \overline{\eta}^{2} )$.

We next define a probability measure $\overline{ \mu}_{n,q}$ on $\overline{ {\cal C}} $. For $(\overline{\eta}^{1}, \overline{\eta}^{2} ) \in \overline{ {\cal C}}$, define $\overline{ \mu}_{n,q} (\overline{\eta}^{1}, \overline{\eta}^{2} )$ by
\begin{eqnarray}\label{muver}
\overline{ \mu}_{n,q} (\overline{\eta}^{1}, \overline{\eta}^{2} )=\left\{ \begin{array}{ll}
\mu_{n,q}  \Big( F_{n, q}  (\overline{\eta}^{1}, \overline{\eta}^{2} ) \Big)  & \text{ if } F_{n, q}  (\overline{\eta}^{1}, \overline{\eta}^{2} )\neq \emptyset \\
0 & \text{ otherwise}  \\
\end{array} \right.
\end{eqnarray}
where 
\begin{equation}\label{fnq}
F_{n, q}  (\overline{\eta}^{1}, \overline{\eta}^{2} ) = \big\{ (\eta^{1}, \eta^{2} ) \in {\cal C} \ \big| \ \pi (\eta^{1}, \eta^{2} )  = (\overline{\eta}^{1}, \overline{\eta}^{2} ) \big\},
\end{equation}
see \eqref{mu} and \eqref{pi} for $\mu_{n,q}$ and  $\pi (\eta^{1}, \eta^{2})$ respectively.
Note that $\overline{ \mu}_{n,q} $ is a probability measure on $\overline{ {\cal C}}$ which is induced by 
\begin{equation}
\pi  \big( \gamma^{n} [0, T^{(1-q)n}], \lambda  [0, T^{(1-q)n} ] \big) \text{ conditioned on } A_{n,q}.
\end{equation}

\vspace{4mm}

The next corollary rephrases Proposition \ref{replace} in terms of $\overline{ \mu}_{n,q} $.

\begin{cor}\label{cor1}
It follows that 
\begin{equation}\label{henkei-4}
P \Big( A_{n} \ \Big|  \  A_{n, q}  \Big) = \big(1 + O (2^{ - \frac{q n}{2} } ) \big) \sum_{(\overline{\eta}^{1}, \overline{\eta}^{2} ) \in \overline{ {\cal C}}} \overline{g} (\overline{\eta}^{1}, \overline{\eta}^{2} ) \overline{ \mu}_{n,q} (\overline{\eta}^{1}, \overline{\eta}^{2} ).
\end{equation}
\end{cor}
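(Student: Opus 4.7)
The plan is simply to reorganize the sum from Proposition \ref{replace} by grouping pairs $(\eta^{1},\eta^{2})\in{\cal C}$ according to their images under $\pi$. The key structural observation, already noted at the start of this subsection, is that $\overline{g}(\eta^{1},\eta^{2})$ depends on $(\eta^{1},\eta^{2})$ only through $\pi(\eta^{1},\eta^{2})$; this is built into its definition in \eqref{Xver}--\eqref{gver}, where only $\pi(\eta^{1})$ and $\pi(\eta^{2})$ appear.

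First I would check that $\pi$ indeed maps ${\cal C}$ into $\overline{{\cal C}}$. For $(\eta^{1},\eta^{2})\in{\cal C}$, the truncation $\pi(\eta^{1})$ inherits the self-avoiding property from $\eta^{1}\in\Gamma$, and $\pi(\eta^{i})$ starts on $\partial\cC_{(1-3q)n}$ and ends on $\partial\cC_{(1-q)n}$ with its interior in $\cC_{(1-q)n}$, verifying conditions (iii)--(iv); condition (v) follows since the non-intersection condition defining ${\cal C}$ on the full paths up to $T^{(1-q)n}$ entails non-intersection of the shorter subpaths $\pi(\eta^{1}),\pi(\eta^{2})$. Hence the collection $\{F_{n,q}(\overline{\eta}^{1},\overline{\eta}^{2})\}_{(\overline{\eta}^{1},\overline{\eta}^{2})\in\overline{{\cal C}}}$ defined in \eqref{fnq} forms a partition of ${\cal C}$ (some fibers possibly empty, accounted for by the convention in \eqref{muver}).

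Next I would decompose the sum in \eqref{henkei-3} along this partition:
\begin{align*}
\sum_{(\eta^{1},\eta^{2})\in{\cal C}}\overline{g}(\eta^{1},\eta^{2})\mu_{n,q}(\eta^{1},\eta^{2})
&=\sum_{(\overline{\eta}^{1},\overline{\eta}^{2})\in\overline{{\cal C}}}\;\sum_{(\eta^{1},\eta^{2})\in F_{n,q}(\overline{\eta}^{1},\overline{\eta}^{2})}\overline{g}(\eta^{1},\eta^{2})\mu_{n,q}(\eta^{1},\eta^{2}).
\end{align*}
On each fiber $F_{n,q}(\overline{\eta}^{1},\overline{\eta}^{2})$ the value of $\overline{g}$ is constant and equals $\overline{g}(\overline{\eta}^{1},\overline{\eta}^{2})$ (in the sense of the extended definition given just after \eqref{fnq}); pulling it out of the inner sum and using the defining identity $\sum_{(\eta^{1},\eta^{2})\in F_{n,q}(\overline{\eta}^{1},\overline{\eta}^{2})}\mu_{n,q}(\eta^{1},\eta^{2})=\mu_{n,q}(F_{n,q}(\overline{\eta}^{1},\overline{\eta}^{2}))=\overline{\mu}_{n,q}(\overline{\eta}^{1},\overline{\eta}^{2})$ from \eqref{muver} yields exactly the right-hand side of \eqref{henkei-4}. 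Combining this identity with \eqref{henkei-3} gives the corollary.

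There is essentially no obstacle here; the statement is a bookkeeping consequence of Proposition \ref{replace} together with the fact that $\overline{g}$ descends to a function on $\overline{{\cal C}}$. The only mild point to be careful about is the case of empty fibers $F_{n,q}(\overline{\eta}^{1},\overline{\eta}^{2})=\emptyset$, which contribute zero on both sides thanks to the convention in \eqref{muver}.
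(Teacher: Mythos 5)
Your argument is correct and is exactly the intended one: the paper gives no explicit proof of Corollary \ref{cor1}, treating it as an immediate rephrasing of Proposition \ref{replace} obtained by regrouping the sum over fibers of $\pi$ and using that $\overline{g}$ is constant on each fiber. Your additional verifications (that $\pi$ maps ${\cal C}$ into $\overline{{\cal C}}$ and that empty fibers are handled by the convention in \eqref{muver}) are the right small checks to make this airtight.
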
 

We now turn to $P ( B_{n}|   B_{n, q} )$. We let $ \overline{ \mu}^{\star}_{n,q}$ be the probability measure on $\overline{ {\cal C}}$ which is induced by 
\begin{equation}
\pi \Big( \gamma_1 [0, T^{(1-q)n}],  \lambda_1 [0, T^{(1-q)n} ]  \Big) \text{ conditioned on } B_{n,q}
\end{equation}
As the next proposition can be proved very similarly, we will omit its proof. 
\begin{prop}\label{bnbnq}
It follows that 
\begin{equation}\label{henkei-5}
P \Big( B_{n} \ \Big|  \  B_{n, q}  \Big) = \big(1 + O (2^{ - \frac{q n}{2} } ) \big) \sum_{(\overline{\eta}^{1}, \overline{\eta}^{2} ) \in \overline{ {\cal C}}} \overline{g} (\overline{\eta}^{1}, \overline{\eta}^{2} ) \overline{ \mu}^{\star}_{n,q} (\overline{\eta}^{1}, \overline{\eta}^{2} ).
\end{equation}
\end{prop}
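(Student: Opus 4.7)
The proof is a near-verbatim transcription of the chain Proposition~\ref{replace} $\to$ Corollary~\ref{cor1}, with the sole change being that the two walks start from $x_{1}$ and $y_{1}$ respectively instead of both from the origin. I will outline the three steps and indicate why no genuinely new work is required.

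First, set up the decomposition at scale $2^{(1-q)n}$. Let $\mathcal{C}^{\star}$ denote the set of pairs $(\eta^{1},\eta^{2})$ satisfying the obvious analogue of \eqref{cnq} but with $\eta^{1}(0)=x_{1}$, $\eta^{2}(0)=y_{1}$; note that since $q<1/10$ we have $x_{1},y_{1}\in\cC_{(1-3q)n}$, so the operator $\pi$ of \eqref{pi} makes sense on $\mathcal{C}^{\star}$ and its image is the \emph{same} set $\overline{\mathcal{C}}$ as before (the base points are forgotten by $\pi$). By the domain Markov property of LERW (Lemma~\ref{domain}) and the strong Markov property of SRW, conditioning on $\bigl(\gamma_{1}[0,T^{(1-q)n}],\lambda_{1}[0,T^{(1-q)n}]\bigr)=(\eta^{1},\eta^{2})$ produces, as continuations, a random walk from the endpoint of $\eta^{1}$ conditioned to avoid $\eta^{1}$ up to $T^{n}$ and an independent SRW from the endpoint of $\eta^{2}$. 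Denoting the analogue of \eqref{g} by $g^{\star}$, this yields
\begin{equation*}
P\bigl(B_{n}\mid B_{n,q}\bigr)=\sum_{(\eta^{1},\eta^{2})\in\mathcal{C}^{\star}}g^{\star}(\eta^{1},\eta^{2})\,\mu^{\star,\mathrm{full}}_{n,q}(\eta^{1},\eta^{2}),
\end{equation*}
where $\mu^{\star,\mathrm{full}}_{n,q}$ is the conditional law of the initial parts given $B_{n,q}$, whose $\pi$-pushforward is exactly $\overline{\mu}^{\star}_{n,q}$.

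Second, transport Proposition~\ref{prop1} to the present setting. Its proof compares $g^{\star}(\eta^{1},\eta^{2})$ with $\overline{g}(\pi(\eta^{1},\eta^{2}))$ via the strong Markov property of two independent SRW's issued from the endpoints $w^{1},w^{2}\in\partial\cC_{(1-q)n}$ inside the ball $\widetilde B=B(w^{1},2^{(1-q)n}/3)$, invoking only Proposition~1.5.10 of \cite{Lawb} and Proposition~6.1.1 of \cite{S}. None of these ingredients sees the starting points $x_{1},y_{1}$ of the initial segments; they only see the traces of $\eta^{1},\eta^{2}$ near their endpoints. Hence the bound $|g^{\star}-\overline{g}|\le C2^{-qn/2}\,\overline{g}$ carries over on all pairs with $\overline{h}\ge 2^{-qn/2}$.

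Third, mimic the proof of Proposition~\ref{replace}. The separation lemma (Theorem~6.1.5 of \cite{S}, Claim~3.4 of \cite{SS}) is proved precisely for pairs of LERW and SRW issued from distinct mesoscopic base points and conditioned not to intersect up to a larger scale, so it applies directly to $(\gamma_{1},\lambda_{1})$ under $B_{n,q}$ and yields a uniform lower bound on the $\overline{\mu}^{\star}_{n,q}$-mass of $c$-well-separated pairs in the sense of \eqref{eq:cwell}. Since well-separated pairs satisfy $\overline{h}\ge c'$ and hence $\overline{g}\ge c\Es(2^{(1-q)n},2^{n})$, the same splitting of $\mathcal{C}^{\star}$ into $\{\overline{h}\ge 2^{-qn/2}\}$ and its complement, together with step two, gives
\begin{equation*}
P\bigl(B_{n}\mid B_{n,q}\bigr)=\bigl(1+O(2^{-qn/2})\bigr)\sum_{(\overline{\eta}^{1},\overline{\eta}^{2})\in\overline{\mathcal{C}}}\overline{g}(\overline{\eta}^{1},\overline{\eta}^{2})\,\overline{\mu}^{\star}_{n,q}(\overline{\eta}^{1},\overline{\eta}^{2}).
\end{equation*}
The only point that warrants a verification rather than a copy-paste is the applicability of the separation lemma with base points $x_{1}\neq y_{1}$, but this is exactly the generality in which it is stated and proved in \cite{S,SS}; otherwise, the proof is a transcription of the $A_{n}$-argument.
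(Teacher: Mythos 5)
Your proposal is correct and matches the paper's intended argument: the paper explicitly states that this proposition ``can be proved very similarly'' to Proposition~\ref{replace}/Corollary~\ref{cor1} and omits the proof, and your transcription, including the check that the operator $\pi$ maps both $\mathcal{C}$ and $\mathcal{C}^{\star}$ onto the same set $\overline{\mathcal{C}}$ and that the separation lemma and the geometric estimates in the proof of Proposition~\ref{prop1} are insensitive to the base points $x_{1},y_{1}$, is exactly the omitted verification.
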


\subsection{Proof of Propositions \ref{ghEs} and \ref{barghEs} } \label{sec:3.6}

As two propositions are extremely similar, we will only prove Proposition \ref{ghEs}.%
We treat two directions of \eqref{eq:ghEs} in Lemmas \ref{2nd-lem} and \ref{3rd-lem} separately.

We recall the definition of $\Es (n)$ and $\Es (m, n)$ in Definition \ref{escape}. 

\begin{lem}\label{2nd-lem}
There exists $c < \infty$ such that for all $n$, $q$ and $(\eta^{1}, \eta^{2} ) \in {\cal C}$,
\begin{equation}\label{eq:2nd-lem}
g (\eta^{1}, \eta^{2} )  \le c h  (\eta^{1}, \eta^{2} )\Es\big( 2^{(1-q) n}, 2^{n} \big).
\end{equation}
\end{lem}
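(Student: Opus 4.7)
The plan is to decompose the non-intersection event defining $g(\eta^1,\eta^2)$ at the intermediate scale $2^{(1-q)n+1}$, at which the LERW $\LE(X[0,T^n])$ and the random walk $R^2$ naturally split into ``initial'' and ``terminal'' pieces via the times $s$ and $T^{(1-q)n+1}$. Denote by $E_g$ and $E_h$ the non-intersection events defining $g$ and $h$ respectively, and set
$$
F := \Bigl\{\LE(X[0,T^n])[s,\cdot]\cap R^2[T^{(1-q)n+1},T^n]=\emptyset\Bigr\},
$$
the pure ``terminal-terminal'' non-intersection event. The key structural observation is the set inclusion $E_g\subseteq E_h\cap F$, since full-path non-intersection in particular forces both non-intersection of the initial pieces (event $E_h$) and of the terminal continuations (event $F$). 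Therefore $g(\eta^1,\eta^2)\le P(E_h\cap F)$.

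Since $E_h$ is measurable with respect to the initial pieces $\LE(X[0,T^n])[0,s]$ and $R^2[0,T^{(1-q)n+1}]$, I would then condition on these initial pieces, reducing the problem to the uniform bound
$$
P(F\mid\text{initial pieces})\le C\,\Es(2^{(1-q)n},2^n)
$$
over realisations of the initial pieces consistent with $E_h$. For this I would invoke the domain Markov property of LERW (Lemma \ref{domain}), applied to the fact that $X$ is a walk living in $B(2^n)\setminus\eta^1$, together with the strong Markov property for $R^2$: conditional on the initial pieces, the terminal LERW continuation has the law of the loop-erasure of a walk from $\zeta(s)\in\partial\cC_{(1-q)n+1}$ conditioned to avoid $\eta^1\cup\LE(X)[0,s]$ until exiting $\cC_n$, while the terminal SRW continuation is an independent ordinary SRW from $R^2(T^{(1-q)n+1})\in\partial\cC_{(1-q)n+1}$ until exit from $\cC_n$, and these two continuations are (conditionally) independent.

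The main obstacle is producing this uniform bound. The geometric setup is exactly that of an annular escape event: a LERW and an independent SRW launched from (possibly very close) points on $\partial\cC_{(1-q)n+1}$ and required to avoid each other until exit from $\cC_n$. By standard scale-invariance of escape probabilities (in the spirit of Prop.\ 6.2.1 of \cite{S}) together with the separation lemma (Theorem 6.1.5 of \cite{S}), this probability is at most $C\Es(2^{(1-q)n},2^n)$, uniformly in the starting configuration. A small extra ingredient is needed to absorb the obstacles $\eta^1\cup\LE(X)[0,s]\subset\cC_{(1-q)n+1}$ imposed on the LERW: since a walk launched outwards from $\partial\cC_{(1-q)n+1}$ has only a bounded probability of re-entering a slightly smaller ball before exiting $\cC_n$, conditioning on avoidance of obstacles strictly inside $\cC_{(1-q)n+1}$ changes the distribution of the relevant terminal piece of the LERW only by a multiplicative constant, and the escape probability bound is preserved up to constants. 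Chaining $P(E_h\cap F)=E\bigl[\mathbf 1_{E_h}P(F\mid\text{initial pieces})\bigr]\le C\,\Es(2^{(1-q)n},2^n)\,P(E_h)=C\,h(\eta^1,\eta^2)\,\Es(2^{(1-q)n},2^n)$ then yields the claim.
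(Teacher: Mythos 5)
Your proposal takes a genuinely different route from the paper. The paper's proof starts from $g(\eta^1,\eta^2)=P(A_n,F(\eta^1,\eta^2))/P(F(\eta^1,\eta^2))$, bounds $A_n\cap F$ by the intersection of an inner non-intersection event (in $\cC_{(1-q)n+1}$) and an outer one (involving the LERW after its \emph{last} exit of $\cC_{(1-q)n+3}$), and then \emph{factors} these using Masson's weak-independence result (Prop.\ 4.6 of \cite{Mas}) for the LERW pieces together with the Harnack principle for the SRW pieces; the outer factor is then identified with $\Es(2^{(1-q)n},2^n)$ up to constants. You instead use the \emph{first}-exit decomposition at time $s$, include the event $E_g$ in $E_h\cap F$, condition on the initial pieces via the domain Markov property, and aim for a uniform conditional bound $P(F\mid\text{initial})\le C\,\Es(2^{(1-q)n},2^n)$.

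The gap is precisely in that uniform bound. You assert it and then offer the justification that ``conditioning on avoidance of obstacles strictly inside $\cC_{(1-q)n+1}$ changes the distribution of the relevant terminal piece of the LERW only by a multiplicative constant.'' This is not accurate: the obstacle $\eta^1\cup\LE(X)[0,s]$ is a SAP ending at $\zeta(s)\in\partial\cC_{(1-q)n+1}$, which is exactly where the continuation walk $Y$ is launched. The conditioning on $Y$ avoiding this obstacle is therefore anything but mild near the starting point and cannot be dismissed by a ``bounded re-entry probability'' argument. Moreover, even ignoring the obstacle, the continuation LERW and SRW are launched from two points on $\partial\cC_{(1-q)n+1}$ which may be arbitrarily close (configurations consistent with $E_h$ do not guarantee separation), whereas in $\Es(2^{(1-q)n},2^n)$ both walks emanate from the origin and one looks at the LERW after its \emph{last} visit to the inner sphere. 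Making the claimed uniform comparison requires exactly the kind of weak-independence / factoring input the paper draws from Prop.\ 4.6 of \cite{Mas} (plus a Harnack step to decouple the SRW's initial and terminal segments); citing the separation lemma and scale-invariance of escape probabilities does not by itself supply it. You would need either to prove this uniform conditional estimate directly or to switch to the paper's last-exit decomposition (so that the outer LERW piece is automatically disjoint from $\cC_{(1-q)n+2}$) and invoke Masson's weak independence.
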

\begin{proof}
Take $(\eta^{1}, \eta^{2} ) \in {\cal C}$. Recall the definition of $F=F_{n,q}$ in \eqref{fnq}. 
Then we have 
\begin{equation*}
g( \eta^{1}, \eta^{2} ) = \frac{P \Big( A_{n}, F (\eta^{1}, \eta^{2} ) \Big)}{P \Big( F (\eta^{1}, \eta^{2} ) \Big)}.
\end{equation*}
Define 
\begin{equation}\label{t1}
t_{1} =  \max \Big\{ k  \  \Big| \ \gamma^{n}  (k) \in \partial\cC_{(1-q) n +3} \big)  \Big\}.
\end{equation}
Then it follows that 
\begin{align*}
\quad \;P \Big( A_{n}, F (\eta^{1}, \eta^{2} ) \Big)  
& \le P \Big( \gamma^{n} [t_{1}, T^{n} ] \cap \lambda [1, T^{n}] = \emptyset, \;\gamma^{n} [0, T^{(1-q)n + 1}] \cap \lambda [1, T^{(1-q)n + 1}] = \emptyset, \   F (\eta^{1}, \eta^{2} ) \Big) .
\end{align*}

By Proposition 4.6 of \cite{Mas}, since $ \gamma^{n} [t_{1}, T^{n} ]$ and $\gamma^{n} [0, T^{(1-q)n + 1}]$ are ``independent up to constant", we see that 
\begin{align*}
&\;P \Big( \gamma^{n} [t_{1}, T^{n} ] \cap \lambda [1, T^{n}] = \emptyset, \ \gamma^{n} [0, T^{(1-q)n + 1}] \cap \lambda [1, T^{(1-q)n + 1}] = \emptyset, \   F (\eta^{1}, \eta^{2} ) \Big) \\
\asymp &\;E^{2} \Big( Y_{1} Y_{2} {\bf 1} \big\{ \lambda  [0, T^{(1-q)n} ]  = \eta^{2} \big\} \Big),
\end{align*}
where $P^{i}$ stands for the probability law of $S^{i}$ assuming $S^{i} (0) = 0$ and $Y^{i}$ are defined by 
\begin{align*}
&Y^{1} = P^{1} \Big(  \gamma^{n} [t_{1}, T^{n} ] \cap \lambda [1, T^{n}] = \emptyset \Big) = P^{1} \Big(  \gamma^{n} [t_{1}, T^{n} ] \cap \lambda [T^{(1-q)n + 2}, T^{n}] = \emptyset \Big); \\
& Y^{2} = P^{1} \Big( \gamma^{n} [0, T^{(1-q)n + 1}] \cap \lambda [1, T^{(1-q)n + 1}] = \emptyset, \  \gamma^{n} [0, T^{(1-q)n}] = \eta^{1} \Big).
\end{align*}
Note that $Y^{1}$ is a function of $\lambda [T^{(1-q)n + 2}, T^{n}] $ while $Y^{2}  {\bf 1} \big\{ \lambda  [0, T^{(1-q)n} ]  = \eta^{2} \big\}$ is a function of $\lambda [0, T^{(1-q)n + 1} ]$. Therefore by Harnack principle, it follows that  $Y^{1}$ and $Y^{2}  {\bf 1} \big\{ \lambda  [0, T^{(1-q)n} ]  = \eta^{2} \big\}$ are also ``independent up to constant". Thus, we have
\begin{equation*}
 E^{2} \Big( Y_{1} Y_{2} {\bf 1} \big\{ \lambda  [0, T^{(1-q)n} ]  = \eta^{2} \big\} \Big) \asymp  E^{2} \Big( Y_{1} \Big) E^{2} \Big( Y_{2} {\bf 1} \big\{ \lambda  [0, T^{(1-q)n} ]  = \eta^{2} \big\} \Big).
\end{equation*}
Again by Harnack principle, we see that 
\begin{equation*}
E^{2} \Big( Y^{1} \Big) \asymp E^{2} \Big\{  P^{1} \Big(  \gamma^{n} [t_{1}, T^{n} ] \cap \lambda [1, T^{n}] = \emptyset \Big) \Big\} =\Es \big( 2^{(1-q)n + 3} , 2^{n} \big).
\end{equation*} 
By Proposition 6.2.1, 6.2.2 and 6.2.4 of \cite{S}, we see that 
\begin{equation*}
\Es  \big( 2^{(1-q)n + 3} , 2^{n} \big) \asymp\Es \big( 2^{(1-q)n} , 2^{n} \big).
\end{equation*}
On the other hand, 
\begin{align*}
&E^{2} \Big( Y_{2} {\bf 1} \big\{ \lambda  [0, T^{(1-q)n} ]  = \eta^{2} \big\} \Big)  = P \Big( \gamma^{n} [0, T^{(1-q)n + 1}] \cap \lambda [1, T^{(1-q)n + 1}] = \emptyset, \ F(\eta^{1}, \eta^{2} ) \Big).
\end{align*}
Therefore, by domain Markov property of LERW (see Lemma \ref{domain}), we have 
\begin{align*}
&\;g( \eta^{1}, \eta^{2} ) = \frac{P \Big( A_{n}, F (\eta^{1}, \eta^{2} ) \Big)}{P \Big( F (\eta^{1}, \eta^{2} ) \Big)} \le c  \frac{\Es  \big( 2^{(1-q)n} , 2^{n} \big) }{P \Big( F (\eta^{1}, \eta^{2} ) \Big)}P \Big( \gamma^{n} [0, T^{(1-q)n + 1}] \cap \lambda [1, T^{(1-q)n + 1}] = \emptyset, \ F(\eta^{1}, \eta^{2} ) \Big)  \\
=&\; c\,\Es \big( 2^{(1-q)n} , 2^{n} \big) P \Big( \gamma^{n} [0, T^{(1-q)n + 1}] \cap \lambda [1, T^{(1-q)n + 1}] = \emptyset  \ \Big| \ F(\eta^{1}, \eta^{2} ) \Big) = c\Es \big( 2^{(1-q)n} , 2^{n} \big) h(\eta^{1}, \eta^{2} ),
\end{align*}
which completes the proof of \eqref{eq:2nd-lem}.
\end{proof}
The following claim can be proved in a similar way.
\begin{cor}
For all $(\overline{\eta}^{1}, \overline{\eta}^{2} ) \in \overline{ {\cal C}}$, 
\begin{equation}\label{couple-2}
\overline{g} (\overline{\eta}^{1}, \overline{\eta}^{2} ) \le c \Es \big( 2^{(1-q)n}, 2^{n} \big).
\end{equation}
\end{cor}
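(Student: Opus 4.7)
The plan is to follow the outline of Lemma \ref{2nd-lem} with $(\eta^1,\eta^2)$ replaced by $(\pi(\eta^1),\pi(\eta^2))$ throughout. The desired bound $\overline{g}(\overline{\eta}^1,\overline{\eta}^2) \le c\,\Es(2^{(1-q)n},2^n)$ is essentially the upper-bound half of Proposition \ref{barghEs}, combined with the trivial observation $\overline{h} \le 1$, since $\overline{h}(\overline{\eta}^1,\overline{\eta}^2)$ is itself the probability of a non-intersection event (see \eqref{hver}).

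Concretely, I would proceed as follows. (i) Let $t_1$ denote the last time that $\LE(\overline{X}[0,T^n])$ hits $\partial \cC_{(1-q)n+3}$, mimicking \eqref{t1}, and observe that the non-intersection event in \eqref{gver} forces in particular the annular non-intersection $\LE(\overline{X})[t_1,T^n] \cap R^2[T^{(1-q)n+2},T^n] = \emptyset$, since both paths lie in $\cC_n \setminus \cC_{(1-q)n+2}$ in that time window. (ii) Apply Proposition~4.6 of \cite{Mas} to decouple $\LE(\overline{X})[t_1,T^n]$ from the initial configuration of $\overline{X}$, and apply the Harnack principle at time $T^{(1-q)n+2}$ to decouple the late crossing of $R^2$ from its early trajectory, exactly as in the $E^2(Y^1)\,E^2(Y^2\mathbf{1}\{\cdots\})$ factorization in the proof of Lemma \ref{2nd-lem}. (iii) Identify the factored late-part probability with $\Es(2^{(1-q)n+3},2^n)$, which is $\asymp \Es(2^{(1-q)n},2^n)$ by Propositions~6.2.1, 6.2.2, and 6.2.4 of \cite{S}. (iv) Bound the residual early-part factor (the analog of $h$, now built from $\pi(\eta^1),\pi(\eta^2)$) trivially by $1$.

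The main subtlety, compared with the proof of Lemma \ref{2nd-lem}, is that here $\overline{X}$ is $R^1$ conditioned on avoiding only the truncated piece $\pi(\eta^1)$, rather than the full $\eta^1$, so I must verify that Masson's asymptotic independence estimate is robust under this relaxed conditioning. This is not a real obstacle: since $\pi(\eta^1) \subset \cC_{(1-q)n}$, the conditioning only affects the law of $\overline{X}$ while it remains in $\cC_{(1-q)n+1}$, and once $\overline{X}$ exits this ball its subsequent dynamics is that of an unconditioned SRW conditioned to reach $\partial\cC_n$. Consequently, the tail $\LE(\overline{X})[t_1,T^n]$, lying in $\cC_n \setminus \cC_{(1-q)n+3}$, inherits the weak asymptotic independence from the initial configuration as in the unconditioned case, so Masson's estimate applies verbatim and the remainder of the proof of Lemma \ref{2nd-lem} transfers with no further modification.
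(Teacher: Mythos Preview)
Your proposal is correct and matches the paper's own treatment: the paper simply states that the corollary ``can be proved in a similar way'' to Lemma~\ref{2nd-lem}, which is exactly the argument you have spelled out. Your one-line observation that the bound is the upper half of Proposition~\ref{barghEs} together with the trivial inequality $\overline{h}\le 1$ is also valid and, if anything, slightly cleaner than redoing the full Lemma~\ref{2nd-lem} argument from scratch.
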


The next lemma shows the opposite direction.

\begin{lem}\label{3rd-lem}
There exists $c  > 0$ such that for all $n$, $q$ and $(\eta^{1}, \eta^{2} ) \in {\cal C}$,
\begin{equation}\label{eq:3rd-lem}
g (\eta^{1}, \eta^{2} )  \ge c h  (\eta^{1}, \eta^{2} )\Es\big( 2^{(1-q) n}, 2^{n} \big).
\end{equation}
\end{lem}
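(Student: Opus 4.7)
The approach mirrors the upper bound of Lemma \ref{2nd-lem}, but constructs a favorable sub-event realizing both the inner non-intersection (captured by $h$) and the outer non-intersection (captured by $\Es(2^{(1-q)n}, 2^n)$), joined by a ``separation bridge'' at the intermediate scale $\partial\cC_{(1-q)n+1}$. Starting from $g(\eta^1,\eta^2) = P(A_n, F(\eta^1,\eta^2))/P(F(\eta^1,\eta^2))$, I would lower-bound the numerator by requiring three things simultaneously: (a) the initial segment $\LE(X[0,T^n])[0,s]$ together with $R^2[0,T^{(1-q)n+1}]$ satisfies the non-intersection event defining $h$; (b) the tips of $\LE(X[0,T^n])[0,s]$ and of $R^2[0,T^{(1-q)n+1}]$ are $c$-well-separated in the sense of \eqref{eq:cwell} at scale $2^{(1-q)n+1}$; (c) the remaining pieces of the LERW and the random walk avoid each other out to $\partial\cC_n$.

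For the factorization, the domain Markov property of LERW (Lemma \ref{domain}) rewrites the conditional law of $\LE(X[0,T^n])[s,\text{end}]$, given its initial segment, as that of the loop-erasure of an SRW from the tip of the initial segment, conditioned to avoid that segment. Combined with ``independence up to constants'' along the lines of Proposition 4.6 of \cite{Mas} and a Harnack-type argument for the outer walk $R^2$, the probability of event (c), conditional on (a) and (b), becomes comparable to $\Es(2^{(1-q)n}, 2^n)$ using the up-to-constants quasi-multiplicativity of escape probabilities from Proposition 6.2.1/6.2.2/6.2.4 of \cite{S}, exactly as in Lemma \ref{2nd-lem}. Conditional on (a), the separation lemma (Theorem 6.1.5 of \cite{S} or Claim 3.4 of \cite{SS}) ensures (b) holds with uniform positive probability. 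Event (a) itself carries probability exactly $h(\eta^1,\eta^2)$ by definition (after accounting for the domain-Markov description of the conditional LERW from the endpoint of $\eta^1$).

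The main obstacle, as with Lemma \ref{2nd-lem}, is that $\LE(X[0,T^n])[0,s]$ and $\LE(X[0,T^n])[s,\text{end}]$ are not literally independent: macroscopic loops of $X$ can straddle $\partial\cC_{(1-q)n+1}$ and get erased in complicated ways. The resolution is to work with the domain Markov decomposition at $s$ rather than at the first hitting time of $\partial\cC_{(1-q)n+1}$ by $X$, and then to compare with the ILERW at the relevant mesoscopic scale via Lemma \ref{lem:LEWILEW} so that the ``independence up to constants'' of Proposition 4.6 of \cite{Mas} can be applied as in the upper-bound proof; the quantitative loss absorbed is universal. Stitching (a)--(c) together with these tools yields $P(A_n,F) \ge c\, h(\eta^1,\eta^2)\,\Es(2^{(1-q)n},2^n)\, P(F)$, which is the desired \eqref{eq:3rd-lem}.
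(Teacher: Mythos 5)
Your proposal follows essentially the same strategy as the paper's proof: lower-bound $P(A_n, F(\eta^1,\eta^2))$ by a favorable sub-event combining the inner non-intersection (giving $h$), a well-separation/bridge condition at the intermediate scale (via the separation lemma), and the outer escape (giving $\Es(2^{(1-q)n},2^n)$), then decouple the inner and outer pieces using the ``independence up to constants'' from Proposition 4.6 of \cite{Mas}. The paper's implementation is more explicit -- it confines the bridge segment $\widehat{\gamma}$ to a cone-shaped tube $W$ and uses sets $K_1,K_2$ to pin down the relevant entry/exit points, citing Lemma 6.2.3 and (6.43) of \cite{S} for the quantitative bounds -- but the decomposition, the role of the domain Markov property, and the use of the separation lemma and quasi-multiplicativity are the same.
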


\begin{proof}
We will follow the proof of Proposition 5.3 of \cite{Mas} and Proposition 6.2.4 of \cite{S}. 

We recall that $t_{1}$ is the last time that $\gamma^{n}$ lies in $\partial B \big( 2^{(1-q)n + 3} \big)$ (see \eqref{t1} for $t_{1}$).  Let (these notations pertain only in this proof) 
\begin{align*}
&\gamma_{1} = \gamma^{n} [0, T^{(1-q)n +1}];\;\gamma_{2} = \gamma^{n} [t_{1}, T^{n} ];\;\widehat{\gamma} = \gamma^{n} [T^{(1-q)n +1}, t_{1}],
\end{align*}
so that $\gamma^{n} = \gamma_{1} \oplus \widehat{\gamma} \oplus \gamma_{2}$. Let $\gamma_{0} = \gamma^{n} [0, T^{(1-q)n}]$. We decompose $\lambda$ into $\lambda_{1} = \lambda [1, T^{(1-q)n + 2}]$ and $\lambda_{2} = \lambda [T^{(1-q)n + 2}, T^{n}]$. Let $\lambda_{0} = \lambda [0, T^{(1-q)n}]$.

For $\kappa \in [0, 1]$, define $\pi (\kappa ) = \{ (x_{1}, x_{2}, x_{3} ) \in \mathbb{R}^{3} \ | \ x_{1} = \kappa \}$. We set $H ( \kappa ) = \{ x \in \mathbb{R}^{3} \ | \ |x| \le 1 \} \cap \pi (\kappa)$ and a cone $O ( \kappa ) = \{ rx \ | \ r \ge 0, \  x \in H ( \kappa ) \}$. We set $O_{i} = O \big( \frac{2 + i}{3+i} \big)$ for $i=1,2, 3$. Note that $O_{3} \subset O_{2} \subset O_{1}$. Define 
\begin{align*}
&W = \Big\{ x \in \mathbb{Z}^{3} \  \Big| \  \frac{3}{4} \cdot 2^{(1-q)n + 1} \le  |x| \le  \frac{5}{4} \cdot 2^{(1-q)n + 3} \Big\} \cap O_{2}\mbox{ and } \\
&W^{\ast} = \Big\{ x \in \mathbb{Z}^{3} \  \Big| \  \frac{3}{4} \cdot 2^{(1-q)n + 1} \le  |x| \le  \frac{5}{4} \cdot 2^{(1-q)n + 3} \Big\} \cap O_{3},
\end{align*}
and set $A = W \cup \cC_{(1-q)n + 1 } $. Let $K_{1}, K_{2}$ be sets of paths defined by 
\begin{align*}
&K_{1} = \Big\{ \eta \ \Big| \ \eta \cap \partial \cC_{(1-q)n + 1} \in O_{3}, \  P \big( \gamma_{1} = \eta \big) > 0 \Big\};
\; K_{2} = \Big\{ \eta \ \Big| \ \eta \cap \partial \cC_{(1-q)n + 3} \in O_{3}, \  P \big( \gamma_{2} = \eta \big) > 0 \Big\}.
\end{align*}
Then we have
\begin{align*}
&\;P \Big( A_{n}, F (\eta^{1}, \eta^{2} ) \Big) = P \Big( \gamma^{n} [0, T^{n} ] \cap \lambda [1, T^{n} ] = \emptyset, \ (\gamma_{0}, \lambda_{0} ) = (\eta^{1}, \eta^{2} ) \Big) \\
=&\;P \Big( \lambda_{1} \cap \gamma_{1} \oplus \widehat{\gamma} = \emptyset, \ \lambda_{2} \cap \gamma_{1} \oplus \widehat{\gamma} \oplus \gamma_{2} = \emptyset, \  (\gamma_{0}, \lambda_{0} ) = (\eta^{1}, \eta^{2} ) \Big) \\
\ge&\; E^{1} \Big[ {\bf 1} \{ \gamma_{1} \in K_{1} \} {\bf 1} \{ \gamma_{2} \in K_{2} \} {\bf 1} \{ \widehat{\gamma} \subset W \} {\bf 1} \{ \gamma_{0} = \eta^{1} \}  P^{2} \Big( \lambda_{1} \cap  ( \gamma_{1} \cup W^{\ast} ) = \emptyset, \ \lambda_{2} \cap (\gamma_{2} \cup A) = \emptyset, \ \lambda_{0} = \eta^{2} \Big) \Big] \\
\ge&\; E^{1} \Big[ \widetilde{X} \widetilde{Y} {\bf 1} \{ \widehat{\gamma} \subset W \} \Big],
\end{align*}
where 
\begin{align*}
&\widetilde{X} = {\bf 1} \{ \gamma_{1} \in K_{1} \} {\bf 1} \{ \gamma_{0} = \eta^{1} \} P^{2} \Big( \lambda_{1} \cap  ( \gamma_{1} \cup W^{\ast} ) = \emptyset,  \ \lambda_{0} = \eta^{2} \Big) \\
&\widetilde{Y} = {\bf 1} \{ \gamma_{2} \in K_{2} \} \min_{x \in \partial \cC{(1-q)n + 2}\setminus W^{\ast} } P^{2}_{x} \Big( S^{2} [0, T^{n} ] \cap (\gamma_{2} \cup A) = \emptyset \Big).
\end{align*}
Note that $\widetilde{X}$ is a function of $\gamma_{1}$ while $\widetilde{Y}$ is a function of $\gamma_{2}$.

By domain Markov property of LERW and Lemma 6.2.3 of \cite{S}, it follows that there exists $c > 0$ such that for all $\eta \in K_{1}$ and $\eta' \in K_{2}$
\begin{equation*}
P^{1} \Big( \widehat{\gamma} \subset W \ \Big| \ \gamma_{1} = \eta, \ \gamma_{2} = \eta' \Big) \ge c.
\end{equation*}
This gives
\begin{equation*}
P \Big( A_{n}, F (\eta^{1}, \eta^{2} ) \Big)  \ge c E^{1} \Big[ \widetilde{X} \widetilde{Y} \Big].
\end{equation*}
However, by Proposition 4.6 of \cite{Mas}, we see that 
\begin{equation*}
P \Big( A_{n}, F (\eta^{1}, \eta^{2} ) \Big)  \ge c E^{1} ( \widetilde{X} ) E^{1} (\widetilde{Y}).
\end{equation*}

It follows from (6.43) of \cite{S} that 
\begin{equation*}
E^{1} (\widetilde{Y} ) \ge c\Es\big( 2^{(1-q)n }, 2^{n} \big).
\end{equation*}
Therefore, it suffices to show that 
\begin{equation}\label{SEPARATE}
E^{1} ( \widetilde{X} ) \ge c P \Big( \gamma_{1} \cap \lambda [1, T^{(1-q)n +1 }] = \emptyset, \  (\gamma_{0}, \lambda_{0} ) = (\eta^{1}, \eta^{2} ) \Big).
\end{equation}
Write $\lambda' = \lambda [1, T^{(1-q)n +1 }] $ and $\lambda'' = \lambda [T^{(1-q)n +1 }, T^{(1-q)n + 2}]$ so that $\lambda_{1} = \lambda' \oplus \lambda''$.

To prove \eqref{SEPARATE}, by the separation lemma (see (6.13) of \cite{S} for the version of the separation lemma that we need here), we see that there exists some universal constant $c  > 0$ such that (see \ref{eq:cwell} for definition of being well-separated):
\begin{align*}
E^{1} ( \widetilde{X} )&\;\ge P \Big( (\gamma_{0}, \lambda_{0} ) = (\eta^{1}, \eta^{2} ), \ \gamma_{1} \cap \lambda' = \emptyset, \ \gamma_{1} \text{ and } \lambda' \text{ are } c \text{-well-separated},   \lambda'' \cap (\gamma_{1} \cup W^{\ast} ) = \emptyset \Big) \\
&\; \ge c  P \Big( (\gamma_{0}, \lambda_{0} ) = (\eta^{1}, \eta^{2} ), \ \gamma_{1} \cap \lambda' = \emptyset, \ \gamma_{1} \text{ and } \lambda' \text{ are } c \text{-well-separated} \Big) \\
&\;\ge c P \Big( (\gamma_{0}, \lambda_{0} ) = (\eta^{1}, \eta^{2} ), \ \gamma_{1} \cap \lambda' = \emptyset \Big),
\end{align*}
which finishes the proof of \eqref{eq:3rd-lem}.
\end{proof}

\subsection{Proof of Proposition \ref{CONDCOMP}}\label{sec:3.7}
\begin{proof}[Proof of Proposition \ref{CONDCOMP}]
We only prove \eqref{couple-4} as \eqref{couple-4p} follows in a similar manner. To show  \eqref{couple-4}, it suffices to show
\begin{equation}\label{couple-4w}
\Big| P \Big( A_{n} \ \Big|  \  A_{n, q}  \Big) - P \Big( B_{n} \ \Big|  \  B_{n, q}  \Big)\Big| \leq  \big( 1 + O (2^{- \delta q n} ) \big) P \Big( A_{n} \ \Big|  \  A_{n, q}  \Big) .
\end{equation}
We observe that applying Prop.\ \ref{prop:coup2} with $(k,N)$ there equal to $\big((1-3q)n,(1-q)n\big)$, we have
\begin{equation}\label{couple}
\Big|\Big|\overline{ \mu}_{n,q} (\overline{\eta}^{1}, \overline{\eta}^{2} ) - \overline{ \mu}^{\star}_{n,q} (\overline{\eta}^{1}, \overline{\eta}^{2} )\Big|\Big|_{\rm TV} \leq c2^{-\delta qn},
\end{equation}
where $||\cdot||_{\rm TV}$ stands for the total variation distance. Hence,
\begin{align*}
\;\Bigg| \sum_{(\overline{\eta}^{1}, \overline{\eta}^{2} ) \in \overline{ {\cal C}}} \overline{g} (\overline{\eta}^{1}, \overline{\eta}^{2} ) &\overline{ \mu}_{n,q} (\overline{\eta}^{1}, \overline{\eta}^{2} ) - \sum_{(\overline{\eta}^{1}, \overline{\eta}^{2} ) \in \overline{ {\cal C}}} \overline{g} (\overline{\eta}^{1}, \overline{\eta}^{2} ) \overline{ \mu}^{\star}_{n,q} (\overline{\eta}^{1}, \overline{\eta}^{2} ) \Bigg|  \\
\overset{\eqref{couple-2}}{\le}&\; C \Es (2^{(1-q) n}, 2^{n} )  \sum_{(\overline{\eta}^{1}, \overline{\eta}^{2} ) \in \overline{ {\cal C}}}  \Big| \overline{ \mu}_{n,q} (\overline{\eta}^{1}, \overline{\eta}^{2} ) - \overline{ \mu}^{\star}_{n,q} (\overline{\eta}^{1}, \overline{\eta}^{2} ) \Big|  \\
\overset{\eqref{couple}}{\le} &\;C' \Es (2^{(1-q)n}, 2^{n} ) 2^{- \delta q n}  \overset{ \eqref{henkei-4} }{\underset{\eqref{couple-3}}{\le}} C'' 2^{- \delta q n} \sum_{(\overline{\eta}^{1}, \overline{\eta}^{2} ) \in \overline{ {\cal C}}} \overline{g} (\overline{\eta}^{1}, \overline{\eta}^{2} ) \overline{ \mu}_{n,q} (\overline{\eta}^{1}, \overline{\eta}^{2} ).
\end{align*}
 Thus, \eqref{couple-4w} follows by rewriting the leftmost and rightmost expression above back to conditional probabilities, thanks to \eqref{henkei-4} and \eqref{henkei-5}. This finishes the proof of \eqref{couple-4}.
\end{proof}

\subsection{Proof of Proposition \ref{difscale}}\label{sec:difscale}
We recall that $\lambda_1$ and $\lambda_{2}$ stands for the SRW on $\mathbb{Z}^{3}$ started at $y_1$ and $y_{2}$, respectively. We also recall that
$$B_n=\{\gamma_{1} \cap \lambda_{1} = \emptyset\}\mbox{ and }C_n=\{\gamma_{2} \cap \lambda_{2} = \emptyset\}.$$
In order to keep coherence of notation in this subsection we will use  notations on the right hand side above in the proposition below.
\begin{prop}  There exist universal constants $c_{3} > 0$ and $q_{1} > 0$  such that for all $n \ge 1$  and $q \in (0, q_{1} )$,
 \begin{equation}\label{g1g2}
 \Big| P \Big( \gamma_{1} \cap \lambda_{1} = \emptyset \Big) - P \Big( \gamma_{2} \cap \lambda_{2} = \emptyset \Big) \Big| \le  c_{3} 2^{-10 q n}.
 \end{equation}
\end{prop}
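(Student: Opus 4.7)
The key observation is that the two pairs $(\gamma_1,\lambda_1)$ and $(\gamma_2,\lambda_2)$ realize the same geometric configuration up to a factor-of-$2$ dilation: each consists of a LERW and an independent SRW started at antipodal points $\pm z$ with $|z|/R = 2^{-5q}$ and stopped at first exit of $B(R)$, with $R = 2^n$ in the first case and $R = 2^{n-1}$ in the second. By dilation invariance of the continuum limit (ILERW plus Brownian motion started at symmetric mesoscopic points of the unit ball), both probabilities converge to a common value as $n\to\infty$, so what remains is a quantitative rate of order $2^{-10qn}$.

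My plan is a coupling argument in the spirit of Section \ref{sec:coupling}. First, using Lemma \ref{lem:LEWILEW} (suitably adapted to a translated starting point) together with the strong Markov property for SRW, I would replace $\gamma_i$ and $\lambda_i$ by the corresponding ILERW and full SRW started from $x_i$ and $y_i$, each truncated at first exit of $B(R_i)$, at a total variation cost that decays like $O(2^{-c'n})$. Next, I would introduce an intermediate scale $r = 2^{(1-4q)n}$ lying well beyond the starting radii $|x_i|\asymp 2^{(1-5q)n}$, and apply the coupling machinery of Section \ref{sec:coupling}: for each of the two setups separately, this delivers a coupling of the pair conditioned on non-intersection with a reference pair whose traces on the macroscopic annulus $B(R_i)\setminus B(r)$ agree with probability $1 - O(2^{-\beta' qn})$ for some universal $\beta' > 0$. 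Choosing the two reference pairs to be compatible under the $2$-dilation (both described by the common ratio $|z|/R = 2^{-5q}$), the two non-intersection probabilities differ by at most the sum of the two coupling errors, which we can arrange to be $O(2^{-10qn})$ by taking $q_1$ (equivalently, the gap between the starting scale and the intermediate scale) sufficiently small.

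The main technical obstacle I anticipate is the lattice mismatch inherent in the dilation: a naive factor-of-$2$ rescaling of $(\gamma_2,\lambda_2)$ would produce paths on $2\mathbb{Z}^3$ rather than $\mathbb{Z}^3$, so the coupling of Section \ref{sec:coupling} cannot be invoked verbatim to match the two setups. To circumvent this, I would work entirely with macroscopic events on the annulus $B(R_i)\setminus B(r)$, where the lattice structure becomes immaterial. Once well-separated at scale $r$ (an event of positive probability by the separation lemma, Theorem 6.1.5 of \cite{S}), the subsequent evolution depends on the initial segment only through the coarse-grained exit distribution on $\partial B(r)$, which by translation and scaling invariance is identical in both setups up to a lattice correction of order $O(r^{-1}) = O(2^{-(1-4q)n})$, negligible compared with $2^{-10qn}$ for $q < 1/14$. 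Combining the ILERW-LERW replacement cost, the coupling error on the macroscopic annulus, and the lattice-correction error then yields the bound $c_3 2^{-10qn}$.
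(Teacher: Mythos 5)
Your proposal misidentifies the source of the difficulty and consequently leans on the wrong tool. The coupling machinery of Section~\ref{sec:coupling} (Propositions~\ref{prop:coup1}--\ref{prop:coup2}) compares two pairs of walks living on the \emph{same} lattice $\mathbb{Z}^3$ and stopped at the \emph{same} boundary $\partial\cC_N$, but started from different initial configurations. It says nothing about comparing a pair at scale $2^n$ with a pair at scale $2^{n-1}$. Yet this cross-scale comparison is exactly what \eqref{g1g2} asserts: $\gamma_1$ lives in $B(2^n)$, $\gamma_2$ lives in $B(2^{n-1})$, and there is no way to reduce the latter to the former by merely changing starting points within a fixed annulus. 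When you say ``choosing the two reference pairs to be compatible under the $2$-dilation\dots the two non-intersection probabilities differ by at most the sum of the two coupling errors,'' you are silently assuming that the two reference non-intersection probabilities at scales $2^n$ and $2^{n-1}$ agree — but that is precisely what one has to prove, and the Section~\ref{sec:coupling} coupling gives you no handle on it.

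The attempt to dismiss this by ``working entirely with macroscopic events on the annulus $B(R_i)\setminus B(r)$, where the lattice structure becomes immaterial'' is where the gap lives. For 3D LERW there is no a priori discrete scale invariance, and the claim that the continuation after $\partial B(r)$ ``depends on the initial segment only through the coarse-grained exit distribution'' is false for LERW (it is history-dependent via the domain Markov property) and, more to the point, does not address why a LERW continuation on $\mathbb{Z}^3$ inside $B(2^n)$ should match a LERW continuation on $\mathbb{Z}^3$ inside $B(2^{n-1})$ after dilation. Establishing that the lattice structure really is immaterial at macroscopic scales is essentially the content of Kozma's scaling-limit theorem, and indeed the paper's proof goes through Theorem~5 of \cite{Koz}, which quantitatively compares the probability that an $\mathbb{Z}^3$-LERW avoids a fixed compact set with the probability that a $2\mathbb{Z}^3$-LERW (equivalently, a scaled-down $\mathbb{Z}^3$-LERW) avoids the same set. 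The rest of the paper's argument (strong coupling of SRW with Brownian motion via Lemma~3.2 of \cite{Lawcut}, replacement of SRW by Wiener sausages, and the hittability estimates Lemma~4.8 of \cite{Koz}/Theorem~3.1 of \cite{SS} to go back from sausages to discrete paths) serves to set up the application of Kozma's theorem; none of that has an analogue in your plan. To repair the argument you would need to import Kozma's multiscale comparison — or reprove it — rather than rely on the single-scale initial-configuration coupling of Section~\ref{sec:coupling}.
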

\begin{proof}
We will closely follow the proof of Proposition 7.1.1 of \cite{S}. We seek to replace the SRW in both probabilities in \eqref{g1g2} by Wiener sausages (see \eqref{g1r} and \eqref{g2r}), and establish an inequality between them (which is \eqref{kozma}). 

Lemma 3.2 of \cite{Lawcut} proves that it is possible to couple $\lambda_{2}$ and $W$, a Brownian motion in $\mathbb{R}^{3}$ started at $y_{2}$, on the same probability space $P^2$ such that 
\begin{equation}
P^{2} \Big( J_1^c \Big) \le a e^{-2^{b n} },
\end{equation}
where
\begin{equation}
J_{1} = \Big[ \max_{0 \le t \le T^{n} } \big| W(t) - \lambda_{2} (3 t) \big| \le 2^{\frac{2n}{3}-1} \Big],
\end{equation}
for some universal constants $a, b >0$. Throughout this proof, we will assume $(\lambda_{2}, W)$ is defined on the same probability space as above. We also write $E^2$ for the corresponding expectation.
Define the event $J_{2}$ by 
\begin{equation}
J_{2} = \Big[ W[0, \infty ) \cap B \big( x_{2}, 2^{(1-15q ) n-1} \big) = \emptyset \Big].
\end{equation}
Then by Theorem 3.17 of \cite{MP}, it follows that
\begin{equation}
P^2 (J_{2} ) \ge 1 - c2^{-10qn}.
\end{equation}

\vspace{2mm}

We now consider the Wiener sausage.
For a discrete or continuous path $\eta$ and $L\in\mathbb{R}$, write 
\begin{equation}
(\eta)^{+L} := \Big[ x \in \mathbb{R}^{3} \ \big| \ \text{ there exists } y \in \eta [0, \text{len} (\eta ) ] \text{ such that } |x-y| \le 2^L \Big]
\end{equation}
for its sausage of radius $2^L$. We also write 
\begin{equation}
W_{2} = W[0, T^{n-1}], \ W'_{2} = 2 W_{2}.
\end{equation}
We let $R = (R (j) )_{j \ge 0}$ be the simple random walk on $2 \mathbb{Z}^{3}$ started at $x_{1}$ and let $\widetilde{\gamma}_{2} = \LE(R [0, T^{n} ])$ be its loop-erasure up to $T^{n}$. 

\vspace{2mm}

By Theorem 5 of \cite{Koz}, there exist deterministic universal constants $q_{0} \in (0, 1)$,  $c_{0} \in (0, \frac{1}{4} )$ and $c_{1} < \infty$ such that for all $q \in (0, q_{0})$ and $W$ satisfies $J_{2}$ then it follows that 
\begin{equation}\label{kozma}
 P^{R} \Big( \widetilde{\gamma}_{2} \cap (W'_{2})^{+ (\frac{2n}{3}+1)} = \emptyset \Big) \ge P^{1} \Big( \gamma_{1} \cap (W'_{2})^{+ {(1-c_{0})n} } = \emptyset \Big) - c_{1} 2^{-c_{0} n},
 \end{equation}
 where $P^{R}$ stands for the probability law of $R$ while $P^{1}$ stands for the law of $S^{1}$ (or equivalently law of $\gamma_{1}$). Thus, the two probabilities in  \eqref{kozma} are functions of $W_{2}$.  (Note that we can take $q_{0} = \frac{1}{15} \times \min \{ \frac{1}{6}, \frac{\epsilon}{8}, \frac{\delta_{2}}{8} \}$ where $\epsilon$ and $\delta_{2}$ are universal constants as in the proof of Theorem 5 of \cite{Koz} for the case that $G^{1} = \mathbb{Z}^{3}$ and $G^{2} = 2 \mathbb{Z}^{3}$. Taking $q_{0}$ like this form and conditioned $W$ on $J_{2}$, we can take universal deterministic constants $c_{0}$ and $c_{1}$ such that they do not depend on the starting point, see (132) of \cite{Koz}.)
 
 \vspace{2mm}
 
 Next, we will replace each probability of \eqref{kozma} by the corresponding non-intersection probability of LERW and SRW as in the proof of Proposition 7.1.1 of \cite{S}. We start with the left one. Note that 
 \begin{equation}
 P^{R} \Big( \widetilde{\gamma}_{2} \cap (W'_{2})^{+ (\frac{2n}{3}+1)}  = \emptyset \Big) = P^{1} \Big(  \gamma_{2} \cap (W_{2})^{+\frac{2n}{3}} = \emptyset \Big).
 \end{equation}
 Therefore, taking expectation with respect to $W$, we have 
 \begin{align}
  &\; P \Big( \gamma_{2} \cap (W_{2})^{+\frac{2n}{3}} = \emptyset \Big)\notag 
 \ge E^{2} \Big[ P^{R} \Big( \widetilde{\gamma}_{2} \cap (W'_{2})^{+ (\frac{2n}{3}+1)}  = \emptyset \Big)  \ ; \ J_{2}  \Big]   \\
\ge&\; E^{2} \Big[ P^{1} \Big( \gamma_{1} \cap (W'_{2})^{+ {(1-c_{0})n} } = \emptyset \Big) - c_{1} 2^{-c_{0} n} \ ; \ J_{2}  \Big]  
\ge E^{2} \Big[ P^{1} \Big( \gamma_{1} \cap (W'_{2})^{+ {(1-c_{0})n} } = \emptyset \Big)\Big]  - c_{1} 2^{-c_{0} n} - c 2^{-10qn}, \notag
\end{align}
for all $q \in (0, q_{0} )$. But the scaling property of the Brownian motion ensures that the law of $W'_{2}$ with $W_2$ started from $y_2$ coincides with the law of the Brownian motion $B_{1} := B[0, T^{n}]$ started from $y_{1}$ up to $T^{n}$. Thus, we have 
\begin{equation}\label{sausage}
 P \Big( \gamma_{2} \cap (W_{2})^{+\frac{2n}{3}} = \emptyset \Big) \ge P \Big( \gamma_{1} \cap (B_{1})^{+ {(1-c_{0})n} } = \emptyset \Big) - c_{1} 2^{-c_{0} n} - c 2^{-10qn},
 \end{equation} 
 for all $q \in (0, q_{0} )$.

 \vspace{2mm}
 
 Now we compare the probability of RHS of \eqref{sausage} and $ P (B_{n} ) $.
 We again assume that $\lambda_{1}$ and $B$ are coupled such that the Hausdorff distance between them is $\le  2^{\frac{2n}{3}}$ with probability at least $1- a e^{-2^{b n}}$ for some universal constants $a, b > 0$ (this is possible by Lemma 3.2 of \cite{Lawcut}). Applying Lemma 4.8 of \cite{Koz} (see Theorem 3.1 of \cite{SS} for a stronger version of it), it follows that there exists universal constants $c_{2}, \rho > 0$ such that for all $q \in (0, q_{0} )$,
 \begin{equation*}
 \Big| P \Big( \gamma_{1} \cap \lambda_{1} = \emptyset \Big) - P \Big( \gamma_{1} \cap (\lambda_{1})^{+ {(1-c_{0})n} }  = \emptyset \Big) \Big| \le c_{2} 2^{-c_{0} \rho  n}.
 \end{equation*}
 Combining this with our coupling of $\lambda_{1}$ and $B$, we see that  
 \begin{equation} \label{g1r}
 \Big| P \Big( \gamma_{1} \cap \lambda_{1} = \emptyset \Big) -P \Big( \gamma_{1} \cap (B_{1})^{+ {(1-c_{0})n} } = \emptyset \Big) \Big| \le c_{2} 2^{-c_{0} \rho  n}.
 \end{equation} 
 Similarly, we see that 
 \begin{equation}\label{g2r}
 \Big| P \Big( \gamma_{2} \cap \lambda_{2} = \emptyset \Big) -P \Big( \gamma_{2} \cap (W_{2})^{+\frac{2n}{3}} = \emptyset \Big) \Big| \le c_{2} 2^{-c_{0} \rho  n}.
 \end{equation}  
 
 \vspace{2mm}
 
 Set $q_{1} = \min \{  \frac{c_{0} \rho}{10} , \frac{q_{0}}{10} \}$. Note that $q_{1}$ is a universal constant. We have showed that there exists a universal constant $c_{3}$ such that for all $q \in (0, q_{1} )$, 
 \begin{equation*}
 P \Big( \gamma_{2} \cap \lambda_{2} = \emptyset \Big) \ge P \Big( \gamma_{1} \cap \lambda_{1} = \emptyset \Big) - c_{3} 2^{-10 q n}.
 \end{equation*}
  An inequality in the opposite direction also follows similarly.  This gives \eqref{g1g2}.
\end{proof}
 
 \begin{proof}[Proof of Proposition \ref{difscale}]
We will only prove \eqref{Goal1-1} as the second claim \eqref{Goal2-1} follows similarly. 
 Since
 \begin{equation*}
 P \Big( \gamma_{1} \cap \lambda_{1} = \emptyset \Big)  \asymp  P \Big( \gamma_{2} \cap \lambda_{2} = \emptyset \Big) \asymp \Es \big( 2^{(1-5q)n }, 2^{n} \big) \ge c 2^{-5 q n},
 \end{equation*}
 by \eqref{g1g2}, it follows that for $q \in (0, q_{1} )$,
 \begin{equation}
 P (B_{n} ) = P (C_{n} ) + O \big(  2^{-10 q n} \big)  = P (C_{n} ) \big( 1 + O (2^{-5qn} ) \big),   
 \end{equation}
 which gives \eqref{Goal1-1} and completes the proof.
 \end{proof}

%
\section{Coupling}\label{sec:coupling}
In this section, we establish various couplings of pairs of loop-erased walk and simple random walk conditioned to avoid each other up to some point under different setup and different initial configurations. As a corollary we obtain \eqref{couple} which is a key ingredient in Section \ref{sec:noninter}. As the prototype of such couplings already appears in \cite{Lawrecent}, in this work we will give a direct proof, but rather argue through fine-tuning the coupling result from \cite{Lawrecent}. For more discussion, see the beginning of Section \ref{sec:Gregvar1}.

\subsection{Setup and statement}

We start by giving a brief introduction to our coupling. Pick $k,n>0$ ({\bf not necessarily an integer}) and $N\geq 2n+k$. Let $\gamma$ be an ILERW and $\lambda$ be a SRW both in $\mathbb{Z}^3$ with $\gamma(0)=\lambda(0)=0$, independent of each other. We write $\bfeta=(\gamma,\lambda)$ for the pair of walks and write $\bfP=\bfP_{0,0}$ for its law. We write 
\begin{equation}
\bfeta_N:=(\gamma_N,\lambda_N):=(\gamma[0,T^N],\lambda[0,T^N])
\end{equation} for the walks  truncated at the first exit of $\cC_N$. 

Let $x,y\in\overline{\cC}_k$. Similarly, let $\bfeta'=(\gamma',\lambda')$ where $\gamma'$ is an ILERW with $\gamma'(0)=x$ and $\lambda'$ is a SRW with $\lambda'(0)=y$, again, independent from each other. We write $\bfP_{x,y}$ for its joint law. For $N\geq k$, we define $\gamma'_N$, $\lambda'_N=\lambda'[0,T^N]$ and $\bfeta'_N$ similarly.

We write 
\begin{equation}
U_N:= \Big\{\gamma_N\cap\lambda_N[1,T^N]=\emptyset\Big\}
\end{equation} 
for the event that $\gamma_N$ and $\lambda_N$ have no intersection and define the event 
$$
U'_N:=\Big\{\gamma_N \bigcap 
\begin{array}{c}
\lambda_N [1,T^N]\\
\lambda_N
\end{array} =\emptyset\Big\}\mbox{ if}
\begin{array}{l}
\gamma_N(0)=\lambda_N(0);\\
\gamma_N(0)\neq\lambda_N(0)
\end{array}
$$
similarly. Note that in the second case, there is no need to exclude $\lambda_N(0)$.
We write 
\begin{equation}
\bbfP^N[{\bfeta}\in\cdot]=\bfP[\bfeta\in\cdot |U_N]
\end{equation} 
and
\begin{equation}
\bbfP^N_{x,y}[{\bfeta}\in\cdot]=\bfP_{x,y}[\bfeta\in\cdot |U'_N],
\end{equation}
for the laws of $\bfeta$ and $\bfeta'$ conditioned on $U_N$ and $U'_N$, respectively.

For $N\geq n+k$, we write $\bfeta_N =_n \bfeta'_N$ if the paths agree from their first exit from $\cC_{N-n}$ onwards, i.e., 
$$
\gamma_{N-n,N}=\gamma'_{N-n,N},\quad\lambda_{N-n,N}=\lambda'_{N-n,N}.
$$
where $\gamma_{N-n,N}=\gamma[T^{N-n},T^N]$, with other notations defined similarly.

We are now ready to state the our coupling. 

\begin{prop}\label{prop:coup1}
There exist $\beta_1>0$ and $c_1<\infty$ such that for all $n,k>0$, $N\geq 2n+k$, $x,y\in \overline{\cC}_k$, there is a coupling $\bfQ$ of $\bfeta$ under $\bbfP^N$ and $\bfeta'$ under $\bbfP^N_{x,y}$, such that 
\begin{equation}\label{eq:coup1}
\bfQ[\bfeta_{N}=_{n} \bfeta'_{N} ]\geq1-c_12^{-\beta_1 n}.
\end{equation}
\end{prop}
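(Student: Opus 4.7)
The plan is to follow the strategy sketched at the end of Section \ref{sec:words}: realize each SRW as an independent LERW plus loops from a Poissonian loop soup via Prop.\ \ref{prop:addloops}, translate the non-intersection event between ILERW and SRW into a non-intersection event between two LERWs reweighted by a loop-soup tilt factor, invoke the coupling of such tilted ILERW pairs from \cite{Lawrecent}, and finally re-attach the loops on the agreed part to obtain coupled SRWs.

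\medskip

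\noindent\textbf{Step 1 (decomposition and tilting).} For $\bfeta=(\gamma,\lambda)$, let $\widetilde\lambda_N$ be the chronological loop-erasure of $\lambda_N$ and let $\mathcal L$ denote the associated independent loop soup of Prop.\ \ref{prop:addloops} restricted to $\cC_N$. The event $U_N$ (resp.\ $U'_N$) decomposes as $\{\gamma_N\cap\widetilde\lambda_N=\emptyset\}$ (with the origin excluded in the first case via the shift by one step, and trivially in the second case) together with the condition that no loop of $\mathcal L$ simultaneously touches $\gamma_N$ and $\widetilde\lambda_N$. Integrating out $\mathcal L$ gives a tilt factor $\exp(-L_N(\gamma_N,\widetilde\lambda_N))$ on the joint law of $(\gamma_N,\widetilde\lambda_N)$, which matches exactly the tilting of \eqref{eq:tiltintro}. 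Since $\gamma$ is an ILERW rather than a LERW stopped at $\partial\cC_N$, we also invoke Lemma \ref{lem:LEWILEW} to compare the two, which costs only an $O(2^{-n})$ multiplicative error on any event measurable with respect to $\gamma_{N-n}$; this is negligible compared with the rate we are aiming for.

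\medskip

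\noindent\textbf{Step 2 (Lawler's coupling).} After Step 1, the laws $\bbfP^N$ and $\bbfP^N_{x,y}$ (restricted to the LERW components and with a bookkeeping of endpoints) are, up to the small errors above, precisely the measures on pairs of ILERWs tilted by $\mathbf 1_{\gamma_N\cap\widetilde\lambda_N=\emptyset}\exp(-L_N)$ considered in \cite{Lawrecent}. Applying the coupling from that work, after suitably shifting the scales so that the matching scale $\cC_{(N+k)/2}$ of \cite{Lawrecent} becomes $\cC_{N-n}$ (possible since we assume $N\ge 2n+k$), we obtain a coupling of $(\gamma_N,\widetilde\lambda_N)$ under $\bbfP^N$ with $(\gamma'_N,\widetilde\lambda'_N)$ under $\bbfP^N_{x,y}$ such that both LERW components agree from their first exit of $\partial\cC_{N-n}$ onward, with failure probability at most $c\,2^{-\beta_1 n}$. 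We verify that the ``two-LERW'' separation lemma (Thm.\ 6.1.5 of \cite{S} or Claim 3.4 of \cite{SS}) provides the required non-degeneracy underlying observation (ii) from Section \ref{sec:words}, uniformly in the starting configuration in $\overline\cC_k$.

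\medskip

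\noindent\textbf{Step 3 (re-attaching loops).} On the event of successful LERW coupling, we couple the two loop soups by taking them equal outside a small neighbourhood of $\gamma_N[T^{N-n},T^N]=\gamma'_N[T^{N-n},T^N]$ (and independent inside), using Prop.\ \ref{prop:addloops} to recover $\lambda_N$ and $\lambda'_N$; then $\lambda_{N-n,N}=\lambda'_{N-n,N}$ holds automatically. Combining with Step 2 gives $\bfeta_N=_n\bfeta'_N$ on an event of probability at least $1-c_1 2^{-\beta_1 n}$, as required.

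\medskip

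\noindent\textbf{Main obstacle.} The principal difficulty is not conceptual but organisational: one must carefully track the tilt factor through the LERW/ILERW replacement, check that Lawler's coupling (designed for common origin) extends to two-point configurations $(x,y)\in\overline\cC_k^2$ with the same rate (this is really where the separation lemma plus observation (i)-(iii) of Section \ref{sec:words} enters), and handle the minor but annoying distinction between the two cases in the definition of $U'_N$ depending on whether $x=y$. Once these are in place, all the exponents combine cleanly to give an overall error of $O(2^{-\beta_1 n})$ for some $\beta_1>0$.
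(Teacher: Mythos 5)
Your overall strategy (decompose the SRW as an independent LERW plus a Poissonian loop soup, translate the conditioning into the loop tilt $\exp(-L_N)$, couple the tilted LERW pair via Lawler, then re-attach loops) is the right one and matches the paper's plan at a high level. However, there is a genuine gap in Step~1--2, and it is exactly the obstacle the paper flags in Remark~\ref{rem:tip}.

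After integrating out the loop soup, the relevant pair is $(\gamma_N,\widetilde\lambda_N)$ where $\gamma_N$ is the ILERW truncated at $T^N$ and $\widetilde\lambda_N=\LE(\lambda_N)$ is a LERW \emph{stopped at} $\partial\cC_N$, with law $\mu_N$. Lawler's tilted coupling in \cite{Lawrecent} (Prop.~\ref{prop:Greg1}/\ref{prop:Greg1pr}) applies to a pair of \emph{ILERW}'s, i.e.\ both components under $\mu^\infty$. You write that the mismatch is in $\gamma$, but $\gamma$ is the component that already matches; the mismatch is in the second component. More importantly, your claim that Lemma~\ref{lem:LEWILEW} repairs this at cost $O(2^{-n})$ does not hold: that lemma controls the ratio $\mu^\circ/\mu$ only when you truncate at a scale $l$ with $B(nl)\subset D$, i.e.\ strictly in the interior, and it degenerates when you evaluate all the way out to $\partial\cC_N$. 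Since the tilt $\exp(-L_N(\gamma_N,\widetilde\lambda_N))$ and the non-intersection event both depend on the full paths up to $T^N$, the comparison you invoke simply does not apply. Indeed, the ratio between $\mu_N[\gamma_N\in\cdot]$ and $\mu^\infty[\gamma_N\in\cdot]$ is \emph{not} uniformly bounded (Remark~\ref{rem:doubletilt}), which is precisely why a naive ``add back loops to Lawler's coupling'' fails. The paper resolves this by constructing an intermediate tilted pair (ILERW, LERW-to-$\partial\cC_N$) law $\widetilde\bfN$ in Prop.~\ref{prop:coup1pp}, sampling one extra shell so that only the ratio $\mu_{N+1}[\gamma_N\in\cdot]/\mu^\infty[\gamma_N\in\cdot]$ is needed, and then bounding the resulting Radon--Nikodym derivative $Z$ via Lemma~\ref{lem:tilt} --- an inductive, one-scale-at-a-time tilting argument that you would need to reproduce. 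Your Step~3 is also lighter than it should be: the paper uses a \emph{single} loop soup for both constructions and then observes that a decoupling of the SRW's forces either the LERW's to decouple or a loop to cross from $\cC_{N-3n/2-1}^c$ to $\cC_{N-n-1}$, each of which has the desired exponential cost; your ``equal outside a small neighbourhood, independent inside'' coupling of two soups would need a verification that the resulting SRW marginals are still exactly right, which is not immediate.
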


In the coupling above, we can also replace the ILERW in $\bfeta'$  by a long LERW from $x$ to $\partial\cC_M$, with $M>N$ (in applications we would like $2^M \gg 2^N$). 
Since the law of the beginning part of an ILERW and that of LERW  are almost the same, such replacement should not change much if we only look at what is happening inside $\cC_N$.

Now let $M>N$ and let $\gamma^M$ be a LERW started from $x\in\overline{\cC}_k$ stopped at exiting $ \cC_M$. Replacing  $\gamma'$ by $\gamma^M$ in the definition above, we define $\bfeta^M$, $\bfeta^M_N$, $U_N^M$, $\bfP_{x,y}^M$ and $\bbfP^{N,M}_{x,y}$ accordingly. 

As a corollary of Proposition \ref{prop:coup1}, we have the following coupling.
\begin{prop}\label{prop:coup2}
There exist $\beta_2>0$ and $c_2<\infty$ such that for all $k,n>0$, $M\geq N\geq 2n+k$, and $x,y\in \overline{\cC}_k$, there is a coupling $\bfQ^M$ of $\bfeta$ under $\bbfP^N$ and $\bfeta^M$ under $\bbfP^{N,M}_{x,y}$, such that
\begin{equation}\label{eq:coup2}
\bfQ^M[\bfeta_{N}=_{n} \bfeta^M_{N} ]\geq1-c_2 2^{-\beta_2\min(n,M-N)}.
\end{equation}
\old{Here constants will NOT depend on $k$, $x$, $y$, $n$, $N$ or $M$.}
\end{prop}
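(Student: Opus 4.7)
The plan is to derive Prop.\ \ref{prop:coup2} from Prop.\ \ref{prop:coup1} by composing the coupling $\bfQ$ of the latter with a short-range total variation coupling that replaces the ILERW component $\gamma'$ by the LERW-to-$\partial\cC_M$ component $\gamma^M$. The chief input will be the estimate
\begin{equation*}
d_{\rm TV}\bigl(\bbfP^N_{x,y},\,\bbfP^{N,M}_{x,y}\bigr) \leq C\, 2^{-(M-N)}.
\end{equation*}
Once this is in place, a maximal coupling realizing the TV bound, glued to $\bfQ$ along the shared marginal $\bbfP^N_{x,y}$, will yield the desired $\bfQ^M$.

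For the total variation bound, I would first note that the SRW component $\lambda'$ has the same law (SRW from $y$) under both $\bfP_{x,y}$ and $\bfP^M_{x,y}$, so the task reduces to comparing the marginal laws of $\gamma'_N$ and $\gamma^M_N$. Invoking Lemma \ref{lem:LEWILEW}, adapted to the starting point $x\in \overline{\cC}_k$ (which is legitimate because $N\geq 2n+k$ forces $\overline{\cC}_k\subset \cC_{N-n}$, so the truncation at $\partial\cC_N$ takes place at a scale comparable to $2^N$ around $x$ while the ambient domain $\cC_M$ provides room of scale $2^M$ around $x$), I would obtain, uniformly in every admissible self-avoiding path $\omega$,
\begin{equation*}
P^M\bigl[\gamma^M_N=\omega\bigr]=\bigl(1+O(2^{-(M-N)})\bigr)\, P^\circ\bigl[\gamma'_N=\omega\bigr].
\end{equation*}
Multiplying by the common distribution of $\lambda'_N$ and restricting to the event $U'_N=U^M_N$ (which is truly the same event, as it only sees the truncated paths) preserves this multiplicative closeness. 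Dividing by the two normalizing constants $\bfP_{x,y}[U'_N]$ and $\bfP^M_{x,y}[U^M_N]$, themselves in ratio $1+O(2^{-(M-N)})$ by summing the above, one sees that the Radon--Nikodym derivative $d\bbfP^{N,M}_{x,y}/d\bbfP^{N}_{x,y}$ differs from $1$ by $O(2^{-(M-N)})$ uniformly, which yields the claimed TV estimate.

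The composition is then routine. By a maximal coupling there exists $\bfQ_1$ realizing $\bbfP^N_{x,y}$ and $\bbfP^{N,M}_{x,y}$ with $\bfQ_1[\bfeta'_N=\bfeta^M_N]\geq 1 - C\, 2^{-(M-N)}$, hence $\bfeta'_N=_n\bfeta^M_N$ on this event. Gluing $\bfQ_1$ with the coupling $\bfQ$ from Prop.\ \ref{prop:coup1} along the common marginal $\bbfP^N_{x,y}$ yields $\bfQ^M$, and a union bound gives
\begin{equation*}
\bfQ^M\bigl[\bfeta_N=_n\bfeta^M_N\bigr]\geq 1 - c_1 2^{-\beta_1 n} - C\, 2^{-(M-N)}\geq 1 - c_2\, 2^{-\beta_2 \min(n,M-N)},
\end{equation*}
with $\beta_2:=\min(\beta_1,1)$ and $c_2:=c_1+C$. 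The main obstacle, and indeed the only non-routine step, is justifying Lemma \ref{lem:LEWILEW} at non-origin starting points: the lemma is stated for walks starting at $0$ and truncated at a concentric ball, while we need truncation at $\partial\cC_N$ for walks starting at $x$. This should be a minor adaptation of Masson's proof, as the relative error comes solely from the geometric scale separation between the inner truncation domain and the ambient domain, which is robust under a translation by the small amount $2^k \ll 2^N$.
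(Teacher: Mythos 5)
Your proposal is correct and takes essentially the same route as the paper's proof: both invoke Lemma \ref{lem:LEWILEW} to control the discrepancy between the truncated ILERW $\gamma'_N$ and the truncated LERW $\gamma^M_N$, observe that the conditioning events $U'_N$ and $U^M_N$ coincide once the truncated $\gamma$-components agree (which in turn controls the ratio of normalizing constants), and compose with the coupling of Proposition \ref{prop:coup1} to land on $\beta_2=\min(\beta_1,1)$. Your explicit total-variation/maximal-coupling packaging simply spells out what the paper compresses into ``modify the coupling,'' and you correctly flag the one delicate point---applying Lemma \ref{lem:LEWILEW} with a non-origin start and truncation at the non-concentric ball $\cC_N$---which the paper invokes without comment.
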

\noindent Note that \eqref{couple} follows as a corollary of Proposition \ref{prop:coup2}.
\begin{proof}[Proof of Prop.\ \ref{prop:coup2} given Prop.\ \ref{prop:coup1}]
In the setting of Prop.\ \ref{prop:coup1}, instead of sampling $\gamma'$, we  sample $\gamma'$ and $\gamma^M$ coupled in the same probability space $\cal P$, such that there exists $c'>0$, 
$$
{\cal P}[\gamma^M_N=\gamma'_N|\gamma'_N=\gamma^\circ]>1-c'2^{-M+N}
$$
uniformly for any self avoiding path $\gamma^\circ$ from $x$ to $\partial\cC_N$. This is possible thanks to Lemma \ref{lem:LEWILEW}.
Note that conditioning on $\gamma^M_N=\gamma'_N$, we have $1_{U'_N}=1_{U_N^M}$. Hence,
$$
\bigg|\frac{\bfP_{x,y}[U'_N]}{\bfP_{x,y}^N[U^M_N]}-1\bigg|<c2^{-M+N}.$$
This observation, along with the coupling $\cal P$ between $\gamma'$ and $\gamma^N$, allows us to modify the coupling in Proposition \ref{prop:coup1} to obtain a new coupling $\bfQ^M$ of $\bfeta$ under $\bbfP^N$ and $\bfeta^M$ under $\bbfP^{N,M}_{x,y}$ such that \eqref{eq:coup2} is satisfied with $\beta_2=\min(\beta_1,1)$.
This finishes the proof.
\end{proof}

\begin{rem}
If both $\bfeta$ and $\bfeta'$ are replaced by LERW stopped at exiting $\partial \cC_M$, then, since boundary issues are no longer a problem, one can have a coupling with better error probability estimates. In fact,  in Prop.\ \ref{prop:coup3} we are going to state a version with better error bounds in that case, which essentially replaces $\min(n,M-N)$ in \eqref{eq:coup2} by $n$.
\end{rem}

\subsection{Variations on a coupling by Lawler}\label{sec:Gregvar1}
In this subsection,we are going to restate the coupling result from \cite{Lawrecent} under the setup that suits our needs.

In the course of proving the existence of infinite two-sided infinite loop-erased random walk (ITLERW), Greg Lawler considered a pair of ILERW's started from the origin, conditioned to not intersect each other up to some level and then tilted by a loop term. Then he constructed a coupling between such a pair of loop-erased walks and another pair conditioned on some prefixed initial configurations. As we are going to see, this coupling is intimately related to the non-intersection probability of a LERW and a SRW. For instance, if we consider $\bfeta$ under $\bbfP_n$ (see the previous subsection for precise definition), and let $\iota$ be the loop erasure of $\lambda_n$, then the law of $(\gamma_n,\iota)$ can also be described through a tilting by loop terms. Hence, it is possible to modify the coupling from \cite{Lawrecent} to obtain Prop.\ \ref{prop:coup1}. However, as the setup and tilting terms are slightly different in \cite{Lawrecent} and in our case, some care must be taken.

\medskip

We start by restating the coupling in \cite{Lawrecent}.

Let $\ovg^1$ and $\ovg^2$ be two independent ILERW starting from $0$ and record their joint law by $\bfM$. For $0<k<N$, write $\bbfM^N$ for the law of $\bfM$ tilted by 
\begin{equation}\label{eq:tilting}
G_N(\ovg^1_N,\ovg^2_N):{=}1_{\ovg^1_N\cap\ovg^2_N=\ovg^1(0)\cap\ovg^2(0)}\exp(-L_N(\ovg^1_N,\ovg^2_N)),
\end{equation}
where $L_N(\ovg^1_N,\ovg^2_N)$ is the loop measure of loops in $\cC_N$ that touch both $\ovg^1_N$ and $\ovg^2_N$. Let  $g_1,g_2$ be two SAP's started from $0$ and stopped at first exiting ${\cC}_k$, such that 
\begin{equation}\label{eq:initial}
g_1\cap g_2=\{0\}.
\end{equation}
Let $\bfM_{\rm g}$ be the law of $\ovg^1$ and $\ovg^2$ conditioned on $(\ovg^1_k,\ovg^2_k)=(g_1,g_2)$ (in this case we  write $\ovg^{1,{\rm g}}$ and $\ovg^{2,{\rm g}}$ for $\ovg^1$ and $\ovg^2$) and let $\bbfM^N_{\rm g}$ be $\bfM_{\rm g}$ tilted by $G_N(\ovg^{1,{\rm g}}_N,\ovg^{2,{\rm g}}_N)$.

\begin{rem}\label{rem:loop0}
Note that in {\rm \cite{Lawrecent}}, the definition of $L_N(\ovg^1_N,\ovg^2_N)$ for $d=3$ is the measure of loops in $\cC_N\backslash\{0\}$ that touches both $\ovg^1_N$ and $\ovg^2_N$. Our choice in \eqref{eq:tilting} does not change the tilted probability law but gives us some convenience in notation below when we do not start both walks from the same point any more.
\end{rem}

We are now ready to state the original version of this coupling. Note that although the original version used $e$ as the ratio between exponential scales, it is not a problem for us since it was stated explicitly  in\cite{Lawrecent} that exponents do not have to be integers.
\begin{prop}[Proposition 2.31 of \cite{Lawrecent}]\label{prop:Greg1}
There exists $\beta_3>0$ and $c_3<\infty$ such that for all $k,n>0$, $N\geq 2n+k$ and any ${\rm g}=(g_1,g_2)$ satisfying \eqref{eq:initial}, we can find a coupling $\bfQ^\infty$ of $\bbfM^N$ and $\bbfM^N_{\rm g}$ such that 
$$
\bfQ^\infty\big[(\ovg^1_N,\ovg^2_N)=_{n} (\ovg^{1,{\rm g}}_N,\ovg^{2,{\rm g}}_N)\big]>1-c_32^{-\beta_3 n}.
$$
\end{prop}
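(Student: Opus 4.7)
The plan is to prove this via a multiscale Markov-chain coupling argument, treating the sequence of scale-truncated path pairs $\{(\ovg^1_j, \ovg^2_j)\}_{j \geq k}$ as a process with good mixing properties under the tilted law, and then coupling two instances of it starting from different configurations.

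First, I would unpack the tilted measure to expose a Markov-like structure. The loop term factorizes across concentric annuli: writing $L_N = L_{j} + L_{j,N} + L^{\rm bridge}_{j,N}$, where $L_{j}$ counts loops inside $\cC_j$ touching both paths, $L_{j,N}$ counts those inside $\cC_N \setminus \cC_j$ touching both, and $L^{\rm bridge}_{j,N}$ counts loops straddling $\partial \cC_j$. Combined with the domain Markov property of LERW (Lemma \ref{domain}), this means that conditionally on $(\ovg^1_j,\ovg^2_j)$, the continued paths depend on the past only through the endpoints and the ``shadow'' cast by the initial configuration inside a narrow layer beyond $\partial \cC_j$. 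This will let us view the process at widely separated scales as essentially Markov.

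Second, I would establish a \emph{one-step coupling lemma}: there exist $m \in \mathbb{Z}^+$ and $c_0 > 0$ such that for any pair of admissible configurations $(g_1,g_2)$ and $(g'_1,g'_2)$ at scale $j$ (satisfying \eqref{eq:initial} with $0$ replaced by appropriate endpoints), the continuations beyond $\partial \cC_{j+1}$ can be coupled, under their respective tilted laws, so that they agree on the annular segment between $\partial \cC_{j+1}$ and $\partial \cC_{j+m+1}$ with probability at least $c_0$. The proof of this lemma proceeds by:
\begin{itemize}
\item Applying a separation lemma (e.g.\ Theorem 6.1.5 of \cite{S} or Claim 3.4 of \cite{SS}) to the non-intersection constraint: with uniformly positive probability, the two walks exit $\cC_{j+1}$ from well-separated locations, at mutual distance of order $2^{j+1}$.
\item Given well-separation, Harnack's inequality and Brownian-coupling-type estimates show that the hitting distribution on $\partial \cC_{j+1}$ has density bounded above and below (up to constants) by the harmonic measure of $\cC_{j+1}$, independently of the initial configurations.
\item The loop term contributed by loops confined to $\cC_{j+m+1}$ but touching $\cC_{j}$ can be bounded above and below by universal constants (because such loops have escape probability to $\partial \cC_{j+m}$ bounded below, so their total mass is $O(1)$), letting us absorb the dependence on $(g_1,g_2)$ versus $(g'_1, g'_2)$ into a bounded Radon--Nikodym factor.
\item A standard maximal coupling is then applied to the laws of the continuations restricted to the next $m$ scales.
\end{itemize}

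Third, I would iterate the one-step lemma. Bundling every $m$ scales into a single ``super-step'', the coupling fails at each super-step with probability at most $1-c_0$, independently given the past (in the sense of domination). After $J \asymp n/m$ super-steps, the coupling succeeds with probability at least $1 - (1-c_0)^J$, yielding the desired bound with $\beta_3 = -m^{-1}\log(1-c_0)$. One must also check that a successful coupling at some super-step ensures the paths agree from that point onward, which again follows from the Markov-like structure combined with the fact that loops touching both paths and straddling the coupled region are automatically identical in the coupled realization.

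The main obstacle I anticipate is the loop-term bookkeeping in the one-step lemma: showing that the tilting by $\exp(-L_N)$ does not destroy the uniform lower bound on the coupling probability coming from the separation lemma. The subtlety is that $L_N$ involves loops on \emph{all} scales up to $N$, whereas the coupling is only supposed to affect scales near $j$; one must carefully factor out the contribution of far-away loops (which contribute identically to both members of the coupling once the paths agree) and control the contribution of loops straddling $\partial \cC_j$. Lawler's argument in \cite{Lawrecent} handles exactly this via a cutoff and an estimate that the straddling loop term has bounded variance.
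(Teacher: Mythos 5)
The paper does not actually prove this proposition: it is stated as a direct citation of Proposition 2.31 of Lawler's preprint \cite{Lawrecent} and used as a black box (the next subsection only modifies it by tilting). So there is no ``paper's proof'' to check your argument against, only the heuristic description in Section \ref{sec:words}, items i)--iii), of why such a coupling should hold.

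Your sketch does reproduce the correct architecture of Lawler's argument as the paper describes it: a one-step coupling lemma in which the separation lemma produces well-separated configurations with uniformly positive probability, after which the continuations can be coupled on a window of $m$ scales; then iteration by bundling scales into super-steps to get exponential decay of the decoupling probability. Those are exactly items i)--iii) in the paper's preamble. However, what you have written is a plan, not a proof, and the two places you yourself flag as delicate are precisely where the theorem's content lives. The claim that, after separation, the hitting distribution on $\partial\cC_{j+1}$ is comparable to harmonic measure \emph{under the tilted law} is not an immediate Harnack consequence: the tilting $\exp(-L_N)$ is a non-local weight involving loops at all scales up to $N$, so one has to show it can be factored into a part that cancels between the two copies and a part that is a bounded Radon--Nikodym factor depending only on the local configuration. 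Likewise, your justification for why the straddling/local loop term is $O(1)$ (``escape probability bounded below, so total mass $O(1)$'') gestures at the right fact but conflates the loop measure of loops touching a region with the loop measure of loops touching \emph{both} paths, and does not explain why the latter remains uniformly comparable across the two initial configurations. Both of these are resolved in \cite{Lawrecent} by careful loop-measure estimates (this is the bulk of that paper's Section 2), and a self-contained proof would have to carry them out rather than defer them.
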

\begin{rem}\label{rem:tip}
Although it is tempting to claim that we can obtain the coupling in Proposition \ref{prop:coup1} by appropriately ``adding back'' loops from an independent loop soup to $\ovg_2$ and $\ovg^{2,{\rm g}}$ simultaneously, it is in fact imprecise due to the fact that the distributions of the ``tip'' of an LERW and an ILERW differ greatly (also, there are a few stitches in choice of loop terms). We will not discuss this in detail here but mention that in order to generate objects with the right distribution, one has to be very careful both in the sampling of $\ovg$'s and the choice of loop terms (e.g.\ $L_N(\ovg^1_N,\ovg^2_N)$ in \eqref{eq:tilting}) in tilting procedures.
\end{rem}

In fact, the ``initial configuration'' in the definition above does not have to be a nearest-neighbor SAP. As we now explain, this coupling also works under more general setups, especially when walks start from points other than the origin.

Pick $x,y\in\overline{\cC}_k$ and let $(\ovg^{1,x},\ovg^{2,y})$ be two independent ILERW starting from $x$ and $y$ respectively and record their joint law by $\bfM_{x,y}$. Define $(\ovg^{1,x}_N,\ovg^{2,y}_N)$ accordingly. Again let $\bbfM^N_{x,y}$ be the law of $\bfM_{x,y}$ tilted by $G_N(\ovg^{1,x}_N,\ovg^{2,y}_N)$.%

We now state a variant of Prop.\ \ref{prop:Greg1}. Note that we still denote the coupling by $\bfQ^\infty$.
\begin{prop}[Variant of Prop.\ \ref{prop:Greg1}] \label{prop:Greg1pr}
There exists $\beta_3>0$ and $c_3<\infty$  such that for all $0<k<n$, $N\geq 2n+k$ and any $x,y\in\overline{\cC}_k$, 
we can find a coupling $\bfQ^\infty$ of $\bbfM^N$ and $\bbfM^N_{x,y}$ such that 
\begin{equation}\label{eq:Greg1pr}
\bfQ^\infty\big[(\ovg^1_N,\ovg^2_N)=_{3n/2} (\ovg^{1,x}_N,\ovg^{2,y}_N)\big]>1-c_32^{-\beta_3 n}.
\end{equation}
\end{prop}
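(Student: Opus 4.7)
My plan is to prove Prop.\ \ref{prop:Greg1pr} by a direct adaptation of the argument establishing Prop.\ \ref{prop:Greg1}: I will first condition on the initial segments of the two ILERWs inside $\cC_k$ to reduce to a setting analogous to that of Prop.\ \ref{prop:Greg1} (but with initial configurations based at $x,y$ rather than at the origin), and then verify that Lawler's iterative coupling scheme in \cite{Lawrecent} carries through with only cosmetic changes once the common-origin assumption is dropped.

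I will begin by decomposing $(\ovg^{1,x},\ovg^{2,y})$ under $\bbfM^N_{x,y}$ at their first exits of $\cC_k$, writing $\bfiota = (\ovg^{1,x}_k,\ovg^{2,y}_k)$ for the pair of initial segments. Thanks to the indicator factor in $G_N$, these initial segments are almost surely disjoint SAPs from $x,y$ to $\partial\cC_k$ (save for the common starting point when $x=y$). For each admissible realization $\bfiota = (h_1,h_2)$, the conditional law of $(\ovg^{1,x},\ovg^{2,y})$ in $\cC_N$ is structurally identical to $\bbfM^N_{\rm g}$: it is the law of a pair of ILERWs with prescribed initial configurations contained in $\cC_k$, tilted by $G_N$; the only distinction is that those configurations no longer emanate from a common basepoint.

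Next I will inspect the proof of Proposition 2.31 in \cite{Lawrecent} and verify that its iterative coupling at successive dyadic scales between $k$ and $N-3n/2$ does not crucially use the common-origin assumption. The three main ingredients of that scheme are: (i) the random-walk loop-soup representation of LERW (as in Prop.\ \ref{prop:addloops}), which gives the loop-term tilting a clean probabilistic meaning; (ii) the separation lemma (e.g.\ Theorem 6.1.5 of \cite{S} or Claim 3.4 of \cite{SS}), providing a uniform positive probability that both pairs of walks are ``well-separated'' upon exiting each dyadic ball; and (iii) a local CLT type coupling of the walks over the next block of scales once they are well separated. Each of these ingredients is translation-equivariant up to negligible errors so long as one stays inside $\cC_k$, and none of them relies on the initial configurations sharing a basepoint. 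Hence Lawler's argument, applied verbatim to the conditional law above, produces for each $\bfiota = (h_1,h_2)$ a coupling of $\bbfM^N$ and $\bbfM^N_{x,y}[\,\cdot\,\mid\bfiota = (h_1,h_2)]$ achieving $=_{3n/2}$ agreement with the desired failure probability bound $c_3 2^{-\beta_3 n}$.

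Finally, averaging this family of couplings over $\bfiota$ drawn from its marginal distribution under $\bbfM^N_{x,y}$ yields the desired coupling $\bfQ^\infty$. The main obstacle will lie in the middle step: although I expect no genuinely new inequality is needed, one must rewrite each scale-by-scale estimate of \cite{Lawrecent} allowing initial configurations rooted at arbitrary points of $\cC_k$ and confirm that the constants $\beta_3$ and $c_3$ survive this generalization uniformly in $x,y \in \overline{\cC}_k$.
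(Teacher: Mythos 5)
Your proposal matches the paper's approach: both argue that Lawler's iterative coupling scheme in \cite{Lawrecent} is robust to replacing origin-rooted initial configurations with ones rooted at arbitrary $x,y\in\overline{\cC}_k$, the key checks being uniformity of the separation-lemma constant over subsets of $\cC_k$ and the fact that the per-scale decoupling probability is bounded by a (conditioned) return probability to $\overline{\cC}_k$. The paper does not spell out the conditioning/averaging decomposition at $\partial\cC_k$ that you make explicit, but that step is implicit and harmless; otherwise your plan and the paper's sketch are the same.
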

\noindent We now explain briefly this variant holds.
\begin{itemize}
\item The constant in the separation lemmas (Lemmas 2.28 and 2.29 in \cite{Lawrecent}) stays unchanged and hence is uniform if one replaces paths from the origin by a pair of paths with different starting points, for it is inherited from Lemma 2.11, ibid., where the constant does not depend on (in the notation of that lemma in \cite{Lawrecent}) the choice of $A'$ as long as it is a subset of $C_n$;
\item Throughout the proof in \cite{Lawrecent} the probability of the coupling getting destroyed is always bounded by the probability that a (conditioned) random walk returns to the ball $\overline{\cC}_k$, see e.g. Lemma 2.32, ibid, hence the argument is still valid for the setup of Prop.\ \ref{prop:Greg1pr}.
\item Also, we note that the change from $n$ to $3n/2$ is merely for the convenience of the coherence of notations in this paper.
\end{itemize}

\begin{rem}
Although it is not needed in this work, we would like to mention that the coupling in {\rm \cite{Lawrecent}} actually works for even more general initial configurations which can just be two subsets of ${\cC}_k$ with a terminal point (in other words, starting points for the walks). For more discussion, see {\rm \cite{Natural}}. 
\end{rem}

\subsection{Proof of Proposition \ref{prop:coup1}}\label{sec:proof2}

We now give a proof of Prop.\ \ref{prop:coup1}  through fine-tuning the coupling in Prop.\ \ref{prop:Greg1pr}.

First, we claim that it suffices to prove the following coupling, which serves as a link between ILERW-SRW couplings of this work and the ILERW-ILERW couplings of \cite{Lawrecent}. For more comments, see the beginning of Section \ref{sec:Gregvar1}.

Pick $N>k>0$. Let $\gamma$ be an ILERW and $\wtg$ be a LERW stopped at exiting $\partial \cC_N$ with $\gamma(0)=\wtg(0)=0$, independent from each other. We write by $\bfN$ for its joint law and write $\widetilde\bfN$ for their joint law tilted by $G_N(\gamma_N,\wtg)$ (see \eqref{eq:tilting}) for the definition of $G_N$). 
 Similarly, let $\gamma^x$ be an ILERW and $\wtg^y$ be a LERW stopped at exiting $\partial \cC_N$ with $\gamma(0)=x$ and $\wtg(0)=y$, independent from each other. We write by $\bfN_{x,y}$ for its joint law and write $\widetilde{\bfN}_{x,y}$ for their joint law tilted by $G_N(\gamma_N^x,\wtg^y)$.

\begin{prop}\label{prop:coup1pp}
There exist $\beta_4>0$ and $c_4<\infty$ such that for all  $n,k>0$, $N\geq 2n+k+1$ and any $x,y\in\overline{\cC}_k$, there is a coupling $\widetilde\bfQ$ of $(\gamma,\wtg)$ under $\widetilde\bfN$ and $(\gamma^x,\wtg^y)$ under $\widetilde\bfN_{x,y}$, such that 
\begin{equation}\label{eq:coup1pp}
\widetilde\bfQ[(\gamma_N,\wtg)=_{3n/2+1} (\gamma^x_N,\wtg^y) ]\geq1-c_4 2^{-\beta_4 n}.
\end{equation}
\end{prop}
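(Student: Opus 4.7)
The plan is to derive Proposition \ref{prop:coup1pp} as a modification of the Lawler coupling from Proposition \ref{prop:Greg1pr}. The crucial structural observation is that the tilting term $G_N(\gamma_N, \wtg)$ appearing in the definition of $\widetilde\bfN$ has exactly the same form as the tilting in $\bbfM^N$, since both depend only on the portion of the paths inside $\cC_N$ and on the loops inside $\cC_N$ touching both paths. Thus the two setups differ only in the marginal law of the second coordinate: an ILERW truncated at $\partial\cC_N$ in $\bbfM^N$ versus a LERW stopped at $\partial\cC_N$ in $\widetilde\bfN$.

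My first step is to revisit the proof of Proposition \ref{prop:Greg1pr} in \cite{Lawrecent} and verify that its two key building blocks go through in our setup: namely, the separation lemma (which guarantees that under the tilted measure the two paths become well-separated at each dyadic scale with a uniform lower bound on the probability), and the one-step coupling (which states that given well-separated configurations at scale $k$, two samples can be coupled to agree from scale $k+1$ onwards with a uniform positive probability). Both ingredients rely only on local properties of the path distributions and on the loop-tilting factor $G_N$; since $G_N$ is identical in $\widetilde\bfN$ and $\bbfM^N$, one only has to check that replacing the ILERW by a LERW stopped at $\cC_N$ does not spoil the separation and one-step coupling estimates. To this end, I would invoke Lemma \ref{lem:LEWILEW}: on any intermediate scale $2^l$ with $l \leq N - O(n)$ the laws of the truncations at scale $2^l$ of an ILERW and of a LERW stopped at $\cC_N$ differ by a multiplicative factor $1 + O(2^{l - N})$. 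Because the target agreement $=_{3n/2+1}$ leaves a buffer of more than $n/2$ between the start of the coupling (scale $N - 3n/2 - 1$) and the boundary (scale $N$), this multiplicative error is at most $2^{-\Omega(n)}$ and can be absorbed into the final bound $c_4 2^{-\beta_4 n}$.

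Concretely, I would run the iterative Lawler coupling up to some scale $N - cn$ where the laws of $\ovg^2$ truncated and of $\wtg$ truncated agree up to a multiplicative error of $1 + O(2^{-\beta n})$, at which point a direct change-of-measure argument (using the loop-soup representation of Section 2.3 to express both laws explicitly) provides the final match of the tips. The main obstacle is the book-keeping of this \emph{tip effect}: near $\partial \cC_N$ the endpoint distributions of a LERW stopped there and of an ILERW truncated there differ substantially, and the Radon--Nikodym density between them involves escape-probability factors together with a macroscopic loop term encoding the loops that touch the path and leave $\cC_N$. However, because the tilting by $G_N$ forces both paths to reach $\partial \cC_N$ along well-separated trajectories (courtesy of the separation lemma applied at scale $N$), this discrepancy is confined to a thin annular region near $\partial \cC_N$ and can be absorbed into the error. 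The extra ``$+1$'' appearing in $3n/2+1$ (as compared to the $3n/2$ in Proposition \ref{prop:Greg1pr}) is precisely the extra scale of buffer needed to carry out this tip-matching step cleanly.
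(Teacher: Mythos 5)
Your proposal correctly identifies the key structural observation (the tilting factor $G_N$ is identical in $\widetilde\bfN$ and $\bbfM^N$, so only the marginal law of the second coordinate changes), and it correctly points to Lemma \ref{lem:LEWILEW} and the tip effect near $\partial\cC_N$ as the central obstacle. However, the strategy you propose — directly re-running Lawler's iterative separation-and-coupling argument with one coordinate replaced by a LERW stopped at $\partial\cC_N$ — is precisely the approach Remark \ref{rem:tip} warns against, and your proposed resolution of the tip effect does not close the gap. The fundamental issue is that $\mu^\infty[\gamma_N\in\cdot]$ and $\mu_N[\gamma_N\in\cdot]$ are \emph{not} uniformly comparable (this is stated as Remark \ref{rem:doubletilt}); the Radon--Nikodym derivative between the ILERW truncated at $\cC_N$ and the LERW stopped there is unbounded, and this is a property of the measures themselves, not something that is cured by the two paths being well-separated from \emph{each other}. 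The claim that "because the tilting by $G_N$ forces both paths to reach $\partial\cC_N$ along well-separated trajectories, this discrepancy is confined to a thin annular region and can be absorbed into the error" conflates two different things: the separation lemma controls the mutual geometry of the pair, not the one-sided Radon--Nikodym factor between a stopped LERW and a truncated ILERW.

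The paper's actual proof circumvents this by \emph{not} re-deriving Lawler's argument for the mixed pair. Instead, it applies Proposition \ref{prop:Greg1pr} at scale $N$ as a black box for a pair of ILERWs, then extends both coupled ILERW pairs by one dyadic scale to $\cC_{N+1}$ in a carefully prescribed way (attaching a conditioned SRW and loop-erasing for the first coordinate, and a conditioned-to-avoid SRW for the second), producing a pair with the auxiliary law $\widehat\bfM$. The crucial point is that the $N$-truncation of $\widehat\bfM$ has the \emph{same} marginal as the $N$-truncation under $\bbfM^N$, and one then passes from $\widehat\bfM$ to $\widetilde\bfN$ by tilting with the Radon--Nikodym derivative $Z$. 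Lemma \ref{lem:tilt} is what makes this tilting harmless: $Z$ is uniformly bounded (this uses $\mu^\infty$ versus $\mu_{N+1}$ comparability at scale $N$, where Lemma \ref{lem:LEWILEW} \emph{does} apply because of the one-scale buffer), and $Z(\bfup)/Z_{x,y}(\bfup')$ is close to $1$ whenever $\bfup=_n\bfup'$. Your "+1" is correctly attributed to a buffer, but the mechanism is the $\cC_N\to\cC_{N+1}$ extension preceding the tilt, not a tip-matching annulus. In short, your sketch identifies the right danger but does not contain the $\widehat\bfM$-extension and the tilting-by-$Z$ step that actually make the comparison rigorous; without these, the argument does not go through.
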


\begin{proof}[Proof of Prop.\ \ref{prop:coup1} given Prop.\ \ref{prop:coup1pp}]
Independently from $\widetilde\bfQ$, sample a loop soup $\cal L$ with intensity measure $m$ (see above \eqref{eq:Fetadef} for definition of $m$) and denote the product measure by $\bfQ$. We then add loops  from $\cal L$ that stay inside $\cC_N$ and do not touch $\gamma_N$ to $\wtg$, and add loops inside $\cC_N$ that do not touch $ \gamma_N^x$ to $\wtg^y$, according to the procedure described in Prop.\ \ref{prop:addloops}. Also,  to both $\wtg$ and $\wtg^y$ we attach an independent SRW that starts from the terminal point respectively and  denote the new, concatenated paths by $\lambda$ and $\lambda^y$ respectively.

We claim that $(\gamma,\lambda)$ and $(\gamma^x,\lambda^y)$ have the law of $\bbfP^N$ and $\bbfP^N_{x,y}$ respectively, as required in Prop.\ \ref{prop:coup1}. To verify this claim, it suffices to check the distribution of $(\gamma_N,\lambda_N)$ and $(\gamma^x_N,\lambda^y_N)$. For brevity we only check the first one.

Let 
\begin{equation}\label{eq:Fdef}
F(\gamma^\dagger,\iota^\dagger)=\bfN\big(\gamma_N=\gamma^\dagger,\wtg_N=\iota^\dagger \big)G_N(\gamma^\dagger,\iota^\dagger)
\end{equation}
be the ``energy'' function for $(\gamma_N,\wtg_N)$ under $\widetilde\bfN$. 
Similarly, let 
\begin{equation}\label{eq:Gdef}
H(\gamma^\dagger,\lambda^\dagger)= \bfP[\gamma_N=\gamma^\dagger, \lambda_N=\lambda^\dagger] 1_{\gamma^\dagger\cap\lambda^\dagger=\{0\}}
\end{equation}
be the the ``energy'' function for $(\gamma_N, \lambda_N)$ under $\bbfP_N$.

Given $\iota^\dagger$ and a loop soup ${\cal L}^\dagger$, let $
\lambda^\dagger(\iota^\dagger,{\cal L}^\dagger)$ stand for the path formed by adding back loops in ${\cal L}^\dagger$ to $\iota^\dagger$.
Then, to verify the claim above, it suffices to check 
\begin{equation}\label{eq:FH}
F(\gamma^\dagger,\iota^\dagger)=\sum_{{\cal L}^\dagger}H\big(\gamma^\dagger,\lambda^\dagger(\iota^\dagger,{\cal L}^\dagger)\big) P[{\cal L}={\cal L}^\dagger],
\end{equation}
where summation is over all possible realizations of a loop soup.
Here we 
let $P$ stand for the law of ${\cal L}$. Let $\mu^\infty$ be the law of an ILERW starting from 0 and $\mu_N$ be the law of LERW from 0 stopped at $T^N$. Thus, we can rewrite \eqref{eq:Fdef} as
\begin{equation}\label{eq:Fexpan}
F(\gamma^\dagger,\iota^\dagger)=\mu^\infty[\gamma^\dagger] \mu_N[\iota^\dagger] 1_{\gamma^\dagger\cap\iota^\dagger=\{0\}}e^{-L_N(\gamma^\dagger,\iota^\dagger)}
\end{equation}
Let $p_0$ be the law of simple random walk from $0$ stopped at first exiting $\cC_N$, then
\begin{equation}\label{eq:Hexpan}
H\big(\gamma^\dagger,\lambda^\dagger(\iota^\dagger,{\cal L}^\dagger)\big)=\mu^\infty[\gamma^\dagger]p_0\big[\lambda^\dagger(\iota^\dagger,{\cal L}^\dagger)\big] 1_{\gamma^\dagger\cap\lambda^\dagger=\{0\}}=\mu^\infty[\gamma^\dagger]\mu_N[\iota^\dagger]P[l={\cal L}^\dagger]1_{\gamma^\dagger\cap\iota^\dagger=\{0\}}1_{{\cal L}^\dagger(\gamma^\dagger,\iota^\dagger)=\emptyset},
\end{equation}
where ${\cal L}^\dagger(\gamma^\dagger,\iota^\dagger)$ stands for the set of loops that touch both $\gamma^\dagger$ and $\iota^\dagger$.
Comparing \eqref{eq:Fexpan} and \eqref{eq:Hexpan}, it suffices to show that given $\gamma^\dagger$ and $\iota^\dagger$ such that $\gamma^\dagger\cap\iota^\dagger=\{0\}$,
\begin{equation}
\sum_{{\cal L}^\dagger} P[{\cal L}={\cal L}^\dagger]1_{{\cal L}^\dagger(\gamma^\dagger,\iota^\dagger)=\emptyset}=e^{-L_N(\gamma^\dagger,\iota^\dagger)},
\end{equation}
where the summation is again over all possible realizations of a loop soup.
But this follows from the definition of $L_N$ and the restriction property of Poissonian loop soups. Hence, we have verified \eqref{eq:FH}.

Now it suffices to show that with the construction above, 
\begin{equation}\label{eq:lambdaeq}
\bfQ[\lambda_N=_{n+1}\lambda^{y}_N]\geq 1-c2^{-\beta n}.
\end{equation}
Note that in general $\LE\big[\lambda[T_{N-n-1},T_N]\big]\neq\gamma[T_{N-n-1},T_N]$. 

We observe that if $\lambda_N \neq_{n+1}\lambda^y_N$, then at least one of
$$
{\cal L}\big(\cC_{N-3n/2-1},\cC_{N-n-1}^c\big)\neq \emptyset\quad\mbox{ and }\quad
\big\{(\gamma_N,\wtg)\neq_{n+1} (\gamma^x_N,\wtg^y)\big\}
$$
must happen. 
We can bound the probability of the former by $c2^{-n/2}$ through a classical estimate on loop measures, see for instance Lemma 2.6 in \cite{Lawrecent} and that of the latter by $c2^{-\beta n}$ through \eqref{eq:coup1pp}.
This finishes the proof of \eqref{eq:lambdaeq} as well as Prop.\ \ref{prop:coup1}.
\end{proof}

We now turn to Prop.\ \ref{prop:coup1pp}. To construct the coupling in Prop.\ \ref{prop:coup1pp} for $N+1$ from Prop.\ \ref{prop:Greg1pr} for $N$, we tilt the law of $(\gamma_N,\wtg)$ and $(\gamma'_N,\wtg')$ from $\bfQ^\infty$ to $\widetilde\bfQ$ by ``extra loop terms'' (we will explain what this means immediately below). To show that under the new law $\widetilde\bfQ$ paths are also coupled in the sense of \eqref{eq:coup1pp} with high probability, it suffices to show that 
\begin{itemize}
\item[1)] the ``Radon-Nikodym'' derivative is uniformly bounded;
\item[2)] if paths have been coupled for many steps, then the ``Radon-Nikodym'' derivative should not differ too much and the laws of the ``tips'' we need to add from step $N$ to $N+1$ do not differ too much either.  
\end{itemize}

In order to describe the tilting procedure we need to introduce some notations.

As in the proof above, let $\mu^\infty_i$ be the law of an ILERW $\gamma^i$ started from the origin, $i=1,2$, and $\mu_N$ be the law of LERW $\gamma^2_N$ from 0 stopped at $T_N$. 

Let $\Gamma_N$ be the set of paths from the origin stopped at first exiting $\cC_N$. Let $\upsilon^i\in \Gamma_N$ and $\zeta^i=\upsilon^i\oplus\iota^{i}\in\Gamma_{N+1}$, $i=1,2$. We use bold fonts to denote a pair of paths, i.e.,  $\bullet=(\cdot^1,\cdot^2)$ as a shorthand. Thus the decomposition above is written as $\bfgam_{N+1}=\bfgam_N\oplus \bfiota$.

Note that the law $\widetilde\bfN$ can be written as follows: 
\begin{equation*}
\begin{split}
\widetilde\bfN(\bfgam_{N+1}=\bfzeta)\;=\;&\mu^\infty_1 [\gamma_{N+1}^1=\zeta^1]\mu_{N+1}[\gamma_{N+1}^2=\zeta^2] \frac{G_{N+1}(\bfzeta)}{\bfN[G_{N+1}(\bfgam_{N+1})]},
\end{split}
\end{equation*}
and the law $\bbfM^N$ can be written as follows.
$$
\bbfM^N(\bfgam_{N}=\bfup)=\mu^\infty_1 [\gamma_{N}^1=\upsilon^1]\mu^\infty_2[\gamma_N^2=\upsilon^2] \frac{G_N(\bfup)}{\bfM[G_N(\bfgam_N)]}.
$$
For $g\in \Gamma_{N+1}$, let $z$ be the terminal point of $g_N$ and decompose $g$ as $g_{N}\oplus \iota$ and define a new probability law of $\gamma_{N+1}$ by
$$
\mu^{\infty \prime}_2[\gamma^2_{N+1}=g]= \mu^\infty_2 [\gamma^2_{N}=g_{N}, p_{z}\big[\LE(W[0,T^{N+1}])=\iota \big| W[0,T^{N+1}]\cap g_{N}=\{z\}\big].
$$
where $p_z$ is the probability law of $W$, a simple random walk started from $z$. In other words, the law of $\mu^{\infty\prime}$ can be described as: take an ILERW, truncate it at first exit of $\cC_N$, then regard it as if it were part of $\gamma^2_{N+1}$ under $\mu_{N+1}$, and ``attach the tail'' through the conditional law under $\mu_{N+1}$.
Hence, for all $g\in\Gamma_N$
$$
\mu^{\infty\prime}_2[(\gamma^2_{N+1})_{N}=g]=
\mu^\infty[\gamma_{N}=g].
$$
Therefore, if we define
$$
\widehat{\bfM}[\gamma_{N+1}=\bfup]=\mu^\infty_1 [\gamma_{N+1}^1=\upsilon^1]\mu^{\infty\prime}_2[\gamma_{N+1}^2=\upsilon^2] \frac{G_N(\bfup_N)}{\bfM[G_N(\bfgam_N)]},
$$ 
then  $(\bfgam_{N+1})_{N}$ under $\widehat{\bfM}$ has the same marginal of $\bfgam_N$ under $\bbfM^N$.

We define $\bfgam^{x,y}_{N+1}\sim\widehat{\bfM}_{x,y}$ similarly.

For $\bfup\in \Gamma_{N+1} \times \Gamma_{N+1}$, we write the Radon-Nikodym derivative we need to investigate by
$$
Z(\bfup)=\frac{\widetilde\bfN(\bfgam_{N+1}=\bfup)}{\widehat{\bfM}(\bfgam_{N+1}=\bfup)}.
$$
We define $Z_{x,y}(\bfup^{x,y})$ similarly. 
As in Section 3 of \cite{Lawrecent}, we have the following properties of $Z$ and $Z_{x,y}$. We will only sketch its proof as it is very similar to Prop. 3.1 in \cite{Lawrecent}.
\begin{lem}\label{lem:tilt} There exists $\beta_5>0$, $c_5,C_5<\infty$ such that for all $\bfup, \bfup_{x,y}\in \Gamma_{N+1}\times \Gamma_{N+1}$
\begin{equation}\label{eq:tiltbad}
Z(\bfup), Z_{x,y}(\bfup_{x,y})\leq C_5.
\end{equation}
For $\bfup$ with $\widehat{\bfM}(\bfup)>0$ and $\bfup^{x,y}$ with $\widehat{\bfM}_{x,y}(\bfup^{x,y})>0$, if $\bfup=_n \bfup^{x,y}$, then
\begin{equation}\label{eq:tiltgood}
\bigg|\frac{Z(\bfup)}{Z_{x,y}(\bfup^{x,y})}-1\bigg|\leq c_5e^{-\beta_5 n}. 
\end{equation}
\end{lem}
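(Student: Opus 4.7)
The plan is to decompose $Z(\bfup)$ (and similarly $Z_{x,y}(\bfup^{x,y})$) into a product of three separately tractable factors, adapting the proof of Proposition~3.1 of \cite{Lawrecent} to the present ILERW--LERW setting. First I would apply the domain Markov property of LERW (Lemma \ref{domain}) together with the very definition of $\mu^{\infty\prime}_2$ to observe that the ``SRW tail'' in the annulus $\cC_{N+1}\setminus\cC_N$ factors out identically from numerator and denominator of $Z$ and cancels. This yields
\begin{equation*}
Z(\bfup)\;=\;R_1(\bfup)\cdot R_2(\bfup)\cdot K,
\end{equation*}
where $R_1(\bfup):=\mu_{N+1}[\gamma^2_N=\upsilon^2_N]/\mu^\infty_2[\gamma^2_N=\upsilon^2_N]$, $R_2(\bfup):=G_{N+1}(\bfup)/G_N(\bfup_N)$, and $K:=\bfM[G_N]/\bfN[G_{N+1}]$, with the analogous decomposition $Z_{x,y}=R_1^{x,y}\cdot R_2^{x,y}\cdot K_{x,y}$.

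For \eqref{eq:tiltbad} I would bound each factor uniformly. The ratio $R_1$ is bounded above and below by Lemma \ref{lem:LEWILEW} applied with $l=2^N$ and $D=\cC_{N+1}$ (so $n_{\text{Mas}}=2$). The loop ratio $R_2=\exp\big(L_N(\bfup_N)-L_{N+1}(\bfup)\big)$ lies in $[c,1]$: indeed $L_{N+1}\ge L_N$, and the increment $L_{N+1}-L_N$ is controlled by the total mass of random walk loops in $\cC_{N+1}$ that either exit $\cC_N$ or touch re-entries of $\upsilon^1$ into $\cC_N$, for which a standard loop measure estimate (cf.\ Lemma~2.6 of \cite{Lawrecent}) gives a uniform bound. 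Finally $K$ is bounded because, via Prop.\ \ref{prop:addloops}, both $\bfM[G_N]$ and $\bfN[G_{N+1}]$ reinterpret as ILERW--SRW non-intersection probabilities at comparable scales, which are of the same order by Corollary \ref{COR}.

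For \eqref{eq:tiltgood} the ratio factors as $(R_1/R_1^{x,y})\cdot(R_2/R_2^{x,y})\cdot(K/K_{x,y})$. When $\bfup=_n\bfup^{x,y}$, Lemma \ref{lem:LEWILEW} applied at the smaller scale $l=2^{N+1-n}$ (so $n_{\text{Mas}}=2^n$), together with a decoupling via the loop-soup representation (Prop.\ \ref{prop:addloops}) that bounds the contribution of loops bridging the differing inner parts to the coupled outer parts by $O(2^{-n})$, gives $R_1/R_1^{x,y}=1+O(2^{-n})$. For $R_2/R_2^{x,y}$, since the paths agree in the annulus $\cC_{N+1}\setminus\cC_{N+1-n}$, the only discrepancies in $L_{N+1}-L_N$ come from loops bridging the coupled outer region and the uncoupled inner one; by the same loop measure estimate this difference is $O(e^{-\beta n})$.

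The main obstacle is the normalization ratio $K/K_{x,y}$, which is a constant depending only on the starting configurations: for \eqref{eq:tiltgood} we must show it is close to $1$, not merely bounded. The strategy is to write
\begin{equation*}
K/K_{x,y}\;=\;\int(R_1^{x,y}R_2^{x,y})\,d\widehat{\bfM}_{x,y}\;\Big/\;\int(R_1 R_2)\,d\widehat{\bfM},
\end{equation*}
(forced by the normalization conditions $\int Z\,d\widehat{\bfM}=\int Z_{x,y}\,d\widehat{\bfM}_{x,y}=1$), and then construct a coupling between $\widehat{\bfM}$ and $\widehat{\bfM}_{x,y}$ by concatenating the coupling $\bfQ^\infty$ of Prop.\ \ref{prop:Greg1pr} with an independent SRW tail in the annulus $\cC_{N+1}\setminus\cC_N$. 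On coupled configurations, $R_1 R_2$ and $R_1^{x,y}R_2^{x,y}$ differ by a factor $1+O(e^{-\beta n})$ by the analyses above, while the probability of being uncoupled is itself $O(e^{-\beta n})$. Combining these gives $K/K_{x,y}=1+O(e^{-\beta n})$; multiplied with the two near-unit path-dependent ratios this yields \eqref{eq:tiltgood}.
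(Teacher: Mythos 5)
Your decomposition $Z=R_1R_2K$ (with $R_1=\mu_{N+1}[\gamma^2_N=\upsilon^2_N]/\mu^\infty_2[\gamma^2_N=\upsilon^2_N]$, $R_2=G_{N+1}(\bfup)/G_N(\bfup_N)$, $K$ the normalization ratio) is exactly what the paper's sketch of \eqref{eq:tiltbad} uses: the first ratio is the first claim in \eqref{eq:ZZ}, the trivial bound $R_2\le1$ is what the paper calls ``$G_{N+1}(\bfzeta)\leq G_N(\bfup)$ by definition,'' and the bound on $K$ is what the paper attributes to one-point asymptotics. Incidentally, you only need $R_2\leq1$ for \eqref{eq:tiltbad}, and the lower bound $R_2\geq c$ you claim fails in general once indicators are accounted for ($R_2=0$ when $\upsilon^1\cap\upsilon^2\not\subset\{0\}$ while $\upsilon^1_N\cap\upsilon^2_N\subset\{0\}$), though this is harmless for the result.

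For \eqref{eq:tiltgood} your factorization into $R_1/R_1^{x,y}$, $R_2/R_2^{x,y}$, and $K/K_{x,y}$ is a more explicit version of what the paper compresses into ``express both $Z$ and $Z_{x,y}$ in loop terms.'' Your handling of $R_2/R_2^{x,y}$ by loops bridging $\cC_{N+1-n}$ and $\cC_N^c$ is exactly the paper's intended estimate. Your identification of the normalization ratio $K/K_{x,y}$ as the nontrivial piece, and the fix via a coupling concatenating $\bfQ^\infty$ from Prop.~\ref{prop:Greg1pr} with i.i.d.\ tails, is a genuine contribution: the paper's sketch does not obviously cover this constant ratio (it is not a path-dependent loop term), so spelling out the integration-over-coupling argument — using the uniform bound on $R_1R_2$ from \eqref{eq:tiltbad} and the fact that $1/K_{x,y}=\bbfM_{x,y}$-average of $R_1^{x,y}R_2^{x,y}$ is bounded below — makes the lemma usable at face value.

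One imprecision worth flagging: your justification of $R_1/R_1^{x,y}=1+O(2^{-n})$ by ``Lemma~\ref{lem:LEWILEW} applied at the smaller scale $l=2^{N+1-n}$'' does not directly work. That lemma compares the laws of the paths \emph{truncated at} $\partial\cC_{N+1-n}$, whereas $R_1$ is the full ratio for paths out to $\partial\cC_N$; applying the lemma at the small scale controls the inner factor of the domain-Markov factorization of $R_1$ but not the conditional continuation ratio. The correct route — which the paper's ``express in loop terms'' indicates and which you also gesture at with the loop-soup decoupling — is to write $R_1=\frac{F_{\upsilon^2_N}(\cC_{N+1})}{F_{\upsilon^2_N}(\mathbb Z^3)}\cdot\frac{{\rm Esc}_{\upsilon^2_N,\cC_{N+1}}(z)}{{\rm Esc}_{\upsilon^2_N,\mathbb Z^3}(z)}$ via \eqref{eq:Fetadef} and observe that, when $\upsilon^2_N$ and $\upsilon^{2,x,y}_N$ agree outside $\cC_{N+1-n}$, the $F$-ratio difference involves only loops linking $\cC_{N+1-n}$ to $\cC_{N+1}^c$ (mass $O(2^{-n})$) and the escape-ratio difference is bounded by the probability that an SRW from $\partial\cC_N$ revisits $\cC_{N+1-n}$, again $O(2^{-n})$, together with a uniform lower bound on the escape probability. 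If you rewrite the $R_1/R_1^{x,y}$ step this way, your proof is complete and slightly more explicit than the paper's.
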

\begin{proof}[Sketch of proof of Lem.\ \ref{lem:tilt}] To check \eqref{eq:tiltbad}, it suffices to see that both
\begin{equation}\label{eq:ZZ}
\frac{\mu_{N+1}[\gamma_{N}^2=\upsilon^2]}{\mu^\infty[\gamma_{N}^2=\upsilon^2]}\leq C \quad\mbox{ and }\quad \frac{\bfM[G_N(\bfgam_N)]}{\bfN[G_{N+1}(\bfgam_{N+1})]}\leq C'.
\end{equation}
as $G_{N+1}(\bfzeta)\leq G_N(\bfup)$ by definition.
The first claim of \eqref{eq:ZZ} follows Lemma \ref{lem:LEWILEW}. The second claim is a corollary of the asymptotics of one-point functions for LERW.

To check \eqref{eq:tiltgood}, it suffices to express both $Z$ and $Z_{x,y}$ in loop terms and see that if $\bfup=_n \bfup^{x,y}$, then the ratio $Z(\bfup)/Z_{x,y}(\bfup^{x,y})$ can be bounded by the exponential of loop terms of loops connecting $\cC_N^c$ and $\cC_{N-n}$, which gives the right-hand side of the inequality in \eqref{eq:tiltgood}.
\end{proof}

\medskip

Finally, Proposition \ref{prop:coup1pp} follows easily from Prop.\ \ref{prop:Greg1pr} and Lemma \ref{lem:tilt}. 
\begin{proof}[Proof of Prop.\ \ref{prop:coup1pp}]
 We start with $\bfQ^\infty$ from Prop.\ \ref{prop:Greg1pr}. First, sample $(\ovg_N^1,\ovg_N^2)$ and $(\ovg^{1,x}_N,\ovg^{2,y}_N)$ according to $\bfQ^\infty$. Attach to $\ovg_N^1$ an SRW conditioned to avoid $\ovg_N^1 $, erase loops, and stop at exiting $\cC_{N+1}$ and to $\ovg_N^2$ an SRW stopped at exiting $\cC_{N+1}$ conditioned to avoid $\ovg_N^2$, both independent from $(\ovg_N^1,\ovg_N^2)$ and of each other. We denote the pair of attached paths by $(\iota^1,\iota^2)$ and write 
$$\bfgam_{N+1}=(\gamma_{N+1}^1,\gamma_{N+1}^2)=(\ovg_N^1\oplus \iota^1,\ovg_N^2\oplus \iota^2).$$
Similarly, we attach to $(\ovg^{1,x}_N,\ovg^{2,y}_N)$ a pair of  $(\iota^{1,x},\iota^{2,y})$ and write 
$$\bfgam^{x,y}_{N+1}=(\gamma_{N+1}^{1,x},\gamma_{N+1}^{2,y})=(\ovg_N^{1,x}\oplus \iota^{1,x},\ovg_N^{2,y}\oplus \iota^{2,y}).$$

Then it is easy to see that $\bfgam_{N+1}$ and $\bfgam^{x,y}_{N+1}$ has the law of $\widehat{\bfM}$ and $\widehat{\bfM}_{x,y}$.

We now claim that it is still possible to couple $\bfgam_{N+1}$ and $\bfgam^{x,y}_{N+1}$ (with little abuse of notation we still call it  $\bfQ^\infty$) such that for some $\beta>0$,
\begin{equation}\label{eq:goodNp1}
\bfQ^\infty [\bfgam_{N+1}=_{3n/2+1}\bfgam^{x,y}_{N+1}] \geq 1-c 2^{-\beta n}.
\end{equation}

To prove this, it suffices to show that on the event
$\big\{(\ovg^1_N,\ovg^2_N)=_{3n/2} (\ovg^{1,x}_N,\ovg^{2,y}_N)\big\},
$
the conditional law of $(\iota^1,\iota^2)$ and $(\iota^{1,x},\iota^{2,y})$ under $\widehat{\bfM}$ and $\widehat{\bfM}_{x,y}$ respectively has a total variation distance uniformly bounded by $c2^{-3n/2}$. Here ``uniformly'' means regardless of actual configuration of $(\ovg^1_N,\ovg^2_N)$ and $(\ovg^{1,x}_N,\ovg^{2,y}_N)$. This follows from Lemma \ref{lem:LEWILEW}.

We finish by constructing a new measure $\widetilde\bfQ$ through ``tilting'' $\bfgam_{N+1}$ and $\bfgam_{N+1}^{x,y}$ in $\bfQ^\infty $ by $Z$ and $Z_{x,y}$ respectively. Then,
 \eqref{eq:goodNp1} combined with \eqref{eq:tiltbad} for the ``bad'' case $\bfeta_{N}\neq_{3n/2}\bfetag_{N}$ and \eqref{eq:tiltgood} for the good case $\bfeta_{N}=_{3n/2}\bfetag_{N}$ guarantees that for some $c_4<\infty$ and $\beta_4>0$,
\begin{equation}\label{eq:newmes}
\widetilde\bfQ[ \bfeta_{N+1}=_{3n/2+1}\bfetag_{N+1}] \geq 1- c_4 2^{-\beta_4 n}.
\end{equation}
This finishes the proof of \eqref{eq:coup1pp} (note that $N$ in the setup of \eqref{eq:coup1pp} is $N+1$ here).
\end{proof}
\begin{rem}\label{rem:doubletilt} The crucial observation here that leads to the proof above is that the ratio between $\mu^\infty[\gamma_N\in\cdot]$ and $\mu_{N+1}[\gamma_N\in\cdot]$ are uniformly bounded from above and below. This is not true for  $\mu^\infty[\gamma_N\in\cdot]$ and $\mu_N[\gamma_N\in\cdot]$. See also Remark \ref{rem:tip}.

\end{rem}

At the end of this subsection, we state another coupling which is related to but not a direct consequence of Prop.\ \ref{prop:coup1}.  Although we do not need it in this work, we still state it here as it is a strengthened version of Prop.\ \ref{prop:coup2} which should have a place in the family portrait of couplings that appear in this section. We will not provide its proof here but remark that it follows from a modification of the tilting arguments in the proof above. In this case, we will need to tilt both $\gamma^1_N$ and $\gamma^2_N$ and derive bounds similar to \eqref{eq:tiltbad} and \eqref{eq:tiltgood}.

In the notation of Proposition \ref{prop:coup2}, we consider $\bfeta^M$ under $\bbfP^{N,M}_{0,0}$ and for $x,y\in\overline{\cC}_k$  consider $\bfeta^{M,x,y}$ under the tilted law $\bbfP^{N,M}_{x,y}$. Then one has the following coupling.
\begin{prop}\label{prop:coup3}
There exist $\beta_6>0$ and $c_6<\infty$ such that for all $k,n>0$, $M\geq N\geq 2n+k$, any $x,y\in\overline{\cC}_k$, there is a coupling $\bbfQ^N$ of $\bfeta^M$ under $\bbfP^{N,M}_{0,0}$ and $\bfeta^{M,x,y}$ under $\bbfP^{N,M}_{x,y}$, such that 
\begin{equation}\label{eq:coup3}
\bbfQ^M[\bfeta^M=_{(n+M-N)} \bfeta^{M,x,y} ]\geq1-c_6 2^{-\beta_6 n}.
\end{equation}
\end{prop}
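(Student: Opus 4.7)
The plan is to follow the blueprint of the proof of Proposition \ref{prop:coup1} but carry out a \emph{double} tilting that promotes both ILERWs from the starting coupling of Proposition \ref{prop:Greg1pr} into LERWs stopped at $\partial\cC_M$. I would begin by invoking Proposition \ref{prop:Greg1pr} at scale $N$ to obtain a coupling $\bfQ^\infty$ of $\bbfM^N$ and $\bbfM^N_{x,y}$ whose paths agree from $\partial\cC_{N-3n/2}$ with probability at least $1-c_3 2^{-\beta_3 n}$. Analogously to $\widehat{\bfM}$ in the proof of Proposition \ref{prop:coup1pp}, I would define an intermediate measure $\widehat{\bfM}^M$ (and its shifted counterpart $\widehat{\bfM}^M_{x,y}$) under which (i) the joint law of the first $N$ steps matches $\bbfM^N$ (resp.\ $\bbfM^N_{x,y}$) exactly, and (ii) each $N$-to-$M$ tail is sampled independently from the conditional law of the corresponding segment of a LERW stopped at $\partial\cC_M$ given its first $N$ steps. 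Because each tail depends on the past only through its terminal point on $\partial\cC_N$ and the trace inside $\cC_N$, whenever the first $N$ steps agree from $\partial\cC_{N-3n/2}$ the tails can be coupled to coincide through $\partial\cC_M$ with error $O(2^{-\beta n})$ using Lemma \ref{lem:LEWILEW} and the separation lemma, producing a coupling of $\widehat{\bfM}^M$ and $\widehat{\bfM}^M_{x,y}$ agreeing from $\partial\cC_{N-3n/2}$ all the way to $\partial\cC_M$ with the required probability.

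Next I would define the target law $\widetilde{\bfN}^M$ as the law of a pair of LERWs stopped at $\partial\cC_M$ tilted by the loop term $G_N$ of \eqref{eq:tilting}, and $\widetilde{\bfN}^M_{x,y}$ similarly. A direct computation, mimicking the expansion leading to \eqref{eq:Fexpan} and \eqref{eq:Hexpan}, shows that the Radon-Nikodym derivative $Z^M = d\widetilde{\bfN}^M/d\widehat{\bfM}^M$ depends on the realization only through the first $N$ steps and factors as
\begin{equation*}
Z^M(\bfzeta_M) \;=\; \frac{\mu_M[\gamma^1_N = \zeta^1_N]}{\mu^\infty[\gamma^1_N = \zeta^1_N]} \cdot \frac{\mu_M[\gamma^2_N = \zeta^2_N]}{\mu^\infty[\gamma^2_N = \zeta^2_N]} \cdot \frac{\bfM[G_N(\bfgam_N)]}{\bfN^M[G_N(\bfgam_N)]},
\end{equation*}
with an analogous factorization for $Z^M_{x,y}$ with shifted starting points.

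The crux of the argument is then to establish the double-tilting analogues of \eqref{eq:tiltbad} and \eqref{eq:tiltgood}: namely, $Z^M, Z^M_{x,y} \leq C$ uniformly, and $|Z^M(\bfup)/Z^M_{x,y}(\bfup') - 1| \leq c 2^{-\beta n}$ whenever $\bfup =_{n+M-N} \bfup'$. Each marginal ratio is $O(1)$ by Lemma \ref{lem:LEWILEW} (using that $M \geq N$), and the normalizing-constant ratio is $O(1)$ since the marginal ratios are bounded; hence $Z^M$ is uniformly bounded. For stability, I would expand each marginal ratio using the loop-soup representation \eqref{eq:Fetadef}: the ratio of $F_{\zeta_N}$-terms between $\cC_M$ and $\mathbb{Z}^3$ equals the exponential of the loop measure of loops touching $\zeta_N$ that leave $\cC_M$. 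When $\bfup =_{n+M-N} \bfup'$, the two configurations agree outside $\cC_{N-n}$, so $Z^M(\bfup)/Z^M_{x,y}(\bfup')$ is controlled by the total mass of loops that simultaneously touch $\cC_{N-n}$ and leave $\cC_M$, whose diameter is at least $n + (M-N)$; the standard estimate of Lemma 2.6 of \cite{Lawrecent} bounds this mass by $c 2^{-\beta(n+M-N)} \leq c 2^{-\beta n}$.

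Once these bounds are in hand, tilting the coupling of $\widehat{\bfM}^M$ and $\widehat{\bfM}^M_{x,y}$ by $Z^M$ and $Z^M_{x,y}$ yields a coupling of $\widetilde{\bfN}^M$ and $\widetilde{\bfN}^M_{x,y}$ agreeing through $\partial\cC_M$ with probability at least $1 - c 2^{-\beta n}$, by splitting into the ``good'' event where the initial parts agree (using the stability bound) and the rare ``bad'' complement (using the uniform bound), exactly as in the last step of the proof of Proposition \ref{prop:coup1pp}. Finally, attaching an independent Poissonian loop soup to the second path as in the proof of Proposition \ref{prop:coup1} given Proposition \ref{prop:coup1pp} reconstructs $\bfeta^M$ and $\bfeta^{M,x,y}$ with the laws $\bbfP^{N,M}_{0,0}$ and $\bbfP^{N,M}_{x,y}$ respectively, while preserving the coupling through $\partial\cC_M$ up to an extra $O(2^{-n/2})$ error from loops crossing between $\partial\cC_{N-n}$ and $\partial\cC_M^c$. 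The main obstacle will be the stability estimate on the double tilting: as cautioned in Remark \ref{rem:tip}, a careless choice of intermediate measure can blow up the Radon-Nikodym derivative, and the crucial design choice that makes $Z^M$ tractable is to align $\widehat{\bfM}^M$ with $\bbfM^N$ (rather than with a LERW$_M$-LERW$_M$ reference) on the first $N$ steps, so that $Z^M$ factors into path-by-path marginal ratios plus one realization-independent constant, each of which can be controlled via Lemma \ref{lem:LEWILEW} and loop-measure techniques.
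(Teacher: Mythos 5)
Your strategy matches the route the paper points at: start from the ILERW--ILERW coupling of Prop.~\ref{prop:Greg1pr}, tilt \emph{both} marginals from $\mu^\infty$ to $\mu_M$, verify boundedness and stability of the resulting Radon--Nikodym derivative, and finally reattach a loop soup. The factorization you derive for $Z^M$ as
\[
Z^M(\bfzeta_M)\;=\;\frac{\mu_M[\gamma^1_N=\zeta^1_N]}{\mu^\infty[\gamma^1_N=\zeta^1_N]}\cdot\frac{\mu_M[\gamma^2_N=\zeta^2_N]}{\mu^\infty[\gamma^2_N=\zeta^2_N]}\cdot\text{(const)}
\]
is correct, and it is a genuine advantage of your choice of intermediate measure that $Z^M$ depends on the realization only through $\bfzeta_N$.

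There is, however, a real gap in the boundedness step. You claim ``each marginal ratio is $O(1)$ by Lemma~\ref{lem:LEWILEW} (using that $M\geq N$).'' Lemma~\ref{lem:LEWILEW} compares $\mu^\infty$ and $\mu_M$ truncated at $B(l)$ only when $B(nl)\subset\cC_M$ with $n\geq2$, i.e.\ only when $M\geq N+1$; it gives nothing when $M=N$, and indeed the ratio is genuinely unbounded there because the endpoint distribution at $\partial\cC_N$ differs by the escape factor $\mathrm{Esc}_{\zeta_N,\mathbb{Z}^3}(\tau)$, which is not bounded away from $0$ over all SAPs $\zeta_N$. This is exactly the phenomenon flagged in Remark~\ref{rem:doubletilt}: $\mu^\infty[\gamma_N\in\cdot]$ and $\mu_N[\gamma_N\in\cdot]$ are \emph{not} uniformly comparable, and this is precisely why the proof of Prop.~\ref{prop:coup1pp} works one level out (tilting to $\mu_{N+1}$, not $\mu_N$). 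Since the entire point of Prop.~\ref{prop:coup3}, per the preceding Remark, is that the error bound $2^{-\beta_6 n}$ should \emph{not} degrade as $M\to N$ (improving the $\min(n,M-N)$ of Prop.~\ref{prop:coup2}), the regime $M$ close to $N$ is the one you most need, and it is exactly the one your argument does not cover. A natural fix would be to run the base coupling at a strictly smaller scale, e.g.\ invoke Prop.~\ref{prop:Greg1pr} at scale $N-1$ and absorb both the $\mu_M[\gamma_{N-1}\in\cdot]/\mu^\infty[\gamma_{N-1}\in\cdot]$ ratio (now bounded for all $M\geq N$) and the bounded factor $G_N(\bfgam_N)/G_{N-1}(\bfgam_{N-1})\leq 1$ into the tilting, checking stability for both.

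A second, smaller issue: in the stability step you control only the ratio of $F_{\zeta_N}$-terms via loop measures. The full ratio $\mu_M[\gamma_N=\zeta_N]/\mu^\infty[\gamma_N=\zeta_N]$ also contains the escape-probability ratio $\mathrm{Esc}_{\zeta_N,\cC_M}(\tau)/\mathrm{Esc}_{\zeta_N,\mathbb{Z}^3}(\tau)$, which is not a pure loop term and requires its own argument (e.g.\ a Harnack/last-exit decomposition comparing the two hitting problems, or an expansion of the escape probabilities themselves in loop and Green's-function terms). You should say how that factor is controlled, both for boundedness and for stability under $\bfup=_{n+M-N}\bfup'$.
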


\section{One-point function estimates for LERW}\label{sec:length}
The goal of this section is to establish the main result of this work, namely Theorem \ref{ONE.POINT}. 
We lay out the main structure of the proof in Section \ref{sec:outlinelength}, and then give the proof two key propositions in Sections \ref{sec:5.2} and \ref{sec:5.3} respectively.

\subsection{Outline of the proof}\label{sec:outlinelength}
We start by a recap on the setup. Let $\mathbb{D}$ be the unit open ball in $\mathbb{R}^{3}$ and let $\overline{\mathbb{D}}$ be its closure. Fix $x \in \mathbb{D} \setminus \{0 \}$. We write $x_{n} $ for the nearest point from $2^{n} x$ in $\mathbb{Z}^{3}$. As introduced in Section \ref{sec:intro}, we are interested in 
\begin{equation}\label{one-point}
a_{n,x} := P \big( x_{n} \in\LE(S[0, T^n] ) \big),
\end{equation}
where $S$ is the SRW started from the origin and $T^n=T_{2^{n}}(S)$.
We first claim that in order to establish \eqref{one-goal}, it suffices to estimate Green's function and a non-intersection probability under a setup which is slightly different from that of Section \ref{sec:noninter}.

Let $X=X_n$ be a simple random walk started at $x_{n}$ conditioned that $\tau_{0}<T^n$, where $\tau_{0}$ stands for the first time that it hits the origin. When  no confusion arises, we write $\overline{X}^\circ$ for $\LE(X[0, \tau_{0}])$ as a shorthand and keep the dependence on $n$ implicit. As a convention, we will (and will only) omit the $X$ in the notation when it comes to hitting times for $X_n$. Let $Y$ be an independent simple random walk started at $x_n$ and write $Y^\circ$ for $Y[1,T^n(Y)]$.
\begin{lem} With the notation above, 
\begin{equation}\label{one}
a_{n,x} = G_{\cC_n} (0,x_n) P \Big(\LE(X[0, \tau_{0}]) \cap Y^\circ = \emptyset \Big).
\end{equation}
\end{lem}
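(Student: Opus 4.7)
The plan is to establish \eqref{one} by combining a standard last-visit decomposition with the reversibility of loop-erased random walk from Lemma \ref{reversal}. Recall the elementary characterization: a point $y$ lies in $\LE(\lambda)$ if and only if $y\in\lambda$ and, writing $\sigma_y:=\max\{k:\lambda(k)=y\}$ for the last visit of $\lambda$ to $y$, one has $\LE(\lambda[0,\sigma_y])\cap\lambda[\sigma_y+1,|\lambda|]=\emptyset$. Applying this with $y=x_n$ and $\lambda=S[0,T^n]$ and decomposing at the value of $\sigma_{x_n}$ via the Markov property, we obtain
$$
a_{n,x}=\sum_{\gamma} P^{0}\bigl(S[0,|\gamma|]=\gamma,\,T^{n}>|\gamma|\bigr)\,P^{x_n}\bigl(\LE(\gamma)\cap Y^{\circ}=\emptyset\bigr),
$$
where the sum runs over all paths $\gamma$ from $0$ to $x_n$ contained in $\cC_n$; the event that the post-$\sigma_{x_n}$ walk avoids $x_n$ is automatically contained in the intersection condition, since $x_n$ is the endpoint of $\LE(\gamma)$.

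Next, I would decompose each $\gamma$ at its last visit to the origin, writing $\gamma=\sigma_1\oplus\sigma_2$, where $\sigma_1$ is a loop at $0$ contained in $\cC_n$ and $\sigma_2$ is a path from $0$ to $x_n$ in $\cC_n$ that does not revisit $0$. Since the loop-erasure algorithm begins by collapsing $\gamma$ to the sub-path starting at its last visit to $\gamma(0)=0$, we have $\LE(\gamma)=\LE(\sigma_2)$. Summing $P^{0}\bigl(S[0,|\sigma_1|]=\sigma_1,\,T^{n}>|\sigma_1|\bigr)$ over all such loops $\sigma_1$ produces a factor $G_{\cC_n}(0,0)$, yielding
$$
a_{n,x}=G_{\cC_n}(0,0)\sum_{\sigma_2} P^{0}\bigl(S[0,|\sigma_2|]=\sigma_2,\,T^{n}>|\sigma_2|\bigr)\,P^{x_n}\bigl(\LE(\sigma_2)\cap Y^{\circ}=\emptyset\bigr).
$$

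The crucial step is to convert this into a sum over paths from $x_n$ to $0$ via LERW reversibility. For each $\sigma_2$ of length $m$, let $\sigma_2':=T^{m}\sigma_2$ be its image under the bijection of Lemma \ref{reversal}. Since $T^{m}$ preserves the multiset of edges together with their directions, $\sigma_2'$ has the same endpoints as $\sigma_2$, lies in the same range (in particular in $\cC_n$), and visits $0$ exactly once (at time $0$) because the number of visits to any vertex is determined by edge multiplicities. Lemma \ref{reversal} further gives $\LE(\sigma_2)=\bigl(\LE((\sigma_2')^{R})\bigr)^{R}$, so $\LE(\sigma_2)$ and $\LE((\sigma_2')^{R})$ have identical vertex sets. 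Setting $\hat\gamma:=(\sigma_2')^{R}$ (a path from $x_n$ to $0$ in $\cC_n$ whose first hit of $0$ is its endpoint) and using the elementary identity $P^{0}(S[0,m]=\sigma_2')=P^{x_n}(S[0,m]=\hat\gamma)$ (time reversal of simple random walk), the sum becomes
$$
\sum_{\hat\gamma} P^{x_n}\bigl(S[0,|\hat\gamma|]=\hat\gamma,\,T^{n}>|\hat\gamma|\bigr)\,P^{x_n}\bigl(\LE(\hat\gamma)\cap Y^{\circ}=\emptyset\bigr)=P^{x_n}(\tau_0<T^{n})\,P\bigl(\LE(X[0,\tau_{0}])\cap Y^{\circ}=\emptyset\bigr),
$$
by definition of $X$. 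Combining this with $P^{x_n}(\tau_0<T^{n})=G_{\cC_n}(0,x_n)/G_{\cC_n}(0,0)$ yields \eqref{one}. The only mild subtlety is that $T^{m}$ must be verified to preserve the ``no revisit to $0$'' condition, which is immediate from its preservation of vertex multiplicities.
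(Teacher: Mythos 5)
Your proof is correct, and it gives a self-contained derivation of what the paper establishes largely by citation. The paper invokes Proposition 8.1.1 of \cite{S}, which yields $a_{n,x} = G_{\cC_n}(0,x_n)\,P\big(\LE(Z[0,u]) \cap Y^\circ = \emptyset\big)$ with $Z$ a SRW from the origin conditioned to hit $x_n$ before exiting $\cC_n$ and $u$ the last time $Z$ visits $x_n$, and then simply says the lemma ``follows from the reversibility of LERW.'' You instead re-derive the identity directly via a different decomposition: you expand $a_{n,x}$ by conditioning on the path of $S$ up to its last visit to $x_n$, factor out the loops of that path at the origin to extract $G_{\cC_n}(0,0)$, apply the bijection $T^m$ of Lemma \ref{reversal} path-by-path to pass to walks run from $x_n$ to their first hit of $0$, and finally use $G_{\cC_n}(0,0)\,P^{x_n}(\tau_0 < T^n) = G_{\cC_n}(0,x_n)$ to reassemble the conditioned walk $X$ together with the correct Green's function factor. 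The key ingredients --- the last-visit characterization of membership in a loop-erasure and the path bijection $T^m$ of Lemma \ref{reversal} --- are the same in both arguments, but yours is self-contained where the paper defers to \cite{S}, and it spells out the reversibility step the paper leaves implicit, including the check that $T^m$ preserves endpoints, range, and the multiplicity of visits to $0$, so that it restricts to a bijection on the relevant class of paths.
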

\begin{proof}
Let $Z$ be a random walk started at the origin conditioned that it hits $x_n$ before hitting $\partial B(2^n)$, independent of $Y$. 
Write
\begin{equation}
u= \max \{ k \le T^{n} \ | \ Z(k) = x_n \}
\end{equation}
for the last time that $Z$ hits $x_n$ up to $T^{n}(Z)$. Then, by Proposition 8.1.1 of \cite{S}, we have 
\begin{equation}
a_n = G_{\cC_n} (0,x_n) P \Big(\LE(Z[0, u]) \cap Y^\circ = \emptyset \Big).
\end{equation}
Then \eqref{one} follows from the reversibility of LERW (see Lemma \ref{reversal}).
\end{proof}

\begin{rem}
The one-point function $a_n$ and the expected length $M_{2^n}$ are intimately related quantities. Loosely speaking, for a `typical' point $x$, in \eqref{one},  $G_{\cC_n} (0,x_n) \asymp2^{-n}$ while the non-intersection  probability in the RHS is comparable to $\Es(2^n)$. Thus, taking sum for $x \in \mathbb{Z}^{3}$, we see that $E (M_{2^n} )$ is comparable to $2^{2n} \Es(2^n)$ which gives an intuitive explanation for \eqref{esaymp}. 
\end{rem}
It is known (see Proposition 1.5.9 of \cite{Lawb}) that there exists a universal constant $a > 0$ such that 
\begin{align}\label{Green}
G_{\cC_n} (0, x_{n} ) &= \frac{a (1- |x| )}{2^{n} |x|}+ O \big( |x|^{-2} 2^{-2n} \big)=\frac{a (1- |x| )}{2^{n} |x|} \Big\{ 1 + O \Big( 2^{-n} |x|^{-1} (1- |x| )^{-1} \Big) \Big\}.
\end{align}
Thus, it suffices to estimate 
\begin{equation}\label{frac}
P \Big(\overline{X}^\circ\cap Y^\circ = \emptyset \Big)
= {P \Big( \tau_{0} < T^{n}, \ \overline{X}^\circ \cap Y^\circ = \emptyset \Big)}\Big/{P \Big( \tau_0 < T^{n} \Big)},
\end{equation}
By Proposition 1.5.10 of \cite{Lawb}, it follows that there exists a universal constant $b > 0$ such that 
\begin{equation}\label{Green2}
P \Big( \tau_{0} < T^{n} \Big)=\frac{b (1- |x| )}{2^{n} |x|} \Big[1 + O \Big( 2^{-n} |x|^{-1} (1- |x| )^{-1} \Big) \Big].
\end{equation}
(Compare this with \eqref{Green}.) Therefore, what we really need to estimate is the numerator of the fraction in \eqref{frac}.

\medskip

As in the proof of Proposition \ref{difscale}, we want to compare $LE (X_{n} [0, \tau_0^{X_n}]) $ and $LE (X_{n+1} [0, \tau^{X_{n+1}}_{0}]) $ via Theorem 5 of \cite{Koz}. We will accomplish this in two steps:
\begin{itemize}
\item[(A):] show that the shape of $\overline{X}^\circ  \cap \cC_{(1-q) n}  $ is not important for the probability of the numerator of the fraction in \eqref{frac} if $q \in (0,1)$ is chosen suitably;
\item[(B):] replace the starting points of $(X_{n}, Y_{n})$ and $(X_{n+1}, Y_{n+1})$ appropriately.
\end{itemize} 

We will first deal with part (A). 
The next lemma show that we may consider $\overline{X}^q:{=}\LE (X_{n} [0, T^{(1-q)n}]) $ instead of $\LE (X_n[0, \tau_0])$ (i.e.\ $\overline{X}^\circ$) for the non-intersection probability. As its proof is long and technical, we postpone its proof to Section \ref{sec:5.2}.
\begin{lem}\label{LEMMA}
There exists a universal constant $\delta > 0$ such that for all $n $, $q \in (0, 1)$ and $x \in \mathbb{D} \setminus \{ 0 \}$,
\begin{equation}\label{trimtip}
P \Big( \tau_0 < T^{n}, \ \overline{X}^\circ  \cap Y^\circ = \emptyset \Big)= P \Big( \tau_0 < T^{n}, \ \overline{X}^q \cap Y^\circ = \emptyset \Big) \big( 1 + O ( |x|^{-1} 2^{- \delta q n } ) \big).
\end{equation}
\end{lem}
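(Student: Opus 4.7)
The plan is to show that the non-intersection probability on the left-hand side of \eqref{trimtip} is almost the same as on the right-hand side, because the ``extra'' piece of $\overline{X}^\circ$ beyond first hitting $\partial\cC_{(1-q)n}$ lies deep inside $\cC_{(1-q)n}$, where $Y^\circ$ rarely strays. I would focus on the regime $|x|\ge 2^{-qn}$, outside of which the error $|x|^{-1}2^{-\delta qn}\ge 1$ and \eqref{trimtip} is trivial. In this regime $x_n\notin\cC_{(1-q)n}$, so the first-hitting time $\sigma:=T^{(1-q)n}$ occurs strictly before $\tau_0$ under the conditioning $\tau_0<T^n$.

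I would introduce a good event $\mathcal{F}=\mathcal{F}_1\cap\mathcal{F}_2$ with $\mathcal{F}_1=\{X[\sigma,\tau_0]\subset\cC_{(1-q/2)n}\}$ (the conditioned walk does not backtrack past $\partial\cC_{(1-q/2)n}$) and $\mathcal{F}_2=\{Y^\circ\cap\overline{\cC}_{(1-q/2)n}=\emptyset\}$ ($Y$ stays in the outer annulus). On $\mathcal{F}$, a careful inspection of the chronological loop-erasure procedure (Definition \ref{procedure}) shows that $\overline{X}^q$ agrees with the initial segment of $\overline{X}^\circ$ up to first visiting $\partial\cC_{(1-q)n}$, using the prefix-stability of the LE whenever $X[\sigma+1,\tau_0]\cap X[0,\sigma]=\emptyset$, a condition ensured up to negligible corrections by $\mathcal{F}_1$ together with $X[0,\sigma]\subset\cC_{(1-q)n}^c\cup\partial\cC_{(1-q)n}$. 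The extra piece $\overline{X}^\circ\setminus\overline{X}^q\subset\overline{\cC}_{(1-q/2)n}$ is then invisible to $Y^\circ$, so on $\mathcal{F}$,
$$
\{\tau_0<T^n,\overline{X}^\circ\cap Y^\circ=\emptyset\}=\{\tau_0<T^n,\overline{X}^q\cap Y^\circ=\emptyset\},
$$
and the lemma reduces to bounding $P(A_\circ\cap\mathcal{F}^c)+P(A_q\cap\mathcal{F}^c)\le c|x|^{-1}2^{-\delta qn}P(A_q)$, where $A_\circ$ and $A_q$ denote the two events in \eqref{trimtip}.

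Each bad contribution is controlled using the strong Markov property, an $h$-transform analysis with $h(z)=P^z(\tau_0<T^n)\asymp|z|^{-1}$, and the separation lemma. For $\mathcal{F}_1^c$: since it depends only on the post-$\sigma$ walk, by strong Markov at $\sigma$ the conditional probability $P^p(\mathcal{F}_1^c\mid\tau_0<T^n)$ is uniformly $O(2^{-\delta qn})$ in $p\in\partial\cC_{(1-q)n}$, and this factor passes through the non-intersection event cleanly (since $\overline{X}^q$ and $Y$ only depend on $X[0,\sigma]$ and $Y$). For $\mathcal{F}_2^c$: conditioning on $\overline{X}^q=\eta$, I would decompose $Y$ at its first entry into $\cC_{(1-q/2)n}$, and estimate the ratio $P(\eta\cap Y^\circ=\emptyset,\,Y\text{ enters})/P(\eta\cap Y^\circ=\emptyset)$ uniformly in $\eta$: the outer excursion yields the gambler's-ruin factor $\asymp(1-|x|)/(|x|2^{qn})$ from Proposition 1.5.10 of \cite{Lawb}, while the post-entry excursion, once returned to the outer annulus, admits a non-intersection probability with $\eta$ comparable up to universal constants to that of an unconstrained walk, via the separation lemma (Theorem 6.1.5 of \cite{S} or Claim 3.4 of \cite{SS}).

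The main obstacle is the pathwise comparison of $\overline{X}^q$ with the initial segment of $\overline{X}^\circ$ on $\mathcal{F}_1$, since the chronological loop-erasure is not strictly prefix-stable when the post-$\sigma$ walk revisits loop vertices of $X[0,\sigma]$ that were erased inside the annulus $\cC_{(1-q/2)n}\setminus\cC_{(1-q)n}$. One must either enlarge the bad event to absorb such revisitations (and bound their probability by a similar $h$-transform / gambler's-ruin estimate) or sharpen $\mathcal{F}_1$ so that the walk after $\sigma$ is well-confined inside $\cC_{(1-q)n}$; in either case, the separation lemma remains the essential tool for controlling the conditional distortion introduced by the non-intersection event, and produces the factor $|x|^{-1}$ in the error, which originates precisely from the $h(x_n)^{-1}\asymp |x_n|$-scaling of the conditional probability $P(\tau_0<T^n)$.
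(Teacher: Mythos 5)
Your overall plan—decompose according to how far the post-$\sigma$ walk wanders, use prefix-stability of the loop-erasure in the typical case, and bound the contribution of the atypical cases—is the right strategy and matches the paper's strategy. However, there is a genuine gap in the pivotal step, which you yourself flag but do not resolve.

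The problematic claim is that on $\mathcal{F}_1=\{X[\sigma,\tau_0]\subset\cC_{(1-q/2)n}\}$, ``$\overline{X}^q$ agrees with the initial segment of $\overline{X}^\circ$ up to first visiting $\partial\cC_{(1-q)n}$, \ldots\ a condition ensured up to negligible corrections by $\mathcal{F}_1$.'' This is not true. The walk $X[\sigma+1,\tau_0]$ starts next to $X(\sigma)\in\partial\cC_{(1-q)n}$, i.e.\ precisely where $X[0,\sigma]$ terminates, so it hits $X[0,\sigma]$ in the annulus $\cC_{(1-q/2)n}\setminus\cC_{(1-q)n}$ with probability bounded away from zero, not $O(2^{-\delta qn})$. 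When that happens, part of $\overline{X}^q$ in the annulus is chronologically erased and $\overline{X}^q$ is not a prefix of $\overline{X}^\circ$. Your fallback event $\mathcal{F}_2$ does not repair this either: even though $Y^\circ$ avoids $\overline{\cC}_{(1-q/2)n}$, the SAP $\overline{X}^q$ can re-exit $\cC_{(1-q/2)n}$ after its first passage of $\partial\cC_{(1-q/2)n}$, and these excursions may be present in $\overline{X}^q$ but absent from $\overline{X}^\circ$ (having been erased by the post-$\sigma$ walk). So the set identity $\{\tau_0<T^n,\overline{X}^\circ\cap Y^\circ=\emptyset\}=\{\tau_0<T^n,\overline{X}^q\cap Y^\circ=\emptyset\}$ on $\mathcal{F}$ fails in one direction, and the reduction to bounding $P(\mathcal{F}^c\cap\cdot)$ does not go through.

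What the paper does differently is work at the level of the genuinely deterministic prefix agreement: if $X[\sigma,\tau_0]\subset\cC_{(1-q)n+k}$, then $\overline{X}^q[0,T^{(1-q)n+k}]=\overline{X}^\circ[0,T^{(1-q)n+k}]$ (this is exact, not approximate, because that portion of the loop-erasure lies strictly outside the range of the post-$\sigma$ walk and therefore cannot be revisited). Summing over the minimal such $k$ and using a gambler's-ruin factor $2^{-k}$ for the conditional probability of escaping to scale $2^{(1-q)n+k}$ gives the geometric decay. The second key ingredient, which your write-up is missing and which replaces your $\mathcal{F}_2$ device, is the direct comparison
$$
P\big(\tau_0<T^n,\ \overline{X}^q[0,T^{(1-q/2)n}]\cap Y^\circ=\emptyset\big)
= P\big(\tau_0<T^n,\ \overline{X}^q\cap Y^\circ=\emptyset\big)\Big(1+O\big(|x|^{-1}2^{-qn/4}\big)\Big),
$$
i.e.\ truncating the loop-erasure at scale $2^{(1-q/2)n}$ barely changes the non-intersection probability because $Y^\circ$ started at $x_n$ rarely reaches that depth (this is where the factor $|x|^{-1}$ originates). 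Finally, the paper separately handles the case where $X[\sigma,\tau_0]$ comes close to $x_n$ (your Case 3 is implicit in $\mathcal{F}^c$ but should be dealt with explicitly); this requires a separate argument because there the prefix agreement at every scale $k\leq k_0$ is potentially lost at once.
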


%

\vspace{3mm}

We now discuss part (B). By the strong Markov property and Proposition 1.5.10 of \cite{Lawb}, we have
\begin{align}\label{Strong}
&\quad \;P \Big( \tau_0 < T^{n}, \ \overline{X}^q \cap Y^\circ = \emptyset \Big) \notag \\
&=\sum_{y \in \partial \cC_{(1-q)n}} P \Big( T^{(1-q)n} < T^{n}, \ X (T^{(1-q)n}) = y,  \overline{X}^q \cap Y^\circ = \emptyset \Big)  \cdot P^{y} \Big( \tau_{0}(W) < T^{n}(W) \Big) \notag  \\
&=c 2^{-(1-q)n } \big( 1 + O (2^{-qn} ) \big) P \Big( T^{(1-q)n} < T^{n}, \ \overline{X}^q \cap Y^\circ= \emptyset \Big),
\end{align}
where $P^y$ is the law of the SRW $W$ that starts from $y$ and $c > 0$ in the last line is a universal constant. Therefore, it suffices to estimate 
\begin{equation}
P \Big( T^{(1-q)n} < T^{n}, \ \overline{X}^q \cap Y^\circ= \emptyset \Big)
\end{equation}
i.e., we do not need to worry about $X[T^{(1-q)n}, \tau_{0} ]$. 

\vspace{4mm}

Let 
\begin{equation}
f_{n, x} = P \Big( \tau_0 < T^{n}, \ \overline{X}^q  \cap Y^\circ = \emptyset \Big) \label{fgnx} \quad\mbox{ and }\quad
 g_{n, x} = \frac{f_{n,x}}{f_{n-1,x}}. 
\end{equation}
Note that $\overline{X}^q$ and $Y^\circ$ implicitly depend on $n$.
We will show that there exist a universal constant $\rho > 0$ and a constant $c_{x} > 0$ depending only on $x$ such that for all $n$ 
\begin{equation}
f_{n,x} = c_{x} 2^{- (1 + \alpha ) n} \big( 1 + O_{x} ( 2^{- \rho n} ) \big)
\end{equation}
by proving that there exists universal constants $r > 0$ (in fact, $r = 2^{-(1 + \alpha )}$) and $\rho > 0$ such that 
\begin{equation}
g_{n, x} = r \big( 1 + O_{x} (2^{-\rho n } ) \big).
\end{equation}
This is in turn proved through the following proposition.
\begin{prop}\label{PROP}
Let $d_{x} = \min \{ |x|, 1- |x| \}$. There exist universal constants $c >0$, $\delta >0$ and $q_{0} > 0$ such that for all $n$ and $x \in \mathbb{D} \setminus \{ 0 \}$, if we let $q = q_{0}$,
\begin{equation}
g_{n, x} = g_{n-1, x}  \cdot \Big\{  1 + O \Big( d_{x}^{-c} 2^{- \delta q_{0} n} \Big) \Big\}. \label{eq32}
\end{equation}
\end{prop}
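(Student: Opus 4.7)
The plan is to mirror the strategy of Section \ref{sec:noninter} while addressing two new features: the walk $X$ is conditioned to hit the origin, and the dependence of all estimates on the location of $x\in\mathbb D\setminus\{0\}$ must be tracked through $d_x$. The starting point is \eqref{Strong}, which allows me to replace the ``hitting 0'' condition by the cleaner exit condition $\{T^{(1-q)n}<T^n\}$ at the cost of a factor $c\,2^{-(1-q)n}(1+O(2^{-qn}))$ that cancels when we form the ratio $g_{n,x}/g_{n-1,x}$. Thus it suffices to analyse the $x$-analogue of the escape probability
\[
\widetilde f_{n,x}=P\Big(T^{(1-q)n}(X)<T^n(X),\ \overline X^q\cap Y^\circ=\emptyset\Big).
\]

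Following Section \ref{sec:3.1}, I would first introduce the analogues $F_{n,q,x}$ and $F^-_{n,q,x}$ of the events $A_{n,q}$ and $A^-_{n,q}$ from \eqref{Anq} (non-intersection of the LERW of $X$ with $Y$ only up to scale $2^{(1-q)n}$, at scales $n$ and $n-1$ respectively), and establish an analogue of Lemma \ref{1st-lem} writing
\[
g_{n,x}=\frac{P\!\left(\widetilde F_{n,x}\mid F_{n,q,x}\right)}{P\!\left(\widetilde F_{n-1,x}\mid F^-_{n,q,x}\right)}\bigl(1+O(d_x^{-c}2^{-qn})\bigr),
\]
where $\widetilde F_{n,x}$ is the full non-intersection event at scale $n$. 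This step uses Corollary 4.5 of \cite{Mas} applied to the LERW of the conditioned walk $X$; the conditioning on $\tau_0<T^n$ distorts the law only near the origin and near $x_n$, and gives the polynomial factor $d_x^{-c}$ from Harnack.

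Next, I would repeat the ``$g$--$\overline g$'' machinery of Sections \ref{sec:3.3}--\ref{sec:3.5} with $(X,Y)$ replacing $(S^1,S^2)$. The function $g(\eta^1,\eta^2)$ now records the probability that a LERW continuation from the end of $\eta^1$ (obtained from $X$ conditioned to avoid $\eta^1$ and eventually hit $0$) and a SRW continuation from the end of $\eta^2$ reach $\partial\cC_n$ without intersecting each other or $\eta^1\cup\eta^2$. The analogues of Propositions \ref{ghEs}, \ref{prop1} and \ref{replace} go through verbatim, since they rely only on the ``independence up to constants'' of disjoint LERW pieces from \cite{Mas} and the separation lemma, both of which are uniform once one works in the annulus between $\cC_{(1-3q)n}$ and $\cC_{(1-q)n}$, i.e.\ well away from $x_n$ and from $0$. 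Invoking Proposition \ref{prop:coup2} with $k=(1-3q)n$, $N=(1-q)n$ and appropriate separated starting points $x_1^\star,x_2^\star$ on $\partial B(x_n,2^{(1-5q)n})$ then yields the analogue of \eqref{CHANGE},
\[
g_{n,x}=\frac{P(B_{n,x})}{P(B^-_{n,x})}\bigl(1+O(d_x^{-c}2^{-\delta qn})\bigr),
\]
where $B_{n,x}$, $B^-_{n,x}$ are the ``separated starting points'' non-intersection events at scales $n$ and $n-1$. Here the $d_x^{-c}$ factor again absorbs the non-uniformity of Lemma \ref{LEMMA} and of the coupling when $x_n$ is close to $\partial\cC_n$.

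Finally, once the starting points are separated, I would compare scales $n$ and $n-1$ via Koz\-ma's multiscale argument (Theorem~5 of \cite{Koz}) together with the Brownian coupling of Lemma~3.2 of \cite{Lawcut}, exactly as in the proof of Proposition \ref{difscale}; in this geometry the walks live far from both $0$ and the outer sphere, so the comparison between walks on $\mathbb Z^3$ and on $2\mathbb Z^3$ produces the required $1+O(2^{-c_1qn})$ approximation with universal constants. Combining these ingredients with the analogue of \eqref{wb-1} and choosing $q_0$ small enough then yields \eqref{eq32}. The \emph{main obstacle} is keeping careful track of the $d_x$-dependence: when $|x|$ is small, the conditional law of $X$ (on hitting $0$ before $\partial\cC_n$) deviates non-trivially from an unconditioned walk in a small ball around the origin, and when $1-|x|$ is small, $x_n$ is near $\partial\cC_n$ so one has to verify that the coupling of Proposition \ref{prop:coup2} and the separation lemma still have uniform constants on annuli of width $\asymp d_x\,2^n$; both effects can be controlled by standard potential-theoretic estimates and contribute only polynomially in $d_x^{-1}$ to the error.
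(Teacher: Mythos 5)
Your proposal follows essentially the same route as the paper's proof in Section \ref{sec:5.3}: trim the tip via Lemma \ref{LEMMA}, factor out the potential-theoretic prefactors via \eqref{Strong}, rewrite $g_{n,x}$ as a ratio of conditional non-intersection probabilities conditioned on the annular non-intersection events (the paper's $H^i$, corresponding to your $F_{n,q,x}$), run the $g$--$\bar g$ decomposition and invoke Proposition \ref{prop:coup2} to replace the starting points by two separated poles of the sphere $\partial B(x_n,2^{(1-5q)n})$, and finally compare scales $n$ and $n-1$ with the Brownian-sausage coupling and Theorem 5 of \cite{Koz} (the content of Proposition \ref{prop:5.9}). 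The one place your sketch is looser than the paper is in the bookkeeping of the auxiliary conditioning events: the paper carries the factors $P(F^i)$ and $P(G^i)$ (hitting $\partial\cC_{(1-q)n}$ before the outer sphere, for the original and for the separated-starting-point walks respectively) explicitly through \eqref{eq1}, \eqref{eq26}, \eqref{eq27} and cancels them pairwise via \eqref{eq29}--\eqref{eq31}; your ``the $2^{-(1-q)n}$ factor cancels in the ratio'' remark captures the idea but elides that there are four such events whose probabilities must each be compared across scales with $1+O(d_x^{-c}2^{-\delta qn})$ error. This is a presentational gap rather than a mathematical one, and your identification of the two sources of $d_x$-dependence (conditioning distortion near $0$, and loss of room near $\partial\cC_n$) matches the paper's discussion.
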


Now we are ready to prove the main theorem of this paper.
\begin{proof}[Proof of Theorem \ref{ONE.POINT}]Using Proposition 8.1.2 and Proposition 8.1.5 of \cite {S} together with Corollary \ref{COR}, it follows that there exist universal constants $a_{1}, a_{2} > 0$ and $N_{x} \in \mathbb{N}$ depending only on $x \in \mathbb{D} \setminus \{ 0 \}$ such that for all $n \ge N_{x}$ and $x \in \mathbb{D} \setminus \{ 0 \}$ with $|x| \in (0, \frac{1}{2} ]$,
\begin{equation}\label{eq33}
a_{1} 2^{-(1+ \alpha ) n} |x|^{-1-\alpha } \le f_{n, x} \le a_{2} 2^{-(1+ \alpha ) n} |x|^{-1-\alpha }, 
\end{equation}
and that for all $n \ge N_{x}$ and $x \in \mathbb{D} \setminus \{ 0 \}$ with $|x| \in [\frac{1}{2}, 1)$,
\begin{equation}\label{eq34}
a_{1} 2^{-(1+ \alpha ) n} (1-|x|)^{1-\alpha } \le f_{n, x} \le a_{2} 2^{-(1+ \alpha ) n} (1-|x|)^{1-\alpha }. 
\end{equation}
This shows that there exist universal constants $b_{1}, b_{2} > 0$ and $N_{x} \in \mathbb{N}$ depending only on $x \in \mathbb{D} \setminus \{ 0 \}$ such that for all $n \ge N_{x}$ and $x \in \mathbb{D} \setminus \{ 0 \}$.
\begin{equation}\label{eq35}
b_{1} \le g_{n, x} \le b_{2},
\end{equation}
It follows from \eqref{eq32} and \eqref{eq35} that 
$\{ g_{n, x} \}_{n \ge 1}$ is a Cauchy sequence for each $x \in \mathbb{D} \setminus \{ 0 \}$. So let 
\begin{equation}
r_{x} := \lim_{n \to \infty} g_{n,x}.
\end{equation}
 We know that $b_{1} \le r_{x} \le b_{2}$ for all $x \in \mathbb{D} \setminus \{ 0 \}$. Moreover, by \eqref{eq32}, we have
\begin{equation}
 g_{n, x} = r_{x} \Big\{  1 + O \Big( d_{x}^{-c} 2^{- \delta q_{0} n} \Big) \Big\}.
\end{equation}
However, by \eqref{eq33} and \eqref{eq34}, we see that for each $x \in \mathbb{D} \setminus \{ 0 \}$
\begin{equation}
0 < \liminf_{n \to \infty } 2^{(1 + \alpha ) n } f_{n, x} \le  \limsup_{n \to \infty } 2^{(1 + \alpha ) n } f_{n, x} < \infty.
\end{equation}
This ensures that $r_{x} = 2^{-(1 + \alpha )}$ for all $x \in \mathbb{D} \setminus \{ 0 \}$ and that
\begin{equation}\label{eq36}
f_{n, x} = c_{x} 2^{-(1 + \alpha ) n } \Big\{  1 + O \Big( d_{x}^{-c} 2^{- \delta q_{0} n} \Big) \Big\},
\end{equation}
for some $c_{x} > 0$ depending only on $x$. 

\vspace{2mm}

We recall (see \eqref{one}, \eqref{frac} and \eqref{trimtip}) that
\begin{equation*}
a_{n, x} = G_{\cC_n}(0, x_{n} ) \cdot \frac{f_{n, x}}{P^{x_{n}} \big( \tau_{0} < T^{n} \big) }\big( 1 + O ( |x|^{-1} 2^{- \delta q n } ) \big).
\end{equation*}
It follows from \eqref{Green} and \eqref{Green2} that 
\begin{equation*}
a_{n, x} = \frac{a}{b} \cdot f_{n,x} \cdot \Big( 1 + O \big( d_{x}^{-1} 2^{-n} \big) \Big)\big( 1 + O ( |x|^{-1} 2^{- \delta q n } ) \big).
\end{equation*}
Therefore, by \eqref{eq36}, we have
\begin{equation}\label{eq37}
a_{n, x} =  c_{x}' 2^{-(1 + \alpha ) n } \Big\{  1 + O \Big( d_{x}^{-c} 2^{- \delta q_{0} n} \Big) \Big\}.
\end{equation}
Here $c_{x}' = \frac{a}{b} \cdot c_{x}$. It follows from \eqref{eq33} and \eqref{eq34} that $c_{x}'$ satisfies
\begin{align}
& a_{1}'  |x|^{-1-\alpha } \le c_{x}' \le a_{2}'  |x|^{-1-\alpha } \  \ \ \ \  \   \ \ \ \ \  \ \  \Big(\text{if } 0 < |x| \le  \frac{1}{2} \Big)\\
& a_{1}' (1-|x|)^{1-\alpha } \le c_{x}' \le a_{2}' (1-|x|)^{1-\alpha } \ \  \Big(\text{if }  \frac{1}{2} \le |x| < 1 \Big),
\end{align}
where $a_{1}', a_{2}' > 0$ are universal constants. Thus, we finish the proof of the theorem. 
\end{proof}
%
%
%
%

\subsection{Proof of Lemma \ref{LEMMA}}\label{sec:5.2}
We define $k_{0} , k_{1} \in \mathbb{N}$ as follows (note that $d_{x} > 0$). 
\begin{itemize}
\item $k_{0}$ is a unique integer satisfying 
\begin{equation*}
2^{(1-q)n + k_{0} } \le {d_{x} 2^{n} }\big/{3} < 2^{(1-q)n + k_{0} +1}.
\end{equation*}

\item $k_{1}$ is the smallest integer satisfying 
\begin{equation*}
{d_{x} 2^{n-k_{1}} }\big/{3} < 1.
\end{equation*}
\item For $k \in \{ 0, 1, \cdots , k_{0} \}$, we write 
\begin{equation*}
D_{k} = \cC_{(1-q)n + k } .
\end{equation*}
\item For $k \in \{ 0, 1, \cdots , k_{1} \}$, we write 
\begin{equation*}
D'_{k} = B \Big( x_{n}, {d_{x} 2^{n-k }}\big/{3} \Big).
\end{equation*}
\end{itemize}
Note that 
\begin{align*}
&\cC_{(1-q)n  }  = D_{0} \subset D_{1} \subset \cdots \subset B \big( {d_{x} 2^{n-1} }\big/{3} \big)  \subset D_{k_{0}}  \subset B \big( {d_{x} 2^{n} }\big/{3} \big); \\
&\{ x_{n} \} = D'_{k_{1}} \subset D'_{k_{1} -1 } \subset \cdots \subset D'_{0} = B \Big( x_{n},{d_{x} 2^{n}}\big/{3} \Big); \quad D_{k_{0}} \cap D'_{0} = \emptyset \text{ and } D_{k_{0}} \cup D'_{0} \subset B (2^{n} ).
\end{align*} 
\begin{proof}[Proof of Lemma \ref{LEMMA}]
Suppose that $\tau_0 < T^{n}$. Then there are three cases for the shape of $X[T^{(1-q)n}, \tau_0]:{=}X^{\sim}$ as follows.
\begin{itemize}
\item[$\phantom{ }$]{\it Case} 1: $X^{\sim} \subset D_{k_{0}}$. In this case, we define 
\begin{equation*}
 k_{2} := \min \{ k \geq 0\ | \  X^{\sim} \subset D_{k}\}\in\{1, 2, \cdots , k_{0} \}.
\end{equation*}
 In other words, this is the case where $X^\sim \subset D_{k_{0}}$.
 
\item[$\phantom{ }$] {\it Case} 2: $X^{\sim} \not\subset D_{k_{0}}$ and $X^{\sim} \cap D'_{0} = \emptyset $.
 
\item[$\phantom{ }$]{\it Case} 3: $X^{\sim} \cap D'_{0} \neq \emptyset$. In this case, we define
 \begin{equation*}
 k_{3} := \max \{ k \le k_1 \ | \  X^{\sim} \cap D'_{k}\neq\emptyset\}\in \{ 0, 1, \cdots , k_{1} \}.
 \end{equation*}
\end{itemize}
Remind that $\overline{X}^q=\LE \big( X  [0, T^{(1-q)n } ]  \big) $ and $\overline{X}^\circ=\LE \big( X  [0, \tau_0]  \big)$. Let 
\begin{equation}\label{CASE}
P \Big( \tau_0 < T^{n},\; \overline{X}^\circ  \cap \overline{Y}^\circ = \emptyset \Big)= \sum_{i=1}^{3} P (H_{i} ),\mbox{ where }
H_{i} = \Big\{ \tau_0 < T^{n},\; \overline{X}^\circ  \cap \overline{Y}^\circ = \emptyset, \ {\it Case }\ i \Big\},\; i=1,2,3.
\end{equation}

We will first deal with $P(H_{1} )$. Note that 
\begin{equation*}
P (H_{1} ) = \sum_{k= 1}^{k_{0}} P (H_{1}, \ k_{2} = k ).
\end{equation*}
Suppose that $H_{1} \cap \{ k_{2} = k \}$ occurs. Then we see that 
\begin{equation*}
\overline{X}^q[0, T^{(1-q)n + k } ] = \overline{X}^\circ[0, T^{(1-q)n + k } ],
\end{equation*}
i.e., the loop-erased walk $\overline{X}^q$ up to the first time that it hits $\partial \cC_{(1-q)n + k }  $ coincides with that for $\overline{X}^\circ$ since $X^\sim$ does not ``destroy" the initial part of  $\overline{X}^q$. Therefore, 
\begin{equation*}
P (H_{1}, \ k_{2} = k ) \le c \big( d_{x} 2^{n} \big)^{-\alpha } \frac{1}{|x|} 2^{- q n} 2^{-k } 2^{-(1-q)n} = c   \big( d_{x} 2^{n} \big)^{-\alpha } \frac{1}{|x|}  2^{-k } 2^{-n}.
\end{equation*}
We remark that 
\begin{equation*}
P \Big( \tau_0 < T^{n}, \overline{X}^\circ  \cap \overline{Y}^\circ = \emptyset \Big)\asymp P \Big( \tau_0 < T^{n}, \overline{X}^q \cap \overline{Y}^\circ = \emptyset \Big) 
 \asymp \big( d_{x} 2^{n} \big)^{-\alpha } \frac{1}{|x|}   2^{-n}.
\end{equation*}
Thus, 
\begin{align*}
P (H_{1} ) = \sum_{k= 1}^{k_{0}} P (H_{1}, \ k_{2} = k ) \le P \Big( \tau_0 < T^{n}, \,   \overline{X}^q  [0, T^{(1-\frac{q}{2})n  } ] \cap \overline{Y}^\circ = \emptyset \Big) 
+ P \Big( \tau_0 < T^{n}, \, \overline{X}^q\cap \overline{Y}^\circ = \emptyset \Big) O \big( 2^{-\frac{q n}{2}} \big).
\end{align*}
However, it follows that 
\begin{align*}
P \Big( \tau_0 < T^{n}, \  \overline{X}^q [0, T^{(1-\frac{q}{2})n  } ] \cap \overline{Y}^\circ = \emptyset \Big) 
=P \Big( \tau_0 < T^{n}, \ \overline{X}^q\cap \overline{Y}^\circ = \emptyset \Big)  \Big( 1 + O \big( |x|^{-1} 2^{-\frac{qn}{4} } \big) \Big).
\end{align*}
Thus, we have
\begin{equation*}
P (H_{1} ) \le P \Big( \tau_0 < T^{n}, \ \overline{X}^q \cap \overline{Y}^\circ = \emptyset \Big)  \Big( 1 + O \big( |x|^{-1} 2^{-\frac{qn}{4} } \big) \Big).
\end{equation*}
Similarly, we have
\begin{align*}
&P (H_{2} ) \le P \Big( \tau_0 < T^{n}, \ \overline{X}^q \cap \overline{Y}^\circ = \emptyset \Big) O \big( 2^{-\frac{q n}{2}} \big) \mbox{ and }P (H_{3} ) \le P \Big( \tau_0 < T^{n}, \ \overline{X}^q \cap \overline{Y}^\circ = \emptyset \Big) O \big( 2^{-\frac{q n}{2}} \big).
\end{align*}
Thus, it follows that 
\begin{align*}
P \Big( \tau_0 < T^{n},\ \overline{X}^\circ  \cap \overline{Y}^\circ = \emptyset \Big) 
&\le P \Big( \tau_0 < T^{n}, \ \overline{X}^q \cap \overline{Y}^\circ = \emptyset \Big)  \Big( 1 + O \big( |x|^{-1} 2^{-\frac{qn}{4} } \big) \Big).
\end{align*}
On the other hand, it is not difficult to see the above inequality in the other direction as well. This completes the proof. 
\end{proof}
\subsection{Proof of Proposition \ref{PROP}}\label{sec:5.3}
As the proof is very similar to that in Section \ref{sec:noninter}, we will state results in a parallel way. As for the proof we will be brief and less pedagogical in the presentation of the argument. Notations are introduced right before the proposition where it first appears. The proof of Proposition \ref{PROP} is at the end of this subsection.

Recall that $q \in (0,1)$.

\begin{itemize}
\item Let  $B^{1}_{i, q} := B \Big(x_{n} , 2^{\{ 1- (2i-1) q \} n }\Big) $ for $i = 1,2, \cdots$; $B^{2}_{i, q} := B \Big( x_{n-1} ,2^{\{ 1- (2i-1) q \} n }\Big) $ for $i = 1,2, \cdots$.

\item Let $R^{1}_{1}, R^{1}_{2}$ and    $R^{2}_{1}, R^{2}_{2}$ be two paris of independent SRWs started from $x_{n}$ and $x_{n-1}$, respectively. We sometimes write $R^{1}_{2}$ and $R^{2}_{2}$ for $R^{1}_{2} [1, T^{n} ]$ and $R^{2}_{2} [1, T^{n-1}]$, respectively.

\item Let $T^{l}_{R^{i}_{1}} := \inf \{ k \ge 0 \ | \ R^{i}_{1} (k) \in \partial \cC_l) \}$ for $l \ge 1$ and $i =1,2$ and let
\begin{itemize}
\item[$\dagger$] $t^{1} = T^{n}_{R^{1}_{1}} \wedge T^{(1-q)n}_{R^{1}_{1}}$; $t^{2} = T^{n-1}_{R^{2}_{1}} \wedge T^{(1-q)n}_{R^{2}_{1}}$.
\end{itemize}

\item Write $\LE (R^{i}_{1} ) =\LE( R^{i}_{1}  [0, t^{i} ] )$ for $i=1,2$.

\item Let $F^{i} := \{ t^{i} = T^{(1-q)n}_{R^{i}_{1}} \}$ for $i=1,2$ and  write $Z^{i}$ for $R^{i}_{1}$ conditioned on $F^{i}$ for $i=1,2$.

\item Write $\LE (Z^{i} ) =\LE(Z^{i} [0, T^{(1-q)n } ] )$ for $i=1,2$.

\item For a path $\lambda$, let $U^{i}_{j} := \inf \{ k \ge 0 \ | \ \lambda (k) \in \partial B^{i}_{j, q} \}$ for $i=1,2$ and $j= 1,2, \cdots$.

\item Let $H^{i} := \Big\{\LE(Z^{i} ) [0, U^{i}_{1}] \cap R^{i}_{2} [1, U^{i}_{1} ] = \emptyset \Big\}$ for $i=1,2$.

\end{itemize}

The following proposition is similar to Lemma \ref{1st-lem}.
\begin{prop} One has
\begin{equation}\label{eq1}
g_{n, x} =\frac{ P (F^{1} ) P \Big( \LE(Z^{1} ) \cap R^{1}_{2} = \emptyset \ \Big| \ H^{1} \Big)  }{P(F^{2} ) P \Big(\LE(Z^{2} ) \cap R^{2}_{2} = \emptyset \ \Big| \ H^{2} \Big) } \cdot \Big\{  1 + O \Big(  d_{x}^{-1} 2^{- qn} \Big) \Big\}.
\end{equation}
\end{prop}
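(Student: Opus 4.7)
The plan is to decompose $f_{n,x}$ and $f_{n-1,x}$ each via strong Markov at the first hitting of the intermediate sphere $\partial\cC_{(1-q)n}$, match the two decompositions into a ratio, and then perform an $h$-transform argument to reconcile the residual conditional probabilities with those appearing in \eqref{eq1}.

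First I would realize $f_{n,x}$ through two independent unconditional SRWs $R^{1}_{1},R^{1}_{2}$ started at $x_{n}$ (these correspond to $X,Y$ before the conditioning $\tau_{0}<T^{n}$) and apply the strong Markov property to $R^{1}_{1}$ at the stopping time $T^{(1-q)n}$. In the main regime $x_{n}\notin\cC_{(1-q)n}$ the event $\{\tau_{0}<T^{n}\}$ forces $F^{1}$ automatically, while the remaining regime $|x|\le 2^{-q}$ is handled by a standard separate argument. Thus
\begin{equation*}
f_{n,x}=E\bigl[\mathbf{1}_{F^{1}}\,P^{R^{1}_{1}(T^{(1-q)n})}(\tau_{0}<T^{n})\,\mathbf{1}_{\LE(R^{1}_{1}[0,T^{(1-q)n}])\cap R^{1}_{2}=\emptyset}\bigr].
\end{equation*}
Proposition 1.5.10 of \cite{Lawb} (cf.\ \eqref{Green2}) produces a universal constant $b_{0}>0$ such that, uniformly for $y\in\partial\cC_{(1-q)n}$,
\begin{equation*}
P^{y}(\tau_{0}<T^{n})=P^{y}(\tau_{0}<T^{n-1})\bigl(1+O(2^{-qn})\bigr)=b_{0}\cdot 2^{-(1-q)n}\bigl(1+O(2^{-qn})\bigr),
\end{equation*}
since $|y|/2^{n}=2^{-qn}$ and $|y|/2^{n-1}=2\cdot 2^{-qn}$ produce the same prefactor at leading order. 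Substituting this into the previous display and identifying the conditional law $Z^{1}=R^{1}_{1}\mid F^{1}$ gives
\begin{equation*}
f_{n,x}=b_{0}\cdot 2^{-(1-q)n}\bigl(1+O(2^{-qn})\bigr)\cdot P(F^{1})\cdot P\bigl(\LE(Z^{1})\cap R^{1}_{2}=\emptyset\bigr),
\end{equation*}
and the entirely analogous decomposition (with the same leading constant $b_{0}\cdot 2^{-(1-q)n}$) holds for $f_{n-1,x}$ with $Z^{2},R^{2}_{2},F^{2}$. Dividing yields
\begin{equation*}
g_{n,x}=\frac{P(F^{1})\,P(\LE(Z^{1})\cap R^{1}_{2}=\emptyset)}{P(F^{2})\,P(\LE(Z^{2})\cap R^{2}_{2}=\emptyset)}\bigl(1+O(2^{-qn})\bigr).
\end{equation*}

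Next, since the non-intersection event implies $H^{i}$, I factor $P(\LE(Z^{i})\cap R^{i}_{2}=\emptyset)=P(\LE(Z^{i})\cap R^{i}_{2}=\emptyset\mid H^{i})\,P(H^{i})$. Matching with the right-hand side of \eqref{eq1} reduces the problem to proving
\begin{equation*}
P(H^{1})/P(H^{2})=1+O(d_{x}^{-1}\,2^{-qn}).
\end{equation*}

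The last display is the main obstacle. My plan is a Doob $h$-transform argument: $Z^{i}=R^{i}_{1}\mid F^{i}$ is the $h$-process associated with the discrete-harmonic function $h_{i}(y):=P^{y}(F^{i})$. Using that $1/|\cdot|$ is harmonic in $\mathbb{R}^{3}$ (and discrete-harmonic on $\mathbb{Z}^{3}\setminus\{0\}$ up to $O(|y|^{-3})$ lattice corrections) together with optional stopping, one has $h_{i}(y)=(|y|^{-1}-2^{-n_{i}})/(2^{-(1-q)n}-2^{-n_{i}})(1+o(1))$ for $y$ outside $\cC_{(1-q)n}$ (with $n_{1}=n$, $n_{2}=n-1$). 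A direct expansion of $1/|y|$ around $x_{n}$ (resp.\ $x_{n-1}$) then gives, uniformly for $y\in\partial B^{i}_{1,q}$,
\begin{equation*}
h_{i}(y)/h_{i}(x_{n})=1+O\bigl(2^{-qn}/(|x|(1-|x|))\bigr)=1+O(d_{x}^{-1}\,2^{-qn}).
\end{equation*}
Combining this Harnack-type bound with the strong Markov property of $R^{i}_{1}$ at the first exit of $B^{i}_{1,q}$ yields, for any event $E^{i}$ measurable with respect to the walks up to this stopping time,
\begin{equation*}
P(F^{i},E^{i})=P(F^{i})\,P(E^{i})\,\bigl(1+O(d_{x}^{-1}\,2^{-qn})\bigr),
\end{equation*}
expressing the asymptotic independence of the macroscopic event $F^{i}$ from small-ball events around $x_{n}$. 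The subtlety that $H^{i}$ is not exactly measurable with respect to the walks up to first exit of $B^{i}_{1,q}$ (because $\LE(Z^{i})[0,U^{i}_{1}]$ may be affected by excursions of $R^{i}_{1}$ leaving $B^{i}_{1,q}$ and returning) is absorbed using 3D transience of SRW and weak independence of LERW (Proposition 4.6 of \cite{Mas}), which cost only an additional $(1+O(2^{-cqn}))$ factor. Finally, translation invariance of a pair of independent SRWs from a common starting point ensures that the unconditional analogues $P(E^{1})$ and $P(E^{2})$ coincide, giving the desired bound on $P(H^{1})/P(H^{2})$. The regime $d_{x}\lesssim 2^{-qn}$ is handled separately by noting that the error bound in \eqref{eq1} is then trivially vacuous. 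Combined with the preceding paragraphs, this yields \eqref{eq1}.
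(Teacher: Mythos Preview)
Your argument is essentially the same as the paper's: both decompose $f_{n,x}$ and $f_{n-1,x}$ via the strong Markov property at the inner sphere (this is the content of \eqref{Strong} and Lemma~\ref{LEMMA}), take the ratio, factor $P(\LE(Z^i)\cap R^i_2=\emptyset)=P(\LE(Z^i)\cap R^i_2=\emptyset\mid H^i)\,P(H^i)$, and reduce to showing $P(H^1)/P(H^2)=1+O(d_x^{-1}2^{-qn})$. The paper proves this last estimate in one line by citing Propositions~4.2 and~4.4 of \cite{Mas}, whereas you take a more explicit detour: first a Doob $h$-transform/Harnack argument to decouple the macroscopic conditioning $F^i$ from events in $B^i_{1,q}$, then an appeal to Masson (Prop.~4.6) to reduce $\LE(Z^i)[0,U^i_1]$ to a quantity local to $B^i_{1,q}$, and finally translation invariance. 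Your Harnack computation $h_i(y)/h_i(x_n)=1+O\bigl(2^{-qn}/(|x|(1-|x|))\bigr)$ is correct, but since you end up invoking Masson's domain-comparison anyway to make the ``unconditional analogues'' genuinely translation-invariant, the $h$-transform step is redundant: Masson's Propositions already subsume it.

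Two small points of sloppiness. First, $f_{n-1,x}$ as literally defined in \eqref{fgnx} uses inner radius $2^{(1-q)(n-1)}$, while the paper's $F^2$, $t^2$ and $Z^2$ use $2^{(1-q)n}$; the paper absorbs this discrepancy into the $O(d_x^{-1}2^{-\delta qn})$ error via Lemma~\ref{LEMMA}, whereas you do not mention it. Second, your ``translation invariance'' sentence is only correct \emph{after} the Masson replacement has converted $\LE(Z^i[0,T^{(1-q)n}])[0,U^i_1]$ into $\LE(R^i_1[0,U^i_1])$ (or ILERW truncated at $U^i_1$); before that step the target sphere $\partial\cC_{(1-q)n}$ is centered at $0$, not at $x_n$ or $x_{n-1}$, so the raw events are not translates. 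You do flag this ``subtlety,'' but it would be cleaner to state the replacement first and the translation invariance second.
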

\begin{proof}

\vspace{2mm}

Note that by Lemma \ref{LEMMA} and \eqref{Strong}, we have 
\begin{equation}
f_{n, x} = c 2^{-(1-q) n } P \Big( F^{1}, \ \LE(R^{1}_{1} \cap R^{1}_{2} = \emptyset \Big) \Big( 1 + O \big( d_{x}^{-1} 2^{-\delta q n } \big)\Big),
\end{equation}
where $c > 0, \delta > 0$ are universal constants. Also we recall that $f_{n, x}$ is defined as in \eqref{fgnx}. Using the same constants $c, \delta$ as above, similarly we have
\begin{equation*}
f_{n-1, x} = c 2^{-(1-q) n } P \Big( F^{2}, \ LE(R^{2}_{1} \cap R^{2}_{2} = \emptyset \Big) \Big( 1 + O \big( d_{x}^{-1} 2^{-\delta q n } \big)\Big).
\end{equation*}
Therefore, we have
\begin{equation}\label{EQ}
g_{n, x} := \frac{f_{n, x}}{f_{n-1, x}} = \frac{ P \Big( F^{1}, \ \LE(R^{1}_{1} ) \cap R^{1}_{2} = \emptyset \Big)}{P \Big( F^{2}, \ \LE(R^{2}_{1} ) \cap R^{2}_{2} = \emptyset \Big)} \cdot  \Big\{  1 + O \Big( d_{x}^{-1} 2^{- \delta qn} \Big) \Big\}. 
\end{equation}

\vspace{2mm}

By Proposition 4.2 and 4.4 of \cite{Mas}, it follows that 
\begin{equation}
P (H^{1} ) = P (H^{2} ) \cdot \Big\{  1 + O \Big(  d_{x}^{-1} 2^{- qn} \Big) \Big\}.
\end{equation}
This gives
\begin{align}
\;&\frac{ P \Big( F^{1}, \ \LE(R^{1}_{1} ) \cap R^{1}_{2} = \emptyset \Big)}{P \Big( F^{2}, \ \LE(R^{2}_{1} ) \cap R^{2}_{2} = \emptyset \Big)} = \frac{ P (F^{1} ) P \Big( \LE(Z^{1} ) \cap R^{1}_{2} = \emptyset \Big)}{P(F^{2} ) P \Big(\LE(Z^{2} ) \cap R^{2}_{2} = \emptyset \Big)} \notag \\
=\;& \frac{ P (F^{1} ) P \Big( \LE(Z^{1} ) \cap R^{1}_{2} = \emptyset \ \Big| \ H^{1} \Big) P \Big( H^{1} \Big) }{P(F^{2} ) P \Big(\LE(Z^{2} ) \cap R^{2}_{2} = \emptyset \ \Big| H^{2} \Big) P \Big( H^{2} \Big)} \notag = \frac{ P (F^{1} ) P \Big( \LE(Z^{1} ) \cap R^{1}_{2} = \emptyset \ \Big| \ H^{1} \Big)  }{P(F^{2} ) P \Big(\LE(Z^{2} ) \cap R^{2}_{2} = \emptyset \ \Big| \ H^{2} \Big) } \cdot \Big\{  1 + O \Big(  d_{x}^{-1} 2^{- qn} \Big) \Big\},
\end{align}
which finishes the proof of \eqref{eq1}.
\end{proof}

We now decompose the conditional probabilities in \eqref{eq1}, just as in Section \ref{sec:3.3}. Before stating the parallel result, let us first introduce a few path spaces and probability measures associated with them.
\begin{itemize}

\item Let $\Pi^{1} = \{ (\gamma, \lambda ) \  | \ (\gamma, \lambda ) \text{ satisfies (i), (ii) and  (iii)}  \} $ be a set of pairs of paths $(\gamma, \lambda )$ satisfying
\begin{align*}
&\text{(i) }  \gamma (0), \lambda (0) \in \partial B^{1}_{2,q} \text{ and }   \gamma \big(  \text{len} (\gamma) \big) , \lambda \big( \text{len} (\lambda) \big) \in \partial B^{1}_{1,q}; \\
&\text{(ii) } \gamma \big[ 0, \text{len} (\gamma)-1 \big] \subset  B^{1}_{1,q} \text{ and } \lambda \big[ 0, \text{len} (\lambda)-1 \big] \subset  B^{1}_{1,q}; \\
&\text{(iii) } \gamma \cap \lambda = \emptyset.
\end{align*}

\item Let $\Pi^{2} = \{ (\gamma, \lambda ) \  | \ (\gamma, \lambda ) \text{ satisfies (i'), (ii') and  (iii')}  \} $ be a set of pairs of paths $(\gamma, \lambda )$ satisfying
\begin{align*}
&\text{(i') }  \gamma (0), \lambda (0) \in \partial B^{2}_{2,q} \text{ and }   \gamma \big(  \text{len} (\gamma) \big) , \lambda \big( \text{len} (\lambda) \big) \in \partial B^{2}_{1,q}; \\
&\text{(ii') } \gamma \big[ 0, \text{len} (\gamma)-1 \big] \subset  B^{2}_{1,q} \text{ and } \lambda \big[ 0, \text{len} (\lambda)-1 \big] \subset  B^{2}_{1,q}; \\
&\text{(iii') } \gamma \cap \lambda = \emptyset.
\end{align*}

\item Take $ (\gamma, \lambda ) \in \Pi^{1} $. Define $g^{1} (\gamma, \lambda )$ by 
\begin{equation*}
g^{1} (\gamma, \lambda ) = P \Big( \big(\LE(Z_{1}') \cup \gamma \big) \cap \big( R_{1}' \cup \lambda \big) = \emptyset \Big),\mbox{ where}
\end{equation*}
\begin{itemize}
\item $Z_{1}$ is a random walk started at $\gamma \big(  \text{len} (\gamma) \big)$ conditioned not to exit from $\cC_n$ before hitting $\partial \cC_{(1-q)n} $;
\item $Z_{1}'$ is $Z_{1}$ conditioned that $Z_{1}[1, T^{(1-q)n}] \cap \gamma = \emptyset$;
\item $R_{1}'$ is a SRW started at $\lambda \big( \text{len} (\lambda) \big)$ which is independent of $Z_{1}'$.
\end{itemize}
\item Take $ (\gamma, \lambda ) \in \Pi^{2} $. Define $g^{2} (\gamma, \lambda )$ by 
\begin{equation*}
g^{2} (\gamma, \lambda ) = P \Big( \big(\LE(Z_{2}') \cup \gamma \big) \cap \big( R_{2}' \cup \lambda \big) = \emptyset \Big),\mbox{ where}
\end{equation*}
\begin{itemize}
\item $Z_{2}$ is a random walk started at $\gamma \big(  \text{len} (\gamma) \big)$ conditioned not to exit from $\cC_{n-1}$ before hitting $\partial \cC_{(1-q)n} $;
\item $Z_{2}'$ is $Z_{2}$ conditioned that $Z_{2}[1, T^{(1-q)n}] \cap \gamma = \emptyset$;
\item $R_{2}'$ is a SRW started at $\lambda \big( \text{len} (\lambda) \big)$ which is independent of $Z_{2}'$.
\end{itemize}
%
%
%
\item Let $J^{1}_{1}, J^{1}_{2}$ be two independent SRW's started at $x_{n}$. For $ (\gamma, \lambda ) \in \Pi^{1} $, let
\begin{equation*}
\mu^{1}_{0}  (\gamma, \lambda )  = P \Big\{ \Big(\LE(J^{1}_{1}  )  \big[ U^{1}_{2}, U^{1}_{1} \big],  J^{1}_{2} \big[ U^{1}_{2}, U^{1}_{1} \big] \Big)  = (\gamma, \lambda )  \ \Big| \ H^{1}_{0} \Big\},\mbox{ where}
\end{equation*}
\begin{itemize}
\item $u= \inf \{ k \ge 0 \ | \ J^{1}_{1} (k) \in \partial B \big( x_{n}, d_{x} 2^{n} \big) \}$;
\item $\LE (J^{1}_{1} ) =\LE(J^{1}_{1} [0, u] )$;
\item $H^{1}_{0} = \{\LE(J^{1}_{1}  )  \big[ 0, U^{1}_{1} \big] \cap J^{1}_{2} \big[ 1, U^{1}_{1} \big] = \emptyset \}$.
\end{itemize}

\item Let $J^{2}_{1}, J^{2}_{2}$ be two independent SRW's started at $x_{n-1}$. For $ (\gamma, \lambda ) \in \Pi^{2} $, let
\begin{equation*}
\mu^{2}_{0}  (\gamma, \lambda )  = P \Big\{ \Big(\LE(J^{2}_{1}  )  \big[ U^{2}_{2}, U^{2}_{1} \big],  J^{2}_{2} \big[ U^{2}_{2}, U^{2}_{1} \big] \Big)  = (\gamma, \lambda )  \ \Big| \ H^{2}_{0} \Big\},\mbox{ where}
\end{equation*}
\begin{itemize}
\item $t= \inf \{ k \ge 0 \ | \ J^{2}_{1} (k) \in \partial B \big( x_{n-1}, d_{x} 2^{n-1} \big) \}$;
\item $\LE (J^{2}_{1} ) =\LE(J^{2}_{1} [0, t] )$;
\item $H^{2}_{0} = \{\LE(J^{2}_{1}  )  \big[ 0, U^{2}_{1} \big] \cap J^{2}_{2} \big[ 1, U^{2}_{1} \big] = \emptyset \}$.
\end{itemize}

\item Let $L^{1}_{1}, L^{1}_{2}$ be two independent SRW's started at $x^{1}_{q}$ and $y^{1}_{q}$. For $ (\gamma, \lambda ) \in \Pi^{1} $, let
\begin{equation*}
\nu^{1}_{0}  (\gamma, \lambda )  = P \Big\{ \Big(\LE(L^{1}_{1}  )  \big[ U^{1}_{2}, U^{1}_{1} \big],  L^{1}_{2} \big[ U^{1}_{2}, U^{1}_{1} \big] \Big)  = (\gamma, \lambda )  \ \Big| \ I^{1}_{0} \Big\},\mbox{ where}
\end{equation*}
\begin{itemize}
\item $u'= \inf \{ k \ge 0 \ | \ L^{1}_{1} (k) \in \partial B \big( x_{n}, d_{x} 2^{n} \big) \}$;
\item $\LE (L^{1}_{1} ) =\LE(L^{1}_{1} [0, u'] )$;
\item $I^{1}_{0} = \{\LE(L^{1}_{1}  )  \big[ 0, U^{1}_{1} \big] \cap L^{1}_{2} \big[ 1, U^{1}_{1} \big] = \emptyset \}$.
\end{itemize}

\item Let $L^{2}_{1}, L^{2}_{2}$ be two independent SRW's started at $x^{2}_{q}$ and $y^{2}_{q}$. For $ (\gamma, \lambda ) \in \Pi^{2} $, let
\begin{equation*}
\nu^{2}_{0}  (\gamma, \lambda )  = P \Big\{ \Big(\LE(L^{2}_{1}  )  \big[ U^{2}_{2}, U^{2}_{1} \big],  L^{2}_{2} \big[ U^{2}_{2}, U^{2}_{1} \big] \Big)  = (\gamma, \lambda )  \ \Big| \ I^{2}_{0} \Big\},\mbox{ where}
\end{equation*}
\begin{itemize}
\item $u''= \inf \{ k \ge 0 \ | \ L^{2}_{1} (k) \in \partial B \big( x_{n-1}, d_{x} 2^{n-1} \big) \}$;
\item $\LE (L^{2}_{1} ) =\LE(L^{2}_{1} [0, u''] )$;
\item $I^{2}_{0} = \{\LE(L^{2}_{1}  )  \big[ 0, U^{2}_{1} \big] \cap L^{2}_{2} \big[ 1, U^{2}_{1} \big] = \emptyset \}$.
\end{itemize}

\end{itemize}

We are now ready to state and prove the decomposition result which is  parallel to \eqref{henkei-4}.
\begin{prop}\label{par4}
For some universal constants $c, \delta > 0$, one has
\begin{align}
&P \Big( \LE(Z^{1} ) \cap R^{1}_{2} = \emptyset \ \Big| \ H^{1} \Big) = \Big( 1 + O \big( d_{x}^{-c} 2^{-\delta q n} \big) \Big) \sum_{(\gamma, \lambda) \in \Pi^{1} } g^{1} (\gamma, \lambda ) \nu^{1}_{0}  (\gamma, \lambda ); \label{eq16} \\
&P \Big( \LE(Z^{2} ) \cap R^{2}_{2} = \emptyset \ \Big| \ H^{2} \Big) = \Big( 1 + O \big( d_{x}^{-c} 2^{-\delta q n} \big) \Big) \sum_{(\gamma, \lambda) \in \Pi^{2} } g^{2} (\gamma, \lambda ) \nu^{2}_{0}  (\gamma, \lambda ), \label{eq17}
\end{align}

\end{prop}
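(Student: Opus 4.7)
The plan is to follow the three-step template developed in Sections \ref{sec:3.5} and \ref{sec:3.7}, now adapted to walks living near the target point $x_n$ rather than near the origin. Since \eqref{eq17} is proved identically, I will describe only \eqref{eq16}. First, I would introduce an ``end-part'' version $\overline{g}^1$ of $g^1$, depending on $(\gamma,\lambda)\in\Pi^1$ only through $\pi(\gamma,\lambda):=(\gamma[U^1_2,U^1_1],\lambda[U^1_2,U^1_1])$, by substituting $\pi(\gamma)$ for $\gamma$ in the definition of $Z_1'$. A repetition of the argument in Proposition \ref{prop1} (using the same separation-lemma/hitting-estimate input, now formulated inside $B(x_n,2^{(1-q)n})$) will show $g^1(\gamma,\lambda)\simeq\overline{g}^1(\gamma,\lambda)$ whenever $\overline{h}^1(\gamma,\lambda)\geq 2^{-qn/2}$, with $\overline{h}^1$ the natural analog of $\overline{h}$ in \eqref{hver}. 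A version of Lemmas \ref{2nd-lem}--\ref{3rd-lem} gives $g^1(\gamma,\lambda)\asymp h^1(\gamma,\lambda)\Es(2^{(1-q)n},2^n)$ and similarly for $\overline{g}^1$.

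Second, I would decompose the conditional probability on the LHS of \eqref{eq16}. By the domain Markov property of LERW (Lemma \ref{domain}) applied to $\LE(Z^1)$ and the strong Markov property of $R^1_2$, together with the definition of $H^1$, one obtains
\begin{equation*}
P\bigl(\LE(Z^1)\cap R^1_2=\emptyset\,\big|\,H^1\bigr) \;=\; \sum_{(\gamma,\lambda)\in\Pi^1} g^1(\gamma,\lambda)\,\widetilde{\mu}^{1}(\gamma,\lambda),
\end{equation*}
where $\widetilde{\mu}^{1}$ is the conditional law of $(\LE(Z^1)[U^1_2,U^1_1],R^1_2[U^1_2,U^1_1])$ given $H^1$. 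Using a separation-lemma lower bound (see Theorem 6.1.5 of \cite{S}) to control the summand and replacing $g^1$ by $\overline{g}^1$ where $\overline{h}^1$ is not too small, as in Proposition \ref{replace}, this becomes
\begin{equation*}
P\bigl(\LE(Z^1)\cap R^1_2=\emptyset\,\big|\,H^1\bigr) \;=\; \bigl(1+O(2^{-qn/2})\bigr)\sum_{(\gamma,\lambda)\in\Pi^1} \overline{g}^1(\gamma,\lambda)\,\widetilde{\mu}^{1}(\gamma,\lambda).
\end{equation*}
A mild modification using Lemma \ref{lem:LEWILEW} and Proposition 4.4 of \cite{Mas} will show that $\widetilde{\mu}^1$ and $\mu^1_0$ agree in total variation up to $O(d_x^{-c}2^{-\delta qn})$; this is where the conditioning on $F^1$ and the difference between the stopping times (for $Z^1$ vs.\ for $J^1_1$) is absorbed, and accounts for the $d_x^{-c}$ factor by estimating the harmonic measure near $x_n$.

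Third, I would invoke an analog of Proposition \ref{prop:coup2} applied in the ball $B(x_n,2^{(1-q)n})$ (with starting points $x_n$ versus $x^1_q,y^1_q$, both inside the mesoscopic inner ball $B^1_{2,q}$) to obtain
\begin{equation*}
\|\mu^1_0-\nu^1_0\|_{\rm TV}\;\le\;c\,d_x^{-c'}\,2^{-\delta qn},
\end{equation*}
the $d_x^{-c'}$ factor again tracking the closest distance from $x_n$ to $\partial B(0,2^n)$ and to $0$. Combining with the uniform bound $\overline{g}^1(\gamma,\lambda)\le c\,\Es(2^{(1-q)n},2^n)$ (the analog of \eqref{couple-2}), together with the lower bound $\sum_{(\gamma,\lambda)}\overline{g}^1\mu^1_0\asymp \Es(2^{(1-q)n},2^n)$ coming from separation (analog of \eqref{compara-es}), one concludes \eqref{eq16} exactly as in the proof of Proposition \ref{CONDCOMP}.

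The main obstacle is the third step: while the coupling in Section \ref{sec:coupling} is stated for walks based at the origin of a ball, here $x_n$ sits at a non-trivial position inside $\cC_n$ and we have to keep track of how the coupling degrades when $x$ approaches $0$ or $\partial\mathbb{D}$. Hitting- and escape-probability estimates for walks in the neighborhood of $x_n$ pick up factors of $|x|^{-1}$ and $(1-|x|)^{-1}$, and one has to verify that the coupling of Proposition \ref{prop:coup1}, together with the loop-soup tilting in Section \ref{sec:proof2}, still produces an error of the claimed form $O(d_x^{-c}2^{-\delta qn})$ when translated to this off-center setup — essentially, that the separation lemma and the Radon--Nikodym bound in Lemma \ref{lem:tilt} remain uniformly valid provided we stay at mesoscopic distance $d_x 2^n/3$ away from both $0$ and $\partial\cC_n$.
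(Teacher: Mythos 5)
Your proposal follows the same three-stage template the paper uses (reduce to end-part non-intersection weights, compare the two conditional end-part laws, then transfer between them via the coupling of Section \ref{sec:coupling}), and your third step correctly identifies Prop.\ \ref{prop:coup2} (applied inside the mesoscopic balls $B^1_{j,q}$ around $x_n$) as the key input, just as the paper does via \eqref{eq8}. One inefficiency worth flagging: the ``end-part version'' $\overline{g}^1$ you introduce is definitionally identical to $g^1$. Pairs in $\Pi^1$ already have $\gamma(0),\lambda(0)\in\partial B^1_{2,q}$, so $U^1_2=0$ and $\pi(\gamma,\lambda)=(\gamma,\lambda)$; thus substituting $\pi(\gamma)$ for $\gamma$ in the definition of $Z_1'$ changes nothing. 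What you actually need (and what the paper subsumes under ``by the same argument as in the proof of Corollary \ref{cor1}'') is the comparison between the non-intersection weight given the \emph{full} path $\LE(Z^1)[0,U^1_1]$ and the function $g^1$ depending only on the annular piece $\Pi^1$; this is exactly the Prop.\ \ref{prop1}/Prop.\ \ref{replace} mechanism, but it should be stated as a comparison between two genuinely different objects, not between $g^1$ and a cosmetic duplicate. Your decomposition in step two, $P(\LE(Z^1)\cap R^1_2=\emptyset\,|\,H^1)=\sum_{\Pi^1}g^1\,\widetilde\mu^1$, should accordingly be stated as an approximate identity (it is not exact; the exact identity holds over full paths), after which the comparison $\widetilde\mu^1\approx\mu^1_0$ via Lemma \ref{lem:LEWILEW} and Prop.\ 4.2/4.4 of \cite{Mas} proceeds as you describe. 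Your explicit tracking of the $d_x^{-c}$ factor is appropriate: since $J^1_1,L^1_1$ are stopped at $B(x_n,d_x2^n/3)$, the parameter $M-N$ in Prop.\ \ref{prop:coup2} is comparable to $qn+\log_2 d_x$, and the resulting $d_x$-dependence is indeed what feeds the $d_x^{-c}$ in the final statement (the paper's \eqref{eq8} is stated without it, but the $d_x^{-c}$ is carried in the earlier comparison steps). Overall, correct approach, with the redundancy in $\overline{g}^1$ and the misstated exactness of the first decomposition being the two places to clean up.
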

\begin{proof}

\vspace{2mm}

By the same argument as in the proof of Corollary \ref{cor1}, and using Proposition 4.2 and 4.4 of \cite{Mas} again, it follows that there exists a universal constant $c < \infty$ such that for all $n$ and $q \in (0,1)$
\begin{align}
&P \Big( \LE(Z^{1} ) \cap R^{1}_{2} = \emptyset \ \Big| \ H^{1} \Big) = \Big( 1 + O \big( d_{x}^{-c} 2^{-\frac{q n}{2}} \big) \Big) \sum_{(\gamma, \lambda) \in \Pi^{1} } g^{1} (\gamma, \lambda ) \mu^{1}_{0}  (\gamma, \lambda ); \label{eq2} \\
&P \Big( \LE(Z^{2} ) \cap R^{2}_{2} = \emptyset \ \Big| \ H^{2} \Big) = \Big( 1 + O \big( d_{x}^{-c} 2^{-\frac{q n}{2}} \big) \Big) \sum_{(\gamma, \lambda) \in \Pi^{2} } g^{2} (\gamma, \lambda ) \mu^{2}_{0} (\gamma, \lambda ). \label{eq3}
\end{align}

%
%
%
%

\vspace{2mm}

Again, by Prop.\ \ref{prop:coup2}, it follows that there exists universal constants $\delta > 0$ and $C$ such that 
\begin{align}
\parallel  \mu^{1}_{0} - \nu^{1}_{0} \parallel_{\rm TV}   \le C 2^{- \delta q n}, \label{eq8} \mbox{ and } \parallel \mu^{2}_{0} - \nu^{2}_{0} \parallel_{\rm TV}  \le C  2^{- \delta q n}. 
\end{align}
(See also \eqref{couple}). 
The separation lemma (see (6.13) of \cite{S} for exact form of the separation lemma we need) ensures that $(\gamma, \lambda) $ is well-separated with positive probability with respect to $\mu^{1}_{0} $. For such a pair $(\gamma, \lambda) $, it follows that $g^{1} (\gamma, \lambda ) \ge c d_{x}^{-\alpha} 2^{-\alpha q n}$ for some universal constant $c > 0$ (see Lemma \ref{3rd-lem} for the use of the separation lemma). On the other hand, using the same argument as in the proof of Lemma \ref{2nd-lem}, we see that for all $(\gamma, \lambda) \in \Pi^{1}$, $g^{1} (\gamma, \lambda ) \le C d_{x}^{-\alpha} 2^{-\alpha q n}$ for some universal constant $C< \infty$. Therefore, we have 
\begin{equation}
\sum_{ (\gamma, \lambda) \in \Pi^{1} } g^{1} (\gamma, \lambda ) \mu^{1}_{0}  (\gamma, \lambda ) \ge c d_{x}^{-\alpha} 2^{-\alpha q n}\mbox{, and }\label{eq10} g^{1} (\gamma, \lambda ) \le C d_{x}^{-\alpha} 2^{-\alpha q n} \text { for all } (\gamma, \lambda) \in \Pi^{1}. 
\end{equation}
Similarly, we see that 
\begin{equation}
\sum_{ (\gamma, \lambda) \in \Pi^{2} } g^{2} (\gamma, \lambda ) \mu^{2}_{0}  (\gamma, \lambda ) \ge c d_{x}^{-\alpha} 2^{-\alpha q n}\mbox{, and } \label{eq12} 
g^{2} (\gamma, \lambda ) \le C d_{x}^{-\alpha} 2^{-\alpha q n} \text { for all } (\gamma, \lambda) \in \Pi^{2}. 
\end{equation}
Combining \eqref{eq8} with \eqref{eq10}
, we have
\begin{align*}
&\;\Big|  \sum_{ (\gamma, \lambda) \in \Pi^{1} } g^{1} (\gamma, \lambda ) \mu^{1}_{0}  (\gamma, \lambda ) - \sum_{ (\gamma, \lambda) \in \Pi^{1} } g^{1} (\gamma, \lambda ) \nu^{1}_{0}  (\gamma, \lambda ) \Big|  \le \sum_{ (\gamma, \lambda) \in \Pi^{1} } g^{1} (\gamma, \lambda ) \big| \mu^{1}_{0}  (\gamma, \lambda )- \nu^{1}_{0}  (\gamma, \lambda ) \big| \\
\le &\;C d_{x}^{-\alpha} 2^{-\alpha q n} \sum_{ (\gamma, \lambda) \in \Pi^{1} }  \big| \mu^{1}_{0}  (\gamma, \lambda )- \nu^{1}_{0}  (\gamma, \lambda ) \big| \notag \le C d_{x}^{-\alpha} 2^{-\alpha q n} 2^{-\delta q n}  \le C 2^{-\delta q n}  \sum_{ (\gamma, \lambda) \in \Pi^{1} } g^{1} (\gamma, \lambda ) \mu^{1}_{0}  (\gamma, \lambda ), 
\end{align*}
i.e.,
\begin{equation}\label{eq14}
\sum_{ (\gamma, \lambda) \in \Pi^{1} } g^{1} (\gamma, \lambda ) \nu^{1}_{0}  (\gamma, \lambda ) = 
 \sum_{ (\gamma, \lambda) \in \Pi^{1} } g^{1} (\gamma, \lambda ) \mu^{1}_{0}  (\gamma, \lambda ) \Big( 1 + O \big( 2^{- \delta q n } \big) \Big).
\end{equation}

Similarly, we have
\begin{equation}\label{eq15}
\sum_{ (\gamma, \lambda) \in \Pi^{2} } g^{2} (\gamma, \lambda ) \nu^{2}_{0}  (\gamma, \lambda ) = 
 \sum_{ (\gamma, \lambda) \in \Pi^{2} } g^{2} (\gamma, \lambda ) \mu^{2}_{0}  (\gamma, \lambda ) \Big( 1 + O \big( 2^{- \delta q n } \big) \Big).
 \end{equation}
Therefore, \eqref{eq16} and \eqref{eq17} follow.
\end{proof}

We now replace the starting points of the walks. Again, we start with notations.
\begin{itemize}
\item Let
\begin{itemize}
\item[$\dagger$] $x^{1}_{q} := x_{n} - \big( 2^{(1-5q) n }, 0 , 0 \big)$ and $y^{1}_{q} := x_{n} + \big( 2^{(1-5q) n }, 0 , 0 \big)$;

\item[$\dagger$] $x^{2}_{q} := x_{n-1} - \big( 2^{(1-5q) n }, 0 , 0 \big)$ and $y^{2}_{q} := x_{n-1} + \big( 2^{(1-5q) n }, 0 , 0 \big)$;

\item[$\dagger$] $x^{3}_{q} := x_{n-1} - \big( 2^{(1-5q) n -1 }, 0 , 0 \big)$ and $y^{3}_{q} := x_{n-1} + \big( 2^{(1-5q) n -1 }, 0 , 0 \big)$;

\item[$\dagger$] $x^{4}_{q} := x_{n-2} - \big( 2^{(1-5q) n -1 }, 0 , 0 \big)$ and $y^{4}_{q} := x_{n-2} + \big( 2^{(1-5q) n -1 }, 0 , 0 \big)$,
\end{itemize}
and define pairs of independent SRW's started from these points:
\begin{itemize}
\item[$\dagger$] let $S^{1}_{1}, S^{1}_{2}$ start from $x^{1}_{q}$ and $y^{1}_{q}$; let $S^{2}_{1}, S^{2}_{2}$ start from $x^{2}_{q}$ and $y^{2}_{q}$;

\item[$\dagger$] let  $S^{3}_{1}, S^{3}_{2}$ start from $x^{3}_{q}$ and $y^{3}_{q}$; let $S^{4}_{1}, S^{4}_{2}$ start from $x^{4}_{q}$ and $y^{4}_{q}$.
\end{itemize}
We sometimes write $S^{1}_{2}$,  $S^{2}_{2}$, $S^{3}_{2}$ and $S^{4}_{2}$ for  $S^{1}_{2} [0, T^{n} ]$,  $S^{2}_{2} [0, T^{n-1}]$, $S^{3}_{2} [0, T^{n-1}]$, and $S^{4}_{2} [0, T^{n-2}]$, respectively.

\item Write $T^{l}_{S^{i}_{1}} := \inf \{ k \ge 0 \ | \ S^{i}_{1} (k) \in \partial \cC_l ) \}$ for $l \ge 1$ and $i =1,2, 3, 4$, and let 
\begin{itemize}
\item[$\dagger$] $u^{1} = T^{n}_{S^{1}_{1}} \wedge T^{(1-q)n}_{S^{1}_{1}}$;  $u^{2} = T^{n-1}_{S^{2}_{1}} \wedge T^{(1-q)n}_{S^{2}_{1}}$; $u^{3} = T^{n-1}_{S^{3}_{1}} \wedge T^{(1-q)n-1}_{S^{3}_{1}}$;  $u^{4} = T^{n-2}_{S^{4}_{1}} \wedge T^{(1-q)n-1}_{S^{4}_{1}}$.
\end{itemize}
We write $\LE (S^{i}_{1} ) =\LE( S^{i}_{1}  [0, u^{i} ] )$ for $i=1,2, 3, 4$.

\item Let 
\begin{align*}
&F^{3} = \Big\{ \text{SRW started at } x_{n-1} \text{ hits } \partial \cC_{(1-q)n -1}  \text{ before hitting } \partial \cC_{n-1} \Big\}, \\
&F^{4} = \Big\{ \text{SRW started at } x_{n-2} \text{ hits } \partial  \cC_{(1-q)n -1}  \text{ before hitting } \partial \cC_{n-2} \Big\}.
\end{align*}

\item Let $G^{i} := \{ u^{i} = T^{(1-q)n}_{S^{i}_{1}} \}$ for $i=1,2$ and  $G^{i} := \{ u^{i} = T^{(1-q)n-1}_{S^{i}_{1}} \}$ for $i=3,4$.

\item Let $W^{i}$ be $S^{i}_{1}$ conditioned on $G^{i}$ for $i=1,2$.

\item Write $\LE (W^{i} ) =\LE(W^{i} [0, T^{(1-q)n } ] )$ for $i=1,2$.

\item Remind that for a path $\lambda$, we write $U^{i}_{j} := \inf \{ k \ge 0 \ | \ \lambda (k) \in \partial B^{i}_{j, q} \}$ for $i=1,2$ and $j= 1,2, \cdots$.

\item Let $V^{i} := \Big\{\LE(W^{i} ) [0, U^{i}_{1}] \cap S^{i}_{2} [1, U^{i}_{1} ] = \emptyset \Big\}$ for $i=1,2$.

%
%
%

\end{itemize}

We are now ready to state the decomposition result similar to Prop.\ \ref{par4} for the walks introduced above. Again we omit the proof for brevity. Compare this with the \eqref{henkei-5} (versus \eqref{henkei-4}).
\begin{prop} For some universal constants $c, \delta > 0$, one has
\begin{align}
&P \Big( \LE(W^{1} ) \cap S^{1}_{2} = \emptyset \ \Big| \ V^{1} \Big) = \Big( 1 + O \big( d_{x}^{-c} 2^{-\frac{q n}{2}} \big) \Big) \sum_{(\gamma, \lambda) \in \Pi^{1} } g^{1} (\gamma, \lambda ) \nu^{1}_{0}  (\gamma, \lambda ), \label{eq22} \\
&P \Big( \LE(W^{2} ) \cap S^{2}_{2} = \emptyset \ \Big| \ V^{2} \Big) = \Big( 1 + O \big( d_{x}^{-c} 2^{-\frac{q n}{2}} \big) \Big) \sum_{(\gamma, \lambda) \in \Pi^{2} } g^{2} (\gamma, \lambda ) \nu^{2}_{0}  (\gamma, \lambda ). \label{eq23}
\end{align}
\end{prop}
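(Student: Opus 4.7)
The strategy will mirror the proof of \eqref{eq16}--\eqref{eq17} given just above, replacing the walks $R^i_1, R^i_2$ starting at $x_n$ (resp.\ $x_{n-1}$) with the walks $S^i_1, S^i_2$ starting at the separated pair $x^i_q, y^i_q$ (resp.\ $x^{i+2}_q, y^{i+2}_q$), and the conditioning events $F^i, H^i$ with $G^i, V^i$. We focus on \eqref{eq22}; the proof of \eqref{eq23} is entirely analogous.

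The first step is to introduce the analog of $\mu^1_0$ for the present setup. For $(\gamma,\lambda)\in\Pi^1$, set
$$\bar\mu^1_0(\gamma,\lambda):= P\Big(\big(\LE(W^1)[U^1_2,U^1_1],\,S^1_2[U^1_2,U^1_1]\big)=(\gamma,\lambda)\,\Big|\,V^1\Big).$$
Running exactly the argument that leads from the definition of $\mu^1_0$ to \eqref{eq2}---i.e.\ the decomposition developed in Section \ref{sec:3.5}, which invokes the domain Markov property of LERW (Lemma \ref{domain}), the strong Markov property of SRW, the asymptotic independence of Propositions 4.2 and 4.4 of \cite{Mas}, the separation lemma, and the localization identity in the spirit of Proposition \ref{prop1} asserting that $g^1(\gamma,\lambda)$ depends (up to a small error) only on the end part of $(\gamma,\lambda)$ in the annulus $B^1_{1,q}\setminus B^1_{2,q}$---will yield
$$P\big(\LE(W^1)\cap S^1_2=\emptyset\,\big|\,V^1\big) = \Big(1 + O\big(d_x^{-c}2^{-qn/2}\big)\Big) \sum_{(\gamma,\lambda)\in\Pi^1} g^1(\gamma,\lambda)\,\bar\mu^1_0(\gamma,\lambda).$$

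The second step, which we expect to be the main obstacle, is to establish the analog of \eqref{eq8}:
$$\|\bar\mu^1_0 - \nu^1_0\|_{\mathrm{TV}} \le C\,2^{-\delta q n}.$$
Both measures are supported on $\Pi^1$ and are obtained from the end parts in $B^1_{1,q}\setminus B^1_{2,q}$ of a (LERW, SRW) pair starting from $(x^1_q,y^1_q)$ conditioned not to intersect on $B^1_{1,q}$. The outer contexts differ: for $\nu^1_0$ the SRW $L^1_1$ is unconditioned and its LERW is taken on $B(x_n, d_x 2^n)$, whereas for $\bar\mu^1_0$ the SRW $W^1$ is conditioned on $G^1$ (reaching $\partial\cC_{(1-q)n}$ before exiting $\cC_n$) and its LERW is taken on $\cC_{(1-q)n}$. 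Since $B^1_{1,q}$ sits well inside both outer domains for typical $x$ (at distance of order $d_x 2^n$), Harnack's principle shows that the Doob transform implementing $G^1$ is nearly constant on $B^1_{1,q}$, so $W^1$ restricted to its initial segment up to $U^1_1$ is close to an unconditioned SRW; combined with Lemma \ref{lem:LEWILEW}, both LERW's on the scale of $B^1_{1,q}$ agree with an ILERW from $x^1_q$ up to TV errors of order $d_x^{-1}2^{-qn}$. One may then invoke Proposition \ref{prop:coup2} (whose proof only requires both walks to start inside a common mesoscopic ball, as explained in Section \ref{sec:Gregvar1}), applied with the ball $B^1_{1,q}$ in place of $\cC_N$ and both initial configurations centered near $x_n$, to obtain the desired TV bound.

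Finally, combining the two displays above with the bounds $g^1(\gamma,\lambda)\le C d_x^{-\alpha}2^{-\alpha q n}$ and $\sum g^1(\gamma,\lambda)\bar\mu^1_0(\gamma,\lambda)\ge c d_x^{-\alpha}2^{-\alpha q n}$ (analogs of \eqref{eq10}, again via the separation lemma) yields
$$\sum_{(\gamma,\lambda)\in\Pi^1} g^1(\gamma,\lambda)\bar\mu^1_0(\gamma,\lambda) = \sum_{(\gamma,\lambda)\in\Pi^1} g^1(\gamma,\lambda)\nu^1_0(\gamma,\lambda)\big(1+O(2^{-\delta q n})\big),$$
which, combined with Step 1, establishes \eqref{eq22}.
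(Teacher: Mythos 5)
Your proposal correctly follows the structure the paper indicates (the paper omits the proof and points the reader to compare with \eqref{henkei-5}), and the core argument in your Step 2 — Harnack's principle to absorb the Doob transform implementing $G^1$, plus Masson's asymptotic-independence results (Lemma \ref{lem:LEWILEW} and Propositions 4.2, 4.4 of \cite{Mas}) to compare loop-erasures stopped at different far-away boundaries — is the right mechanism and suffices.

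One caveat: the final sentence of your Step 2, invoking Proposition \ref{prop:coup2}, is unnecessary and somewhat misplaced. Both $\bar\mu^1_0$ and $\nu^1_0$ are built from walk pairs started at the \emph{same} points $(x^1_q,y^1_q)$; the only discrepancies are the conditioning on $G^1$ and the different outer stopping boundaries ($\partial\cC_{(1-q)n}$ versus $\partial B(x_n,d_x 2^n)$). Since these produce a pointwise multiplicative error $1+O(d_x^{-1}2^{-qn})$ on the law of the truncated path $(\LE(\cdot)[0,U^1_1],\cdot[1,U^1_1])$, and since the conditioning events $V^1$ and $I^1_0$ are measurable with respect to that truncated path, the conditional laws inherit the same multiplicative error directly — no coupling is needed. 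The coupling of Proposition \ref{prop:coup2} is reserved for the change of starting points, which is the content of \eqref{eq8} in the \emph{previous} proposition (\eqref{eq16}--\eqref{eq17}), not here.
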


Now we can change the starting points. The following proposition is parallel to \eqref{CHANGE} and \eqref{CHANGE-2}.
\begin{prop} We have
\begin{equation}\label{eq26}
g_{n, x} =\frac{ P (F^{1} ) P  \Big(\LE(S^{1}_{1} ) \cap S^{1}_{2} = \emptyset,  G^{1} \Big)  P (G^{1})^{-1}  }{P(F^{2} ) P \Big(\LE(S^{2}_{1} ) \cap S^{2}_{2} = \emptyset,  G^{2} \Big) P (G^{2})^{-1}  } \cdot \Big[  1 + O \Big( d_{x}^{-c} 2^{- \delta qn} \Big) \Big],
\end{equation}
and
\begin{equation}\label{eq27}
g_{n-1, x}  =\frac{ P (F^{3} ) P  \Big(\LE(S^{3}_{1} ) \cap S^{3}_{2} = \emptyset,  G^{3} \Big)  P (G^{3})^{-1}  }{P(F^{4} ) P \Big(\LE(S^{4}_{1} ) \cap S^{4}_{2} = \emptyset,  G^{4} \Big) P (G^{4})^{-1}  } \cdot \Big[  1 + O \Big( d_{x}^{-c} 2^{- \delta qn} \Big) \Big]. 
\end{equation}
\end{prop}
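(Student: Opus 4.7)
My plan is to derive \eqref{eq26} by chaining \eqref{eq1} with the two parallel decompositions from Proposition \ref{par4} (namely \eqref{eq16}--\eqref{eq17}) and \eqref{eq22}--\eqref{eq23}, followed by some elementary manipulations of conditional probabilities. Equation \eqref{eq27} will then follow by the same argument applied at scale $n-1$ with $(S^1,S^2)$ replaced by $(S^3,S^4)$ and analogously-defined $W^i,V^i$ for $i=3,4$.

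First I would combine \eqref{eq16}--\eqref{eq17} with \eqref{eq22}--\eqref{eq23}: since for $i=1,2$ both $P(\LE(Z^i)\cap R^i_2=\emptyset\mid H^i)$ and $P(\LE(W^i)\cap S^i_2=\emptyset\mid V^i)$ equal, up to a factor $1+O(d_x^{-c}2^{-\delta qn})$, the same sum $\sum_{(\gamma,\lambda)\in\Pi^i}g^i(\gamma,\lambda)\nu^i_0(\gamma,\lambda)$, these two conditional probabilities are equal to each other up to such an error. Substituting into \eqref{eq1} yields
$$g_{n,x}=\frac{P(F^1)\,P(\LE(W^1)\cap S^1_2=\emptyset\mid V^1)}{P(F^2)\,P(\LE(W^2)\cap S^2_2=\emptyset\mid V^2)}\,\bigl\{1+O(d_x^{-c}2^{-\delta qn})\bigr\}.$$
Next, since $W^i$ is by definition $S^i_1$ conditioned on $G^i$, and the full non-intersection event $\{\LE(W^i)\cap S^i_2=\emptyset\}$ is (modulo negligible start-point issues) contained in $V^i$, I may rewrite
$$P(\LE(W^i)\cap S^i_2=\emptyset\mid V^i)=\frac{P(\LE(W^i)\cap S^i_2=\emptyset)}{P(V^i)}=\frac{P(\LE(S^i_1)\cap S^i_2=\emptyset,\,G^i)}{P(G^i)\,P(V^i)}.$$
Plugging this back recovers the right-hand side of \eqref{eq26} apart from an extra factor of $P(V^2)/P(V^1)$.

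The main remaining step, which I expect to be the principal obstacle, is therefore to show that
$$\frac{P(V^1)}{P(V^2)}=1+O(d_x^{-c}2^{-\delta qn}).$$
The two events $V^1$ and $V^2$ are structurally identical after translation: each is the non-intersection, inside a ball of radius $2^{(1-q)n}$ around its respective center ($x_n$ or $x_{n-1}$), of a loop-erasure and an independent SRW whose starting points are separated by a distance of order $2^{(1-5q)n}$. Their only genuine difference lies in the long-range conditioning $G^i$, which forces the underlying SRW $S^i_1$ to reach $\partial\cC_{(1-q)n}$ before exiting $\cC_n$ or $\cC_{n-1}$ respectively. Since $V^i$ is a local event depending only on $W^i$ inside $B^i_{1,q}$, applying the weak-independence estimates of Propositions 4.2 and 4.4 of \cite{Mas} (already exploited throughout Section \ref{sec:noninter}) allows me to decouple the local non-intersection from the distant boundary condition, yielding the required comparison. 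Once this is established, \eqref{eq26} follows; equation \eqref{eq27} is obtained identically by running the same chain at scale $n-1$ on the walks $S^3,S^4$, using the obvious analogs of \eqref{eq16}--\eqref{eq17} and \eqref{eq22}--\eqref{eq23} at that scale, where all error terms introduced remain of the same form.
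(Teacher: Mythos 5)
Your proposal is correct and takes essentially the same route as the paper: chain \eqref{eq1} with Proposition \ref{par4} and its $W^i/V^i$ counterpart (\eqref{eq22}--\eqref{eq23}) to pass to the $(W^i,S^i_2)$ setup, unfold $P(\cdot\mid V^i)$ using $\{\LE(W^i)\cap S^i_2=\emptyset\}\subseteq V^i$ and the definition of $W^i$ as $S^i_1$ conditioned on $G^i$, and finally absorb the ratio $P(V^1)/P(V^2)$ into the error via Propositions 4.2 and 4.4 of \cite{Mas} — this last step is exactly \eqref{eq24} in the paper. The only cosmetic difference is that you compare each conditional probability to the common sum $\sum g^i\nu^i_0$ individually before forming the ratio, whereas the paper manipulates the ratio directly.
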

\begin{proof}
 By \eqref{eq16}, \eqref{eq17}, \eqref{eq22} and \eqref{eq23}, we have 
\begin{align}
& \frac{     P \Big( \LE(Z^{1} ) \cap R^{1}_{2} = \emptyset \ \Big| \ H^{1} \Big)      }{    P \Big(\LE(Z^{2} ) \cap R^{2}_{2} = \emptyset \ \Big| H^{2} \Big)      } 
= \frac{    P \Big( \LE(W^{1} ) \cap S^{1}_{2} = \emptyset \ \Big| \ V^{1} \Big)    }{       P \Big( \LE(W^{2} ) \cap S^{2}_{2} = \emptyset \ \Big| \ V^{2} \Big)    } \cdot \Big\{  1 + O \Big( d_{x}^{-c} 2^{- \delta qn} \Big) \Big\} \notag \\
=&\frac{    P \Big( \LE(W^{1} ) \cap S^{1}_{2} = \emptyset  \Big)  P(V^{1} )  }{       P \Big( \LE(W^{2} ) \cap S^{2}_{2} = \emptyset  \Big) P (V^{2})   } \cdot \Big\{  1 + O \Big( d_{x}^{-c} 2^{- \delta qn} \Big) \Big\} =\frac{    P \Big( \LE(W^{1} ) \cap S^{1}_{2} = \emptyset  \Big)    }{       P \Big( \LE(W^{2} ) \cap S^{2}_{2} = \emptyset  \Big)    } \cdot \Big\{  1 + O \Big(  d_{x}^{-c} 2^{- \delta qn} \Big) \Big\}, \notag
\end{align}
where in the last equality we used the following fact
\begin{equation}\label{eq24}
P (V^{1} ) = \Big( 1 + O \big( d_{x}^{-1} 2^{- q n} \big) \Big) P (V^{2} )
\end{equation}
which again follows from Propositions 4.2 and 4.4 of \cite{Mas}.
Combining this with \eqref{EQ}, it follows that  
\begin{align}
g_{n, x} &= \frac{f_{n, x}}{f_{n-1, x}} = \frac{ P \Big( F^{1}, \ \LE(R^{1}_{1} ) \cap R^{1}_{2} = \emptyset \Big)}{P \Big( F^{2}, \ \LE(R^{2}_{1} ) \cap R^{2}_{2} = \emptyset \Big)} \cdot  \Big\{  1 + O \Big( d_{x}^{-c} 2^{- \delta qn} \Big) \Big\} \notag \\
&=\frac{ P (F^{1} ) P \Big( \LE(Z^{1} ) \cap R^{1}_{2} = \emptyset \ \Big| \ H^{1} \Big)  }{P(F^{2} ) P \Big(\LE(Z^{2} ) \cap R^{2}_{2} = \emptyset \ \Big| \ H^{2} \Big) } \cdot \Big\{  1 + O \Big( d_{x}^{-c} 2^{- \delta qn} \Big) \Big\} \notag \\
&=\frac{ P (F^{1} ) P  \Big(\LE(W^{1} ) \cap S^{1}_{2} = \emptyset \Big)   }{P(F^{2} ) P \Big(\LE(W^{2} ) \cap S^{2}_{2} = \emptyset \Big)  } \cdot \Big\{  1 + O \Big( d_{x}^{-c} 2^{- \delta qn} \Big) \Big\} \notag \\
&=\frac{ P (F^{1} ) P  \Big(\LE(S^{1}_{1} ) \cap S^{1}_{2} = \emptyset, \ G^{1} \Big)  P (G^{1})^{-1}  }{P(F^{2} ) P \Big(\LE(S^{2}_{1} ) \cap S^{2}_{2} = \emptyset, \ G^{2} \Big) P (G^{2})^{-1}  } \cdot \Big\{  1 + O \Big( d_{x}^{-c} 2^{- \delta qn} \Big) \Big\}. 
\end{align}
This gives \eqref{eq26}. The claim \eqref{eq27} follows similarly.
\end{proof}
Note that, an easy consequence of Proposition 1.5.10 of \cite{Lawb} is 
\begin{align}
P (F^{1} )  = P (F^{3} )   \cdot \Big\{  1 + O \Big( d_{x}^{-c} 2^{- \delta qn} \Big) \Big\}; &\quad
P (F^{2} )  = P (F^{4} )   \cdot \Big\{  1 + O \Big( d_{x}^{-c} 2^{- \delta qn} \Big) \Big\}; \label{eq29} \\
P (G^{1} )  = P (G^{3} )   \cdot \Big\{  1 + O \Big( d_{x}^{-c} 2^{- \delta qn} \Big) \Big\}; &\quad
P (G^{2} )  = P (G^{4} )   \cdot \Big\{  1 + O \Big( d_{x}^{-c} 2^{- \delta qn} \Big) \Big\}. \label{eq31}
\end{align}
Therefore, with \eqref{eq26} and \eqref{eq27} in mind, it suffices to compare 
\begin{equation}\label{FRAC12}
\frac{  P  \Big(\LE(S^{1}_{1} ) \cap S^{1}_{2} = \emptyset, \ G^{1} \Big)   }{ P \Big(\LE(S^{2}_{1} ) \cap S^{2}_{2} = \emptyset, \ G^{2} \Big)   }
\qquad \mbox{with}\qquad 
\frac{  P  \Big(\LE(S^{3}_{1} ) \cap S^{3}_{2} = \emptyset, \ G^{3} \Big)    }{ P \Big(\LE(S^{4}_{1} ) \cap S^{4}_{2} = \emptyset, \ G^{4} \Big)   }. 
\end{equation}
The following proposition is similar to Prop.\ \ref{difscale}.
\begin{prop}\label{prop:5.9}
There exist universal constants $c_{1} > 0$, $c_{2} > 0$ and $q_{2} > 0$  such that for all $n \ge 1$ and $q \in (0, q_{2} )$ 
\begin{equation}\label{Goal1-1-1}
P  \Big(\LE(S^{1}_{1} ) \cap S^{1}_{2} = \emptyset, \ G^{1} \Big) = P  \Big(\LE(S^{3}_{1} ) \cap S^{3}_{2} = \emptyset, \ G^{3} \Big)  \Big( 1 + O \big(d_{x}^{-c_{1}} 2^{- c_{2} q n} ) \Big).  
\end{equation}
Similarly,  for all $n \ge 1$ and $q \in (0, q_{2} )$
\begin{equation}\label{Goal2-1-1}
P  \Big(\LE(S^{2}_{1} ) \cap S^{2}_{2} = \emptyset, \ G^{2} \Big) = P  \Big(\LE(S^{4}_{1} ) \cap S^{4}_{2} = \emptyset, \ G^{4} \Big)  \Big( 1 + O \big(d_{x}^{-c_{1}} 2^{- c_{2} q n} ) \Big). 
\end{equation}
\end{prop}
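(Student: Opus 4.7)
The proof closely parallels that of Proposition \ref{difscale}: configuration $3$ is configuration $1$ rescaled by factor $1/2$ (up to lattice discretization, since $x^3_q = x^1_q/2$ and $y^3_q = y^1_q/2$ modulo $O(1)$, and the corresponding balls at scales $2^{(1-q)n},\, 2^n$ halve to $2^{(1-q)n-1},\, 2^{n-1}$), so we compare the two probabilities through a common Brownian-motion intermediate using Brownian scaling and Kozma's multiscale coupling (Theorem~5 of \cite{Koz}) between LERW on $\mathbb{Z}^3$ and on $2\mathbb{Z}^3$. We argue \eqref{Goal1-1-1}; \eqref{Goal2-1-1} is entirely analogous.

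First I would use the KMT-type strong approximation of Lemma~3.2 of \cite{Lawcut} to couple $S^3_2$ with a Brownian motion $W$ started at $y^3_q$ so that $\max_{0\le t\le T^{n-1}(W)} |W(t) - S^3_2(3t)| \le 2^{2n/3-1}$ outside an event of probability $a\exp(-2^{bn})$; then set $W' := 2W$, which is distributionally a Brownian motion from $y^1_q$ stopped at $\partial B(2^n)$. I would also restrict to the event
\[
J := \big\{ W'[0,\infty) \cap B\bigl(x_n,\, d_x\, 2^{(1-15q)n-1}\bigr) = \emptyset \big\},
\]
which has probability $\ge 1 - c d_x^{-c} 2^{-10 q n}$ by Theorem~3.17 of \cite{MP}: the role of $J$ is to keep $W'$ away from the pole $x_n$ so that the escape event $G^\bullet$ is well-controlled and Kozma's approximation constants are uniform.

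Next, Theorem~5 of \cite{Koz} applied to $S^1_1$ on $\mathbb{Z}^3$ and a simple random walk $R$ on $2\mathbb{Z}^3$ started at $x^1_q$ against the common obstacle $W'$ would give universal constants $c_0 \in (0, 1/4)$ and $c_1 < \infty$ such that, conditional on $J$ and for $q$ below some threshold $q_0$,
\[
P^R\Bigl(\LE\bigl(R[0,u^R]\bigr)\cap (W')^{+(2n/3+1)}=\emptyset,\ G^R\Bigr) \ge P^1\Bigl(\LE(S^1_1)\cap(W')^{+(1-c_0)n}=\emptyset,\ G^1\Bigr) - c_1 2^{-c_0 n},
\]
where $G^R$ is the $R$-version of $G^1$; under the half-scaling $R \leadsto R/2$, the left-hand side becomes a probability involving $S^3_1$ against the Wiener sausage at the configuration-$3$ scale, with $G^R$ corresponding to $G^3$. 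The escape event $G^\bullet$ is preserved because it is determined by the endpoint of the loop-erased path, which Kozma's coupling identifies with high probability.

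Finally, the Wiener-sausage approximation of Lemma~4.8 of \cite{Koz} (or the sharper Theorem~3.1 of \cite{SS}), combined with an analogous KMT coupling of $S^1_2$ with a Brownian motion at the configuration-$1$ scale, replaces both Wiener-sausage non-intersection probabilities by the corresponding true LERW--SRW non-intersection probabilities at cost $c\, d_x^{-c'} 2^{-c_0 \rho n}$ per side. Assembling the three steps yields
\[
\bigl| P\bigl(\LE(S^1_1)\cap S^1_2=\emptyset,\ G^1\bigr) - P\bigl(\LE(S^3_1)\cap S^3_2=\emptyset,\ G^3\bigr) \bigr| \le c\, d_x^{-c_1} 2^{-10 q n}.
\]
A separation-lemma argument (Theorem~6.1.5 of \cite{S}, Claim~3.4 of \cite{SS}) together with the up-to-constants one-point bound \eqref{eq33}--\eqref{eq34} forces $P\bigl(\LE(S^3_1)\cap S^3_2=\emptyset,\ G^3\bigr) \ge c\, d_x^{-\alpha} 2^{-\alpha q n}$; since $\alpha<1$, choosing $q_2 \le q_0$ small enough converts the additive error into the multiplicative form \eqref{Goal1-1-1}. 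The main technical obstacle is checking that Kozma's multiscale coupling, originally stated for walks starting from the origin, applies uniformly in the translated starting points $x^1_q, y^1_q$ while respecting the escape event $G^\bullet$; conditioning on $J$ and standard harmonic-measure estimates on $\partial B(2^{(1-q)n}), \partial B(2^n)$ resolve this.
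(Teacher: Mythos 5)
Your overall strategy is the same as the paper's: rescale configuration~3 to configuration~1, couple the second SRW with a Brownian motion via a KMT-type estimate, apply Kozma's Theorem~5 to compare LERW on $\mathbb{Z}^3$ and on $2\mathbb{Z}^3$ against the common Brownian obstacle, replace the Wiener sausage by the SRW on each side, and convert the resulting additive error into a multiplicative one. However, two of your steps, as stated, do not go through.

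First, your treatment of the escape event $G^\bullet$ is not sound. You assert that ``$G^\bullet$ is preserved because it is determined by the endpoint of the loop-erased path, which Kozma's coupling identifies with high probability,'' but Theorem~5 of \cite{Koz} compares non-intersection probabilities against a fixed obstacle; it is not a pathwise coupling of the two LERW's and does not identify their endpoints. The way the paper handles $G^\bullet$ is structural: it runs Kozma's theorem with the annular domain $\mathcal{D}=\mathbb{D}\setminus\mathbb{D}_{q,n}$, $\mathbb{D}_{q,n}=2^{-qn}\mathbb{D}$, so that the LERW being stopped at whichever of $\partial\cC_{(1-q)n}$ and $\partial\cC_n$ it reaches first (the structure defining $G^\bullet$) is built into the domain, and then passes between ``LERW avoids the sausage and $\partial\cC_n$'' and ``LERW avoids the sausage on $G^\bullet$'' at cost $O(2^{-\rho n})$. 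Relatedly, your avoidance event $J$ is centered at $x_n$; what Kozma's argument needs is that the Brownian motion stays away from the LERW starting point $x^3_q$ (after doubling, $x^1_q$), and a ball of radius $\approx 2^{(1-15q)n}$ around $x_n$ sits at distance $2^{(1-5q)n}\gg 2^{(1-15q)n}$ from $x^1_q$, so it does not provide that protection.

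Second, the lower bound you invoke in the final step, $P\bigl(\LE(S^3_1)\cap S^3_2=\emptyset,\ G^3\bigr)\ge c\,d_x^{-\alpha}2^{-\alpha q n}$, is false. The event in question is contained in $G^3$, and $P(G^3)\lesssim d_x 2^{-qn}$; a lower bound of order $d_x^{-\alpha}2^{-\alpha qn}$ would then force $d_x^{-1-\alpha}\lesssim 2^{(\alpha-1)qn}$, which fails for large $n$ since $\alpha<1$ and $d_x<1$. The bound that actually enters is of the form $c\,d_x^{1-\alpha}2^{-q(1+5\alpha)n}$: the factor $d_x 2^{-qn}$ reflects the cost of the escape to $\partial\cC_{(1-q)n}$ constituting $G^3$, and the factor $2^{-5\alpha qn}$ the non-intersection exponent at the scale $2^{(1-5q)n}$ separating the two starting points. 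Since $\alpha<1$, this is still dominated by the $2^{-10qn}$ additive error for $q$ small, so the additive-to-multiplicative conversion yielding \eqref{Goal1-1-1} does go through once the bound is corrected; note also that \eqref{eq33}--\eqref{eq34} control $f_{n,x}$ directly and do not by themselves yield a lower bound on the present quantity.
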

\begin{proof}

We will show that the numerator of the first fraction of \eqref{FRAC12} is well approximated by that of the second fraction by using the same idea as in the proof of Proposition \ref{difscale}. We first couple $S^{3}_{2}$ with the Brownian motion $B^{3} (t)$ started at $y^{3}_{q}$ so that the Hausdorff distance between $S^{3}_{2} [0, T^{n-1}]$ and $B^{3} [0, T^{n-1} ]$ is less than $2^{{2n}/{3}}$ with probability at least $1 - c' \exp \{ - 2^{cn} \}$ for some universal constants $c, c' > 0$ (this is possible by Lemma 3.1 of \cite{Lawcut}). We write $B^{3} = B^{3} [0, T^{n-1} ]$ for the trace of the Brownian motion. 

Take $\epsilon, \delta$ and $\delta_{2}$ are the constants as in the proof of Theorem 5 of \cite{Koz} for that case of $G^{1} = \mathbb{Z}^{3}$ and $G^{2} = 2 \mathbb{Z}^{3}$ in the statement of the theorem. (For some technical reason, we assume $\epsilon < c_{4}$ where $c_{4}$ is a universal constant coming from Lemma 3.3 of \cite{Koz}.) Note that these three constants are universal. Taking these three universal constants, let $\rho = \frac{1}{10} \cdot \min \{ \epsilon, \delta, \delta_{2} \}$ and write 
\begin{align*}
&B^{3, 1} = \Big\{ x \in \mathbb{R}^{3} \ \Big| \ \text{ there exists } y \in 2B^{3} \text{ such that } |x -y| \le 2^{\frac{3n}{4}} + 20 \cdot 2^{(1- \rho) n } \Big\} \\
&B^{3, 2} = \Big\{ x \in \mathbb{R}^{3} \ \Big| \ \text{ there exists } y \in 2B^{3} \text{ such that } |x -y| \le 2^{\frac{3n}{4}}  \Big\} 
\end{align*}
for sets of points within a distance $2^{\frac{3n}{4}} + 20 \cdot 2^{(1- \rho) n }$ and $2^{\frac{3n}{4}}$ of $2B^{3}$ (i.e., Wiener sausages). Let 
\begin{equation*}
A= \Big\{ B^{3} \cap B \big( x^{3}_{q}, 2^{(1-15 q ) n } \big) = \emptyset \Big\}.
\end{equation*}
Write $\tilde{S}^{1}_{1}$ for the simple random walk on $2 \mathbb{Z}^3$ started at $x^{1}_{q}$. We also write $\tilde{S}^{1}_{1} = \tilde{S}^{1}_{1} [0, \tilde{u}^{1} ]$ where $\tilde{u}^{1} = T^{n}_{\tilde{S}^{1}_{1}} \wedge T^{(1-q)n}_{\tilde{S}^{1}_{1}}$. Set $\tilde{G}^{1} = \{ \tilde{u}^{1} = T^{(1-q)n}_{\tilde{S}^{1}_{1}} \}$.

As in the proof of Proposition \ref{difscale}, having conditioned $B^{3}$ on on $A$, we will compare 
$$
P_{S^{1}_{1}} \Big(\LE( S^{1}_{1} ) \cap B^{3,1} = \emptyset, \ G^{1} \Big)\mbox{ with }
P_{\tilde{S}^{1}_{1}} \Big(\LE( \tilde{S}^{1}_{1} ) \cap B^{3,2} = \emptyset, \  \tilde{G}^{1} \Big)
$$
for sufficiently small $q >0$ via Theorem 5 of \cite{Koz}. For this purpose, take $q_{1} := {\rho}/{100}$. We assume $ q \in (0, q_{1} )$.  We now apply Theorem 5 of \cite{Koz} with the parameters in the following table:

\vspace{2mm}
\begin{center}
 \begin{tabular}{|c|c|c|c|c|c|c|} 
\hline
Theorem 5 of \cite{Koz} & $G^{1}$ & $G^{2}$  & ${\cal D}$ & $s$  & $s a$ & $s {\cal E}$ \\ \hline
Here & $\mathbb{Z}^{3}$ & $2 \mathbb{Z}^{3}$ & $\mathbb{D} \setminus  \mathbb{D}_{q, n}$ & $2^{n}$ & $x^{1}_{q}$ & $2^{n} \mathbb{D} \setminus \big( B^{3,1} \cup D \big)$ \\ 
\hline
\end{tabular}
\end{center}
where $\mathbb{D}_{q, n } = 2^{-q n}  \mathbb{D}$ and 
\begin{equation*}
D = \Big\{ x \in \mathbb{R}^{3} \ \Big| \  \text{dist} \Big( x, \partial \big( 2^{n} \mathbb{D} \big) \Big) \le 2^{\frac{3n}{4}} + 20 \cdot 2^{(1- \rho) n } \Big\}.
\end{equation*}
Then it follows that there exist universal constants $C_{0} < \infty , c_{0} > 0$ such that if $B^{3}$ satisfies $A$, we have
\begin{equation}\label{KOZMA1}
P_{S^{1}_{1}} \Big(\LE( S^{1}_{1} ) \cap \big( B^{3,1} \cup D \big) = \emptyset \Big) \le P_{\tilde{S}^{1}_{1}} \Big(\LE( \tilde{S}^{1}_{1} ) \cap \big( B^{3,2} \cup D' \big) = \emptyset \Big) + C_{0} 2^{-c_{0} n },
\end{equation}
where 
\begin{equation*}
D' = \Big\{ x \in \mathbb{R}^{3} \ \Big| \  \text{dist} \Big( x, \partial \big( 2^{n} \mathbb{D} \big) \Big) \le 2^{\frac{3n}{4}}  \Big\}.
\end{equation*}
We note that we can take $C_{0} $ and $c_{0}$ as universal constants because if $q \le q_{1}$
\begin{itemize}
\item The LHS of (132) of \cite{Koz} is bounded above by $C 2^{-{ 4 \epsilon n}/{5}}$ for some universal constant $C$. We have the same upper bound for $|p^{7} - p^{8}|$ in line -8 page 133 of \cite{Koz}.

\item For the constants $K$ and $k$ in (137) of \cite{Koz}, we can take $K $ as a universal constant and can take $k = -1$, because the LHS of (137) of \cite{Koz} can be approximated by the probability that the coupled Brownian motion as in Section 3.4 of \cite{Koz} avoids the boundary of $\mathbb{D} \setminus \mathbb{D}_{q, n}$ even though its starting point is close to the boundary. Namely, since $q < q_{1}$ and $\epsilon < c_{4}$ (see Lemma 3.3 of \cite{Koz} for $c_{4}$), the LHS of (137) is bounded above by $C 2^{-{\epsilon n}/{2} }$ for some universal constant $C$.

\item By the same reason as above, we can take $C$ and $c$ of (138) of \cite{Koz} as universal constants. 

\item The other constants appeared in the comparison between $p_{i}$ and $p_{i+1}$ in the proof of Theorem 5 of \cite{Koz} can be taken as universal constants.

\end{itemize}

\vspace{2mm}

Given \eqref{KOZMA1}, we want to compare 
\begin{equation*}
P_{S^{1}_{1}} \Big(\LE( S^{1}_{1} ) \cap B^{3,1} = \emptyset, \ G^{1} \Big)
\end{equation*}
with the LHS of \eqref{KOZMA1}. Note that 
\begin{equation*}
P_{S^{1}_{1}} \Big(\LE( S^{1}_{1} ) \cap B^{3,1} = \emptyset, \ G^{1} \Big) =
P_{S^{1}_{1}} \Big(\LE( S^{1}_{1} ) \cap \big( B^{3,1} \cup \partial \cC_n \big) = \emptyset  \Big),
\end{equation*}
which is clearly bigger than the LHS of \eqref{KOZMA1}. The difference of the two probabilities is bounded above by
\begin{equation*}
P_{S^{1}_{1}} \Big( S^{1}_{1} \cap \partial \cC_n = \emptyset, \ S^{1}_{1} \cap D \neq \emptyset \Big) \le C 2^{- \rho n}.
\end{equation*}
Similarly, we have
\begin{equation*}
\Big| P_{\tilde{S}^{1}_{1}} \Big(\LE( \tilde{S}^{1}_{1} ) \cap \big( B^{3,2} \cup D' \big) = \emptyset \Big) - P_{\tilde{S}^{1}_{1}} \Big(\LE( \tilde{S}^{1}_{1} ) \cap B^{3,2} = \emptyset, \  \tilde{G}^{1} \Big) \Big| \le C 2^{- \frac{n}{4}}.
\end{equation*}

\vspace{2mm}
Consequently, we see that there exist universal constants $C_{1}$ and $c_{1}$ such that if we condition $B^{3}$ on $A$ and if $q \le q_{1}$, 
\begin{equation}\label{KOZMA2} 
P_{S^{1}_{1}} \Big(\LE( S^{1}_{1} ) \cap B^{3,1} = \emptyset, \ G^{1} \Big) \le P_{\tilde{S}^{1}_{1}} \Big(\LE( \tilde{S}^{1}_{1} ) \cap B^{3,2} = \emptyset, \  \tilde{G}^{1} \Big) + C_{1} 2^{-c_{1} n}.
\end{equation}
Given \eqref{KOZMA2}, the remaining part can be dealt with the same argument as in the proof of Proposition \ref{difscale}. We can replace the Wiener sausages $B^{3, 1}$ and $B^{3, 2}$ with $S^{1}_{2}$ and $S^{3}_{2}$. Namely, for $q \le q_{1}$
\begin{equation}\label{KOZMA3} 
P \Big(\LE( S^{1}_{1} ) \cap S^{1}_{2} = \emptyset, \ G^{1} \Big) \le P \Big(\LE( S^{3}_{1} ) \cap S^{3}_{2} = \emptyset, \  G^{3} \Big) + C_{1} 2^{-c_{1} n} + C 2^{-10 q n }.
\end{equation}
Thus, if we let $q_{2} = q_{1} \wedge \frac{c_{1}}{10}$ then we have for $q \le q_{2}$
\begin{equation*}
P \Big(\LE( S^{1}_{1} ) \cap S^{1}_{2} = \emptyset, \ G^{1} \Big) \le P \Big(\LE( S^{3}_{1} ) \cap S^{3}_{2} = \emptyset, \  G^{3} \Big) +  C 2^{-10 q n }.
\end{equation*}
Similarly we have 
\begin{equation*}
 P \Big(\LE( S^{3}_{1} ) \cap S^{3}_{2} = \emptyset, \  G^{3} \Big) \le P \Big(\LE( S^{1}_{1} ) \cap S^{1}_{2} = \emptyset, \ G^{1} \Big) +  C 2^{-10 q n }.
\end{equation*}
However, we know that 
\begin{equation*}
P \Big(\LE( S^{1}_{1} ) \cap S^{1}_{2} = \emptyset, \ G^{1} \Big) \ge c d_{x}^{1- \alpha} 2^{- q (1 + 5 \alpha ) n }.
\end{equation*} 
Since $\alpha \in [\frac{1}{3}, 1)$, we conclude with \eqref{Goal1-1-1} as desired. We also obtian \eqref{Goal2-1-1} through an easy modification.
\end{proof}

We are now ready to prove the main result of this subsection.
\begin{proof}[Proof of Prop.\ \ref{PROP}]
Combining \eqref{eq26}, \eqref{eq27}, \eqref{eq29}, \eqref{eq31}, \eqref{Goal1-1-1} and \eqref{Goal2-1-1}, it follows that there exist universal constants $c >0$, $\delta >0$ and $q_{0} > 0$ (in fact, we can talk $q_{0} = {q_{2}}/{2}$ of Prop.\ \ref{prop:5.9}) such that for all $n$ and $x \in \mathbb{D} \setminus \{ 0 \}$, if we let $q = q_{0}$,
\begin{align}
g_{n, x} &\;= \frac{ P (F^{1} ) P  \Big(\LE(S^{1}_{1} ) \cap S^{1}_{2} = \emptyset, \ G^{1} \Big)  P (G^{1})^{-1}  }{P(F^{2} ) P \Big(\LE(S^{2}_{1} ) \cap S^{2}_{2} = \emptyset, \ G^{2} \Big) P (G^{2})^{-1}  } \cdot \Big\{  1 + O \Big( d_{x}^{-c} 2^{- \delta q_{0} n} \Big) \Big\} \notag \\
&\;= \frac{ P (F^{3} ) P  \Big(\LE(S^{3}_{1} ) \cap S^{3}_{2} = \emptyset, \ G^{3} \Big)  P (G^{3})^{-1}  }{P(F^{4} ) P \Big(\LE(S^{4}_{1} ) \cap S^{4}_{2} = \emptyset, \ G^{4} \Big) P (G^{4})^{-1}  } \cdot \Big\{  1 + O \Big( d_{x}^{-c} 2^{- \delta q_{0} n} \Big) \Big\} \notag =g_{n-1, x}  \cdot \Big\{  1 + O \Big( d_{x}^{-c} 2^{- \delta q_{0} n} \Big) \Big\}. 
\end{align}
This finishes the proof.
\end{proof}

\end{document}